\newtheorem{lemma}{Lemma}[section]
\newtheorem{theorem}{Theorem}[section]
\newtheorem{remark}{Remark}[section]
\newtheorem{corollary}{Corollary}[section]
\numberwithin{equation}{section}
\newcommand{\dis}{\displaystyle}
\newcommand{\C}{\mathbb{C}}
\newcommand{\R}{\mathbb{R}}
\newcommand{\Z}{\mathbb{Z}}
\renewcommand{\S}{\mathbb{S}}
\newcommand{\T}{\mathbb{T}}
\newcommand{\FP}{\mathbf{P}}
\newcommand{\FI}{\mathbf{I}}
\newcommand{\Fu}{\mathbf{u}}
\newcommand{\Fb}{\mathbf{b}}
\newcommand{\Fk}{\mathbf{k}}
\newcommand{\Fv}{\mathbf{v}}
\newcommand{\Fx}{\mathbf{x}}
\newcommand{\FV}{\mathbf{V}}
\newcommand{\Fe}{\mathbf{e}}
\newcommand{\FTv}{\tilde{\Fv}}
\newcommand{\tv}{\tilde{v}}
\newcommand{\CA}{\mathcal{A}}
\newcommand{\CD}{\mathcal {D}}
\newcommand{\CE}{\mathcal{E}}
\newcommand{\CF}{\mathcal{F}}
\newcommand{\CH}{\mathcal{H}}
\newcommand{\CI}{\mathcal{I}}
\newcommand{\CJ}{\mathcal{J}}
\newcommand{\CK}{\mathcal{K}}
\newcommand{\CL}{\mathcal{L}}
\newcommand{\CZ}{\mathcal{Z}}
\newcommand{\CW}{\mathcal{W}}
\newcommand{\CP}{\mathcal{P}}
\newcommand{\CQ}{\mathcal{Q}}
\newcommand{\CV}{\mathcal{V}}
\newcommand{\SH}{\mathscr{H}}
\newcommand{\SV}{\mathscr{V}}
\newcommand{\SR}{\mathscr{R}}
\newcommand{\SU}{\mathscr{U}}
\newcommand{\SP}{\mathscr{P}}
\newcommand{\RL}{\mathfrak{L}}
\newcommand{\na}{\nabla}
\newcommand{\al}{\alpha}
\newcommand{\ga}{\gamma}
\newcommand{\om}{\omega}
\newcommand{\Om}{\Omega}
\newcommand{\la}{\lambda}
\newcommand{\de}{\delta}
\newcommand{\si}{\sigma}
\newcommand{\pa}{\partial}
\newcommand{\ka}{\kappa}
\newcommand{\eps}{\epsilon}
\newcommand{\ta}{\theta}
\newcommand{\vth}{\vartheta}
\newcommand{\vps}{\varepsilon}
\newcommand{\Ga}{\Gamma}
\newcommand{\lag}{\langle}
\newcommand{\rag}{\rangle}
\DeclareSymbolFont{yhlargesymbols}{OMX}{yhex}{m}{n} \DeclareMathAccent{\yhwidehat}{\mathord}{yhlargesymbols}{"62}
\begin{document}

\title[3D Couette flow via Boltzmann equation in diffusive limit]{The 3D kinetic Couette flow via the Boltzmann equation in the diffusive limit}

\author[R.-J. Duan]{Renjun Duan}
%\thanks{}
\address[RJD]{Department of Mathematics, The Chinese University of Hong Kong,
Shatin, Hong Kong, P.R.~China}
\email{rjduan@math.cuhk.edu.hk}

\author[S.-Q. Liu]{Shuangqian Liu}
\address[SQL]{School of Mathematics and Statistics, and Key Lab NAA-MOE, Central China Normal University, Wuhan 430079, P.R.~China}
\email{sqliu@ccnu.edu.cn}

\author[R. M. Strain]{Robert M. Strain}
\address[RMS]{University of Pennsylvania, Department of Mathematics, David Rittenhouse Lab, 209 South 33rd Street, Philadelphia, PA 19104-6395, USA}
\email{strain@math.upenn.edu}

\author[A. Yang]{Anita Yang}
\address[AY]{Department of Mathematics, The Chinese University of Hong Kong,
Shatin, Hong Kong, P.R.~China}
\email{ayang@math.cuhk.edu.hk}

\begin{abstract} 
In this paper we study the Boltzmann equation in the diffusive limit in a channel domain $\T^2\times (-1,1)$ for the 3D kinetic Couette flow. Our results demonstrate that the first-order approximation of the solution is governed by the perturbed incompressible Navier-Stokes-Fourier system around the fluid Couette flow.  Moreover, in the absence of external forces, the 3D kinetic Couette flow asymptotically converges over time to the 1D steady planar kinetic Couette flow.  Our proof relies on (i) the Fourier transform on $\T^2$ to essentially reduce the 3D problem to a one-dimensional one, (ii) anisotropic Chemin-Lerner type function spaces, incorporating the Wiener algebra, to control nonlinear terms and address the singularity associated with a small Knudsen number in the diffusive limit, and (iii) Caflisch’s decomposition, combined with the $L^2\cap L^\infty$ interplay technique, to manage the growth of large velocities.
\end{abstract}

\date{February 19, 2025}
\subjclass[2020]{35Q20, 35B40}

%35Q20  	Boltzmann equations
%35B40  	Asymptotic behavior of solutions to PDEs

\keywords{Boltzmann equation, kinetic Couette flow, shear force, stationary solution, asymptotic stability}
\maketitle

\thispagestyle{empty}
	
\tableofcontents

\section{Introduction}

%\subsection{Motivation}

The Boltzmann equation is a fundamental physical model in collisional kinetic theory which describes the motion of a rarefied gas when the Knudsen number is finite, cf.~\cite{CerBook, Ko, Sone07, TM}.  The stability of the standard Maxwellian equilibrium along the Boltzmann dynamics is a result of the celebrated $H$-theorem and the intrinsic structure of the Boltzmann collision operator.  However, a very important physical motivation for introducing the Boltzmann equation is to study the non-equilibrium dynamics of a rarefied gas which can admit stationary states that are non-Maxwellian.  The physical importance of these stationary non-equilibrium states has recently been discussed in the review article \cite{EM-JSP} and the references therein.  In most situations, those non-equilibrium effects are caused by inhomogeneous data at physical boundaries or at far fields with variable temperature and velocity, or possibly caused by a given external force.  For mathematical analysis based on the perturbation theory of solutions \cite{Guo-2010,U86, U74}, these non-equilibrium steady states are not Maxwellian but they are still close to a global Maxwellian and they still decay exponentially for large velocities; for instance, we refer readers to the early work \cite{Guir1,Guir2} and important progress \cite{EGKM-13, EGKM-18, EGM, UA1, UA2}. 

However, there also exists a class of non-equilibria achieved by the Boltzmann dynamics that exhibit a polynomial tail for large velocities. Among them, we mention several recent works \cite{BNV-2019, DL-2020, DLY-2021, JNV-ARMA, Kep} in the context of kinetic shear flow governed by the Boltzmann equation.  On this topic, we refer to the monograph \cite{GaSa} and the survey \cite{NV}. We will discuss in more detail the following two cases:
\begin{itemize}
  \item If there is no physical boundary, that's in the spatially homogeneous case, the uniform kinetic shear flow is a class of homoenergetic solutions determined by the time-evolutionary Boltzmann equation with a deformation force \cite{Gal-R, Tru}. In such case, the large time behavior of solutions is self-similar with a time-growing temperature that depends on the Boltzmann collision kernel \cite{Bv75, Bv76, BCS, Cer89, Cer00, Cer02}. In the Maxwell molecules collision kernel, for example, the temperature or equivalently the total energy grows exponentially to infinity at infinite time and the self-similar steady profile only has a polynomial tail \cite{BNV-2019, DL-2020, JNV-ARMA, Kep}.     
  \item If there is a physical boundary in space, a typical example is the famous Couette flow for the rarefied gas confined by two parallel infinite plates moving relative to each other with opposite velocities. The Couette problem is one of the simplest problems of gas dynamics but raises many challenging difficulties in the mathematical study.  Recently, for the diffusive reflection boundary condition, under the smallness assumption on the relative velocity the first and second authors of this paper together with their collaborator \cite{DLY-2021} gave a rigorous justification of the existence and positivity of the 1D Couette flow that depends on one spatial component normal to the boundary plates; see also \cite{MW} for the asymptotic stability of the 1D Couette flow under the 3D perturbation. In addition, we refer to the series of works \cite{AEMN, AN06, AN05} on the Boltzmann equation in a Couette setting for rarefied gas between two coaxial rotating cylinders. 
\end{itemize}

Although there has been much progress on the kinetic shear flow either in the spatially homogeneous setting (i.e., homoenergetic solutions) or in the spatially one-dimensional setting (i.e, planar Couette flow), it has remained largely open to obtain the long time asymptotic stability of those kinetic shear flows in the multi-dimensional spatially inhomogeneous regime. In contrast, such stability or even instability theory has been extensively developed in recent years in the context of classical fluid dynamic equations such as the Euler and Navier-Stokes systems; see \cite{BGM-BAMS,BM-15, BMV-16, BGM-17, IJ, ScHe}, for instance.  Then it becomes an important task to develop analogous mathematical results at the kinetic level in case of the finite Knudsen number, and additionally to construct Boltzmann solutions in the hydrodynamic limit when the Knudsen number is vanishing.

In this paper we study the 3D Couette problem for the Boltzmann equation in the diffusive limit.  Our first main goal is to construct the stationary 3D kinetic Couette flow solution.  Our second main goal is to prove the exponential asymptotic stability of this stationary state for solutions to the full time-dependent problem. To the best of our knowledge, this is the first result on the existence and stability of the multi-dimensional kinetic Couette flow governed by the nonlinear Boltzmann equation with a small Knudsen number. Additionally, in the absence of external forces, our results show the exponential convergence of the 3D time-dependent solutions toward the steady 1D planar kinetic Couette flow uniformly with respect to a small Knudsen number.  We expect that the techniques that are introduced herein will be useful to understand other physically important stationary non-equilibrium states and their stability.

\subsection{Problem} %\label{sb-pb}
Let the rarefied gas  be confined in a three-dimensional periodic channel domain
$$
\Omega:=\{\Fx=(x,y,z)\in \T\times \T\times (-1,1)\}
$$
%in space $\R^3$, 
where $\T:=\R/(2\pi \Z)$. For simplicity, we write $\bar{x}=(x,y)\in \T^2$. Let the two boundary plates at $z=\pm 1$ be moving relative to each other with opposite velocities along the horizontal $x$-direction
$$
U_\pm=(\pm \al \eps,0,0),
$$
respectively, where $\al>0$ denotes the shear strength and $\eps>0$ is the Knudsen number defined as the ratio of the molecular mean free path to a representative physical length. For later use we define the macroscopic shear velocity for the 3D fluid Couette flow by
\begin{equation}
\label{def.msv}
U(z)=(\al\eps z,0,0), \quad -1\leq z\leq 1,
\end{equation}
that connects the boundary velocities $U_\pm$ linearly. We further suppose that such motion of the rarefied gas flow is governed by the following initial boundary value problem of the Boltzmann equation under the diffusive scaling:
\begin{align}\label{rbe}
\eps\pa_t\widetilde{F}^\eps+\Fv\cdot\na_\Fx \widetilde{F}^\eps+\eps^2\Phi\cdot\na_\Fv \widetilde{F}^\eps=\frac{1}{\eps}Q(\widetilde{F}^\eps,\widetilde{F}^\eps),\ t>0,\ \Fx\in\Om,\ \Fv\in\R^3.
\end{align}
We supplement \eqref{rbe} with the diffusive reflection boundary condition at $z=\pm 1$ as
% \begin{align}\label{obd-1}
% \widetilde{F}^\eps(t,x,y,\pm1,\Fv)|_{v_{z}\lessgtr0}=\sqrt{2\pi}M_{w\pm}\int_{v_{z}\gtrless0}\widetilde{F}^\eps(t,x,y,\pm1,\Fv)|v_{z}|d\Fv,
% \end{align}
\begin{align}\label{obd-1}
\widetilde{F}^\eps(t,x,y,\pm1,\Fv)|_{v_{z}\lessgtr0}
=
\sqrt{2\pi}M_{w\pm}\int_{\tv_{z}\gtrless0}\widetilde{F}^\eps(t,x,y,\pm1,\FTv)|\tv_{z}|d\FTv,
\end{align}
where we use the integration variable $\FTv=(\tv_x,\tv_y,\tv_{z})\in\R^3$ over $\tv_{z}\gtrless0$ respectively.  The initial condition is given by
\begin{align}\label{s-id-o}
\widetilde{F}^\eps(0,\Fx,\Fv)=\widetilde{F}_0^\eps(\Fx,\Fv).
\end{align}
%\begin{align}
%\eps\pa_tF^\eps+\Fv\cdot\na_\Fx F^\eps+\eps^2\Phi\cdot\na_\Fv F^\eps&=\frac{1}{\eps}Q(F^\eps,F^\eps),\ t>0,\ \Fx\in\Om,\ \Fv\in\R^3,
%\label{rbe}\\
%F^\eps(t,x,y,\pm1,\Fv)|_{v_{z}\lessgtr0} &=\sqrt{2\pi}M_w\int_{v_{z}\gtrless0}F^\eps(t,x,y,\pm1,\Fv)v_{z}d\Fv,
%\label{obd-1}\\
%F^\eps(0,\Fx,\Fv) &=F_0^\eps(\Fx,\Fv). \label{s-id}
%\end{align}
Here $\widetilde{F}^\eps=\widetilde{F}^\eps(t,\Fx,\Fv)\geq0$ stands for the density distribution function of gas particles with velocity $\Fv=(v_x,v_y,v_{z})\in\R^3$ at time $t\geq0$ and position $\Fx=(x,y,z)\in \Om$. 
%$\Fx=(x,y,z)=(\bar{\Fx},z)\in\Om=\T^2\times(-1,1)=[0,2\pi]^2\times(-1,1)$
%and 
The function $\Phi=(\Phi_x(z),\Phi_y(z),\Phi_z(z))$ is a given external force field and for a technical reason we have assumed that $\Phi$ depends only on $z$. Moreover, $M_{w\pm}$ are the boundary Maxwellians at the plates $z=\pm 1$ of the form
\begin{equation}
\label{def.bcM}
M_{w\pm}=(2\pi)^{-\frac{3}{2}}\exp\left(-\frac{|\Fv-U_\pm |^2}{2}\right).
%\ \ U=U(z)=(\al\eps z,0,0),
\end{equation}
These Maxwellians have the same macroscopic velocities as those of the moving boundaries. 
%where $\al>0$ indicating the shear strength.
In addition, $Q(\cdot,\cdot)$ is the bilinear Boltzmann collision operator which takes the form of
\begin{align}
Q(F_1,F_2)&=\int_{\R^3\times\S^2_+}B_0(\Fv-\Fv_\ast,\om)[F_1(\Fv_\ast')F_2(\Fv')-F_1(\Fv_\ast)F_2(\Fv)]d\Fv_\ast d\om \notag \\
&:= Q_{\rm{gain}}-Q_{\rm{loss}}, \label{Q-op}
\end{align}
where 
$\om\in \S^2_+:=\{\om~|~\om\cdot (\Fv_\ast-\Fv)\geq0\}$.  Additionally $(\Fv,\Fv_\ast)$ and  $(\Fv',\Fv'_\ast)$ denote the pre-post collisional velocity pairs of particles, which satisfy
 \begin{align}\label{pp-ve}
\Fv'=\Fv+(\Fv_\ast-\Fv)\cdot\om \om,\ \Fv_\ast'=\Fv_\ast-(\Fv_\ast-\Fv)\cdot\om \om.
\end{align}
Note that the $\omega$-representation in \eqref{pp-ve} is a consequence of the following molecular conservation laws of momentum and energy for elastic collisions:
 \begin{align}
\Fv+\Fv_\ast=\Fv'+\Fv_\ast',\quad |\Fv|^2+|\Fv_\ast|^2=|\Fv'|^2+|\Fv_\ast'|^2.\notag
\end{align}
The collisional kernel $B_0(\Fv-\Fv_\ast,\om)$ is assumed to satisfy the angular cutoff assumption and take the form:
\begin{align}
B_0(\Fv-\Fv_\ast,\om)=|\Fv-\Fv_\ast|^\ga b_0(\cos\ta),\notag
\end{align}
where $\ga\in[0,1]$, $\cos\ta=\om\cdot(\Fv_\ast-\Fv)\geq 0$ with $\om\in \S^2_+$.  We suppose that there exists a constant $C>0$ such that
$$
0\leq b_0(\cos\ta)\leq C\cos\ta.
$$
Thus our assumption covers the cases of Maxwell molecules ($\gamma=0$) and hard potentials ($0<\gamma\leq 1$) including the hard sphere model (which is $\gamma=1$ and $b_0(\cos\ta)=C\cos\theta$).

\subsection{Reformulation}

We seek solutions of \eqref{rbe} and \eqref{obd-1}  in the form of
$$
\widetilde{F}^\eps(t,\Fx,\Fv)=F^\eps(t,\Fx,\Fv-U(z))=F^\eps(t,\Fx,v_x-\al\eps z,v_y,v_{z}),
$$
where $\Fv-U(z)=(v_x-\al\eps z,v_y,v_{z})$ is the peculiar velocity of gas particles, cf.~\cite{GaSa}.   
Then \eqref{rbe}, \eqref{obd-1} and \eqref{s-id-o}  can be converted to
\begin{align}\label{s-rbe}
\eps\pa_tF^\eps+\Fv\cdot\na_\Fx F^\eps+\eps^2\Phi\cdot\na_\Fv F^\eps+\al\eps z\pa_{x}F^\eps-\al\eps v_{z}\pa_{v_x}F^\eps=\frac{1}{\eps}Q(F^\eps,F^\eps),
\end{align}
where $t>0$, $\Fx\in\Om$ and $\Fv\in\R^3$.  The diffuse reflection boundary condition is given by
\begin{align}\label{obd}
F^\eps(t,x,y,\pm1,\Fv)|_{v_{z}\lessgtr0}=\sqrt{2\pi}\mu\int_{\tv_{z}\gtrless0}F^\eps(t,x,y,\pm1,\FTv)|\tv_{z}|d\FTv. 
\end{align}
Then the initial condition is 
\begin{align}\label{s-id}
{F}^\eps(0,\Fx,\Fv)={F}_0^\eps(\Fx,\Fv):=\widetilde{F}_0^\eps(\Fx,\Fv+U(z))
=
\widetilde{F}_0^\eps(\Fx,v_x+\al\eps z,v_y,v_{z}).
\end{align}
 Above $\mu$ is the normalized global Maxwellian which is defined by
\begin{equation}
\label{def.gmax}
\mu=(2\pi)^{-3/2}e^{-|\Fv|^2/2}.
\end{equation}
Note that under such a reformulation the boundary Maxwellians become spatially homogeneous with zero bulk velocity while an extra forcing term $-\al\eps v_{z}\pa_{v_x}F^\eps$ is present in the equation.
%At initial time corresponding to \eqref{s-id-o}, we also assume 

\subsubsection{Steady problem}
It is expected that the long time behavior of \eqref{s-rbe}, \eqref{obd} and \eqref{s-id} will be governed by the following steady problem
\begin{align}\label{st-rbe}
\Fv\cdot\na_\Fx F_{st}^\eps+\eps^2\Phi\cdot\na_\Fv F_{st}^\eps+\al\eps z\pa_{x}F_{st}^\eps-\al\eps v_{z}\pa_{v_x}F_{st}^\eps=\frac{1}{\eps}Q(F_{st}^\eps,F_{st}^\eps),
\end{align}
where $\Fx\in\Omega$, $\Fv\in \R^3$, and
\begin{align}\label{st-obd}
F_{st}^\eps(x,y,\pm1,\Fv)|_{v_{z}\lessgtr0}=\sqrt{2\pi}\mu\int_{\tv_{z}\gtrless0}F_{st}^\eps(x,y,\pm1,\FTv)|\tv_{z}|d\FTv.
\end{align}
Moreover, we assume that the total mass of gas particles in the full domain $\Omega$ is fixed to be
\begin{align}\label{mass}
\int_{\Om\times\R^3}F^\eps_{st}(\Fx,\Fv)d\Fx d\Fv=|\Om|.
\end{align}
%The purpose of this paper is to prove the
Furthermore, at a formal level, in the diffusive limit $\eps\to 0$, $F_{st}^\eps$ tends to $\mu$  in terms of \eqref{st-rbe}, \eqref{st-obd} and \eqref{mass}. Then, to construct the solution, we set the Hilbert expansion as
\begin{align}\label{st-exp}
F_{st}^\eps=\mu+\eps\sqrt{\mu}\{f_1+\eps f_2+\eps^{\frac{1}{2}}f_R\},
\end{align}
where $f_1$ and $f_2$ are respectively the first-order and second-order correction terms, $f_R$ is the remainder and we have ignored the dependence of $f_R$ on $\eps$ for brevity. We now define the linearized collision operator by
\begin{align}
Lf=-\mu^{-1/2}\{Q(\mu,\sqrt{\mu}f)+Q(\sqrt{\mu}f,\mu)\},\notag
\end{align}
and we define the non-linear collision operator as
\begin{align}
\Ga(f,g)=\mu^{-1/2}Q(\sqrt{\mu}f,\sqrt{\mu}g).\notag
\end{align}
Next, we plug \eqref{st-exp} into \eqref{st-rbe} and compare the different orders of $\eps$ in the resulting equation, to obtain
\begin{align}\label{f0}
Lf_1=0,
\end{align}
\begin{align}\label{f1}
\Fv\cdot\na_\Fx f_1+\al v_{z}v_x\mu^{\frac{1}{2}}+Lf_2=\Ga(f_1,f_1),
\end{align}
and
\begin{align}\label{f2}
\Fv\cdot\na_\Fx f_2+\al z\pa_{x}f_1-\al \mu^{-\frac{1}{2}}v_{z}\pa_{v_x}\{\sqrt{\mu}f_1\}-\Phi\cdot \Fv\mu^{\frac{1}{2}}=\Ga(f_1,f_2)+\Ga(f_2,f_1).
\end{align}
Hence,  the remainder $f_R$ satisfies
%and the equation for
\begin{align}\label{f-rbe}
\Fv\cdot\na_\Fx f_{R}+&\eps^2\Phi\cdot\na_\Fv f_{R}+\al\eps z\pa_{x}f_{R}-\al\eps v_{z}\pa_{v_x}f_{R}
+\frac{\al\eps v_xv_{z}}{2}f_{R}-\frac{\eps^2}{2}\Phi\cdot \Fv f_{R}
\notag\\
+\frac{1}{\eps} Lf_{R}
=
&\eps^{\frac{1}{2}}\Ga(f_{R},f_{R})+\Ga(f_{R},f_1+\eps f_2)+\Ga(f_1+\eps f_2,f_{R})+\eps^{\frac{3}{2}}\Ga(f_2,f_2)
\notag\\&-\eps^{\frac{3}{2}}\mu^{-1/2}\Phi\cdot \na_\Fv(\sqrt{\mu}f_{1})-\eps^{\frac{5}{2}}\mu^{-1/2}\Phi\cdot \na_\Fv(\sqrt{\mu}f_{2}) 
\notag\\&-\al\eps^{\frac{3}{2}}z\pa_xf_2-\al\eps^{\frac{3}{2}} \mu^{-1/2}v_z\pa_{v_x}\{\sqrt{\mu}f_2\}.
\end{align}
Moreover, by \eqref{mass} it holds that 
\begin{align}%\label{mass-z}
\int_{\Om\times\R^3}f_R(\Fx,\Fv)d\Fx d\Fv=0.\notag
\end{align}
Next, we determine $f_1$ and $f_2$. In fact, from \eqref{f0}, one has
\begin{align}\label{f1-def}
f_1=\FP f_1=\left\{\rho_s+\Fv\cdot \Fu_s+\frac{1}{2}(|\Fv|^2-3)\ta_s\right\}\sqrt{\mu},
\end{align}
where $\FP$ is an $L^2$ projection from $L^2_\Fv$ to the null space of $L$,  denoted by 
$$
\ker (L)={\rm span}\left\{1,\Fv,\frac{1}{2}(|\Fv|^2-3)\right\}\sqrt{\mu}.
$$
Moreover, \eqref{f1} gives
\begin{align}\label{ic-bq-re}
\na_{\Fx}\cdot \Fu_s=0,\ \ \na_{\Fx}(\rho_s+\ta_s)=0.
\end{align}
Due to this, without loss of generality we assume $\rho_s=-\ta_s-\frac{1}{\Om}\int_{\Om}\rho_s(\Fx)d\Fx$,
 and we further define
\begin{align}\label{f2-def}
f_2=&L^{-1}\left\{-\Fv\cdot\na_\Fx f_1-\al v_{z}v_x\mu^{\frac{1}{2}}+\Ga(f_1,f_1)\right\}
\notag\\
&+
\left\{\Fu_{s,2}\cdot \Fv+\frac{|\Fv|^2-3}{2}\ta_{s,2}\right\}\sqrt{\mu}
\notag\\
=&-\sum\limits_{i,j=1}^3\bar{A}_{ij}\pa_iu_{s,j}-\sum\limits_{i=1}^3\bar{B}_i\pa_i\ta_s
+\{\FI-\FP\}\left\{\frac{(\Fv\cdot\Fu_s)^2}{2}\sqrt{\mu}\right\}\notag
\\
&
+\{\FI-\FP\}\left\{\frac{(|\Fv|^2-5)^2\ta_s^2}{8}\sqrt{\mu}\right\}
-\al L^{-1}(v_{z}v_x\mu^{\frac{1}{2}})
\notag
\\
&
+(|\Fv|^2-5)(\Fv\cdot \Fu_s)\ta_s\sqrt{\mu}+\left\{\Fu_{s,2}\cdot \Fv+\frac{|\Fv|^2-3}{2}\ta_{s,2}\right\}\sqrt{\mu},
\end{align}
where  $\FI$ is the identity operator.  We further define
$$
\bar{A}_{ij}:= L^{-1}A_{ij}=L^{-1}\left\{\left(v_iv_j-\frac{\de_{ij}|\Fv|^2}{3}\right)\sqrt{\mu}\right\},
$$
$$
\bar{B}_i:= L^{-1}B_{i}=L^{-1}\left\{\frac{(|\Fv|^2-5)v_i}{2}\sqrt{\mu}\right\}.
$$
Additionally for $i\in\{1,2,3\}$ we have used the notations
$$
\pa_i\in\{\pa_{x}, \pa_{y},\pa_{z}\},\ \ v_i\in\{v_x,v_y,v_z\}, \ u_{s,i}\in\{u_{s,x},u_{s,y},u_{s,z}\}.
$$
Above we also have used the following known identity
$$
2\Ga(\FP f,\FP f)=L\left\{\frac{(\FP f)^2}{\sqrt{\mu}}\right\}.
$$
Moreover, the macroscopic velocity of $f_2$, namely $\Fu_{s,2}:=(u_{s,2,x},u_{s,2,y},u_{s,2,z})$, is defined as
\begin{align}\label{u2}
\Fu_{s,2} &=-\al\na_\Fx\Delta^{-1}\{z\pa_x\rho_s\},\quad u_{s,2,z}(x,y,\pm1)=0,
%\\
%\notag 
%\Fu_{s,2}&=(u_{s,2,x},u_{s,2,y},u_{s,2,z}),
\end{align}
which follows from the inner product $\lag \eqref{f2},\sqrt{\mu}\rag.$ The macroscopic temperature of $f_2$, denoted by $\ta_{s,2}$, is given by \eqref{P}.
It should be pointed out that we have assumed the macroscopic mass density of $f_2$, represented by $\rho_{s,2}$, to be zero. 

By considering velocity moments $[\Fv,\frac{1}{2}(|\Fv|^2-5)]\sqrt{\mu}$ for equation \eqref{f2},
and utilizing \eqref{f2-def} as well as \eqref{ic-bq-re}, one can deduce that
$[\rho_s,\Fu_s,\ta_s]$ satisfies the perturbed incompressible Navier-Stokes-Fourier system around the fluid Couette flow $(\al z,0,0)^T$:
\begin{eqnarray}\label{ins-s}
\left\{\begin{array}{rll}
&\na_\Fx\cdot \Fu_s=0,\ \ \rho_s=-\ta_s,\\[2mm]
&\Fu_s\cdot\na_\Fx \Fu_s+\na_\Fx P+\al z\pa_{x}\Fu_s+\al(u_{s,z},0,0)^T-\Phi=\eta\Delta_\Fx \Fu_s,\\[2mm]
&\al z\pa_{x}\ta_s+\na_\Fx\ta_s\cdot \Fu_s=\frac{2}{5}\ka\Delta_\Fx \ta_s,\\[2mm]
&\Fu_s(x,y,\pm1)=0,\ \ta_s(x,y,\pm1)=0,
\end{array}\right.
\end{eqnarray}
where
\begin{align}\label{P}
\na_\Fx P=\na_\Fx\left\{\ta_{s,2}-\frac{1}{3}|\Fu_s|^2\right\},\quad \int_{\Om}Pd\Fx=0,
\end{align}
and the positive constants $\eta$ and $\ka$ denote the diffusive coefficient and heat conductivity coefficient, respectively.

\begin{remark}
Let $U_s=(\alpha z,0,0)+\Fu_s$, then $U_s$ satisfies the boundary-value problem on the incompressible Navier-Stokes system in the finite channel domain $\T^2\times (-1,1)$:
\begin{eqnarray}\label{ins-soo}
\left\{\begin{array}{rl}
&\na_\Fx\cdot U_s=0, \\[2mm]
&U_s\cdot\na_\Fx U_s+\na_\Fx P-\Phi=\eta\Delta_\Fx U_s,\\[2mm]
&U_s(x,y,\pm1)=\pm \alpha,
\end{array}\right.
\end{eqnarray}
which is consistent with that derived from the diffusive limit of the original steady problem corresponding to \eqref{rbe} and \eqref{obd-1}.
If $\Phi\equiv 0$ is further assumed, the only solution to \eqref{ins-soo} is given by the planar Couette flow $(\alpha z,0,0)$.  This is exactly why we have introduced the bulk velocity in \eqref{def.msv}; to facilitate the reformulation of the problem in the moving frame.
\end{remark}

Furthermore, in view of \eqref{f1-def}, $\eqref{ins-s}_1$, $\eqref{ins-s}_4$, \eqref{st-exp} and \eqref{st-obd}, the boundary condition for
\eqref{f-rbe} can be written as
\begin{align}\label{st-bd}
f_{R}(x,y,\pm1,\Fv)|_{v_z\lessgtr0}=P_\ga f_R+\eps^{\frac{1}{2}} \{-f_2+P_\ga f_2\}
:= P_\ga f_R+\eps^{\frac{1}{2}}r,
\end{align}
where
\begin{align}%\label{st-bd1}
P_\ga f :=\sqrt{2\pi\mu}\int_{v_z\gtrless0}f(x,y,\pm1,\Fv)\sqrt{\mu}|v_z|d\Fv,\ r :=-f_2+P_\ga f_2.\notag
\end{align}
Note that
\begin{align}\label{r-bd}
\int_{v_z\lessgtr 0}r(\pm 1)\sqrt{\mu} |v_z| d\Fv=0,\ \int_{v_{z}\lessgtr 0}P_\ga f_R(\pm1)r(\pm 1) |v_z| d\Fv=0,
\end{align}
due to the definitions \eqref{f2-def} and \eqref{u2}.

{\bf The purpose of this paper is twofold: 
\begin{itemize}
  \item to rigorously derive the perturbed incompressible Navier-Stokes-Fourier system \eqref{ins-s}, and 
\item to prove the stability of \eqref{s-rbe}, \eqref{obd} and \eqref{s-id} around the stationary solution \eqref{st-rbe} satisfying \eqref{st-obd} and \eqref{mass}.
\end{itemize}}

In order to solve \eqref{f-rbe}, as in \cite{DL-2020, DLY-2021}, we make the following ansatz:
\begin{align}\label{fst-dec}
\sqrt{\mu}f_R=f_{R,1}+\sqrt{\mu}f_{R,2},
\end{align}
where $f_{R,1}$ and $f_{R,2}$ satisfy the coupled boundary-value problems:
\begin{align}\label{f1-eq}
\Fv\cdot\na_\Fx f_{R,1}&+\eps^2\Phi\cdot\na_\Fv f_{R,1}+\al\eps z\pa_{x}f_{R,1}-\al\eps v_z\pa_{v_x}f_{R,1}+\frac{1}{\eps} \nu f_{R,1}
\notag\\
=&
\frac{1}{\eps}\chi_M\CK f_{R,1}+\frac{\eps^2}{2}\Phi\cdot \Fv\sqrt{\mu}f_{R,2}-\frac{\al\eps v_xv_z}{2}\sqrt{\mu}\{\FI-\FP\}f_{R,2}
\notag
\\
&
-\eps^{\frac{3}{2}}\Phi\cdot \na_\Fv(\sqrt{\mu}f_{1})
-\eps^{\frac{5}{2}}\Phi\cdot \na_\Fv(\sqrt{\mu}f_{2})
\notag
\\
&
-
\al\eps^{\frac{3}{2}} z\pa_{x}(\sqrt{\mu}f_{2})+\al\eps^{\frac{3}{2}} v_z\pa_{v_x}(\sqrt{\mu}f_{2})
\notag
\\
&
+\eps^{\frac{1}{2}}Q(f_{R,1},f_{R,1})
+\eps^{\frac{1}{2}}Q(f_{R,1},\sqrt{\mu}f_{R,2})+\eps^{\frac{1}{2}}Q(\sqrt{\mu}f_{R,2},f_{R,1})\notag\\
&+\eps^{\frac{3}{2}}Q(\sqrt{\mu}f_2,\sqrt{\mu}f_2)+Q(f_{R,1},\sqrt{\mu}\{f_1+\eps f_2\})
\notag
\\
&
+Q(\sqrt{\mu}\{f_1+\eps f_2\},f_{R,1}), 
\end{align}
with 
\begin{align}\label{f1-bd}
f_{R,1}(x,y,\pm1,\Fv)|_{v_z\lessgtr0}=0,
\end{align}
and
\begin{align}\label{f2-eq}
\Fv\cdot\na_\Fx f_{R,2}&+\eps^2\Phi\cdot\na_\Fv f_{R,2}+\al\eps z\pa_{x}f_{R,2}-\al\eps v_z\pa_{v_x}f_{R,2}
+\frac{1}{\eps} Lf_{R,2}\notag\\
=&\frac{1}{\eps}(1-\chi_M)\mu^{-1/2}\CK f_{R,1}-\frac{\al\eps  v_xv_z}{2}\FP f_{R,2}+\eps^{\frac{1}{2}}\Ga(f_{R,2},f_{R,2})\notag\\
&+\Ga(f_{R,2},f_1+\eps f_2)+\Ga(f_1+\eps f_2,f_{R,2}),
\end{align}
\begin{align}\label{f2-bd}
f_{R,2}(x,y,\pm1,\Fv)|_{v_z\lessgtr0}=&\sqrt{2\pi\mu}\int_{v_z\gtrless0}\{f_{R,1}(\pm1)+f_{R,2}(\pm1)\sqrt{\mu}\}|v_z|d\Fv
\\
&
+\eps^{\frac{1}{2}}r.
\notag 
\end{align}
Here
$$
\nu=\int_{\R^3\times\S^2_+}B_0(\Fv-\Fv_\ast,\om)\mu(\Fv_\ast)d\Fv_\ast d\om\sim(1+|\Fv|)^\ga,
$$
\begin{align}\label{CK-def}
\CK f=Q(f,\mu)+Q_{\rm{gain}}(\mu,f),
\end{align}
and $\chi_{M}(\Fv)$ is a non-negative smooth cutoff function such that
\begin{align}%\label{def.chiM}
\chi_{M}(\Fv)=\left\{\begin{array}{rll}
1,&\ |\Fv|\geq M+1,\\[2mm]
0,&\ |\Fv|\leq M,
\end{array}\right.\notag
\end{align}
for some large $M>0$.  
Note that
$$
Lf=\nu f-Kf
$$
with
\begin{align}\label{sp.L}
Kf=\mu^{-\frac{1}{2}}\left\{Q(\mu^{\frac{1}{2}}f,\mu)+Q_{\textrm{gain}}(\mu,\mu^{\frac{1}{2}}f)\right\},
\end{align}
where $Q_{\textrm{gain}}$ denotes the positive part of $Q$ in \eqref{Q-op}. Moreover, under the angular cutoff assumption,  it holds that
\begin{align}
Kf=\int_{\R^3}\Fk(\Fv,\Fv_\ast)f(\Fv_\ast)\,d\Fv_\ast=\int_{\R^3}(\Fk_2-\Fk_1)(\Fv,\Fv_\ast)f(\Fv_\ast)\,d\Fv_\ast,\notag
\end{align}
with
\begin{equation}\notag %\label{fk-12}
0\leq \Fk_1(\Fv,\Fv_\ast)\leq \tilde{c}_1|\Fv-\Fv_\ast|^\ga e^{-\frac{1}{4}(|\Fv|^2+|\Fv_\ast|^2)},
\end{equation}
and 
\begin{equation}
 0\leq \Fk_2(\Fv,\Fv_\ast)\leq \tilde{c}_2|\Fv-\Fv_\ast|^{-2+\ga}e^{-
\frac{1}{8}|\Fv-\Fv_\ast|^{2}-\frac{1}{8}\frac{\left||\Fv|^{2}-|\Fv_\ast|^{2}\right|^{2}}{|\Fv-\Fv_\ast|^{2}}},\notag
\end{equation}
where both $\tilde{c}_1$ and $\tilde{c}_2$ are positive constants.

\subsubsection{Time-dependent problem}
The non-negativity of the steady solution $F^\eps_{st}$ will be ascertained by the asymptotic stability for the initial boundary value problem \eqref{s-rbe}, \eqref{obd} and
\eqref{s-id}.
To solve this initial boundary value problem in the diffusive limit $\eps\to 0$, we look for a solution that takes the form
\begin{align}
F^\eps=F^\eps_{st}+\eps\sqrt{\mu}\{g_1+\eps g_2+\eps^{\frac{1}{2}}g_R\},\notag
\end{align}
with initial data
\begin{align}
F^\eps_0(\Fx,\Fv)=F^\eps_{st}+\eps\sqrt{\mu}\{g_1(0,\Fx,\Fv)+\eps g_2(0,\Fx,\Fv)+\eps^{\frac{1}{2}}g_{R,0}(\Fx,\Fv)\},\notag
\end{align}
where $g_1$ is given by
\begin{align}\label{g1-it}
g_1=\left\{\rho(t,\Fx)+\Fu(t,\Fx)\cdot \Fv+\frac{|\Fv|^2-3}{2}\ta(t,\Fx)\right\}\sqrt{\mu},
\end{align}
with $[\rho(t,\Fx),\Fu(t,\Fx),\ta(t,\Fx)]$ satisfying
\begin{eqnarray}\label{ns-ust}
\left\{\begin{array}{rll}
&\na_\Fx\cdot \Fu=0,\ \ \rho=-\ta,
\\[2mm]
&\pa_t\Fu+\Fu\cdot\na_\Fx \Fu+\na_\Fx \tilde{P}+\al z\pa_{x}\Fu+\Fu\cdot\na_\Fx \Fu_s+\Fu_s\cdot\na_\Fx \Fu
\\ &
\hspace{40mm}+\al(u_{z},0,0)^T=\eta\Delta_\Fx \Fu,
\\[2mm]
&\pa_t\ta+\al z\pa_{x}\ta+\na_\Fx\ta\cdot \Fu+\na_\Fx\ta\cdot \Fu_s+\na_\Fx\ta_s\cdot \Fu=\frac{2}{5}\ka\Delta_\Fx \ta,
\\[2mm]
&\Fu(x,y,\pm1)=0,\ \ta(x,y,\pm1)=0,\\
&\Fu(0,\Fx)=\Fu_0(\Fx),\ \ta(0,\Fx)=\ta_0(\Fx),
\end{array}\right.
\end{eqnarray}
and $g_2$ is given by
\begin{align}\label{g2-it}
g_2=&L^{-1}\left\{-\Fv\cdot\na_\Fx g_1+\Ga(g_1,g_1)+\Ga(f_1,g_1)+\Ga(g_1,f_1)\right\}
\notag\\&
+\left\{\Fu_2\cdot \Fv+\frac{|\Fv|^2-3}{2}\ta_2\right\}\sqrt{\mu}
\notag\\
=&-\sum\limits_{i,j=1}^3\bar{A}_{ij}\pa_iu_{j}-\sum\limits_{i=1}^3\bar{B}_i\pa_i\ta
\notag\\&
+\{\FI-\FP\}\left\{\frac{(\Fv\cdot\Fu)^2}{2}\sqrt{\mu}\right\}
+\{\FI-\FP\}\left\{\frac{(|\Fv|^2-5)^2\ta^2}{8}\sqrt{\mu}\right\}
\notag\\&
+(|\Fv|^2-5)(\Fv\cdot \Fu)\ta\sqrt{\mu}+\{\FI-\FP\}\left\{(\Fv\cdot\Fu)(\Fv\cdot\Fu_s)\sqrt{\mu}\right\}
\notag\\&
+\{\FI-\FP\}\left\{\frac{(|\Fv|^2-5)^2\ta \ta_s}{4}\sqrt{\mu}\right\}+\left\{\Fu_2\cdot \Fv+\frac{|\Fv|^2-3}{2}\ta_2\right\}\sqrt{\mu}.
\end{align}
Note that 
$$
\FP g_2=\left\{\Fu_2\cdot \Fv+\frac{|\Fv|^2-3}{2}\ta_2\right\}\sqrt{\mu},
$$
where both $g_1(0,\Fx,\Fv)$ and $ g_2(0,\Fx,\Fv)$ are determined by the initial data $[\Fu_0(\Fx),\ta_0(\Fx)]=[\Fu(0,\Fx,\Fv),\ta(0,\Fx,\Fv)]$. For more details, we refer to the discussions in Section \ref{sec-usp}.
%Note that $f^\eps_0=\eps^\beta f_{2,0}$.

Furthermore, the remainder $g_R$ satisfies
\begin{align}%\label{gr}
\eps\pa_tg_R&+\Fv\cdot\na_\Fx g_R+\eps^2\Phi\cdot\na_\Fv g_R+\al\eps z\pa_{x}g_R-\al\eps v_z\pa_{v_x}g_R
+\frac{1}{\eps}Lg_R
\notag\\
=&\eps^{\frac{3}{2}}\pa_tg_2-\eps^{\frac{3}{2}}\al z\pa_xg_2+\eps^{\frac{3}{2}}\al\mu^{-\frac{1}{2}}v_z\pa_{v_x}\{\sqrt{\mu}g_2\}
\notag\\&
-\eps^{\frac{3}{2}}\mu^{-\frac{1}{2}}\Phi\cdot\na_\Fv[\sqrt{\mu}(g_1+\eps g_2)]
+\frac{\eps^2}{2}\Phi\cdot\Fv g_R-\frac{\al\eps v_xv_z}{2}g_R+\eps^{\frac{1}{2}}\Ga(g_R,g_R)
\notag\\&
+\Ga(g_R,g_1+\eps g_2)
+\Ga(g_1+\eps g_2,g_R)+\eps^{\frac{3}{2}}\Ga(g_2,g_2)
\notag\\&
+\Ga(g_R,f_1+\eps f_2)+\Ga(f_1+\eps f_2,g_R)
\notag\\&
+\Ga(f_R,g_1+\eps g_2)
+\Ga(g_1+\eps g_2,f_R)+\eps^{\frac{1}{2}}\{\Ga(g_R,f_R)+\Ga(f_R,g_R)\},\notag
\end{align}
with
\begin{align}%\label{gr-id}
\sqrt{\mu}g_R(0,\Fx,\Fv)=\sqrt{\mu}g_{R,0}(\Fx,\Fv),\notag
\end{align}
and
\begin{align}%\label{gr-bdY}
g_R(t,\pm1,\Fv)|_{v_z\lessgtr0}=P_\ga g_R+\eps^{\frac{1}{2}}\tilde{r},\notag
\end{align}
where $\tilde{r}=-g_2+P_\ga g_2.$

\subsection{Main results}
To state our main results, we introduce some notations for norms right away.  Further notations will be clarified later on. We first define the  Fourier transform with respect to the variable $\bar{x}= (x,y)\in\T^2$ as
\begin{equation*}
%\label{ }
\hat{f}(t,\bar{k},z,\Fv)=\CF_{\bar{x}} f(t,\bar{k},z,\Fv)=\int_{\T^2} e^{-  i \bar{k}\cdot \bar{x}}f(t,\bar{x},z,\Fv)\,d\bar{x},\quad \bar{k}=(k_x,k_y)\in \Z^2.
\end{equation*}
Denoting a velocity weight by $w^l(\Fv)=(1+|\Fv|^2)^l$, then for $T>0$ we introduce the weighted norms
\begin{align}%\label{def.xtw}
\|f\|_{X_{T}}:=\sum\limits_{\bar{k}\in\Z^2}\sup_{0\leq t\leq T} \|w^l\hat{f}(t,\bar{k},\Fv)\|_{L^\infty_{z,\Fv}}.
\notag
\end{align}
 Moreover, we set $\|f\|_\nu=\|f\|_{L^2_\nu}=\|\nu^{\frac{1}{2}}f\|_{L^2}=\|\nu^{\frac{1}{2}}f\|_2$.  Furthermore, for fixed $\bar{k}\in \Z^2$ and $T>0$, we define the following mixture norms
\begin{align*}
\|w^{l}\hat{f}(t,\bar{k},z,\Fv)\|_{L^\infty_{T,z,\Fv}}&=\sup\limits_{0\leq t\leq T}
\|w^{l}\hat{f}(t,\bar{k},z,\Fv)\|_{L^\infty_{z,\Fv}},\\
\|w^{l}\hat{f}(t,\bar{k},z,\Fv)\|_{L^\infty_{T}L^2_{z,\Fv}}&=\sup\limits_{0\leq t\leq T}
\|w^{l}\hat{f}(t,\bar{k},z,\Fv)\|_{L^2_{z,\Fv}},\\
\|w^{l}\hat{f}(t,\bar{k},z,\Fv)\|^2_{L^2_{T,z,\Fv}}
&=\int_0^T\|w^{l}\hat{f}(t,\bar{k},z,\Fv)\|^2_{L^2_{z,\Fv}}dt,\\
\|w^{l}\hat{f}(t,\bar{k},z,\Fv)\|^2_{L^2_{T,z,\nu}}
&=\int_0^T\|w^{l}\hat{f}(t,\bar{k},z,\Fv)\|^2_{L^2_{z}L^2_{\nu}}dt. 
\end{align*}

%When the time variable is not involved, we also define the Banach spaces
%\begin{equation*}
%X_S:=L^1_{\bar{k}}L^2_{z,v},\ \ Y_S:=L^1_{\bar{k}}L^\infty_{z,v},
%\end{equation*}
%with the norms $\|f\|_{X_S}$, $\|f\|_{Y_S}$, $\|f\|_{X_{S,l}}$ and $\|f\|_{Y_{S,l}}$
%defined similarly as \eqref{XY-norm} and \eqref{def.xtw}.

We now state our main results in the paper. Our first main theorem is concerned with the solvability of the steady problem \eqref{st-rbe} and \eqref{st-obd}.

\begin{theorem}[Existence of the steady solution]\label{st-sol-th}
Let $\ga\in[0,1]$ and $l_\infty\gg l_2\gg4$.
%and $\vps_{\Phi,\al}>0$ be suitably small. 
There exists a constant $\delta_0>0$ such that for any $\al,\eps\in (0,\delta_0)$ and for any $\Phi(z)\in C([-1,1];\R^3)$ with $\|\Phi(z)\|_{2}\leq \delta_0$, 
%constants $\al_0>0$ and $\eps_0$ such that for any $\al\in(0,\al_0)$ and $\eps\in(0,\eps_0)$, if $\Phi_z(z)\in C[-1,1] \cap L^2(-1,1)$ and
%\begin{align}%\label{phi-cd}
%\al+\|\Phi(z)\|_{2}\leq\vps_{\Phi,\al},\notag
%\end{align}
%then 
the steady boundary value problem \eqref{st-rbe} and \eqref{st-obd} admits a unique strong solution $F_{st}^{\eps}(\Fx,\Fv)$ satisfying \eqref{mass} and
\begin{align}\label{fst-mth}
F_{st}^\eps=\mu+\eps\sqrt{\mu}\{f_1+\eps f_2+\eps^{\frac{1}{2}}f_R\}\geq0,
\end{align}
as well as the following things:
\begin{itemize}
\item
For the coefficients $f_1$ and $f_2$, it holds that
\begin{align}\notag
f_1=\left\{\rho_s+\Fv\cdot \Fu_s+\frac{1}{2}(|\Fv|^2-3)\ta_s\right\}\sqrt{\mu},
\end{align}
where $[\rho_s,\Fu_s,\ta_s]$ satisfies the Navier-Stokes equations \eqref{ins-s} which is a perturbation around the fluid Couette flow $[\al z,0,0]$.
And $f_2$ is given by \eqref{f2-def}. In addition, for any integer $m\geq0$, 
it holds
\begin{align}\label{f12-es}
\sum\limits_{\bar{k}\in\Z^2} \|w^{l_\infty}(1+|\bar{k}|^m)[\hat{f}_1,\hat{f}_2](\bar{k},z,\Fv)\|_{\infty}\lesssim \vps_{\Phi,\al}.
\end{align}
Here and in the rest of the paper we denote
\begin{equation}
%\label{ }
\vps_{\Phi,\al}:=\|\Phi(z)\|_{2}+\al.\notag
\end{equation}
\item
For the remainder $f_R$, it holds that $\sqrt{\mu}f_R=f_{R,1}+\sqrt{\mu}f_{R,2}$, where
$f_{R,1}$ and $f_{R,2}$ satisfy
\begin{align}\label{fr-es-lem}
\eps^{-\frac{3}{2}}\sum\limits_{\bar{k}\in\Z^2}&\|w^{l_\infty}\hat{f}_{R,1}\|_{\infty}+\eps^{-\frac{5}{2}}\sum\limits_{\bar{k}\in\Z^2}\|w^{l_2}\hat{f}_{R,1}\|_\nu
+\eps^{\frac{1}{2}}\sum\limits_{\bar{k}\in\Z^2}\|w^{l_\infty}\hat{f}_{R,2}\|_{\infty}
\notag\\&
+\sum\limits_{\bar{k}\in\Z^2}\|\FP\hat{f}_{R,2}\|_2
+\eps^{-1}\sum\limits_{\bar{k}\in\Z^2}\|\{\FI-\FP\}\hat{f}_{R,2}\|_\nu
\lesssim \vps_{\Phi,\al}.
\end{align}
\end{itemize}
\end{theorem}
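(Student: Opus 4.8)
The plan is to construct the solution through the Hilbert expansion already set up in the reformulation, treating the fluid part $[\rho_s,\Fu_s,\ta_s]$, the correctors $f_1,f_2$, and the remainder pair $(f_{R,1},f_{R,2})$ in sequence, and closing everything by a fixed-point argument in the Chemin--Lerner/Wiener-algebra norms $\|\cdot\|_{X_T}$ (here in the stationary case, without the $t$-variable, the relevant norm is $\sum_{\bar k}\|w^l\hat f(\bar k)\|_\infty$ and its $L^2_\nu$ counterpart). First I would solve the perturbed incompressible Navier--Stokes--Fourier system \eqref{ins-s} in the channel $\T^2\times(-1,1)$: since $\Phi$ depends only on $z$ and $\vps_{\Phi,\al}=\|\Phi\|_2+\al$ is small, a contraction argument (Fourier in $\bar x$, energy/elliptic estimates in $z$, exploiting the divergence-free and no-slip conditions and the dissipation $\eta\Delta_\Fx$, $\tfrac25\ka\Delta_\Fx$) yields a unique small solution with $\sum_{\bar k}(1+|\bar k|^m)\|[\hat\rho_s,\hat\Fu_s,\hat\ta_s](\bar k)\|$ controlled by $\vps_{\Phi,\al}$; the Couette transport term $\al z\pa_x$ is antisymmetric and harmless, and the coupling term $\al(u_{s,z},0,0)$ is absorbed by smallness of $\al$. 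This immediately gives $f_1$ from \eqref{f1-def} and then $f_2$ from \eqref{f2-def}, with the exponential velocity decay of $L^{-1}$ applied to polynomial-times-$\sqrt\mu$ sources delivering the weighted bound \eqref{f12-es} for every $m$ and every weight $l_\infty$.

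Next I would set up the coupled system \eqref{f1-eq}--\eqref{f2-bd} for $(f_{R,1},f_{R,2})$ and solve it by iteration, taking Fourier transform in $\bar x$ so that each mode $\bar k$ solves a one-dimensional-in-$z$ stationary transport/collision problem. For $f_{R,2}$ (the "fluid-like", $\sqrt\mu$-weighted part) I would run the standard $L^2$ energy estimate: test \eqref{f2-eq} against $\hat f_{R,2}$, use the coercivity $\langle L f,f\rangle\gtrsim \|\{\FI-\FP\}f\|_\nu^2$, handle the boundary term via the diffusive-reflection structure \eqref{f2-bd} together with the orthogonality relations \eqref{r-bd} (so that the incoming/outgoing fluxes nearly cancel and only produce controllable $\eps^{1/2}r$ contributions), and recover the macroscopic part $\FP\hat f_{R,2}$ by the usual macroscopic estimates (testing against the $[1,\Fv,|\Fv|^2]\sqrt\mu$ moments, using the Poincaré inequality on $(-1,1)$ and the no-net-mass condition). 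The singular factors $\tfrac1\eps Lf_{R,2}$ and $\tfrac1\eps(1-\chi_M)\mu^{-1/2}\CK f_{R,1}$ are the price of the diffusive limit: the first gives the gain of one $\eps$ on $\{\FI-\FP\}\hat f_{R,2}$ seen in \eqref{fr-es-lem}, while the second is benign because $(1-\chi_M)$ is compactly supported in $\Fv$ and $\mu^{-1/2}\CK$ maps the $\|\cdot\|_\infty$-bound on $f_{R,1}$ into a good $L^2_\nu$ source. For $f_{R,1}$ (the "large-velocity", un-weighted-by-$\sqrt\mu$ part that carries the polynomial tail), I would instead run an $L^\infty$ estimate with weight $w^{l_\infty}$: rewrite \eqref{f1-eq} along characteristics of the transport operator $\Fv\cdot\na_\Fx+\eps^2\Phi\cdot\na_\Fv+\al\eps z\pa_x-\al\eps v_z\pa_{v_x}$ with the damping $\tfrac1\eps\nu$, use $f_{R,1}|_{v_z\lessgtr 0}=0$ so there is no boundary loss, bound $\tfrac1\eps\chi_M\CK f_{R,1}$ by $\tfrac{C_M}{\eps}\|w^{l_\infty}f_{R,1}\|_\infty$ with the smallness coming from taking $M$ large (so $C_M/\nu\ll 1$ on the support), and treat all remaining source terms by $\|w^{l_\infty}\hat f_{R,2}\|_\infty$, $\|[\hat f_1,\hat f_2]\|_\infty$ and the quadratic $Q$-terms via the standard $\Gamma,\,Q$ weighted estimates. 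Summing over $\bar k\in\Z^2$ keeps us in the Wiener algebra, which is what makes the nonlinear terms multiplicative.

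The scheme is then closed by a Picard iteration: feed $f_{R,2}^{(n)}$ into the $f_{R,1}$ equation, feed $f_{R,1}^{(n+1)}$ into the $f_{R,2}$ equation, and show the map is a contraction on the ball defined by \eqref{fr-es-lem}, using the smallness of $\vps_{\Phi,\al}$, $\eps$, and $1/M$; existence, uniqueness and the quantitative bounds \eqref{fr-es-lem} follow at once, and positivity \eqref{fst-mth} is obtained afterward by the usual argument (either from the asymptotic stability of the time-dependent problem as indicated in the text, or directly since $F_{st}^\eps=\mu+\eps\sqrt\mu f_R^{\mathrm{low}}+\cdots$ with the remainder small in $L^\infty$ relative to $\mu$ after accounting for the $\sqrt\mu$-weight loss, which is exactly why $l_\infty\gg l_2\gg 4$ is required). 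The main obstacle I expect is the interplay between the two parts of Caflisch's decomposition at the boundary: the boundary condition \eqref{f2-bd} for $f_{R,2}$ sees $f_{R,1}$, so one must carefully track that the $\tfrac1\eps$-singular exchange term $\tfrac1\eps(1-\chi_M)\mu^{-1/2}\CK f_{R,1}$ and the boundary coupling do not destroy the $\eps$-gains; getting the powers of $\eps$ in \eqref{fr-es-lem} to be mutually consistent (i.e. $\eps^{-3/2}\|w^{l_\infty}f_{R,1}\|_\infty$, $\eps^{-5/2}\|w^{l_2}f_{R,1}\|_\nu$, $\eps^{1/2}\|w^{l_\infty}f_{R,2}\|_\infty$, and $\eps^{-1}\|\{\FI-\FP\}f_{R,2}\|_\nu$ all closing simultaneously) is the delicate bookkeeping at the heart of the proof. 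A secondary difficulty is the loss of velocity weight each time $K$ or $\CK$ is applied together with the large-velocity growth from $\al\eps v_z\pa_{v_x}$ and $\al\eps v_xv_z$, which forces the two-tier weight choice $l_\infty\gg l_2$ and the $L^2\cap L^\infty$ interplay rather than a single-norm estimate.
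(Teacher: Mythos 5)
Your proposal reproduces the paper's overall architecture faithfully: Caflisch decomposition $\sqrt\mu f_R=f_{R,1}+\sqrt\mu f_{R,2}$ with the shear-induced growth isolated in $f_{R,1}$, Fourier transform in $\bar x$ to reduce to a 1D-in-$z$ problem, $L^1_{\bar k}$ (Wiener) summation to control products, $L^\infty$-along-characteristics for $f_{R,1}$ with the smallness of $\chi_M\CK$ for large $M$, and $L^2$-energy plus a dual macroscopic estimate for $f_{R,2}$, all closed by Picard iteration with positivity harvested afterward from the time-dependent problem. However, there is a genuine gap in how you propose to close the $\eps$-bookkeeping, and a secondary one in the $L^\infty$ control of $f_{R,2}$.

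The critical gap: you estimate $f_{R,1}$ \emph{only} in $L^\infty$. That gives $\sum_{\bar k}\|w^{l_\infty}\hat f_{R,1}\|_\infty\lesssim\eps^{3/2}\vps_{\Phi,\al}$, hence at best $\sum_{\bar k}\|w^{l_2}\hat f_{R,1}\|_\nu\lesssim\eps^{3/2}\vps_{\Phi,\al}$. But when you run the $L^2$ energy estimate on \eqref{f2-ft} and test the exchange term $\eps^{-2}(1-\chi_M)\mu^{-1/2}\CK\hat f_{R,1}$ against $\hat f_{R,2}$, Cauchy--Schwarz produces $C_\eta\eps^{-4}\|\hat f_{R,1}\|_2^2+\eta\|\hat f_{R,2}\|_2^2$. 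Feeding in $\|\hat f_{R,1}\|_2\lesssim\eps^{3/2}\vps_{\Phi,\al}$ yields $\eps^{-4}\|\hat f_{R,1}\|_2^2\lesssim\eps^{-1}\vps_{\Phi,\al}^2$, so the coercivity inequality gives only $\|\{\FI-\FP\}\hat f_{R,2}\|_\nu\lesssim\eps^{1/2}\vps_{\Phi,\al}$, and the desired $\eps^{-1}\sum_{\bar k}\|\{\FI-\FP\}\hat f_{R,2}\|_\nu\lesssim\vps_{\Phi,\al}$ in \eqref{fr-es-lem} fails by $\eps^{-1/2}$. The paper avoids this by proving a \emph{separate} weighted $L^2$ estimate for $f_{R,1}$, testing \eqref{f1-ft} against $w^{2l_2}\hat f_{R,1}$; because all sources in the $f_{R,1}$ equation carry high explicit $\eps$-powers (they come from $\eps^{3/2}$- or $\eps^{5/2}$-prefactors, from $\al\eps$-terms multiplying the microscopic part of $f_{R,2}$ which itself gains $\eps$, etc.), this delivers the extra factor of $\eps$, i.e. $\eps^{-5/2}\sum_{\bar k}\|w^{l_2}\hat f_{R,1}\|_\nu\lesssim\vps_{\Phi,\al}$, which is what makes $\eps^{-4}\|\hat f_{R,1}\|_2^2\lesssim\eps\vps_{\Phi,\al}^2$ and closes the chain. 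Without this dedicated $L^2_\nu$ estimate for $f_{R,1}$ the scheme cannot close; it is not an optional refinement but the linchpin.

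Secondary issue: you describe only an $L^2$ energy argument for $f_{R,2}$, but \eqref{fr-es-lem} also requires $\eps^{1/2}\sum_{\bar k}\|w^{l_\infty}\hat f_{R,2}\|_\infty\lesssim\vps_{\Phi,\al}$, which is needed to control the quadratic term $\eps^{1/2}\Gamma(f_{R,2},f_{R,2})$ inside the $L^2$ estimate itself. This $L^\infty$ bound is not routine: the operator $K$ carries no smallness, so one must iterate the mild (Duhamel) formulation for $\hat f_{R,2}$ twice, split off the near-grazing time slice of length $\kappa_0\eps$, and on the complementary piece change variables $v'_{\ell,z}\mapsto \tilde z$ with Jacobian bounded below by $\kappa_0\eps$; this converts the double $d\Fv' d\Fv''$ integral into an $L^2_{z,\Fv}$ quantity with only an $\eps^{-1/2}$ loss (rather than $\eps^{-3/2}$), giving $\|w^{l_\infty}\hat f_{R,2}\|_\infty\lesssim\eps^{-1/2}\|\hat f_{R,2}\|_2+\cdots$. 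You also gloss over the fact that the macroscopic estimate of $\FP\hat f_{R,2}$ cannot be a naive moment estimate: because $f_{R,2}$ alone does not conserve mass, one must work with the full $\hat f_R$, use dual test functions $\Psi$ built from elliptic potentials $\phi_{a_s},\phi_J,\phi_{c_s}$ with Neumann/Dirichlet boundary data matched to the diffusive-reflection structure, and invoke trace inequalities for those potentials together with a Ukai-type trace lemma for $\{\FI-P_\gamma\}\hat f_{R,2}$ at the boundary — all of which are necessary for your "usual macroscopic estimates" to actually produce the stated bound.
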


The second result is devoted to the stability of the steady solution established in Theorem \ref{st-sol-th} under initial small perturbations.

\begin{theorem}[Stability of the steady solution]\label{sta-th}
Let $\ga\in[0,1]$, $l_\infty\gg l_2\gg4$, and $\ta_0(\Fx)=0$. Suppose 
\begin{align}\label{id-pos}
F^\eps_0(\Fx,\Fv)=F^\eps_{st}+\eps\sqrt{\mu}\{g_{1}(0,\Fx,\Fv)+\eps g_2(0,\Fx,\Fv)+\eps^{\frac{1}{2}}g_{R,0}(\Fx,\Fv)\}\geq0,
\end{align}
with
$$
\sqrt{\mu }g_{R,0}(\Fx,\Fv)=\eps^{\frac{3}{2}} g^{(1)}_{R,0}(\Fx,\Fv)+\sqrt{\mu}g^{(2)}_{R,0}(\Fx,\Fv),
$$
where $F^\eps_{st}$ is constructed as in \eqref{fst-mth} where for $g_1$ and $g_2$ we use \eqref{g1-it} and \eqref{g2-it}, respectively. There exists a constant $\vps_0>0$ such that if   
\begin{align}\label{gr0-sm}
\sum\limits_{\bar{k}\in\Z^2}\left\|w^{l_\infty}\left[\hat{g}^{(1)}_{R,0},\hat{g}^{(2)}_{R,0}\right]\right\|_{\infty}
+\sum\limits_{m\leq8}\|\pa_i^m\Fu_0(\Fx)\|_{H^4_z}\leq\vps_0,
\end{align}
where $\pa_i=\pa_x$ or $\pa_y$,
%There exist constants $\al_0>0$ and $\eps_0$ such that for any $\al\in(0,\al_0)$ and $\eps\in(0,\eps_0)$,
then the initial boundary value problem \eqref{s-rbe}, \eqref{obd} and
\eqref{s-id} admits a unique global in time   solution $F^\eps(t,\Fx,\Fv)$
such that
$$
F^\eps=F^\eps_{st}+\eps\sqrt{\mu}\{g_1+\eps g_2+\eps^{\frac{1}{2}}g_R\}\geq0,
$$
satisfying the following properties:
 \begin{itemize}
 \item
There exists $\la_0>0$ such that for any $0\leq t<+\infty$ we uniformly have
 \begin{align}\label{g12-es}
\sum\limits_{\bar{k}\in\Z^2}
\|w^{l_\infty}\yhwidehat{\pa^{m_0}_t\pa^{m_1}_x[g_1,g_2]}(t)\|_{\infty}
&
+\sum\limits_{\bar{k}\in\Z^2}
\|w^{l_\infty}\yhwidehat{\pa^{m_0}_t\pa^{m_1}_x[g_1,g_2](t)}\|_{L^2_{z,\Fv}}
\notag
\\
&
\lesssim \vps_0e^{-\la_0t}.
\end{align}

 \item For the remainder $g_R$, it holds that $\sqrt{\mu}g_R=g_{R,1}+\sqrt{\mu}g_{R,2}$.  Here  $g_{R,1}$ and $g_{R,2}$ satisfy
for any $0\leq t<+\infty$ the uniform estimate
 
\begin{align}\label{rm-de}
\eps^{-\frac{3}{2}}\sum\limits_{\bar{k}\in\Z^2}&\|w^{l_\infty}\hat{g}_{R,1}(t)\|_{\infty}+
\eps^{-\frac{3}{2}}\sum\limits_{\bar{k}\in\Z^2}\|w^{l_2}\hat{g}_{R,1}(t)\|_{2}+
\eps^{\frac{1}{2}}\sum\limits_{\bar{k}\in\Z^2}\|w^{l_\infty}\hat{g}_{R,2}(t)\|_{\infty}
\notag
\\
&
+
\sum\limits_{\bar{k}\in\Z^2}\|\hat{g}_{R,2}(t)\|_{2}
\leq Ce^{-\la_0t}\left\{\sum\limits_{\bar{k}\in\Z^2}\left\|w^{l_\infty}\left[\hat{g}^{(1)}_{R,0},\hat{g}^{(2)}_{R,0}\right]\right\|_{\infty}+\vps_0\right\}.
\end{align}
\end{itemize}
\end{theorem}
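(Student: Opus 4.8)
The plan is to run a priori estimates, carrying an exponential-in-time weight $e^{\la_0 t}$ with $\la_0>0$ small, on the two layers of the expansion --- the fluid triple $[\rho,\Fu,\ta]$ (equivalently $g_1$ and the algebraically determined $g_2$) and the Caflisch-split remainder $\sqrt\mu g_R=g_{R,1}+\sqrt\mu g_{R,2}$ --- and then to upgrade the a priori bounds to a global solution by combining them with the local existence theory already available from the proof of Theorem~\ref{st-sol-th} through a continuation argument. Non-negativity of $F^\eps$ will come for free: $F^\eps_0\ge0$ by hypothesis \eqref{id-pos}, and positivity propagates along the Duhamel iteration used to build the solution.

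\emph{Fluid layer.} I would first observe that $\ta_s$ solves, in \eqref{ins-s}, a linear homogeneous transport--diffusion problem with zero boundary data, hence $\ta_s\equiv0$ and $\rho_s\equiv0$; then in \eqref{ns-ust} the temperature equation collapses to $\pa_t\ta+\al z\pa_x\ta+\na_\Fx\ta\cdot(\Fu+\Fu_s)=\tfrac25\ka\Delta_\Fx\ta$ with $\ta|_{z=\pm1}=0$, so the assumption $\ta_0=0$ forces $\ta\equiv0$ and $\rho\equiv0$. What remains is a perturbed incompressible Navier--Stokes equation for $\Fu$ with fixed viscosity $\eta$, no-slip boundary, and linear part equal to the Dirichlet Stokes operator plus the small-coefficient terms $\al z\pa_x\Fu$, $\al(u_z,0,0)^T$, $\Fu\cdot\na_\Fx\Fu_s$, $\Fu_s\cdot\na_\Fx\Fu$ (small since $\al<\delta_0$ and $\|\Fu_s\|$ is controlled by $\vps_{\Phi,\al}$ via Theorem~\ref{st-sol-th}). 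On $\T^2\times(-1,1)$ the Dirichlet Stokes operator has a spectral gap from Poincar\'e's inequality, which dominates these perturbations, so a Fourier-in-$\bar x$ energy estimate bootstrapped against the quadratic term $\Fu\cdot\na_\Fx\Fu$ gives, for small data, $\|\widehat{\Fu}(t,\bar k)\|_{H^s_z}\lesssim e^{-\la_0 t}\|\widehat{\Fu}_0(\bar k)\|_{H^s_z}$; differentiating the equation in $\bar x$ and in $t$ (using the equation to trade $\pa_t$ for spatial derivatives), applying $H^1_z\hookrightarrow L^\infty_z$, and summing over $\bar k\in\Z^2$ using the $\bar x$-regularity of $\Fu_0$ assumed in \eqref{gr0-sm}, yields \eqref{g12-es} for $g_1$. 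Then \eqref{g12-es} for $g_2$ follows from \eqref{g2-it}, which expresses $g_2$ through first spatial derivatives and quadratic products of $g_1$ and of $g_1$ with $f_1,f_2$, using that $\ell^1(\Z^2)$ is a Banach algebra under convolution.

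\emph{Remainder layer.} These are the time-weighted counterparts of the estimates already established for the steady remainder: $g_{R,1}$ carries the large-velocity piece $\chi_M\CK g_{R,1}$ and the homogeneous in-flow boundary data \eqref{f1-bd}, while $g_{R,2}$ carries the genuinely kinetic term $\tfrac1\eps Lg_{R,2}$ and the diffusive-reflection-type boundary condition \eqref{f2-bd}. The pieces are: (i) an $L^\infty$ bound for $e^{\la_0 t}w^{l_\infty}\widehat{g}_{R,1}$ from the Duhamel formula along $\eps$-scaled characteristics, where the strong damping $\tfrac1{\eps^2}\nu\gg\la_0$ produces a gain absorbing the singular $\tfrac1\eps\chi_M\CK$ term (which is moreover small because $\chi_M$ is supported in $|\Fv|\ge M$ and $\CK$ has a fast-decaying kernel), the $\eps^{3/2}$-order forcing built from $f_1,f_2,g_1,g_2$ setting the base size, and no boundary contribution appearing thanks to \eqref{f1-bd}; (ii) an $L^2_\nu$ energy bound for $e^{\la_0 t}w^{l_2}\widehat{g}_{R,1}$ from the same equation; (iii) an $L^2$ energy bound for $e^{\la_0 t}\widehat{g}_{R,2}$, where $\tfrac1\eps\langle Lg_{R,2},g_{R,2}\rangle\gtrsim\tfrac1\eps\|\{\FI-\FP\}\widehat{g}_{R,2}\|_\nu^2$ gives the microscopic dissipation, the transport term leaves a boundary integral at $z=\pm1$ handled by \eqref{f2-bd} and orthogonality relations like \eqref{r-bd}, and the skew terms $\al\eps z\pa_x$, $-\al\eps v_z\pa_{v_x}$, $\eps^2\Phi\cdot\na_\Fv$ either integrate to zero or are lower order; (iv) a macroscopic estimate recovering $\|\na_\Fx\FP\widehat{g}_{R,2}\|$, hence $\|\FP\widehat{g}_{R,2}\|$ by Poincar\'e, by building an interaction functional out of the local conservation laws obtained from the $[1,\Fv,\tfrac12(|\Fv|^2-3)]\sqrt\mu$ moments of \eqref{f2-eq}; (v) an $L^\infty$ bound for $e^{\la_0 t}w^{l_\infty}\widehat{g}_{R,2}$ via the double-Duhamel/Vidav iteration on $\tfrac1\eps L=\tfrac1\eps\nu-\tfrac1\eps K$, with the boundary term from \eqref{f2-bd} closed by (i) and the $L^2$ bounds. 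Every nonlinear term --- $Q(g_{R,1},g_{R,1})$, $Q(g_{R,1},\sqrt\mu g_{R,2})$, $\Ga(g_{R,2},g_{R,2})$, and the cross terms with $f_1,f_2,g_1,g_2$ --- becomes a convolution in $\bar k$ and is closed by the Wiener-algebra inequality together with smallness from $\vps_0$ and $\vps_{\Phi,\al}$.

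\emph{Closing, and the main obstacle.} Summing (i)--(v) with appropriately large constants linking the $L^2$ and $L^\infty$ levels gives an inequality $\mathcal{E}(t)\lesssim\mathcal{E}(0)+(\vps_0+\al+\|\Phi\|_2)\,\mathcal{E}(t)$ for the combined quantity on the left of \eqref{rm-de} (with the $e^{\la_0 t}$ weight), which closes for $\vps_0,\al,\|\Phi\|_2,\eps$ small and, with local existence, propagates to all $t$, giving \eqref{rm-de}. The hard part will be step (iv) and its compatibility with the rest: extracting the full spatial-gradient dissipation of the hydrodynamic part $\FP g_{R,2}$ uniformly in $\eps$ with the moving boundary and the shear term $\al\eps z\pa_x$ present, then arranging the resulting coercivity (after Poincar\'e) to dominate both the $\la_0 e^{\la_0 t}$ growth and the coupling losses through the boundary condition \eqref{f2-bd} and the $(1-\chi_M)$ term transferring mass from $g_{R,1}$ into the $g_{R,2}$ equation --- all while keeping every smallness threshold independent of $\eps$.
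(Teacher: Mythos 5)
Your overall architecture — exponential weight $e^{\la_0 t}$, fluid layer reduced to a perturbed Navier--Stokes problem after killing $\ta_s,\ta$, Caflisch-split remainder with $L^\infty$ via characteristics and double Duhamel and $L^2$ via energy plus macroscopic (dual test function) estimates, nonlinear closure through Wiener algebra — is the same as the paper's. But there are two places where you assert as routine something that in fact requires a separate nontrivial construction, and as stated those steps would not go through.

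The claim that ``positivity propagates along the Duhamel iteration used to build the solution'' is not correct for the iteration you describe, which acts on $g_{R,1},g_{R,2}$ and involves $L=\nu-K$, $\chi_M\CK$, $(1-\chi_M)\mu^{-1/2}\CK$, and various $\Ga$ terms. None of these is sign preserving: $K$ is an integral operator whose kernel $\Fk_2-\Fk_1$ changes sign, and the $\Ga$ pieces are indefinite. The paper therefore builds, in its Step 4, a \emph{separate} monotone approximation of the full nonlinear equation \eqref{s-rbe}, namely $\pa_t F^{n+1}+\cdots+\eps^{-2}F^{n+1}\SV(F^n)=\eps^{-2}Q_{\rm gain}(F^n,F^n)$ with diffuse-reflection boundary data taken from $F^n$; this splitting $Q=Q_{\rm gain}-F\,\SV(F)$ preserves $F^{n+1}\geq0$ given $F^n\geq0$ and $F_0\geq0$. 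One then proves (by the same $L^1_{\bar k}L^\infty$ machinery) that the associated $[g^n_{R,1},g^n_{R,2}]$ are bounded and Cauchy on a short interval $[0,\eps^2 T_1]$, identify the limit with the solution constructed earlier, and extend via the a priori bound. Without this scheme, non-negativity is unsupported, and it is also what makes positivity of the \emph{steady} solution $F^\eps_{st}$ emerge as a corollary via \eqref{rm-de}.

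A second, subtler gap is that for the $L^\infty$ bound on $h_{R,2}$ with the diffuse-reflection boundary condition \eqref{gr2-bd}, a mild formulation from $t=0$ does not close: as $t\to\infty$ the number of stochastic backward bounces in the cycle integral grows without bound, and the per-bounce contributions accumulate. The paper fixes $T_0$, takes $\RL=C_1T_0^{5/4}$, uses Lemma \ref{t-k-cyc} to show the measure of trajectories that have not reached $t=0$ after $\RL$ bounces is $\leq(1/2)^{C_2T_0^{5/4}}$, obtains a one-block estimate \eqref{g1g2decay} with propagator contraction $e^{-\nu_0 T_0/(4\eps)}$, and only then iterates over blocks $[\eps m T_0,\eps(m+1)T_0]$ via \eqref{lif-lift}--\eqref{hr12-com-p3} to reach all $t\geq 0$. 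Your outline invokes ``double Duhamel/Vidav'' and closure against the $L^2$ layer, which is the right intuition, but omits this time-blocking iteration, which is precisely the device that makes the $L^\infty$ estimate uniform in time.
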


\begin{remark} We provide an example to demonstrate
the validity of the non-negativity condition specified in \eqref{id-pos}. For this purpose, we set
\begin{align}
\tilde{g}^{(1)}_{R,0}=\frac{\vps_0}{4}w^{-l_\infty}(\Fv)+\frac{\vps_0}{4}\rho^{(1)}_R\mu,\quad 
\tilde{g}^{(2)}_{R,0}=\vps_0^{\frac{3}{2}}\mu^{q_0}+\vps^{\frac{3}{2}}_0\rho^{(2)}_R\sqrt{\mu},
\label{gr0-ex}
\end{align}
where $q_0\in(0,\frac{1}{2})$ is a fixed constant and $\rho^{(i)}_R$ $(i=1,2)$ are chosen as
\begin{align}
\rho^{(1)}_R=-\int_{\R^3}w^{-l_\infty}(\Fv)d\Fv,\quad  
\rho^{(2)}_R=-\int_{\R^3}\mu^{q_0+\frac{1}{2}}(\Fv)d\Fv.\notag
\end{align}
Note that for sufficiently small value of $\vps_0$, the above $\tilde{g}^{(i)}_{R,0}$ $(i=1,2)$ satisfies the condition \eqref{gr0-sm}.

Moreover, by \eqref{f12-es}, \eqref{fr-es-lem}, \eqref{g12-es} and \eqref{gr0-ex}, it can be established that there exist constants
$C_0>0$  and $q_1\in(0,\frac{1}{2})$, dependent on $l_\infty$, such that
\begin{align}
\eps\sqrt{\mu}&\left\{f_1+\eps f_2+\eps^{\frac{1}{2}}f_R\right\}+\eps\sqrt{\mu}\left\{g_{1}(0,\Fx,\Fv)+\eps g_2(0,\Fx,\Fv)+\eps^{\frac{1}{2}}\vps^{\frac{3}{2}}_0\rho^{(2)}_R\sqrt{\mu}\right\}
\notag
\\
&
+\eps^{3}\frac{\vps_0}{4}\rho^{(1)}_R\mu
\leq C_0\eps^3 \vps_{\Phi,\al}w^{-l_\infty}(\Fv)+C_0\eps\vps_0\mu^{\frac{1}{2}+q_1}.\notag
\end{align}
Here, it is assumed that $0<\vps_{\Phi,\al}\ll \vps_0.$ Note that the first term on the right hand side of the above inequality can be bounded by
the first part of $\tilde{g}^{(1)}_{R,0}$ given by \eqref{gr0-ex}, owing to
$0<\vps_{\Phi,\al}\ll \vps_0.$
For the second term, it can be observed that
$\mu(\Fv)\geq C_0\eps\vps_0\mu^{\frac{1}{2}+q_1}(\Fv)$ for bounded $\Fv$. As for the complement set of such $\Fv$, 
an additional choice of $q_1>0$ is made such that
$$
\frac{1}{2}>q_1\geq \frac{2}{5}q_0+\frac{3}{10}.
$$
Consequently, it follows that
\begin{align}
(\eps\vps_0)^{\frac{3}{2}}\mu^{\frac{1}{2}+q_0}(\Fv)\gtrsim \mu^{\frac{3}{2}(\frac{1}{2}-q_1)+\frac{1}{2}+q_0}(\Fv)\gtrsim \mu^{\frac{1}{2}+q_1}(\Fv).\notag
\end{align}
Therefore, for such a pair $[\tilde{g}^{(1)}_{R,0},\tilde{g}^{(2)}_{R,0}]$, \eqref{id-pos} is valid.
\end{remark}

We should point out that when $\Phi\equiv 0$, Theorem \ref{st-sol-th} and Theorem \ref{sta-th} imply the exponential convergence of the time-dependent 3D kinetic Couette flow toward the steady 1D planar kinetic Couette flow for the Boltzmann equation in the diffusive limit. In fact, when  $\Phi\equiv 0$, by uniqueness the 3D steady Couette flow governed by \eqref{st-rbe} and \eqref{st-obd} can be reduced to equivalently solving the 1D planar Couette flow $F^\epsilon_{st}(z,\Fv)$ depending only on $z$ and $\Fv$ that satisfies
\begin{align*}
v_z\pa_z F_{st}^\eps-\al\eps v_{z}\pa_{v_x}F_{st}^\eps=\frac{1}{\eps}Q(F_{st}^\eps,F_{st}^\eps),
\end{align*}
with the boundary condition
\begin{align*}
F_{st}^\eps(\pm1,\Fv)|_{v_{z}\lessgtr0}=\sqrt{2\pi}\mu\int_{\tv_{z}\gtrless0}F_{st}^\eps(\pm1,\FTv)|\tv_{z}|d\FTv.
\end{align*}
This boundary-value problem for the planar Couette flow has been studied in \cite{DLY-2021} for fixed $\epsilon=1$.  Our work not only extends the result in \cite{DLY-2021} to the regime of the fluid dynamic limit, but we also obtain the convergence of multi-dimensional time-dependent solutions to one-dimensional steady solutions.  We prove convergence that is uniform in small $\epsilon>0$ and for all times $t\geq 0$.  This is analogous to the result that $U(z)=(\alpha\epsilon z, 0,0)$ is asymptotically stable in the context of the 3D incompressible Navier-Stokes-Fourier system with constant temperature.

\subsection{Literature review}
The fluid dynamic limit is an important subject in the Boltzmann theory. The formal asymptotic expansion which reveals the fluid dynamic structure inside the Boltzmann equation is first due to Hilbert \cite{Hil}, and also to Chapman and Enskog \cite{CC-90}; we also refer to   Grad \cite{Gr-63} for the moment method. For a detailed review of the rigorous justifications of fluid dynamic limits of the Boltzmann equation, we refer the readers to Bouchut-Golse-Pulvirenti \cite{BGP} and Saint-Raymond \cite{SR}. Here we mention a few results on classical solutions \cite{BU91, Caf80, DeMEL, Guo-2006}, weak solutions \cite{BGL91,BGL93,GSR04,GSR09, LiMa1, LiMa2}, and solutions with slow velocity decay \cite{BMAM, GeLo}. 

Let's formulate the problem under consideration in a little more general setting. Associated with a parameter $\eps>0$, the Mach and Knudsen numbers are taken respectively as $M\!a=\eps$ and $K\!n=\eps^q$ with $q\geq 1$. In terms of the von Karman relation $K\!n\sim M\!a/Re$ (cf.~\cite{Sone07}), the Reynolds number is given as $Re\sim \eps^{1-q}$.  We set the Boltzmann solution to have the form $F=\mu(1+\eps g)$ around the normalized global Maxwellian $\mu$ as in \eqref{def.gmax}.  Then it is well known that the perturbation $g$ tends, as $\eps\to 0$, to the solution of the fluid system described by the incompressible Navier-Stokes equations if $q>1$ and to the incompressible Euler equations if $q=1$, cf.~\cite{BGP,SR}. Specifically, the limiting solution is of the form $g=\rho+\Fv\cdot \Fu+\frac{1}{2} (|\Fv|^2-3)\theta$ with the divergence-free velocity field $\Fu$, the Boussinesq  relation $\na (\rho+\theta)=0$, and 
\begin{equation*}
%\label{ }
\pa_t \Fu+\Fu\cdot \na \Fu +\na p=\mu_q \Delta \Fu,\quad \pa_t\theta+\Fu\cdot \na\theta=\kappa_q \Delta\theta,
%\quad \text{if }q>1;
\end{equation*}  
%or
%\begin{equation}
%\label{ }
%\pa_t u+u\cdot \na u +\na p=0,\quad \pa_t\theta+u\cdot \na\theta=0,\quad \text{if }q=1,
%\end{equation}   
where $\na=\na_{\Fx}$, $\Delta=\Delta_\Fx$.  Additionally $\mu_1=\kappa_1=0$ for $q=1$, and $\mu_q>0$, $\kappa_q>0$ for $q>1$ are the viscosity and heat-conductivity coefficients, respectively. It is obvious to see that in both cases $q>1$ and $q=1$, the fluid Couette flow $u_\al^{sh}:=(\al z,0,0)$, namely, a linear shear flow, is a non-trivial space-dependent special solution to the equation of the divergence-free velocity field $\Fu$. Here, $\al>0$ denotes the shear rate. A natural but challenging problem to ask is whether it is possible to construct the Boltzmann kinetic solution around the non-trivial Couette flow $u_\al^{sh}$ in the hydrodynamic regime. The problem was investigated by Esposito-Lebowitz-Marra \cite{ELM-94, ELM-95}  in the one-dimensional stationary case via the Navier-Stokes approximation; see also \cite{DE-96}. 

On the other hand, it is also a very interesting problem to search for a density distribution function at the kinetic level that can characterize the effect of the macroscopic shearing motion. In fact, to do so, one can formally make a change of variables by 
$$
v_x\mapsto v_x-\al\eps z
$$ 
meaning that the molecular velocity of each particle in the $x$-direction is sheared by $\al z$ that is linear in the $z$-direction with rate $\al>0$, so the dependence on $z$ also occurs in the first component of the velocity variables. Then, in such a Lagrangian frame with a shearing velocity the Boltzmann equation can be reformulated as 
\begin{equation}\notag%\label{rbe-lit}
\eps \pa_t F + (\Fv+\eps u_\al^{sh})\cdot \na_{\Fx} F-\al\eps v_z\pa_{v_x} F=\frac{1}{\eps^q}Q(F,F).
\end{equation}  
The most interesting case is when the distribution function becomes spatially homogeneous as follows
\begin{equation}
\label{equsf}
\eps \pa_t F -\al\eps v_z\pa_{v_x} F=\frac{1}{\eps^q}Q(F,F).
\end{equation}
The above equation is called the Boltzmann equation for uniform shear flow with the shear rate $\al>0$. In kinetic theory, it is also called a simple homoenergetic flow. A basic problem is therefore to study the global existence and large time asymptotic behavior of solutions for uniform shear flow with $\al>0$ and $\eps>0$. It certainly depends on the choice of initial data as well as the inter-molecular interaction type, for instance, whether or not the initial data admit finite energy, and whether or not it is a gas of Maxwell molecules that determines how large the large-velocity magnitude of the nonlinear collision term around equilibrium is in comparison with the shearing term. 

In what follows comment on the literature related to those research topics mentioned above. In particular, we will first focus on the problem \eqref{equsf} for the Maxwell molecule gas, since few mathematical results on other topics have been known thus far.
\begin{itemize}
  \item First of all, regarding the uniform shear flow described by \eqref{equsf} or a homoenergetic affine flow with a general deformation matrix, work dates back to Truesdell \cite{Tru} and Galkin \cite{Gal,Gal-R} who independently studied the existence of solutions for a Maxwell molecule gas by solving the infinite moment system of the Boltzmann equation. Specifically, for uniform shear flow in the Maxwell molecule case, although the collisions of particles conserve energy, the shearing motion induces extra heat so that the energy and hence temperature of the system increases in time. Thus, the system becomes much farther from equilibrium as times goes on. It turns out the large time behavior of solutions is determined by a self-similar profile with a prescribed finite second order moment. Existence of solutions in large time is basically assured by the interplay between the dissipative relaxation, shearing force and collisions. One very interesting point, but also an intrinsic property of the self-similar profile, is that it is a non-Maxwellian distribution having a polynomial tail in large velocity that has been confirmed by Monte Carlo simulations. We refer readers to the monograph Garz\'o-Santos \cite{GaSa} for systematic research on the subject; see also other monographs by Cercignani \cite[Chapter 8.8]{CerBook} and Truesdell-Muncaster \cite[Chapter 26]{TM}.
     
  \item Second, for the homogeneous Boltzmann equation with $\al=0$, there is a large number of papers in the literature. Here, we only mention the series of works by Bobylev-Cercignani \cite{BC02b,BC02a,BC03} for investigations of exact eternal or self-similar solutions. The self-similar profile has infinite energy when $\al=0$. In the angular non-cutoff case, the issue was further systematically studied by Cannone-Karch \cite{CK10,CK13} as well as Morimoto \cite{Mor}, and the large time asymptotics toward the self-similar profile was also established by Morimoto-Yang-Zhao \cite{MYZ}.
  
  \item Third, when $\al\neq 0$, the global solution to \eqref{equsf} was first rigorously constructed by Cercignani \cite{Cer89,Cer00,Cer02}. The properties of solutions were further studied by Bobylev-Caraffini-Spiga \cite{BCS}. The high-velocity tail was derived from the Boltzmann-Fokker-Planck model by Acedo-Santos-Bobylev \cite{ASB}. Great progress has been recently made by James-Nota-Vel\'azquez \cite{JNV-ARMA,JNV-JNS,JNV19}, Matthies-Theil \cite{MT} and Bobylev-Nota-Vel\'azquez \cite{BNV-2019}. Specifically, \cite{JNV-ARMA} studied the existence of self-similar solutions in a weak topology sense, and  \cite{JNV-JNS,JNV19}  derived the explicit long-time asymptotics of solutions in either the collision-dominated or hyperbolic-dominated cases. The large-time asymptotic stability of self-similar profiles was recently established in \cite{BNV-2019} on the basis of the Bobylev's Fourier transform approach \cite{Bo75,Bo88}. The analysis in \cite{MT} confirmed the non-existence of self-similar solutions with the exponential high-velocity tail.
  
\item Last, as we mentioned before, although the hydrodynamic limit of the spatially inhomogeneous Boltzmann equation has been extensively studied in general settings, as far as we know there are very few results are known for the reformulated Boltzmann equation \eqref{rbe} as $\eps\to 0$; readers may refer to \cite{CerCo, Ko,LL,OSA,Ro,Sone07,STO} and reference therein for some discussions or numeric results on this problem.  In particular, on one hand, the stability/instability of kinetic shear flow was formally studied in \cite{GaSa} in terms of the corresponding fluid dynamical equations, but rigorous mathematical analysis has remained very challenging. On the other hand, the stability problem for the incompressible Euler or Navier-Stokes equations for the velocity field $\Fu$ around the Couette flow $u_\al^{sh}$ has been investigated in great depth by Bedrossian together with his collaborators Masmoudi, Germain and Vicol \cite{BM-15,BMV-16,BGM-17} and by many other people; we refer to the survey \cite{BGM-BAMS} and references therein. Therefore, it is an interesting problem to develop new techniques to understand the stability of Couette flows at the kinetic level either in large time or in the hydrodynamic regime.       

\end{itemize}

We point out that if we directly work on \eqref{rbe} instead of \eqref{s-rbe} by taking the approximation of \eqref{def.bcM} around $\mu$ then one could adopt the approach by Wu-Ouyang \cite{WuOu1,WuOu2} to study the existence and asymptotic stability of stationary solutions. However our goal using in the Lagrangian formulation  \eqref{s-rbe} is to capture the macroscopic shear velocity as in \eqref{def.msv} for the 3D Couette flow so that the zero-order approximation
$$
\mu(v_x-\alpha\epsilon z, v_y,v_z)
$$ 
exactly matches the boundary condition \eqref{obd-1}.

\subsection{Strategies of the proof}
In the proof we employ several key ingredients and ideas, outlined as follows.
\begin{itemize}
\item
Initially, inspired by the prior works of the first two authors and their collaborator \cite{DL-2020, DLY-2021}, we have introduced a refined Caflisch decomposition \cite{Caf80} to address the velocity growth induced by the shear force:
\begin{align}%\label{fst-dec}
\sqrt{\mu}f_R = f_{R,1} + \sqrt{\mu}f_{R,2}.\notag
\end{align}
However, in contrast to \cite{DL-2020, DLY-2021}, in this work we are required to more carefully select the equations satisfied by the two components, $f_{R,1}$ and $f_{R,2}$. To obtain higher-order $\eps$-estimates for the first component $f_{R,1}$,  the macroscopic and microscopic parts of the second component $f_{R,2}$ are separated into distinct equations, leveraging the higher-order estimates enjoyed by the microscopic part. Furthermore, the assignment of nonlinear terms, boundary conditions, and initial data is carefully crafted to derive the critical estimates:
$$
\sum\limits_{\bar{k}\in\Z^2}\|w^{l_2}\hat{f}_{R,1}(\bar{k})\|_\nu\lesssim \vps_{\Phi,\al}\eps^{\frac{5}{2}},\qquad
\sum\limits_{\bar{k}\in\Z^2}\|\FP\hat{f}_{R,2}(\bar{k})\|_2\lesssim\vps_{\Phi,\al}.
$$
These estimates are critical in the power of $\eps$ because of to the following inequalities:
\begin{align}
Re\left(\eps^{-2}(1-\chi_M)\mu^{-1/2}\CK \hat{f}_{R,1},\hat{f}_{R,2}\right)
\leq \frac{C_\eta}{\eps^4}|\hat{f}_{R,1}|_2^2+\eta|\hat{f}_{R,2}|_2^2,\notag
\end{align}
and
$$
\sum\limits_{\bar{k}\in\Z^2}\|\FP\hat{f}_{R,2}(\bar{k})\|_2\lesssim \eps^{-1}\sum\limits_{\bar{k}\in\Z^2}\|\{\FI-\FP\}\hat{f}_{R,2}(\bar{k})\|_\nu.
$$
The latter inequality remains valid even in the Euler scaling \cite{DL-t}.  This illustrates the robustness of our estimates across different scaling conditions.

\item Building upon the work of the first three authors and their collaborator in \cite{DLSS}, we introduce anisotropic Chemin-Lerner type function spaces.  These spaces are
exemplified by expressions such as:
$$
\sum\limits_{\bar{k}\in\Z^2}\|w^{l_\infty}\hat{f}(\bar{k})\|_{L^\infty_{T,z,\Fv}}=
\sum\limits_{\bar{k}\in\Z^2}\sup\limits_{0\leq t\leq T}\|w^{l_\infty}\hat{f}(t,\bar{k})\|_{L^\infty_{z,\Fv}},
$$
and
$$
\sum\limits_{\bar{k}\in\Z^2}\|w^{l_\infty}\hat{f}(\bar{k})\|_{L^2_{T,z,\Fv}}
=\sum\limits_{\bar{k}\in\Z^2}\left(\int_0^T\|w^{l_\infty}\hat{f}(t,\bar{k})\|^2_{L^2_{z,\Fv}}dt\right)^{\frac{1}{2}}.
$$
The advantages of employing those function spaces are twofold: 

\begin{enumerate}
    \item Dimension reduction: The 3D problem for a finite channel can be transformed into a 1D problem. When converting from $L^1_{\Fv,v_z}$ to $L^2_{\Fv,z}$, the Jacobian takes the form:
\begin{align}
{\bf 1}_{|t-s|>\ka_0 \eps}\left|\frac{v_{z}}{\pa Z(s)}\right|\sim{\bf 1}_{|t-s|>\ka \eps}\left|\frac{\pa(z-(t-s)v_{z})}{\pa v_{z}}\right|^{-1}\lesssim \eps^{-1},\notag
\end{align}
which is an improvement over the previous form \cite{DL-t}:
\begin{align}
\left|{\bf 1}_{|t-s|>\ka_0 \eps}\frac{\pa \Fv}{\pa(\Fx-\Fv(t-s))}\right|\lesssim \eps^{-3},\notag
\end{align}
thus alleviating a second-order singularity in $\eps$.

\item Banach algebraic properties: These function spaces exhibit Banach algebraic characteristics.  This makes it possible to establish the following crucial nonlinear estimate
\begin{align}
\sum\limits_{\bar{k}\in\Z^2}&\left\{\int_0^T\int_{-1}^1|\langle \hat{Q}(\hat{f},\hat{g}),w^{2l_2}  \hat{h}\rangle|dzdt\right\}^{\frac{1}{2}}\notag\\
\lesssim &\sum\limits_{\bar{k}\in\Z^2}\left\{\int_0^T\int_{-1}^1\sum\limits_{\bar{l}\in\Z^2}
\|w^{l_2} \hat{f}(\bar{k}-\bar{l})\|_{2}\|\sqrt{\nu}w^{l_2} \hat{g}(\bar{l})\|_{2}\|\sqrt{\nu}w^{l_2} \hat{h}(\bar{k})\|_{2}dzdt\right\}^{\frac{1}{2}}\notag\\
\leq& \eta\|\sqrt{\nu}w^{l_2} \hat{h}\|_{L^1_{\bar{k}}L^2_{T,z,\Fv}}+
C_\eta\|w^{l_\infty} \hat{f}\|_{L^1_{\bar{k}}L^\infty_{T,z,\Fv}}\|\sqrt{\nu}w^{l_2} \hat{g}\|_{L^1_{\bar{k}}L^2_{T,z,\Fv}}.\notag
\end{align}
\end{enumerate}

\item The linearized equation for the first component $f_{R,1}$ in \eqref{f1-eq} lacks self-adjointness, resulting in the absence of a basic $L^2$ structure. Different from \cite{DL-2022}, where time growth appears, weighted $L^2$ estimates for $f_{R,1}$ are then essential to derive $L^2$ higher-order estimates in $\eps$. The key distinction here is the involvement of diffusive boundary condition.  Unlike the second component, there is no cancellation between the incoming and outgoing boundaries. Specifically, for $h_{R,1}$ from \eqref{hr1}, the term
$
\int_{v_z>0}v_zw^{2l_2}|\hat{h}_{R,1}(1)|^2d\Fv\notag
$
cannot dominate
$
\int_{v_z<0}|v_z|w^{2l_2}|\hat{h}_{R,1}(1)|^2d\Fv.\notag
$
To overcome this difficulty, we artificially attribute the total diffusive reflection at the boundary to the second component. Then we are abel to control the inhomogeneous diffusive boundary for the second component with a a Ukai-type trace inequality, cf.~Lemma \ref{ukai}, in our anisotropic function space.

\item The coefficients $f_1$, $f_2$, $h_1$, and $h_2$ are determined by solving the Navier-Stokes-Fourier system \eqref{ins-s} and \eqref{ns-ust}, respectively. However, obtaining direct anisotropic estimates the space $L^1_{\bar{k}}H^2_z$ and achieving elliptic estimates for $\Fu_s$ and $\Fu$ is extremely challenging due to the mixed boundary conditions.
To overcome this dificulty, we prove estimates in the energy space $H^m_{\bar{x}}H^2_z$ and then we utilize the following interpolation inequality:  
\begin{align}%\label{us-sh-l2es}
\sum\limits_{\bar{k}\in\Z^2}\|(1+|\bar{k}|^m)[\hat{\Fu}_s,\hat{\ta}_s]\|_{H^2_z}\lesssim \sum\limits_{\bar{k}\in\Z^2}\left(\lag\bar{k}\rag^{2m+4}\|\hat{\Fu}_s\|^2_{H^2_z}\right)^{\frac{1}{2}}.\notag
\end{align}

\end{itemize}

\subsection{Notations} %\label{sub.not}
In addition to the norms given previously, we now define several additional notations and norms that will be used in the rest of the paper.
\begin{itemize}
\item
Throughout this paper,  $C$ denotes some generic positive (generally large) constant. An estimate $D\lesssim E$ means that  there is a generic constant $C>0$
such that $D\leq CE$. $D\sim E$
means $D\lesssim E$ and $E\lesssim D$. ${\bf 1}_A$ denotes the characteristic function on the set $A$.

\item
 We denote $\| \cdot \|_2 $ to be the norm of  $L^{2}((-1,1)
\times \R^{3})$ or $L^{2}(-1,1)$ or $L^{2}(\R^3)$, and use $|\cdot|_2$ to stand for the norm of  $L^2(\R^3_\Fv)$.
Sometimes, without causing confusion, we use $\|\cdot \|_{\infty }$ to denote either the $L^{\infty }([-1,1]
\times \R^{3})-$norm or $L^{2}(-1,1)-$norm or $L^{\infty }(\R^3)-$norm.
Furthermore,
$(\cdot,\cdot)$ denotes the $L^{2}$ inner product in
$(-1,1)\times {\R}^{3}\times\Z^2$  and $\langle\cdot\rangle$ denotes the $L^{2}$ inner product in $\R^3_\Fv\times\Z^2$.
In addition, $(\cdot|\cdot)$ stands for the pure complex inner product in the complex number field $\C$.
For the sake of brevity, sometimes, we also use $L^1_{\bar{k}}$ to denote the discrete summation $\sum_{\bar{k}\in\Z^2}$.
Besides, $H^k_z$ represents the standard Sobolev space concerning the variable $z$, while $C([z_1,z_2])$ denotes the space of continuous functions defined on the interval $[z_1,z_2]$.

\item
We denote the outgoing set by 
\begin{equation*}
\ga_+=\{(1,\Fv)|\Fv\in\R^3,v_z>0\}\cup\{(-1,\Fv)|\Fv\in\R^3,v_z<0\},
\end{equation*}%
we denote the incoming set by 
\begin{equation*}
\ga_-=\{(1,\Fv)|\Fv\in\R^3,v_z<0\}\cup\{(-1,\Fv)|\Fv\in\R^3,v_z>0\},
\end{equation*}
and we also denote the grazing set by 
\begin{equation*}
\ga_0=\{(\pm1,\Fv)|\Fv\in\R^3,v_z=0\}.
\end{equation*}
Moreover $|f|_{2,\pm}= |f \mathbf{1}_{\gamma_{\pm}}|_2$
represents the $L^2$ norm of $f(z,\Fv)$ at the boundary $z=\pm1$, e.g. 
\begin{equation*}
|f|_{2,\pm}^2:= \int_{\pm v_z>0} |f(1, \Fv)|^2 |v_z| d\Fv+\int_{\pm v_z<0} |f(-1, \Fv)|^2 |v_z|d\Fv,
\end{equation*}
and we further define 
$$
\|\cdot\|^2_{L^2_TL^2_{\ga_{\pm}}}=\int_0^T|\cdot|^2_{2,\pm}dt.
$$
\item
 Finally, we define
\begin{equation*}
P_{\gamma }f(\pm1,\Fv)=\sqrt{\mu (\Fv)}\int_{n(\pm1)\cdot \FTv>0}f(\pm1,\FTv)\sqrt{\mu (\FTv)}(n(\pm1)\cdot \FTv)d\FTv,
%\label{pgamma}
\end{equation*}%
where the unit normal vector is $n(\pm1)=(0,0,\pm1).$
\end{itemize}

\subsection{Organization of the paper} 
The rest of this paper is arranged as follows.
Section \ref{sec-sape} is devoted to establishing the existence of the steady solution to \eqref{f-rbe} with \eqref{st-bd},  thereby proving  Theorem \ref{st-sol-th};  in this section, we provide only the {\it a priori} estimates (\ref{fr-es-lem}) and omit the detailed iteration process for brevity. The stability of the steady solution obtained in Theorem \ref{sta-th} is then demonstrated in Section \ref{sec-usp}; similar to Section \ref{sec-sape}, we refrain from presenting the rigorous derivation and focus on clarifying the {\it a priori} estimate \eqref{rm-de}. Finally, in Appendix \ref{app-sec}, we present some important estimates to complement the main results discussed in the preceding sections.

\section{Steady problem}\label{sec-sape}
In this section, we aim to verify Theorem \ref{st-sol-th}.
Actually, the existence of steady solution of \eqref{f-rbe} and \eqref{st-bd} can be constructed via the iteration method as \cite{DLY-2021}. In this article, we skip this tedious procedure and only derive the uniform estimates on both $f_{R,1}$ and $f_{R,2}$
which satisfy \eqref{f1-eq}, \eqref{f1-bd}, \eqref{f2-eq} and \eqref{f2-bd}.
%\begin{proof}[The proof of Theorem \ref{st-sol-th}]
The proof of Theorem \ref{st-sol-th} is proceeded in the following four subsections.

The first step is concerned with the macroscopic estimates.

\subsection{Macroscopic estimates}
%\noindent\underline{{\bf Step 1. Macroscopic estimates.}} 
To control the nonlinear collision operator, i.e. $\eps^{\frac{1}{2}}\Ga(f_{R,2},f_{R,2})$, it will be necessary for us to deduce the $L^2$ estimates on the macroscopic part of $f_{R,2}$ denoted as 
$$
\FP f_{R,2}=[a_{s}^{(2)}+\Fv\cdot \Fb_{s}^{(2)}+\frac{1}{2}(|\Fv|^2-3)c_{s}^{(2)}]\sqrt{\mu}.
$$
However, due to the non-conservation of mass in $f_{R,2}$, we need to restore to $ f_{R}$ which satisfies \eqref{f-rbe} and \eqref{st-bd}.
It is worth noting that the steady problem described by equations \eqref{f-rbe} and \eqref{st-bd} does not explicitly guarantee the conservation of mass. To address this, a penalty term like $\lambda f_{R}$ can be added to enforce conservation of mass, as discussed in \cite{EGKM-13} and therein.
We will omit this argument and focus solely on deriving {\it a priori} estimates for the subsequent original steady problem
\begin{multline}\label{p-f-rbe}
\Fv\cdot\na_\Fx f_{R}+\eps^2\Phi\cdot\na_\Fv f_{R}+\al\eps z\pa_{x}f_{R}
-\al\eps v_z\pa_{v_x}f_{R}
+\frac{\al\eps v_xv_z}{2}f_{R}-\frac{\eps^2}{2}\Phi\cdot \Fv f_{R}
+\frac{1}{\eps} Lf_{R}
\\
=
\eps^{\frac{1}{2}}\Ga(f_{R},f_{R})+\Ga(f_{R},f_1+\eps f_2)+\Ga(f_1+\eps f_2,f_{R})+\eps^{\frac{3}{2}}\Ga(f_2,f_2)
\\
-\eps^{\frac{3}{2}}\mu^{-1/2}\Phi\cdot \na_\Fv(\sqrt{\mu}f_{1})-\eps^{\frac{5}{2}}\mu^{-1/2}\Phi\cdot \na_\Fv(\sqrt{\mu}f_{2})
\\
-\al\eps^{\frac{3}{2}}z\pa_xf_2-\al\eps^{\frac{3}{2}} \mu^{-1/2}v_z\pa_{v_x}\{\sqrt{\mu}f_2\},
\end{multline}
% \begin{align}\label{p-f-rbe}
% \Fv\cdot\na_\Fx f_{R}&+\eps^2\Phi\cdot\na_\Fv f_{R}+\al\eps z\pa_{x}f_{R}-\al\eps v_z\pa_{v_x}f_{R}
% +\frac{\al\eps v_xv_z}{2}f_{R}-\frac{\eps^2}{2}\Phi\cdot \Fv f_{R}
% +\frac{1}{\eps} Lf_{R}\notag\\
% =&\eps^{\frac{1}{2}}\Ga(f_{R},f_{R})+\Ga(f_{R},f_1+\eps f_2)+\Ga(f_1+\eps f_2,f_{R})+\eps^{\frac{3}{2}}\Ga(f_2,f_2)
% \\&-\eps^{\frac{3}{2}}\mu^{-1/2}\Phi\cdot \na_\Fv(\sqrt{\mu}f_{1})-\eps^{\frac{5}{2}}\mu^{-1/2}\Phi\cdot \na_\Fv(\sqrt{\mu}f_{2})
% \notag\\&
% -\al\eps^{\frac{3}{2}}z\pa_xf_2-\al\eps^{\frac{3}{2}} \mu^{-1/2}v_z\pa_{v_x}\{\sqrt{\mu}f_2\},\notag
% \end{align}
and
\begin{align}%\label{p-t-bd}
f_R(x,y,\pm1,\Fv)|_{v_z\lessgtr0}=\sqrt{2\pi\mu}\int_{\tilde{v}_z\gtrless0}f_R(x,y,\pm1,\tilde{\Fv})\sqrt{\mu}|\tilde{v}_z|d\tilde{\Fv}+\eps^{\frac{1}{2}}r,\notag
\end{align}
with
$$
\int_{\Om\times\R^3}f_R\sqrt{\mu}d\Fv d\Fx=0.
$$
Next, we define the corresponding macroscopic components of the solutions as follows
\begin{align}
\bar{\FP} f_{R,1}=[a_s^{(1)}+\Fv\cdot \Fb_s^{(1)}+\frac{1}{2}(|\Fv|^2-3)c_s^{(1)}]\mu,\notag
\end{align}
and
\begin{align}
\FP f_R=[a_s+\Fv\cdot \Fb_s+\frac{1}{2}(|\Fv|^2-3)c_s]\sqrt{\mu},\notag
\end{align}
with the given definitions and using
\eqref{fst-dec},
we see that
\begin{align}
a_s=a_s^{(1)}+a_s^{(2)},\ \Fb=\Fb_s^{(1)}+\Fb_s^{(2)},\ c=c_s^{(1)}+c_s^{(2)}.\notag
\end{align}
The point is therefore to derive the $L_{\bar{k}}^1L_z^2$ estimates on $[a_s,\Fb_s,c_s]$.
For results in this direction, we have
\begin{lemma}\label{abc-lem}
It holds that
\begin{align}%\label{abc-es}
&\sum\limits_{\bar{k}\in\Z^2}\|[\hat{a}_s,\hat{\Fb}_s,\hat{c}_s\|_2
\notag\\
\lesssim& 
\sqrt{\eps}\sum\limits_{\bar{k}\in\Z^2}\|w^{l_\infty}\hat{f}_{R,2}\|_\infty
\sum\limits_{\bar{k}\in\Z^2}\|\hat{f}_{R,2}\|_2
+\sum\limits_{\bar{k}\in\Z^2}\|w^{l_\infty}\hat{f}_{R,1}\|_\infty
\sum\limits_{\bar{k}\in\Z^2}\|\hat{f}_{R,2}\|_2
\notag\\&
+\sum\limits_{\bar{k}\in\Z^2}\|w^{l_\infty}\hat{f}_{R,1}\|_\infty
+\sum\limits_{\bar{k}\in\Z^2}|\{\FI-P_\ga\}\hat{f}_{R,2}(\pm1)|_{2,+}
+\left(\sum\limits_{\bar{k}\in\Z^2}\|w^{l_\infty}\hat{f}_{R,1}\|_\infty\right)^2
\notag\\&
+\eps^{-1}\sum\limits_{\bar{k}\in\Z^2}
\|w^{l_2}\hat{f}_{R,1}\|_2+
\eps^{-1}\sum\limits_{\bar{k}\in\Z^2}\|\{\FI-\FP\}\hat{f}_{R,2}\|_2
+\vps_{\Phi,\al}\sum\limits_{\bar{k}\in\Z^2}\|\FP\hat{f}_{R,2}\|_2\notag\\
&+\vps_{\Phi,\al}\eps^{\frac{1}{2}}.\notag
\end{align}
\end{lemma}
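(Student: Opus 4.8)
The standard route to macroscopic estimates of this type is the local conservation law / test-function method originating from Guo and adapted to the steady channel setting in \cite{EGKM-13,DLY-2021}. The plan is to work with the equation \eqref{p-f-rbe} for the full remainder $f_R$ (whose macroscopic part decomposes as $[a_s,\Fb_s,c_s]=[a_s^{(1)},\Fb_s^{(1)},\Fb_s^{(1)}]+[a_s^{(2)},\Fb_s^{(2)},c_s^{(2)}]$, so estimating $[a_s,\Fb_s,c_s]$ recovers exactly what is needed for $\FP f_{R,2}$), take the Fourier transform in $\bar x$, and for each fixed $\bar k\in\Z^2$ test \eqref{p-f-rbe} against carefully chosen velocity moments multiplied by $z$-dependent test functions $\phi(z)$ built from $[\hat a_s,\hat\Fb_s,\hat c_s](\bar k,z)$ (or, more precisely, from primitives/solutions of auxiliary one-dimensional elliptic problems in $z$ with the $\bar k$-frequency treated as a parameter, in the spirit of the ``$\phi$'' test functions for the Stokes/Poisson structure). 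Since the transport operator in \eqref{p-f-rbe} contains only $v_z\partial_z$ plus $O(\eps)$ and $O(|\bar k|)$ lower-order terms after Fourier transform, the macroscopic equations close up to (i) the microscopic part $\{\FI-\FP\}\hat f_{R,2}$ and $\hat f_{R,1}$, which enter through $L$ and $\CK$ and are controlled by $\eps^{-1}\|w^{l_2}\hat f_{R,1}\|_2$ and $\eps^{-1}\|\{\FI-\FP\}\hat f_{R,2}\|_2$; (ii) the nonlinear terms $\eps^{1/2}\Ga(f_R,f_R)$, $\Ga(f_R,f_1+\eps f_2)+\Ga(f_1+\eps f_2,f_R)$, etc.; and (iii) the boundary contributions from the diffusive reflection and the inhomogeneity $\eps^{1/2}r$.

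\textbf{Key steps, in order.} First, derive the Fourier-side local conservation laws: pairing \eqref{p-f-rbe} with $\sqrt\mu$, $v_i\sqrt\mu$, $\frac{1}{2}(|\Fv|^2-3)\sqrt\mu$ gives, for each $\bar k$, a closed first-order system in $z$ for $[\hat a_s,\hat\Fb_s,\hat c_s]$ with source terms that are microscopic moments plus nonlinear/forcing moments; the mass constraint $\int_{\Om\times\R^3}f_R\sqrt\mu=0$ fixes the free constant (the zero-mode of $\hat a_s(0,\cdot)$), which is essential for a Poincaré-type inequality in $z$. Second, following the $\phi$-test-function machinery (choose $\phi$ so that, e.g., $-\partial_z^2\phi + |\bar k|^2\phi$ reproduces $\hat c_s$ or $\hat a_s$, etc.), integrate \eqref{p-f-rbe} in $z$ against the chosen moments times $\phi$, integrate by parts in $z$, and use ellipticity to bound $\|[\hat a_s,\hat\Fb_s,\hat c_s](\bar k)\|_2$ (or its $\bar k$-weighted version) by $\|\{\FI-\FP\}\hat f_{R,2}(\bar k)\|_\nu$, $\|w^{l_2}\hat f_{R,1}(\bar k)\|_2$, the traces $|\{\FI-P_\ga\}\hat f_{R,2}(\pm1)(\bar k)|_{2,+}$, and the nonlinear/forcing moments at frequency $\bar k$. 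Third, estimate those source terms: the boundary terms come from the diffusive-reflection identities \eqref{r-bd} (which kill the leading part of $P_\ga$) leaving $\{\FI-P_\ga\}\hat f_{R,2}$ and $\eps^{1/2}r$, the latter absorbed into $\vps_{\Phi,\al}\eps^{1/2}$ via \eqref{f12-es}; the nonlinear terms are handled by the Banach-algebra / Chemin-Lerner nonlinear estimate quoted in the strategies section, producing the products $\eps^{1/2}(\sum\|w^{l_\infty}\hat f_{R,2}\|_\infty)(\sum\|\hat f_{R,2}\|_2)$, $(\sum\|w^{l_\infty}\hat f_{R,1}\|_\infty)(\sum\|\hat f_{R,2}\|_2)$, and $(\sum\|w^{l_\infty}\hat f_{R,1}\|_\infty)^2$, while the linear-in-$f_R$ terms $\Ga(f_R,f_1+\eps f_2)+\cdots$ together with the $\eps$-order transport corrections give the $\vps_{\Phi,\al}\sum\|\FP\hat f_{R,2}\|_2$ and the remaining $\vps_{\Phi,\al}\eps^{1/2}$ contributions using \eqref{f12-es}. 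Fourth, sum over $\bar k\in\Z^2$ (using $\ell^1$ summability and the convolution structure of the nonlinearity) to obtain the stated bound.

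\textbf{Main obstacle.} The delicate point is the boundary handling in the $z$-direction. Unlike the full-space or periodic case, the test-function integrations by parts in $z$ produce boundary terms at $z=\pm1$ coming from the diffusive reflection; one must choose the test functions $\phi$ to vanish (or satisfy the right mixed condition) at $z=\pm1$ so that only the harmless $\{\FI-P_\ga\}\hat f_{R,2}(\pm1)$ traces survive, and simultaneously ensure the auxiliary elliptic problems defining $\phi$ remain uniformly solvable in $\bar k$ with the correct $\bar k$-dependence — this is exactly where the mixed (Dirichlet-type for $\Fu$, diffusive for $f_R$) boundary conditions make direct anisotropic $L^1_{\bar k}H^2_z$ estimates awkward, as flagged in the strategies. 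A secondary difficulty is bookkeeping the powers of $\eps$: several source terms carry negative powers of $\eps$ (e.g. $\eps^{-1}\CK\hat f_{R,1}$), and one must verify they are matched against quantities ($\|w^{l_2}\hat f_{R,1}\|_2$, $\|\{\FI-\FP\}\hat f_{R,2}\|_\nu$) that, by the later energy estimates, are small enough in $\eps$ for the whole scheme to close — but that verification belongs to the subsequent subsections, so here it suffices to keep the $\eps^{-1}$ factors explicit as in the statement.
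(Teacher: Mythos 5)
Your proposal is correct and follows essentially the same route as the paper: Fourier transform in $\bar x$, test the equation for the full $\hat f_R$ against velocity moments multiplied by solutions of the auxiliary one-dimensional elliptic problems $(|\bar k|^2-\partial_z^2)\hat\phi=\hat a_s,\hat b_{s,J},\hat c_s$, use elliptic/trace bounds and the $\{\FI-P_\gamma\}$ structure of the diffusive boundary to control the surface terms, and close the nonlinear terms via the convolution (Wiener-algebra) estimate before summing in $\bar k$. The only detail you gloss over is that the paper uses Neumann boundary data for $\phi_{a_s}$ (together with the zero-mass normalization) but Dirichlet data for $\phi_J$ and $\phi_{c_s}$ — which is what makes the boundary bookkeeping work out in each case — but you flagged exactly this flexibility, so the plan is faithful.
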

The next corollary is a direct consequence of Lemma \ref{abc-lem}.
\begin{corollary}It holds that
\begin{align}\label{abc-2-es}
&\sum\limits_{\bar{k}\in\Z^2}\|[\hat{a}^{(2)}_s,\hat{\Fb}^{(2)}_s,\hat{c}^{(2)}_s\|_2
\notag\\
\lesssim& \sqrt{\eps}\sum\limits_{\bar{k}\in\Z^2}\|w^{l_\infty}\hat{f}_{R,2}\|_\infty
\sum\limits_{\bar{k}\in\Z^2}\|\hat{f}_{R,2}\|_2+\sum\limits_{\bar{k}\in\Z^2}\|w^{l_\infty}\hat{f}_{R,1}\|_\infty
\sum\limits_{\bar{k}\in\Z^2}\|\hat{f}_{R,2}\|_2
\notag\\
&+\sum\limits_{\bar{k}\in\Z^2}\|w^{l_\infty}\hat{f}_{R,1}\|_\infty
+\sum\limits_{\bar{k}\in\Z^2}|\{\FI-P_\ga\}\hat{f}_{R,2}(\pm1)|_{2,+}
+\left(\sum\limits_{\bar{k}\in\Z^2}\|w^{l_\infty}\hat{f}_{R,1}\|_\infty\right)^2
\notag\\&+\eps^{-1}\sum\limits_{\bar{k}\in\Z^2}
\|w^{l_2}\hat{f}_{R,1}\|_2+
\eps^{-1}\sum\limits_{\bar{k}\in\Z^2}\|\{\FI-\FP\}\hat{f}_{R,2}\|_2+\vps_{\Phi,\al}\eps^{\frac{1}{2}}.
\end{align}
\end{corollary}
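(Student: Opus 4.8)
The plan is to deduce \eqref{abc-2-es} directly from Lemma~\ref{abc-lem} by subtracting off the macroscopic part of $f_{R,1}$ and then absorbing one small term. The starting point is the identity $\sqrt{\mu}f_R=f_{R,1}+\sqrt{\mu}f_{R,2}$ together with the definitions of $\FP f_R$, $\bar{\FP}f_{R,1}$ and $\FP f_{R,2}$, which, as already recorded above, give $a_s=a_s^{(1)}+a_s^{(2)}$, $\Fb_s=\Fb_s^{(1)}+\Fb_s^{(2)}$, $c_s=c_s^{(1)}+c_s^{(2)}$. Hence
\begin{align}
[a_s^{(2)},\Fb_s^{(2)},c_s^{(2)}]=[a_s,\Fb_s,c_s]-[a_s^{(1)},\Fb_s^{(1)},c_s^{(1)}],\notag
\end{align}
and, after taking the Fourier transform in $\bar{x}$ and applying the triangle inequality in the $L^1_{\bar{k}}L^2_z$ norm, the problem reduces to estimating $\sum_{\bar{k}\in\Z^2}\|[\hat{a}_s,\hat{\Fb}_s,\hat{c}_s]\|_2$, which is exactly the content of Lemma~\ref{abc-lem}, and $\sum_{\bar{k}\in\Z^2}\|[\hat{a}_s^{(1)},\hat{\Fb}_s^{(1)},\hat{c}_s^{(1)}]\|_2$.

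For the latter quantity I would use that $a_s^{(1)},\Fb_s^{(1)},c_s^{(1)}$ are velocity integrals of $\hat{f}_{R,1}$ against functions of the form (polynomial of degree $\le 2$)$\times\mu$; since $l_\infty\gg l_2\gg 4$, the crude bound $|\hat{f}_{R,1}(\bar{k},z,\Fv)|\le \|w^{l_\infty}\hat{f}_{R,1}(\bar{k},z,\cdot)\|_{L^\infty_\Fv}w^{-l_\infty}(\Fv)$ together with integrability of $(1+|\Fv|)^2w^{-l_\infty}(\Fv)$ and the finiteness of $|(-1,1)|$ yields
\begin{align}
\sum_{\bar{k}\in\Z^2}\|[\hat{a}_s^{(1)},\hat{\Fb}_s^{(1)},\hat{c}_s^{(1)}]\|_2\lesssim \sum_{\bar{k}\in\Z^2}\|w^{l_\infty}\hat{f}_{R,1}\|_\infty,\notag
\end{align}
which is already one of the terms appearing on the right-hand side of Lemma~\ref{abc-lem}. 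At this stage one obtains \eqref{abc-2-es} with the single extra term $\vps_{\Phi,\al}\sum_{\bar{k}\in\Z^2}\|\FP\hat{f}_{R,2}\|_2$ still present.

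The only genuinely nontrivial (though mild) point is to remove that last term. Since $\{\sqrt{\mu},v_i\sqrt{\mu},(|\Fv|^2-3)\sqrt{\mu}\}$ are mutually orthogonal in $L^2_\Fv$, for each $\bar{k}$ one has the norm equivalence $\|\FP\hat{f}_{R,2}(\bar{k})\|_2\sim \|[\hat{a}_s^{(2)},\hat{\Fb}_s^{(2)},\hat{c}_s^{(2)}](\bar{k})\|_2$, hence $\sum_{\bar{k}\in\Z^2}\|\FP\hat{f}_{R,2}\|_2\sim \sum_{\bar{k}\in\Z^2}\|[\hat{a}_s^{(2)},\hat{\Fb}_s^{(2)},\hat{c}_s^{(2)}]\|_2$. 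Because $\vps_{\Phi,\al}=\|\Phi(z)\|_2+\al\le 2\delta_0$ with $\delta_0$ small, the contribution $\vps_{\Phi,\al}\sum_{\bar{k}\in\Z^2}\|\FP\hat{f}_{R,2}\|_2$ can then be absorbed into the left-hand side, which produces \eqref{abc-2-es}. I expect the main obstacle here to be purely bookkeeping: one has to ensure that the product of the implied constant of Lemma~\ref{abc-lem} in front of $\vps_{\Phi,\al}\sum_{\bar{k}}\|\FP\hat{f}_{R,2}\|_2$ with the norm-equivalence constant is strictly less than $1$, which holds for $\delta_0$ fixed small enough since both constants are absolute; no estimate beyond Lemma~\ref{abc-lem} is required.
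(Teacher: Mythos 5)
Your argument is correct and is precisely what the paper's one-line remark (``direct consequence of Lemma~\ref{abc-lem}'') is pointing at: split $[\hat{a}_s^{(2)},\hat{\Fb}_s^{(2)},\hat{c}_s^{(2)}]=[\hat{a}_s,\hat{\Fb}_s,\hat{c}_s]-[\hat{a}_s^{(1)},\hat{\Fb}_s^{(1)},\hat{c}_s^{(1)}]$, bound the first piece by Lemma~\ref{abc-lem} and the second by $\sum_{\bar{k}}\|w^{l_\infty}\hat f_{R,1}\|_\infty$ (already present on the right), and then use $\sum_{\bar{k}}\|\FP\hat f_{R,2}\|_2\sim\sum_{\bar{k}}\|[\hat{a}_s^{(2)},\hat{\Fb}_s^{(2)},\hat{c}_s^{(2)}]\|_2$ together with the smallness of $\vps_{\Phi,\al}\le 2\delta_0$ to absorb the last remaining term into the left-hand side. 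No gap; this is the standard (and intended) derivation.
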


To achieve this, let's begin by taking the Fourier transform of equation \eqref{p-f-rbe} with respect to $\bar{x}$ to obtain
\begin{align}\label{fst-ft}
i\bar{k}\cdot\bar{v} \hat{f}_R&+v_z\pa_z\hat{f}_R+\eps^2\Phi\cdot\na_\Fv \hat{f}_R+\al\eps ik_xz\hat{f}_R-\al\eps v_z\pa_{v_x}\hat{f}_R
+\frac{\al\eps v_xv_z}{2}\hat{f}_R
\notag\\&
-\frac{\eps^2}{2}\Phi\cdot \Fv\hat{f}_R
+\frac{1}{\eps} L\hat{f}_R
+\eps^{\frac{3}{2}}\mu^{-1/2}\Phi\cdot \na_\Fv(\sqrt{\mu}\hat{f}_{1})
\notag\\&
+\eps^{\frac{5}{2}}\mu^{-1/2}\Phi\cdot \na_\Fv(\sqrt{\mu}\hat{f}_{2})
+i\al\eps^{\frac{3}{2}}zk_x\hat{f}_2-\al\eps^{\frac{3}{2}} \mu^{-1/2}v_z\pa_{v_x}\{\sqrt{\mu}\hat{f}_2\}\notag\\
=&
\eps^{\frac{1}{2}}\hat{\Ga}(\hat{f}_{R},\hat{f}_{R})+\{\hat{\Ga}(\hat{f}_{R},\hat{f}_1+\eps \hat{f}_2)+\hat{\Ga}(\hat{f}_1+\eps \hat{f}_2,\hat{f}_{R})\}+\eps^{\frac{3}{2}}\hat{\Ga}(\hat{f}_2,\hat{f}_2),
\end{align}
where
\begin{align}
\hat{\Ga}(\hat{f},\hat{g})=\CF_{\bar{x}}\{\Ga(f,g)\}.\label{Ga-ft}
\end{align}
Then letting $\hat{\Psi}(z,\bar{k},\Fv)\in C^1((-1,1)\times\Z^2\times\R^3)$ with $\bar{k}=(k_x,k_y)\in \Z^2$
and taking the inner product of \eqref{fst-ft} with $\hat{\Psi}$ over $(-1,1)\times\R^3$, we get
\begin{align}\label{fst-ip}
(\hat{\Psi}(1),& v_z \hat{f}_R(1))-(\hat{\Psi}(-1), v_z \hat{f}_R(-1))-(\Fv\cdot\widehat{\na_\Fx\Psi}, \hat{f}_R)-\eps^2(\Phi\cdot\na_\Fv \hat{\Psi},\hat{f}_R)
\notag\\&
-\al\eps (z\hat{f}_R,\widehat{\pa_{x}\Psi})
+\al\eps (v_z\hat{f}_R,\pa_{v_x}\hat{\Psi})
+\frac{\al\eps }{2}(v_xv_z\hat{f}_R,\hat{\Psi})
-\frac{\eps^2}{2}(\Phi\cdot \Fv\hat{f}_R,\hat{\Psi})
\notag\\&
+\frac{1}{\eps} (L\hat{f}_R,\hat{\Psi})
+(\eps^{\frac{3}{2}}\mu^{-1/2}\Phi\cdot \na_\Fv(\sqrt{\mu}\hat{f}_{1}),\hat{\Psi})
+(\eps^{\frac{5}{2}}\mu^{-1/2}\Phi\cdot \na_\Fv(\sqrt{\mu}\hat{f}_{2})
\notag\\&
+i\al\eps^{\frac{3}{2}}zk_x\hat{f}_2
-\al\eps^{\frac{3}{2}} \mu^{-1/2}v_z\pa_{v_x}
\{\sqrt{\mu}\hat{f}_2\},\hat{\Psi})=(\hat{H},\hat{\Psi}),
\end{align}
where $\hat{H}$ denotes the right hand side of \eqref{fst-ft}.

From \eqref{fst-ip} and by $f_R=\FP f_R+\{\FI-\FP\}f_R$, it follows
\begin{align}
-(\Fv\cdot\widehat{\na_\Fx\Psi}, \FP \hat{f}_{R})=\sum\limits_{i=1}^7S_i,\label{abc-test}
\end{align}
where
\begin{align}
S_1=-\lag\hat{\Psi}(1), v_z \hat{f}_{R}(1)\rag+\lag\hat{\Psi}(-1), v_z \hat{f}_{R}(-1)\rag,\label{s1}
\end{align}
\begin{align}
S_2=(\Fv\cdot\widehat{\na_\Fx\Psi}, \{\FI-\FP\}\hat{f}_{R}),\label{s2}
\end{align}
\begin{align}
S_3=\eps^2(\Phi\cdot\na_\Fv \hat{\Psi},\hat{f}_{R})+\frac{\eps^2}{2}(\Phi\cdot \Fv\hat{f}_{R},\hat{\Psi}),\label{s3}
\end{align}
\begin{align}
S_4=
\al\eps (z\hat{f}_{R},\pa_{x}\hat{\Psi})-\al\eps (v_z\hat{f}_{R},\pa_{v_x}\hat{\Psi})-\frac{\al\eps }{2}(v_xv_z\hat{f}_{R},\hat{\Psi}),\label{s4}
\end{align}
\begin{align}
S_5=
-(S_{5,1},\hat{\Psi}),\label{s5}
\end{align}
with
\begin{align}\notag
S_{5,1}=&
~\eps^{\frac{3}{2}}\mu^{-1/2}\Phi\cdot \na_\Fv(\sqrt{\mu}\hat{f}_{1})-\eps^{\frac{5}{2}}\mu^{-1/2}\Phi\cdot \na_\Fv(\sqrt{\mu}\hat{f}_{2})-i\al\eps^{\frac{3}{2}}zk_x\hat{f}_2
\\ \notag &
+\al\eps^{\frac{3}{2}} \mu^{-1/2}v_z\pa_{v_x}\{\sqrt{\mu}\hat{f}_2\},
\end{align}
\begin{align}
S_6=-\frac{1}{\eps} (L\hat{f}_{R},\hat{\Psi}),\ \
S_7=(\hat{H},\hat{\Psi}).\label{s6}
\end{align}
Now, using \eqref{abc-test}, we can proceed to estimate $[a_s,\Fb_s,c_s]$ separately.

\medskip
\noindent\underline{{\it Estimates on $a_s$.}} Set
\begin{align}\label{test-a}
\hat{\Psi}=\hat{\Psi}_{a_s}=\sqrt{\mu}(|\Fv|^2-10)\Fv\cdot \widehat{\na_\Fx\phi_{a_s}},
\end{align}
where $\phi_{a_s}$ satisfies
\begin{align}\label{ep-a}
\left\{\begin{array}{rll}
&(|\bar{k}|^2-\pa_z^2) \hat{\phi}_{a_s}=\hat{a}_s,\\[2mm]
&\pa_{z}\hat{\phi}_{a_s}(\bar{k},\pm 1)=0,\ \int_{-1}^1\hat{\phi}_{a_s}(0,z)dz=0.
\end{array}\right.
\end{align}
From \eqref{ep-a}, one has
\begin{align}\label{epes-ap}
\||\bar{k}|^2\hat{\phi}_{a_s}(\bar{k},z)\|_{2}+\||\bar{k}|\hat{\phi}_{a_s}(\bar{k},z)\|_{H^{1}_z}
+\|\hat{\phi}_{a_s}\|_{H^2_z}\lesssim \|\hat{a}_s\|_{2},\ \forall\bar{k}\in\Z^2,
\end{align}
in particular, we have
\begin{align}\label{epes-a2}
(1+|\bar{k}|)\left\{\||\bar{k}|\hat{\phi}_{a_s}(\bar{k},z)\|_{2}+\|\hat{\phi}_{a_s}(\bar{k},z)\|_{H^1_z}\right\}
\lesssim \|\hat{a}_s\|_{2},\ \forall\bar{k}\in\Z^2.
\end{align}
Furthermore one also has the trace inequalities
\begin{align}\label{trace}
(1+|\bar{k}|)\|\hat{\phi}_{a_s}(\bar{k},\pm1)\|_2\lesssim\|\hat{a}\|_{2},\ \forall\bar{k}\in\Z^2.
\end{align}
The proof of \eqref{epes-ap} and \eqref{trace} will be given in Appendix \ref{app-sec}.

According to the choice specified in \eqref{test-a}, it can be seen that the left hand side of \eqref{abc-test} is equal to $\|\hat{a}_s\|_2^2$. We now turn to estimate $S_i$ $(1\leq i\leq 7)$ individually.
For $S_1: \eqref{s1}$, note that
\begin{align}\label{fst-bdp}
\hat{f}_R|_\ga=P_\ga \hat{f}_R+\{\FI-P_\ga\}\hat{f}_R{\bf 1}_{\ga_+}+\eps^{\frac{1}{2}}\hat{r}{\bf 1}_{\ga_-}.
\end{align}
We thus have by using \eqref{fst-dec}, \eqref{ep-a}, \eqref{trace} and  \eqref{fst-bdp} that
\begin{align} \notag %\label{s1-trace}
|S_1|\leq&\left|( (|\Fv|^2-10)\sqrt{\mu}\bar{v} \cdot \bar{k}\hat{\phi}_{a_s}(1), v_z \hat{f}_{R}(1))\right|
\\ & +\left|( (|\Fv|^2-10)\sqrt{\mu}\bar{v}\cdot \bar{k}\hat{\phi}_{a_s}(-1), v_z \hat{f}_{R}(-1))\right|\notag\\
\leq&\left|( (|\Fv|^2-10)\sqrt{\mu}\bar{v} \cdot \bar{k}\hat{\phi}_{a_s}(1), v_z( \{\FI-P_\ga\}\hat{f}_{R}(1){\bf 1}_{\ga_+}+\eps^{\frac{1}{2}}\hat{r}{\bf 1}_{\ga_-})\right|
\notag\\&+\left|( (|\Fv|^2-10)\sqrt{\mu}\bar{v} \cdot \bar{k}\hat{\phi}_{a_s}(-1), v_z (\{\FI-P_\ga\}\hat{f}_{R}(-1){\bf 1}_{\ga_+}+\eps^{\frac{1}{2}}\hat{r}{\bf 1}_{\ga_-})\right|\notag\\
\lesssim& \|\hat{a}_s\|_2\{|w^{l_2}\sqrt{\mu}\{\FI-P_\ga\}\hat{f}_{R}(\pm1)|_{2,+}+\eps^{\frac{1}{2}}|\hat{r}|_{2,-}\}
\notag\\
\lesssim& \eta\|\hat{a}_s\|_2^{2}
+C_\eta|w^{l_2}\hat{f}_{R,1}(\pm1)|^2_{2,+}+C_\eta|\{\FI-P_\ga\}\hat{f}_{R,2}(\pm1)|^2_{2,+}+\eps|\hat{r}|_{2,-}^2\notag\\
\lesssim& \eta\|\hat{a}_s\|_2^2
+C_\eta\|w^{l_\infty}\hat{f}_{R,1}\|^2_{\infty}+C_\eta|\{\FI-P_\ga\}\hat{f}_{R,2}(\pm1)|^2_{2,+}+\eps|\hat{r}|_{2,-}^2.\notag
\end{align}
%where we also used the trace estimates given by \eqref{trace-sn} in Appendix \ref{app-sec}.

As to $S_2: \eqref{s2}$, Cauchy-Schwarz's inequality together with \eqref{epes-ap} gives
\begin{align}
|S_2|\leq& \eta\|\hat{a}_s\|_2^2+C_\eta\|w^{l_\infty}\hat{f}_{R,1}\|_\infty^{2}
+C_\eta\|\{\FI-\FP\}\hat{f}_{R,2}\|_2^2.\notag
\end{align}
For $S_3: \eqref{s3}$, one has from \eqref{epes-ap} that
\begin{align}
|S_3|\leq& C\eps^2\|\Phi\|_2\|\widehat{\na_\Fx\phi_{a_s}}\|_2\{\|w^{l_\infty}\hat{f}_{R,1}\|_\infty+\|\hat{f}_{R,2}\|_{\infty}\}\notag\\
\leq& \vps_{\Phi,\al}\|\hat{a}_s\|_2^2+
C\eps^{4}\|w^{l_\infty}\hat{f}_{R,1}\|_\infty^2
+C\eps^{4}\|\hat{f}_{R,2}\|_{\infty}^2.\notag
\end{align}
For $S_4: \eqref{s4}$, in view of H\"older's inequality and \eqref{epes-ap}, it follows
\begin{align}
|S_4|\leq& C\al\eps\|\widehat{\na_\Fx\phi_{a_s}}\|_{H_z^{1}}\{\|w^{l_2}\hat{f}_{R,1}\|_2+\|\hat{f}_{R,2}\|_{2}\}\notag\\
\leq& \eta\|\hat{a}_s\|_2^2+
C_\eta(\al\eps)^{2}\|w^{l_\infty}\hat{f}_{R,1}\|_\infty^2
+C_\eta(\al\eps)^{2}\|\hat{f}_{R,2}\|_{2}^2.\notag
\end{align}
For $S_5: \eqref{s5}$, applying Cauchy-Schwarz's inequality, we get
\begin{align}
|S_5|\leq& C\vps_{\Phi,\al}\eps^{\frac{3}{2}}\|\widehat{\na_\Fx\phi_{a_s}}\|_{H_z^{1}}\{\|\hat{f}_{1}\|_\infty+\|\bar{k}\hat{f}_{2}\|_{\infty}\}\notag\\
\leq& \eta\|\hat{a}_s\|_2^2+
C_\eta\vps_{\Phi,\al}^2\eps^{3}\{\|\hat{f}_{1}\|_\infty+\|\bar{k}\hat{f}_{2}\|_{\infty}\}^2.\notag
\end{align}
For $S_6: \eqref{s6}$, using Cauchy-Schwarz's inequality again, we get from Lemma \ref{es-tri} and \eqref{epes-ap} that for $l_2\gg2$
\begin{align}
|S_6|\leq& \frac{1}{\eps}\left(\hat{Q}(\mu,\sqrt{\mu}\{\FI-\FP\}\hat{f}_{R})+\hat{Q}(\sqrt{\mu}\{\FI-\FP\}\hat{f}_{R},\mu),(|\Fv|^2-10)\Fv\cdot \widehat{\na_\Fx\phi_{a_s}}\right)
\notag\\
\lesssim& \frac{1}{\eps}\int_{-1}^1\left|w^{l_2}\sqrt{\mu}\{\FI-\FP\}\hat{f}_{R}\right|_{2} |\widehat{\na_\Fx\phi_{a_s}}|dz
\notag\\
\lesssim&
\frac{1}{\eps}\left\{\|\{\FI-\FP\}\hat{f}_{R,2}\|_2+\|w^{l_2}\hat{f}_{R,1}\|_2\right\}
\|\widehat{\na_\Fx\phi_{a_s}}\|_2
\notag\\
\leq& \eta\|\hat{a}_s\|_2^2+
C_\eta\eps^{-2}
\|w^{l_2}\hat{f}_{R,1}\|_2^2
+C_\eta\eps^{-2}\|\{\FI-\FP\}\hat{f}_{R,2}\|_2^2,\notag
\end{align}
where we have used the following notation
\begin{align}\label{Q-ft-def}
\hat{Q}(\hat{f},\hat{g})=\CF_{\bar{x}}\{Q(f,g)\}.
\end{align}
Finally, for $S_7: \eqref{s6}$, utilizing Lemma \ref{es-tri} and \eqref{epes-ap}, we have
\begin{align}
|S_7|
\leq& \eps^{\frac{1}{2}}\Big|\Big(\hat{Q}(\hat{f}_{R,1}+\sqrt{\mu}\hat{f}_{R,2},\hat{f}_{R,1}+\sqrt{\mu}\hat{f}_{R,2}),(|\Fv|^2-10)\Fv\cdot \widehat{\na_\Fx\phi_{a_s}}\Big)\Big|\notag\\
&+\left|(\hat{Q}(\sqrt{\mu}(\hat{f}_{1}+\eps\hat{f}_{2}),\sqrt{\mu}\hat{f}_{R})\right.\notag\\
&\qquad \qquad \qquad \qquad \left.
+\hat{Q}(\sqrt{\mu}\hat{f}_{R},\sqrt{\mu}(\hat{f}_{1}+\eps\hat{f}_{2})),(|\Fv|^2-10)\Fv\cdot \widehat{\na_\Fx\phi_{a_s}})\right|\notag\\
&+\eps^{\frac{3}{2}}\left|(\hat{Q}(\sqrt{\mu}\hat{f}_{2},\sqrt{\mu}\hat{f}_{2}),(|\Fv|^2-10)\Fv\cdot \widehat{\na_\Fx\phi_{a_s}})\right|\notag\\
\lesssim &\eps^{\frac{1}{2}}\int_{-1}^1\sum\limits_{\bar{l}}|w^{l_2}\hat{f}_{R,1}(\bar{k}-\bar{l})|_2|w^{l_2}\sqrt{\mu}\hat{f}_{R,2}(\bar{l})|_2
|\widehat{\na_\Fx\phi_{a_s}}(\bar{k})|dz
\notag\\&+\eps^{\frac{1}{2}}\int_{-1}^1\sum\limits_{\bar{l}}|w^{l_2}\hat{f}_{R,1}(\bar{k}-\bar{l})|_2|w^{l_2}\hat{f}_{R,1}(\bar{l})|_2
|\widehat{\na_\Fx\phi_{a_s}}(\bar{k})|dz\notag\\
&+\eps^{\frac{1}{2}}\int_{-1}^1\sum\limits_{\bar{l}}|w^{l_2}\sqrt{\mu}\hat{f}_{R,2}(\bar{k}-\bar{l})|_2|w^{l_2}\sqrt{\mu}\hat{f}_{R,2}(\bar{l})|_2
|\widehat{\na_\Fx\phi_{a_s}}(\bar{k})|dz\notag\\
&+\int_{-1}^1\sum\limits_{\bar{l}}\left\{|w^{l_2}\sqrt{\mu}\hat{f}_{1}(\bar{k}-\bar{l})|_2
+|w^{l_2}\sqrt{\mu}\hat{f}_{2}(\bar{k}-\bar{l})|_2\right\}\notag\\
&\qquad\qquad\times\left\{|w^{l_2}\hat{f}_{R,1}(\bar{l})|_2+
|w^{l_2}\sqrt{\mu}\hat{f}_{R,2}(\bar{l})|_2\right\}|\widehat{\na_\Fx\phi_{a_s}}(\bar{k})|dz\notag\\
&+\eps^{\frac{3}{2}}\int_{-1}^1\sum\limits_{\bar{l}}|w^{l_2}\sqrt{\mu}\hat{f}_{2}(\bar{k}-\bar{l})|_2
|w^{l_2}\sqrt{\mu}\hat{f}_{2}(\bar{l})|_2|\widehat{\na_\Fx\phi_{a_s}}(\bar{k})|dz.
\notag
\end{align}
Then we get
\begin{align}
|S_7|\lesssim&\eps^{\frac{1}{2}}\|\widehat{\na_\Fx\phi_{a_s}}(\bar{k},z)\|_2
\left\|\sum\limits_{\bar{l}}|w^{l_2}\hat{f}_{R,1}(\bar{k}-\bar{l})|_2|w^{l_2}\sqrt{\mu}\hat{f}_{R,2}(\bar{l})|_2\right\|_2
\notag\\&+\eps^{\frac{1}{2}}\|\widehat{\na_\Fx\phi_{a_s}}(\bar{k},z)\|_2
\left\|\sum\limits_{\bar{l}}|w^{l_2}\hat{f}_{R,1}(\bar{k}-\bar{l})|_2|w^{l_2}\hat{f}_{R,1}(\bar{l})|_2\right\|_2
\notag\\&+\eps^{\frac{1}{2}}\|\widehat{\na_\Fx\phi_{a_s}}(\bar{k},z)\|_2
\left\|\sum\limits_{\bar{l}}|w^{l_2}\sqrt{\mu}\hat{f}_{R,2}(\bar{k}-\bar{l})|_2|w^{l_2}\sqrt{\mu}\hat{f}_{R,2}(\bar{l})|_2\right\|_2
\notag\\
&+\|\widehat{\na_\Fx\phi_{a_s}}(\bar{k},z)\|_2\Bigg\|\sum\limits_{\bar{l}}\left\{|w^{l_2}\sqrt{\mu}\hat{f}_{1}(\bar{k}-\bar{l})|_2
+|w^{l_2}\sqrt{\mu}\hat{f}_{2}(\bar{k}-\bar{l})|_2\right\}\notag\\& \qquad\qquad\qquad\qquad\times\left\{|w^{l_2}\hat{f}_{R,1}(\bar{l})|_2+
|w^{l_2}\sqrt{\mu}\hat{f}_{R,2}(\bar{l})|_2\right\}\Bigg\|_2\notag\\&
+\eps^{\frac{3}{2}}\|\widehat{\na_\Fx\phi_{a_s}}(\bar{k},z)\|_2
\left\|\sum\limits_{\bar{l}}|w^{l_2}\sqrt{\mu}\hat{f}_{2}(\bar{k}-\bar{l})|_2
|w^{l_2}\sqrt{\mu}\hat{f}_{2}(\bar{l})|_2\right\|_2.\label{s7-as}
\end{align}
Furthermore, using generalized Minkowski's inequality
\begin{align}\label{gm-ine}
\left\|\sum\limits_{\bar{k}}|\hat{f}(\bar{k},z)|\right\|_{2}
\leq \sum\limits_{\bar{k}}\left\|\hat{f}(\bar{k},z)\right\|_{2},
\end{align}
one has
\begin{align}
|S_7|\leq&\eta\|\hat{a}_s\|^2_2
+C_\eta\eps\left[\sum\limits_{\bar{l}}
\left(\int_{-1}^1\left(|w^{l_2}\hat{f}_{R,1}(\bar{k}-\bar{l})|_2|w^{l_2}\sqrt{\mu}\hat{f}_{R,2}(\bar{l})|_2\right)^2dz\right)^{\frac{1}{2}}\right]^2
\notag\\&+C_\eta\eps\left[\sum\limits_{\bar{l}}
\left(\int_{-1}^1\left(|w^{l_2}\hat{f}_{R,1}(\bar{k}-\bar{l})|_2|w^{l_2}\hat{f}_{R,1}(\bar{l})|_2\right)^2dz\right)^{\frac{1}{2}}\right]^2
\notag\\&+C_\eta\eps\left[\sum\limits_{\bar{l}}
\left(\int_{-1}^1\left(|w^{l_2}\sqrt{\mu}\hat{f}_{R,2}(\bar{k}-\bar{l})|_2|w^{l_2}\sqrt{\mu}\hat{f}_{R,2}(\bar{l})|_2\right)^2dz\right)
^{\frac{1}{2}}\right]^2\notag\\
&+C_\eta\Bigg[\sum\limits_{\bar{l}}
\Big(\int_{-1}^1\Big(\left\{|w^{l_2}\sqrt{\mu}\hat{f}_{1}(\bar{k}-\bar{l})|_2
+|w^{l_2}\sqrt{\mu}\hat{f}_{2}(\bar{k}-\bar{l})|_2\right\}\notag\\&
\qquad\qquad\times\left\{|w^{l_2}\hat{f}_{R,1}(\bar{l})|_2+
|w^{l_2}\sqrt{\mu}\hat{f}_{R,2}(\bar{l})|_2\right\}\Big)^2dz\Big)
^{\frac{1}{2}}\Bigg]^2\notag\\&
+C\eps^{\frac{3}{2}}\left[\sum\limits_{\bar{l}}
\left(\int_{-1}^1\left(|w^{l_2}\sqrt{\mu}\hat{f}_{2}(\bar{k}-\bar{l})|_2
|w^{l_2}\sqrt{\mu}\hat{f}_{2}(\bar{l})|_2\right)^2dz\right)
^{\frac{1}{2}}\right]^2\notag\\
\leq&
\eta\|\hat{a}_s\|^2_2
+C_\eta\eps\left[\sum\limits_{\bar{l}}
\|w^{l_\infty}\hat{f}_{R,1}(\bar{k}-\bar{l})\|_\infty\|\hat{f}_{R,2}(\bar{l})\|_2\right]^2
\notag\\&
+C_\eta\eps\left[\sum\limits_{\bar{l}}
\|w^{l_\infty}\hat{f}_{R,1}(\bar{k}-\bar{l})\|_\infty\|w^{l_\infty}\hat{f}_{R,1}(\bar{l})\|_\infty\right]^2
\notag\\&
+C_\eta\eps\left[\sum\limits_{\bar{l}}
\|w^{l_2}\hat{f}_{R,2}(\bar{k}-\bar{l})\|_\infty\|\hat{f}_{R,2}(\bar{l})\|_2\right]^2
\notag\\&+C\eps^{\frac{3}{2}}\left[\sum\limits_{\bar{l}}
\|\hat{f}_{2}(\bar{k}-\bar{l})\|_\infty\|\hat{f}_{2}(\bar{l})\|_2\right]^2
\notag\\&
+C_\eta\Bigg[\sum\limits_{\bar{l}}\left\{\|\hat{f}_{1}(\bar{k}-\bar{l})\|_\infty+
\|\hat{f}_{2}(\bar{k}-\bar{l})\|_\infty\right\}\notag\\&\qquad\qquad\times
\left\{\|w^{l_\infty}\hat{f}_{R,1}(\bar{l})\|_\infty^2+\|\hat{f}_{R,2}(\bar{l})\|_2\right\}\Bigg]^2.
\notag
\end{align}
Now putting the above estimates together and utilizing Lemma \ref{coef-es}, we conclude
\begin{align}\label{ak-es}
\sum\limits_{\bar{k}\in\Z^2}\|\hat{a}_s\|_2\lesssim& \sqrt{\eps}\sum\limits_{\bar{k}\in\Z^2}\|w^{l_\infty}\hat{f}_{R,2}\|_\infty
\sum\limits_{\bar{k}\in\Z^2}\|\hat{f}_{R,2}\|_2\notag\\
&+\sum\limits_{\bar{k}\in\Z^2}\|w^{l_\infty}\hat{f}_{R,1}\|_\infty
\sum\limits_{\bar{k}\in\Z^2}\|\hat{f}_{R,2}\|_2
+\sum\limits_{\bar{k}\in\Z^2}\|w^{l_\infty}\hat{f}_{R,1}\|_\infty\notag\\
&
+\sum\limits_{\bar{k}\in\Z^2}|\{\FI-P_\ga\}\hat{f}_{R,2}(\pm1)|_{2,+}
+\left(\sum\limits_{\bar{k}\in\Z^2}\|w^{l_\infty}\hat{f}_{R,1}\|_\infty\right)^2
\notag\\&+\eps^{-1}\sum\limits_{\bar{k}\in\Z^2}
\|w^{l_2}\hat{f}_{R,1}\|_2
+\eps^{-1}\sum\limits_{\bar{k}\in\Z^2}\|\{\FI-\FP\}\hat{f}_{R,2}\|_2
\notag\\&+\vps_{\Phi,\al}\|\FP\hat{f}_{R,2}\|_2+\vps_{\Phi,\al}\eps^{\frac{1}{2}}.
\end{align}

\medskip
\noindent
\underline{{\it Estimates on $\Fb_s$.}} We intend to show that $\Fb_s$ shares the same bound as \eqref{ak-es}.
To do so,
let us define
\begin{eqnarray*}
\hat{\Psi}=\hat{\Psi}_{\Fb_s}=\sum\limits_{m=1}^3\hat{\Psi}^{J,m}_{\Fb_s}%=\sum\limits_{m=1}^3\hat{\Phi}^{j,m}_b
,\ J=1,2,3,\ \Fb_s=(b_{s,1},b_{s,2},b_{s,3}),
\end{eqnarray*}
with
\begin{eqnarray*}
\hat{\Psi}^{J,m}_{\Fb_s}
=\left\{\begin{array}{l}
|\Fv|^2v_mv_J\widehat{\pa_m\phi_{J}}-\frac{7}{2}(v_m^2-1)\widehat{\pa_J\phi_{J}}\mu^{\frac{1}{2}},\ \ J\neq m,\\[2mm]
\frac{7}{2}(v_J^2-1)\widehat{\pa_J\phi_{J}}\mu^{\frac{1}{2}},\ \ J=m,
\end{array}\right.
\end{eqnarray*}
where
\begin{equation}%\label{ep.b}
(|\bar{k}|^2-\pa_z^2)\hat{\phi}_{J}=\hat{b}_{s,J},\ \hat{\phi}_{J}(\bar{k},\pm1)=0,\  {\rm and}\ \pa_1=\pa_{x}, \pa_2=\pa_{y}, \pa_3=\pa_{z}.\notag
\end{equation}
As \eqref{epes-ap}, \eqref{epes-a2} and \eqref{trace}, one has the following elliptic estimates
\begin{align}%\label{epes-bp}
\||\bar{k}|^2\hat{\phi}_{J}(\bar{k},z)\|_{2}+\||\bar{k}|\hat{\phi}_{J}(\bar{k},z)\|_{H^1_z}
+\|\hat{\phi}_{J}\|_{H^{2}_z}\lesssim \|\hat{b}_{s,J}\|_{2},\ \forall\bar{k}\in\Z^2,\notag
\end{align}
\begin{align}%\label{epes-b2}
(1+|\bar{k}|)\left\{\||\bar{k}|\hat{\phi}_{J}(\bar{k},z)\|_{2}+\|\hat{\phi}_{J}(\bar{k},z)\|_{H^1_z}\right\}
\lesssim \|\hat{b}_{s,J}\|_{2},\ \forall\bar{k}\in\Z^2,\notag
\end{align}
and the trace inequality
\begin{align}\label{trace-b}
(1+|\bar{k}|)\|\hat{\phi}_{J}(\bar{k},\pm1)\|_2+\|\pa_z\hat{\phi}_{J}(\bar{k},\pm1)\|_{2}\lesssim\|\hat{b}_{s,J}\|_{2},\ \forall\bar{k}\in\Z^2.
\end{align}
We now turn to compute \eqref{abc-test} with $\Psi$ being replaced by $\Psi_{\Fb_s}$ term by term. By a direct calculation, the left hand side of \eqref{abc-test} gives
\begin{equation*}%\label{cc}
\begin{split}
&-\sum\limits_{m=1}^3\left(\FP \hat{f}_{R},\Fv\cdot \widehat{\na \Psi^{J,m}_{\Fb_s}}\right)
\\
&
=-\sum\limits_{m=1,m\neq J}^3\left(v_mv_J\mu^{\frac{1}{2}}\hat{b}_{s,J}
,|\Fv|^2v_mv_J\mu^{\frac{1}{2}}\widehat{\pa^2_m\phi_{J}}\right)
\\
&\qquad
-\sum\limits_{m=1,m\neq J}^3
\left(v_mv_J\mu^{\frac{1}{2}}\hat{b}_{s,m},|\Fv|^2v_mv_J\mu^{\frac{1}{2}}\widehat{\pa_J\pa_m\phi_{J}}\right)
\\
&\qquad
+7\sum\limits_{m=1,m\neq J}^3\left( \hat{b}_{s,m}
,\widehat{\pa_m\pa_J\phi_{J}}\right)-7\left(\hat{b}_{s,J}
,\widehat{\pa^2_J\phi_{J}}\right)=7\|\hat{b}_{s,J}\|_2^2.
\end{split}
\end{equation*}
For $S_1$ on the right hand side of \eqref{abc-test}, by using \eqref{trace-b} and \eqref{fst-bdp}, one has
\begin{align}\label{bs-es}
|S_1|=&\left|\sum\limits_{J=1}^3( \hat{\Psi}^{J,m}_{\Fb_s}(1), v_z \hat{f}_{R}(1))\right|
+\left|\sum\limits_{J=1}^3( \hat{\Psi}^{J,m}_{\Fb_s}(-1), v_z \hat{f}_{R}(-1))\right|\notag\\
\leq&\left|\sum\limits_{J=1}^3( \hat{\Psi}^{J,m}_{\Fb_s}(1), v_z \{\FI-P_\ga\}\hat{f}_{R}(1){\bf 1}_{\ga_+}+\eps^{\frac{1}{2}}v_z\hat{r}{\bf 1}_{\ga_-})\right|
\notag\\&+\left|\sum\limits_{J=1}^3( \hat{\Psi}^{J,m}_{\Fb_s}(-1), v_z \{\FI-P_\ga\}\hat{f}_{R}(-1){\bf 1}_{\ga_+}+\eps^{\frac{1}{2}}v_z\hat{r}{\bf 1}_{\ga_-})\right|\notag\\
\lesssim& \sum\limits_{J=1}^3\|\hat{b}_{s,J}\|_2\{|w^{l_2}\sqrt{\mu}\{\FI-P_\ga\}\hat{f}_{R}(\pm1)|_{2,+}+\eps^{\frac{1}{2}}|\hat{r}|_{2,-}\}
\notag\\
\lesssim&  \eta\|\hat{\Fb}_s\|_{2}^2+C_\eta\|w^{l_\infty}\hat{f}_{R,1}\|_{\infty}^2
+C_\eta|\{\FI-P_\ga\}\hat{f}_{R,2}(\pm1)|_{2,+}^2+\eps|\hat{r}|_{2,-}^2.
\end{align}
The remaining terms on the right hand side of \eqref{abc-test} can be handled in the same way as for deriving \eqref{ak-es}, we omit the details for brevity.

\medskip
\noindent
\underline{{\it Estimates on $c_s$.}} Set
$$
\hat{\Psi}=\hat{\Psi}_{c_s}=(\vert\Fv \vert^2-5)\left\{\Fv\cdot\widehat{ \na_{\Fx}\phi_{c_s}}\right\}\mu^{\frac{1}{2}},
$$
where
\begin{equation}%\label{ep.c}
(|\bar{k}|^2-\pa_z^2)\hat{\phi}_{c_s}=\hat{c}_s,\quad \hat{\phi}_{c_s}(\bar{k},\pm1)=0.\notag
\end{equation}
It can be seen that $\hat{\phi}_{c_s}$ enjoys the same estimates as \eqref{epes-ap}, \eqref{epes-a2} and \eqref{trace}.
To evaluate each term in \eqref{abc-test} with $\Psi$ replaced by $\Psi_{c_s}$, we focus exclusively on handling $S_1$, as the other terms can be computed using the same approach employed to derive \eqref{ak-es}. The key point is that $S_1$ will be also reduced to
\begin{multline*}
    S_1=-( (|\Fv|^2-5)\{i\bar{k}\cdot\bar{v}+v_z\pa_z\}\hat{\phi}_{c_s}(1), v_z \{\FI-P_\ga\}\hat{f}_{R}(1){\bf 1}_{\ga_+}+ v_z\eps^{\frac{1}{2}}\hat{r}{\bf 1}_{\ga_-})
\\+( (|\Fv|^2-5)\{i\bar{k}\cdot\bar{v}+v_z\pa_z\}\hat{\phi}_{c_s}(-1), v_z \{\FI-P_\ga\}\hat{f}_{R}(-1){\bf 1}_{\ga_+}
+v_z\eps^{\frac{1}{2}}\hat{r}{\bf 1}_{\ga_-}),
\end{multline*}
% \begin{align}
% S_1=&-( (|\Fv|^2-5)\{i\bar{k}\cdot\bar{v}+v_z\pa_z\}\hat{\phi}_{c_s}(1), v_z \{\FI-P_\ga\}\hat{f}_{R}(1){\bf 1}_{\ga_+}+ v_z\eps^{\frac{1}{2}}\hat{r}{\bf 1}_{\ga_-})
% \notag\\&+( (|\Fv|^2-5)\{i\bar{k}\cdot\bar{v}+v_z\pa_z\}\hat{\phi}_{c_s}(-1), v_z \{\FI-P_\ga\}\hat{f}_{R}(-1){\bf 1}_{\ga_+}
% +v_z\eps^{\frac{1}{2}}\hat{r}{\bf 1}_{\ga_-}),\notag
% \end{align}
due to the parity \eqref{fst-bdp}. Hence $c_s$ also enjoys the bound as \eqref{ak-es}.

%Note that the equations \eqref{f-rbe} and \eqref{st-bd} that $f_{st}^\eps$ satisfies indeed provide the conservation of mass, the rigorous derivation is tedious, interested readers can confer to \cite{} for the details.

This is the end of the macroscopic estimates. In the next step, we concern the  $L_{\bar{k}}^1L_{z,\Fv}^\infty$ estimates.

\subsection{\texorpdfstring{$L_{\bar{k}}^1L_{z,\Fv}^\infty$}{Lg} estimates}
Taking the Fourier transform of equations \eqref{f1-eq}, \eqref{f1-bd}, \eqref{f2-eq} and \eqref{f2-bd} with respect to $\bar{x}=(x,y)$, one has
\begin{align}\label{f1-ft}
&i\bar{k}\cdot\bar{v}\hat{f}_{R,1}+v_z\pa_{z}\hat{f}_{R,1}+\eps^2\Phi(z)\cdot\na_\Fv \hat{f}_{R,1}\notag\\
&\quad+ik_x\al\eps z\hat{f}_{R,1}-\al\eps v_z\pa_{v_x}\hat{f}_{R,1}+\frac{1}{\eps} \nu \hat{f}_{R,1}
\notag\\
&=\frac{1}{\eps}\chi_M\CK \hat{f}_{R,1}+\frac{\eps^2}{2}\Phi(z)\cdot \Fv\sqrt{\mu}\hat{f}_{R,2}-\frac{\al \eps v_xv_z}{2}\sqrt{\mu}\{\FI-\FP\}\hat{f}_{R,2}
\notag\\&
\quad-\eps^{\frac{5}{2}}\Phi\cdot \na_\Fv(\sqrt{\mu}\hat{f}_{2})-
\al\eps^{\frac{3}{2}} izk_x(\sqrt{\mu}\hat{f}_{2})+\al\eps^{\frac{3}{2}} v_z\pa_{v_x}(\sqrt{\mu}\hat{f}_{2})
\notag\\&
\quad+\eps^{\frac{1}{2}}\hat{Q}(\hat{f}_{R,1},\hat{f}_{R,1})
+\eps^{\frac{1}{2}}\hat{Q}(\hat{f}_{R,1},\sqrt{\mu}\hat{f}_{R,2})+\eps^{\frac{1}{2}}\hat{Q}(\sqrt{\mu}\hat{f}_{R,2},\hat{f}_{R,1})
\notag\\
&
\quad+\hat{Q}(\hat{f}_{R,1},\sqrt{\mu}\{\hat{f}_1+\eps \hat{f}_2\})+\hat{Q}(\sqrt{\mu}\{\hat{f}_1+\eps \hat{f}_2\},\hat{f}_{R,1})\notag\\
&\quad+\eps^{\frac{3}{2}}\hat{Q}(\sqrt{\mu}\hat{f}_2,\sqrt{\mu}\hat{f}_2),
\end{align}
\begin{align}\label{f1-ft-bd}
\hat{f}_{R,1}(\bar{k},\pm1,\Fv)|_{v_{z}\lessgtr0}=0,
\end{align}
and
\begin{align}\label{f2-ft}
&i\bar{k} \cdot\bar{v} \hat{f}_{R,2}+v_{z} \pa_{z}\hat{f}_{R,2}+\eps^2\Phi(z)\cdot\na_\Fv \hat{f}_{R,2}\notag\\
&\quad+\al\eps ik_xz\hat{f}_{R,2}-\al\eps v_{z}\pa_{v_x}\hat{f}_{R,2}
+\frac{1}{\eps} L\hat{f}_{R,2}\notag\\
&=\frac{1}{\eps}(1-\chi_M)\mu^{-1/2}\CK \hat{f}_{R,1}-\frac{\al\eps  v_xv_{z}}{2}\FP \hat{f}_{R,2}
\notag\\
&\quad+\hat{\Ga}(\hat{f}_{R,2},\hat{f}_1+\eps \hat{f}_2)+\hat{\Ga}(\hat{f}_1+\eps\hat{f}_2,\hat{f}_{R,2})+\eps^{\frac{1}{2}}\hat{\Ga}(\hat{f}_{R,2},\hat{f}_{R,2}),
\end{align}
\begin{align}\label{f2-ft-bd}
\hat{f}_{R,2}(\bar{k},\pm1,v)|_{v_{z}\lessgtr0}=&\sqrt{2\pi\mu}\int_{\tilde{v}_{z}\gtrless0}\{\hat{f}_{R,1}(\bar{k},\pm1,\tilde{\Fv})
+\hat{f}_{R,2}(\bar{k},\pm1,\tilde{\Fv})\sqrt{\mu}\}|\tilde{v}_{z}|d\tilde{\Fv}
\notag\\&
+\eps^{\frac{1}{2}} \hat{r}.
\end{align}
Next, let us introduce a parameter $t\in(-\infty,+\infty)$ and regard $(z,\Fv)$
as the functions of $t$.
Denote $[Z(s;t,z,\Fv),$ $\FV(s;t,z,\Fv)]$ by the characteristic line of both the equations \eqref{f1-ft} and \eqref{f2-ft} passing through $(t,z,\Fv)$.
It is straightforward to see that $[Z(s;t,z,\Fv),\FV(s;t,z,\Fv)]$ is determined by the ODE system
\begin{align}\label{chl}
\frac{d Z}{ds}=V_{z},\
\frac{d \FV}{ds}=\eps^2\Phi(Z)-\al\eps V_{z}\Fe_1,\ \FV=(V_x,V_y,V_{z}),
\end{align}
with $[Z(t;t,z,\Fv),\FV(t;t,z,\Fv)]=(z,\Fv)$ and $\Fe_1=(1,0,0)$. It follows from \eqref{chl} that
\begin{align}\label{chl-sol}
\left\{\begin{array}{rll}
&\dis Z(s)=z+(s-t)v_{z}+\eps^2\int_t^s\int_t^\tau\Phi_{z}(Z(\eta))d\eta d\tau,\\[3mm]
&\dis \FV(s)=\Fv+\eps^2\int_t^s\Phi(Z(\tau))d\tau-\al\eps v_{z}(s-t)\Fe_1\\
&\dis\qquad\qquad-\al\eps^3\int_t^s\int_t^\tau\Phi_{z}(Z(\eta))d\eta d\tau \Fe_1.
\end{array}\right.
\end{align}
For any $(t,z,\Fv)\in(-\infty,+\infty)\times[-1,1]\times\R^3$ with $z\neq0$, we define
the {\it backward exit time} as
\begin{align}\label{ex-t}
t_{b}(z,\Fv)=\inf\{\tau\geq0|Z(t-\tau;t,z,\Fv)\notin(-1,1)\}.
\end{align}
This is the first moment when the backward characteristic line  $[Z(s;t,z,\Fv),$ $\FV(s;t,z,\Fv)]$ emerges from $z=\pm1$.
Note that $t_{b}(z,\Fv)$ is well-defined if $(z,\Fv)\in [-1,1]\times\R^3/\ga_0$, and $t_{b}(z,\Fv)=0$ if $(z,\Fv)\in \ga_-$.
We also define the {\it backward exit position}
$$z_{b}(z,\Fv)=Z(t-t_b;t,z,\Fv)\in\{\pm1\}.$$
Similarly, we introduce the {\it forward exit time}
\begin{align}%\label{fo-t}
t_{f}(z,\Fv)=\inf\{\tau\geq0|Z(t+\tau;t,z,\Fv)\notin(-1,1)\},\notag
\end{align}
and define
\begin{align}%\label{fo-x}
z_{f}(z,\Fv)=Z(t+t_f;t,z,\Fv).\notag
\end{align}
For random variable $\Fv_l$ with $l\geq0$, we define the backward time cycles
\begin{equation}\label{cyc}
\left\{\begin{aligned}
(t_0,z_0,\Fv_0)=&~(t,z,\Fv), \\
\Fv_l=&~(v_{l,x},v_{l,y},v_{l,z}),
\\ 
(t_{l+1},z_{l+1},\Fv_{l+1})=&~(t_l-t_{\mathbf{b}}(z_l,\Fv_l),z_{b}(z_l,\Fv_l),\Fv_{l+1}). 
\end{aligned}\right.
\end{equation}
We also set
\begin{eqnarray*}
Z^{l}_{\mathbf{cl}}(s;t,z,\Fv) &=&\mathbf{1}_{[t_{l+1},t_{l})}(s)%
Z(s;t_l,z_l,\Fv_l), \\
\FV^{l}_{\mathbf{cl}}(s;t,z,\Fv) &=&\mathbf{1}%
_{[t_{l+1},t_{l})}(s)\FV(s;t_l,z_l,\Fv_l).
\end{eqnarray*}
Note that $[Z^{0}_{\mathbf{cl}}(s),\FV^{0}_{\mathbf{cl}}(s)]=[Z(s),\FV(s)]$ and $z_{l}\in\{\pm1\}$ for $l\geq1$. Moreover, $t_l$ can be negative.

Next, to handle the diffusive reflection boundary condition \eqref{f2-ft-bd}, we introduce the stochastic cycle integrals as follows.
Denote $
\mathcal{V}_{l}=\{\Fv_l\in \R^{3}\ |\ \Fv_l\cdot
n(z_{l})>0\},$
where $n(z_{l})=(0,0,1)$ if $z_{l}=1$ and $n(z_{l})=(0,0,-1)$ if $z_{l}=-1$. Let the iterated integral for $\mathfrak{L}\geq 2$ be defined as
\begin{equation}\label{def.sil}
\int_{\prod\limits_{l=1}^{\mathfrak{L}-1}
\mathcal{V}_{l}}\prod\limits_{l=1}^{\mathfrak{L}-1}d\sigma_{l}\equiv \int_{\mathcal{V}_{1}}\cdots
\left\{ \int_{\mathcal{V}%
_{\mathfrak{L}-1}}d\sigma_{\mathfrak{L}-1}\right\} d\sigma_{1},
\end{equation}%
where $d\sigma _{l}=\sqrt{2\pi}\mu (\Fv_l)|v_{l,z}|d\Fv_l$ is a probability
measure.

Along the characteristic line \eqref{chl}, for $(z,\Fv)\in[-1,1]\times\R^3\backslash(\ga_-\cup\ga_0)$, we now write the solution of the system \eqref{f1-ft} and \eqref{f1-ft-bd} with polynomial weight $w^{l_\infty}(\Fv)$ as the following mild form
\begin{align}%\label{f1-mf}
(w^{l_\infty}&\hat{f}_{R,1})(z(t),\Fv(t))\notag\\=&\frac{1}{\eps}\int_{t_1}^{t}e^{-\int_{s}^t\CA^\eps(\tau,\FV(\tau))d\tau}\left\{\chi_{M}w^{l_\infty}\CK
\hat{f}_{R,1}\right\}(Z(s),\FV(s))\,ds\notag
\\
&-\al\eps\int_{t_1}^{t}e^{-\int_{s}^t\CA^\eps(\tau,\FV(\tau))d\tau}
\left\{w^{l_\infty}
\frac{v_xv_{z}}{2}\sqrt{\mu}\{\FI-\FP\}\hat{f}_{R,2}\right\}(Z(s),\FV(s))\,ds\notag\\
&+\frac{\eps^2}{2}\int_{t_1}^{t}e^{-\int_{s}^t\CA^\eps(\tau,V(\tau))d\tau}\left\{w^{l_\infty}\Phi\cdot \Fv\sqrt{\mu}\hat{f}_{R,2}\right\}(Z(s),V(s))\,ds\notag\\
&+\int_{t_1}^{t}e^{-\int_{s}^t\CA^\eps(\tau,V(\tau))d\tau}
\Big\{w^{l_\infty}\Big(-\eps^{\frac{5}{2}}\Phi\cdot \na_\Fv(\sqrt{\mu}\hat{f}_{2})-
\al\eps^{\frac{3}{2}} izk_1(\sqrt{\mu}\hat{f}_{2})
\notag\\&\qquad\qquad+\al\eps^{\frac{3}{2}} v_z\pa_{v_x}(\sqrt{\mu}\hat{f}_{2})\Big)
\Big\}(Z(s),\FV(s))ds\notag\\
&+\eps^{\frac{1}{2}}\int_{t_1}^{t}e^{-\int_{s}^t\CA^\eps(\tau,V(\tau))d\tau}\Big\{w^{l_\infty}\Big(\hat{Q}(\hat{f}_{R,1},\hat{f}_{R,1})
+\hat{Q}(\hat{f}_{R,1},\sqrt{\mu}\hat{f}_{R,2})\notag\\&\qquad\qquad+\hat{Q}(\sqrt{\mu}\hat{f}_{R,2},\hat{f}_{R,1})\Big)\Big\}(Z(s),\FV(s))\,ds,\notag\\
&+\int_{t_1}^{t}e^{-\int_{s}^t\CA^\eps(\tau,V(\tau))d\tau}
\Big\{w^{l_\infty}\Big(\hat{Q}(\hat{f}_{R,1},\sqrt{\mu}\{\hat{f}_1+\eps \hat{f}_2\})\notag\\&\qquad\qquad+\hat{Q}(\sqrt{\mu}\{\hat{f}_1+\eps \hat{f}_2\},\hat{f}_{R,1})\Big)\Big\}(Z(s),\FV(s))\,ds\notag\\
&+\eps^{\frac{3}{2}}\int_{t_1}^{t}e^{-\int_{s}^t\CA^\eps(\tau,V(\tau))d\tau}
\left\{w^{l_\infty}\left(\hat{Q}(\sqrt{\mu}\hat{f}_2,\sqrt{\mu}\hat{f}_2)\right)\right\}(Z(s),\FV(s))\,ds,\notag
\end{align}
where
\begin{align}%\label{CAe-def}
\CA^\eps(\tau,\FV(\tau))=& 
\frac{1}{\eps}\nu(\FV(\tau))-2\eps^2l_\infty\frac{V(\tau)\cdot\Phi(Z)}{{1+|\FV(\tau)|^2}}+2l_\infty \al\eps \frac{V_x(\tau)V_{z}(\tau)}{{1+|\FV(\tau)|^2}}
\notag\\
&+i\bar{k}\cdot \bar{\FV}+ik_xZ\al\eps,
\notag\\
\bar{\FV}=&(V_x,V_y).\notag
\end{align}
Note that
\begin{align}
\left|e^{-\int_{s}^t\CA^\eps(\tau,\FV(\tau))d\tau}\right|\leq e^{-\frac{1}{2\eps}\int_{s}^t\nu(\FV(\tau))d\tau},\label{CAe-lbd}
\end{align}
provided that $\eps>0$ and $\eps l_\infty>0$ are suitably small.
By Lemmas \ref{g-ck-lem} and \ref{lk1-ne}, it is easy to show the following bound
\begin{align}
&\sum\limits_{\bar{k}\in\Z^2}\|w^{l_\infty}\hat{f}_{R,1}\|_{\infty} \notag\\
&\lesssim
\left\{(1+M)^{-\ga}+\varsigma+\eps\right\}\sum\limits_{\bar{k}\in\Z^2}\|w^{l_\infty}\hat{f}_{R,1}\|_{\infty}
\notag\\&
\quad+\eps^{\frac{3}{2}}\sum\limits_{\bar{k}\in\Z^2}\|w^{l_\infty}\hat{f}_{R,1}\|_{\infty}
\left\{\sum\limits_{\bar{k}\in\Z^2}\|w^{l_\infty}\hat{f}_{R,2}\|_{\infty}
+\sum\limits_{\bar{k}\in\Z^2}\|w^{l_\infty}\hat{f}_{R,1}\|_{\infty}\right\}
\notag\\&
\quad +
\eps^2\{\al+\eps\}\sum\limits_{\bar{k}\in\Z^2}\|w^{l_\infty}\hat{f}_{R,2}\|_{\infty}
+\vps_{\Phi,\al}\eps^{\frac{5}{2}}.
\label{H1m.p1}
\end{align}
By taking $M>0$ sufficiently large and $\varsigma>0$ as well as $\eps>0$ small enough, \eqref{H1m.p1} further gives
\begin{align}\label{f1-sum}
&\sum\limits_{\bar{k}\in\Z^2}\|w^{l_\infty}\hat{f}_{R,1}\|_{\infty}\notag\\
&\lesssim
\eps^2\{\al+\eps\}\sum\limits_{\bar{k}\in\Z^2}\|w^{l_\infty}\hat{f}_{R,2}\|_{\infty}
+\vps_{\Phi,\al}\eps^{\frac{5}{2}}\notag
\\&
\quad+\eps^{\frac{3}{2}}\sum\limits_{\bar{k}\in\Z^2}\|w^{l_\infty}\hat{f}_{R,1}\|_{\infty}
\left\{\sum\limits_{\bar{k}\in\Z^2}\|w^{l_\infty}\hat{f}_{R,2}\|_{\infty}
+\sum\limits_{\bar{k}\in\Z^2}\|w^{l_\infty}\hat{f}_{R,1}\|_{\infty}\right\}.
\end{align}
Similarly, one can write the solution of \eqref{f2-ft} and \eqref{f2-ft-bd} with polynomial weight $w^{l_\infty}(\Fv)$  as
\begin{align}\label{H2m}
(w^{l_\infty}&\hat{f}_{R,2})(z(t),\Fv(t))\notag\\
=&\underbrace{\frac{1}{\eps}\int_{t_1}^{t}e^{-\int_{s}^t\CA^\eps(\tau,\FV(\tau))d\tau}\left\{w^{l_\infty}K
\hat{f}_{R,2}\right\}(Z(s),\FV(s))\,ds}_{I_1}\notag
\\
&+\underbrace{\frac{1}{\eps}\int_{t_1}^{t}e^{-\int_{s}^t\CA^\eps(\tau,V(\tau))d\tau}
\left\{w^{l_\infty}((1-\chi_{M})\mu^{-\frac{1}{2}}\CK \hat{f}_{R,1}\right\}(Z(s),\FV(s))\,ds}_{I_2}\notag\\
& \underbrace{-\frac{\al\eps}{2}\int_{t_1}^{t}e^{-\int_{s}^t\CA^\eps(\tau,\FV(\tau))d\tau}\left\{w^{l_\infty}v_xv_{z}\FP \hat{f}_{R,2}\right\}(Z(s),\FV(s))\,ds}_{I_3}
\notag\\
&+ \int_{t_1}^{t}e^{-\int_{s}^t\CA^\eps(\tau,\FV(\tau))d\tau}
\left\{w^{l_\infty}\left(\hat{\Ga}(\hat{f}_{R,2},\hat{f}_1+\eps \hat{f}_2)\right.\right.\notag\\
&\underbrace{\qquad \qquad \left.\left. +\hat{\Ga}(\hat{f}_1+\eps\hat{f}_2,\hat{f}_{R,2})+\eps^{\frac{1}{2}}\hat{\Ga}(\hat{f}_{R,2},\hat{f}_{R,2})
\right)\right\} \left(Z(s),\FV(s)\right)\,ds}_{I_4}\notag\\
&+\sum\limits_{n=5}^{11}I_n,
\end{align}
where $(z,\Fv)\in[-1,1]\times\R^3\backslash(\ga_-\cup\ga_0)$, and for $\RL\geq2$,
\begin{align}
I_5=&\underbrace{\sqrt{2\pi}e^{-\int_{t_1}^t\CA^\eps(\tau,V(\tau))d\tau}
\left[w^{l_\infty}\sqrt{\mu}\right](V(t_1))}_{\CW}\notag\\&\qquad\qquad\times\int_{\prod\limits_{j=1}^{\RL-1}\CV_j}
(w^{l_\infty}\hat{f}_{R,2})(t_\RL,z_\RL,V_{\mathbf{cl}}^{\RL-1}(t_\RL))\,d\Sigma_{\RL-1}(t_\RL),\notag
\end{align}
\begin{align}
I_6=&\sum\limits_{l=1}^{\RL-1}\CW\int_{\prod\limits_{j=1}^{\RL-1}\CV_j}
\int_{t_{l+1}}^{t_l}\left\{w^{l_\infty}\left(\hat{\Ga}(\hat{f}_{R,2},\hat{f}_1+\eps \hat{f}_2)+\hat{\Ga}(\hat{f}_1+\eps\hat{f}_2,\hat{f}_{R,2})\right.\right.
\notag\\
&\qquad \qquad \qquad \qquad\qquad\left. \left.+\eps^{\frac{1}{2}}\hat{\Ga}(\hat{f}_{R,2},\hat{f}_{R,2})\right)
\right\}(Z^{l}_{\mathbf{cl}},\FV^{l}_{\mathbf{cl}})(s)\,d\Sigma_{l}(s)ds,\notag
\end{align}
\begin{align}
I_7=& \frac{1}{\eps}\sum\limits_{l=1}^{\RL-1}\CW\int_{\prod\limits_{j=1}^{\RL-1}\CV_j}
\int_{t_{l+1}}^{t_l}\left\{w^{l_\infty}K\hat{f}_{R,2}\right\}(Z^{l}_{\mathbf{cl}},\FV^{l}_{\mathbf{cl}})(s)\,d\Sigma_{l}(s)ds,\notag
\end{align}
\begin{align}
I_8=& \frac{1}{\eps}\sum\limits_{l=1}^{\RL-1}\CW\int_{\prod\limits_{j=1}^{\RL-1}\CV_j}
\int_{t_{l+1}}^{t_l}\left\{(1-\chi_{M})w^{l_\infty}\mu^{-\frac{1}{2}}\CK
\hat{f}_{R,1}\right\}(Z^{l}_{\mathbf{cl}},\FV^{l}_{\mathbf{cl}})(s)\,d\Sigma_{l}(s)ds,\notag
\end{align}
\begin{align}
I_9=& -\frac{\al}{2}\sum\limits_{l=1}^{\RL-1}\CW\int_{\prod\limits_{j=1}^{\RL-1}\CV_j}
\int_{t_{l+1}}^{t_l}\left\{w^{l_\infty}v_xv_{z}\FP \hat{f}_{R,2}\right\}(Z^{l}_{\mathbf{cl}},\FV^{l}_{\mathbf{cl}})(s)\,d\Sigma_{l}(s)ds,\notag
\end{align}
\begin{align}
I_{10}=&\CW \sum\limits_{l=1}^{\RL-1}\int_{\prod\limits_{j=1}^{\RL-1}\CV_j}\left(\frac{w^{l_\infty}}{\sqrt{\mu}}\hat{f}_{R,1}\right)
(t_l,z_l,\FV_{\mathbf{cl}}^{l}(t_l))d\Sigma_{l}(t_l),\notag
\end{align}
\begin{align}
I_{11}=&\eps^{\frac{1}{2}}e^{-\int_{t_1}^t\CA^\eps(\tau,\FV(\tau))d\tau}\left(w^{l_\infty}\hat{r}\right)(t_1,z_1,\FV(t_1))
\notag \\ &+
\eps^{\frac{1}{2}}\CW \sum\limits_{l=1}^{\RL-1}\int_{\prod\limits_{j=1}^{\RL-1}\CV_j}\left(w^{l_\infty}\hat{r}\right)
(t_{l+1},z_{l+1},\FV_{\mathbf{cl}}^{l}(t_{l+1}))d\Sigma_{l}(t_{l+1}).\notag
\end{align}
Moreover, in the above expressions we have used the following notations
\begin{align}\label{Sigma}
\Sigma_{l}(s)=\prod\limits_{j=l+1}^{\RL-1}d\si_j 
&
e^{-\int_s^{t_l}
\CA^\eps(\tau,\FV_{\mathbf{cl}}^l(\tau))d\tau}{w_2}(\Fv_l)d\si_l 
\notag\\
 &
\times \prod\limits_{j=1}^{l-1}\frac{{w_2}(\Fv_j)}{{w_2}(\FV^{j}_{\mathbf{cl}}(t_{j+1}))}
e^{-\int_{t_{j+1}}^{t_j}
\CA^\eps(\tau,\FV_{\mathbf{cl}}^j(\tau))d\tau}d\si_j,
\end{align}
and
\begin{align}\label{wt-2}
{w_2}(\Fv)=(\sqrt{2\pi}w^{l_\infty}\sqrt{\mu})^{-1}(\Fv).
\end{align}
Note that we take $\RL=C_1T_0^{\frac{5}{4}}$ for $T_0$ defined in Lemma \ref{k-cyc}.

The $L^\infty$ estimates for $\hat{f}_{R,2}$ is more complicated because $K$ has no smallness property. To overcome this difficulty, one available way is to iterate \eqref{H2m} twice. Let us first compute $I_n$ $(1\leq n\leq 11)$ term by term.
Recalling the definition \eqref{kw-def} for ${\bf k}_{w}$, one directly has by \eqref{CAe-lbd}
\begin{align*}
|I_1|\leq \frac{1}{\eps}\int_{t_1}^te^{-\int_s^t\frac{\nu(\FV(\tau))}{2\eps}d\tau}\int_{\R^3}{\bf k}_{w}(V(s),\Fv')|(w^{l_\infty}\hat{f}_{R,2})(s,Z(s),\Fv')|d\Fv'ds.
\end{align*}
By Lemma \ref{op-es-lem}, it follows
\begin{align*}
|I_2|\leq \frac{C}{\eps}\|w^{l_\infty}\hat{f}_{R,1}\|_{\infty}
\int_{t_1}^te^{-\int_s^t\frac{\nu(\FV(\tau))}{2\eps}d\tau}ds\leq C\|w^{l_\infty}\hat{f}_{R,1}\|_{\infty}.
\end{align*}
It is straightforward to see
\begin{align*}
|I_3|\leq C \al\eps^2\|w^{l_\infty}\hat{f}_{R,2}\|_{\infty}.
\end{align*}
For $I_4$, Lemma \ref{lk1-ne} gives
\begin{align*}
|I_4|\leq& C \eps\left\{\|w^{l_\infty}\nu^{-1}\hat{\Ga}(\hat{f}_{1}+\eps\hat{f}_{2},\hat{f}_{R,2})\|_{\infty}
+\|w^{l_\infty}\nu^{-1}\hat{\Ga}(\hat{f}_{R,2},\hat{f}_{1}+\eps\hat{f}_{2})\|_{\infty}\right\}
\notag\\&+C\eps^{\frac{3}{2}} \|w^{l_\infty}\nu^{-1}\hat{\Ga}(\hat{f}_{R,2},\hat{f}_{R,2})\|_{\infty}\notag\\
\leq& C \eps\sum\limits_{\bar{l}\in \Z^2}\{\|w^{l_\infty}\hat{f}_{1}(\bar{k}-\bar{l})\|_{\infty}+\|w^{l_\infty}\hat{f}_{2}(\bar{k}-\bar{l})\|_{\infty}\}
\|w^{l_\infty}\hat{f}_{R,2}(\bar{l})\|_{\infty}
\notag\\
&+C\eps^{\frac{3}{2}} \sum\limits_{\bar{l}\in \Z^2}\|w^{l_\infty}\hat{f}_{R,2}(\bar{k}-\bar{l})\|_{\infty}
\|w^{l_\infty}\hat{f}_{R,2}(\bar{l})\|_{\infty}.
\end{align*}
For $I_5$, motivated by the approach developed in \cite{EGMW-23}, let us first choose a sufficiently large $T_0>0$ and then write
\begin{align}
I_5=&\CW\int_{\prod\limits_{j=1}^{\RL-1}\CV_j}{\bf 1}_{t_\RL\geq t-T_0}
(w^{l_\infty}\hat{f}_{R,2})(t_\RL,z_\RL,V_{\mathbf{cl}}^{\RL-1}(t_\RL))\,d\Sigma_{\RL-1}(t_\RL)\notag\\
&+\CW\int_{\prod\limits_{j=1}^{\RL-1}\CV_j}{\bf 1}_{t_\RL< t-T_0}
(w^{l_\infty}\hat{f}_{R,2})(t_\RL,z_\RL,V_{\mathbf{cl}}^{\RL-1}(t_\RL))\,d\Sigma_{\RL-1}(t_\RL)\notag\\
=:& I_{5,1}+I_{5,2}.\notag
\end{align}
For $I_{5,1}$,
in light of Lemma \ref{k-cyc}, it follows
\begin{align*}
|I_{5,1}|\leq C(l_\infty)2^{-C_2T^{\frac{5}{4}}_0} \|w^{l_\infty}\hat{f}_{R,2}\|_{\infty}.
\end{align*}
Here, for $0<\eta_0\ll1$, we utilized the estimate
\begin{align}\label{diff-w}
\frac{{w_2}(\Fv_j)}{{w_2}(V^{j}_{\mathbf{cl}}(t_{j+1}))}
=&\frac{w^{l_\infty}(\FV^{j}_{\mathbf{cl}}(t_{j+1}))\mu^{\frac{1}{2}}(\FV^{j}_{\mathbf{cl}}(t_{j+1}))}{w^{l_\infty}(\Fv_j)
\mu^{\frac{1}{2}}(\Fv_j)}
\notag \\ 
=& \frac{(1+|\FV^{j}_{\mathbf{cl}}(t_{j+1})|^2)^{l_\infty}}{(1+|\Fv_j|^2)^{l_\infty}} e^{\frac{|\Fv_j|^2-|\FV^{j}_{\mathbf{cl}}(t_{j+1})|^2}{4}}\notag\\
\leq&2^{l_\infty}(1+|\FV^{j}_{\mathbf{cl}}(t_{j+1})-\Fv_j|^2)^{l_\infty} e^{C_{\eta_0}\eps^2}e^{\frac{\eta_0|\Fv_{j}|^2}{4}}\notag\\
\leq & C(l_\infty)e^{\frac{\eta_0|\Fv_j|^2}{4}}. 
\end{align}
As for $I_{5,2}$, since $t-t_\CL>T_0>0$, the exponential term becomes very small, that is, from Lemma \ref{k-cyc} and \eqref{diff-w},
and using
\begin{align}
\left\{\begin{array}{rll}
&\left|e^{-\int_{t_{l+1}}^{t_l}\CA^\eps(\tau,\FV(\tau))d\tau}\right|\geq e^{-\frac{1}{2\eps}\int_{t_{l+1}}^{t_l}\nu(\FV(\tau))d\tau}\geq e^{-\frac{\nu_0(t_l-t_{l+1})}{2\eps}},\\
&e^{-\frac{\nu_0(t-t_{1})}{2\eps}} e^{-\frac{\nu_0(t_1-t_{2})}{2\eps}}\cdots e^{-\frac{\nu_0(t_l-s)}{2\eps}}=e^{-\frac{\nu_0(t-s)}{2\eps}},
\end{array}\right.
\label{CAe-lbd2}
\end{align}
we have for $0<\eta_0\ll1$ that
\begin{align}\label{tdecay-sm}
|I_{5,2}|\leq& 
\|w^{l_\infty}\hat{f}_{R,2}\|_{\infty}
C(l_\infty)e^{-\frac{\nu_0(t-t_1)}{2\eps}}
\notag\\ & \qquad 
\times
\int_{\Pi_{l=1}^{\RL-1}\mathcal{V}_{l}} d\sigma _{l}
\mathbf{1}_{
t_{\RL}(t,z,\Fv,\Fv_{1},\Fv_{2}...,\Fv_{\RL-1})<t-T_0}
\Pi _{l=1}^{\RL-1}e^{-\frac{\nu_0(t_l-t_{l+1})}{2\eps}}e^{\frac{\eta_0}{2}|\Fv_{l}|^2}
\notag\\
\leq& \|w^{l_\infty}\hat{f}_{R,2}\|_{\infty} C(l_\infty)e^{-\frac{\nu_0T_0}{2\eps}}.
\end{align}
Using Lemma \ref{lk1-ne} and applying \eqref{diff-w} and \eqref{CAe-lbd2} again, one gets
\begin{align*}
|I_6|\leq& C(l_\infty)\sum\limits_{l=1}^{\RL-1}\int_{\prod\limits_{j=1}^{\RL-1}\CV_j}
\int_{t_{l+1}}^{t_l}e^{-\int_s^{t_l}\frac{\nu(\FV_{cl}^l(\tau))}{2\eps}d\tau}\nu(\Fv_l)\notag\\
&\quad \times\prod\limits_{j=l+1}^{\RL-1}d\si_j {w_2}(\Fv_l)d\si_l
\prod\limits_{j=1}^{l-1}e^{\frac{\eta_0}{2}|\Fv_{j}|^2}d\si_j ds
\notag\\&\quad \times
\left\{\left\|w^{l_\infty}\nu^{-1}\left(\hat{\Ga}(\hat{f}_{R,2},\hat{f}_1+\eps \hat{f}_2)+\hat{\Ga}(\hat{f}_1+\eps\hat{f}_2,\hat{f}_{R,2})\right)\right\|_{\infty}\right.\notag\\
&\quad \quad \quad\quad \quad \left.+
\eps^{\frac{1}{2}}\|w^{l_\infty}\nu^{-1}\hat{\Ga}(\hat{f}_{R,2},\hat{f}_{R,2})\|_{\infty}\right\}\notag\\
\leq& \eps\RL C(l_\infty)\left\{\sum\limits_{\bar{l}\in \Z^2}\|w^{l_\infty}(\hat{f}_1+\eps\hat{f}_2)(\bar{k}-\bar{l})\|_{\infty}\|w^{l_\infty}\hat{f}_{R,2}\|_{\infty}\right.\notag\\
&\quad \quad \quad \quad \quad \left.+
\eps^{\frac{1}{2}}\sum\limits_{\bar{l}\in \Z^2}\|w^{l_\infty}\hat{f}_{R,2}(\bar{k}-\bar{l})\|_{\infty}
\|w^{l_\infty}\hat{f}_{R,2}(\bar{l})\|_{\infty}\right\}.\notag
\end{align*}
Similarly, it follows
\begin{align*}
|I_8|,\ |I_{10}|\leq \RL C(l_\infty)\|w^{l_\infty}\hat{f}_{R,1}\|_{\infty},\\ 
|I_9|\leq \al\RL C(l_\infty)\|w^{l_\infty}\hat{f}_{R,2}\|_{\infty},
\\ |
I_{11}|\leq \eps^{\frac{1}{2}}\RL C(l_\infty)\|w^{l_\infty}\hat{r}\|_{\infty},
\end{align*}
and
\begin{multline*}
|I_7|\leq \eps^{-1} C(l_\infty)e^{-\int_{t_1}^t\frac{\nu(\FV(\tau))}{2\eps}d\tau}\sum\limits_{l=1}^{\RL-1}\int_{\prod\limits_{j=1}^{\RL-1}\CV_j}
\int_{t_{l+1}}^{t_l}\int_{\R^3}{\bf k}_{w}(\FV^{l}_{\mathbf{cl}}(s),\Fv')\\
|(w^{l_\infty}\hat{f}_{R,2})(s,Z^{l}_{\mathbf{cl}}(s),\Fv')|d\Fv'\,d\Sigma_{l}(s)ds.
\end{multline*}
Up to now, we cannot obtain the desired estimate of the right hand side of the above inequality. However, putting all the estimates above together, we arrive at
\begin{align}\label{H2sum1}
& |w^{l_\infty}\hat{f}_{R,2}|
\notag \\
\leq &  \frac{C}{\eps}\int_{t_1}^te^{-\int_{s}^t\frac{\nu(\FV(\tau))}{2\eps}d\tau}
\int_{\R^3}{\bf k}_{w}(\FV(s),\Fv')|(w^{l_\infty}\hat{f}_{R,2})(s,Z(s;t,z,\Fv),\Fv')|d\Fv'ds\notag\\
&+\frac{C(l_\infty)}{\eps}e^{-\int_{t_1}^t\frac{\nu(\FV(\tau))}{2\eps}d\tau}
\sum\limits_{l=1}^{\RL-1}\int_{\prod\limits_{j=1}^{\RL-1}\CV_j}
\int_{t_{l+1}}^{t_l}\int_{\R^3}{\bf k}_{w}(\FV^{l}_{\mathbf{cl}}(s),v')\notag\\&
\qquad\qquad\qquad\qquad\qquad\qquad\times|(w^{l_\infty}\hat{f}_{R,2})(s,Z^{l}_{\mathbf{cl}}(s;t,z,\Fv),\Fv')|d\Fv'\,d\Sigma_{l}(s)ds
\notag\\&+\CQ(t),
\end{align}
where
\begin{align}
\CQ(t)=&\RL C(l_\infty)\|w^{l_\infty}\hat{f}_{R,1}\|_{\infty}
\notag\\&+
C \eps\sum\limits_{\bar{l}\in \Z^2}\left\{\|w^{l_\infty}\hat{f}_{1}(\bar{k}-\bar{l})\|_{\infty}+\|w^{l_\infty}\hat{f}_{2}(\bar{k}-\bar{l})\|_{\infty}\right\}
\|w^{l_\infty}\hat{f}_{R,2}(\bar{l})\|_{\infty}
\notag\\
&+C\eps^{\frac{3}{2}} \sum\limits_{\bar{l}\in \Z^2}\|w^{l_\infty}\hat{f}_{R,2}(\bar{k}-\bar{l})\|_{\infty}
\|w^{l_\infty}\hat{f}_{R,2}(\bar{l})\|_{\infty}+\eps^{\frac{1}{2}}\RL C(l_\infty)\|w^{l_\infty}\hat{r}\|_{\infty}.\notag
\end{align}
The key point now is to handle the above bound by iteration method. To do this, let us define a new  backward stochastic cycle as
\begin{align}%\label{cy-p1}
(t_{\ell+1}',z_{\ell+1}',\Fv_{\ell+1}')=(t_{\ell}'-t_{b}(z_{\ell}',\Fv_\ell'),z_{b}(z_\ell',\Fv_\ell'),\Fv_{\ell+1}'),\notag
\end{align}
and the starting point
\begin{align}\notag
(t_{0}',z_{0}',\Fv_{0}')=(s,z',\Fv'):=(s,Z(s),\Fv')\ \textrm{or}\ (s,Z^l_{\mathbf{cl}}(s),\Fv'),
\end{align}
for some $s\in\R$ and $l\in\Z^+$. Furthermore, for $\ell\in\Z^+$, we also denote
\begin{align}%\label{cy-p2}
Z^\ell_{\mathbf{cl}}(s';s,z',\Fv') &=\mathbf{1}_{[t'_{\ell+1},t'_{\ell})}(s')
Z(s';t'_\ell,z'_\ell,\Fv'_\ell),\notag\\
\FV^{\ell}_{\mathbf{cl}}(s';s,z',\Fv') &=\mathbf{1}_{[t'_{\ell+1},t'_{\ell})}(s')\FV(s';t'_\ell,z'_\ell,\Fv'_\ell),\notag
\end{align}
where $Z$ and $\FV$ are defined in \eqref{chl-sol}.
To be consistent, we set 
$$
[Z^{0}_{\mathbf{cl}}(s'),\FV^{0}_{\mathbf{cl}}(s')]:=[Z(s'),\FV(s')].
$$

Iterating \eqref{H2sum1} again, one has
\begin{align}\label{f2-itr}
|w^{l_\infty}&\hat{f}_{R,2}|\leq \frac{C}{\eps^2}\int_{t_1}^te^{-\int_{s}^t\frac{\nu(\FV(\tau))}{2\eps}d\tau}\int_{\R^3}{\bf k}_{w}(\FV(s),\Fv')
\int_{t'_1}^{s}e^{-\int_{s'}^s\frac{\nu(\FV(\tau))}{2\eps}d\tau}\notag\\
&\qquad\quad\times\int_{\R^3}{\bf k}_{w}(\FV(s';s,Z(s),\Fv'),\Fv'')
\notag\\&\qquad\qquad\times|(w^{l_\infty}\hat{f}_{R,2})(s',Z(s';s,Z(s),\Fv'),\Fv'')|~d\Fv''ds'd\Fv'ds\notag\\
&+\frac{C}{\eps^2}\int_{t_1}^te^{-\int_{s}^t\frac{\nu(\FV(\tau))}{2\eps}d\tau}\int_{\R^3}{\bf k}_{w}(\FV(s),\Fv')e^{-\int_{t_1'}^s\frac{\nu(\FV(\tau))}{2\eps}d\tau}
\notag\\
&\qquad\times\sum\limits_{\ell=1}^{\RL-1}\int_{\prod\limits_{j=1}^{\RL-1}\CV'_j}
\int_{t'_{\ell+1}}^{t'_\ell}\int_{\R^3}{\bf k}_{w}(\FV^{\ell}_{\mathbf{cl}}(s';s,Z(s),\Fv'),\Fv'')
\notag
\\&\qquad\quad\times|(w^{l_\infty}\hat{f}_{R,2})(s',Z^{\ell}_{\mathbf{cl}}(s';s,Z(s),\Fv'),\Fv'')|d\Fv''\,d\Sigma_{\ell}(s')ds'd\Fv'ds\notag\\
&+\frac{C}{\eps^2}e^{-\int_{t_1}^t\frac{\nu(\FV(\tau))}{2\eps}d\tau}\sum\limits_{l=1}^{\RL-1}\int_{\prod\limits_{j=1}^{\RL-1}\CV_j}
\int_{t_{l+1}}^{t_l}\int_{\R^3}{\bf k}_{w}(\FV^{l}_{\mathbf{cl}}(s),\Fv')
\notag\\
&\qquad\times\int_{t'_1}^{s}e^{-\int_{s'}^s\frac{\nu(\FV(\tau))}{2\eps}d\tau}\int_{\R^3}{\bf k}_{w}(\FV(s';s,Z^{l}_{\mathbf{cl}}(s),\Fv'),\Fv'')
\notag\\&\qquad\quad\times|(w^{l_\infty}\hat{f}_{R,2})(s',Z(s';s,Z^{l}_{\mathbf{cl}}(s),\Fv'),\Fv'')|~d\Fv''ds'd\Fv'\,d\Sigma_{l}(s)ds
\notag\\&+\frac{C}{\eps^2}e^{-\int_{t_1}^t\frac{\nu(\FV(\tau))}{2\eps}d\tau}\sum\limits_{l=1}^{\RL-1}\int_{\prod\limits_{j=1}^{\RL-1}\CV_j}
\int_{t_{l+1}}^{t_l}\int_{\R^3}{\bf k}_{w}(\FV^{l}_{\mathbf{cl}}(s;\Fv),\Fv')e^{-\int_{t_1'}^s\frac{\nu(\FV(\tau))}{2\eps}d\tau}
\notag\\
&\quad\times\sum\limits_{\ell=1}^{\RL-1}\int_{\prod\limits_{j=1}^{\RL-1}\CV'_j}
\int_{t'_{\ell+1}}^{t'_\ell}\int_{\R^3}  {\bf k}_{w}(\FV^{\ell}_{\mathbf{cl}}(s';s,Z^{l}_{\mathbf{cl}}(s),\Fv'),\Fv'')
\notag\\
&\quad\times|(w^{l_\infty}\hat{f}_{R,2})(s',Z^{\ell}_{\mathbf{cl}}(s';s,Z^{l}_{\mathbf{cl}}(s),\Fv'),\Fv'')|
d\Fv''\,d\Sigma_{\ell}(s')ds'd\Fv'\,d\Sigma_{l}(s)ds
\notag\\
&+ \frac{C}{\eps}\int_{t_1}^te^{-\int_{s}^t\frac{\nu(\FV(\tau))}{2\eps}d\tau}\int_{\R^3}{\bf k}_{w}(\FV(s),\Fv')\CQ(s)d\Fv'ds
\notag\\&+\frac{C(l_\infty)}{\eps}e^{-\int_{t_1}^t\frac{\nu(\FV(\tau))}{2\eps}d\tau}\int_{\prod\limits_{j=1}^{\RL-1}\CV_j}
\int_{t_{l+1}}^{t_l}\int_{\R^3}{\bf k}_{w}(\FV^{l}_{\mathbf{cl}}(s),\Fv')\CQ(s)d\Fv'\,d\Sigma_{l}(s)ds,
\end{align}
where
\begin{multline}
\Sigma_{\ell}(s')=\prod\limits_{j=\ell+1}^{\RL-1}d\si'_j e^{-\int_{s'}^{t'_\ell}
\CA^\eps(\tau,\FV_{\mathbf{cl}}^\ell(\tau))d\tau}{w_2}(\Fv'_\ell)d\si'_\ell \notag\\
\times\prod\limits_{j=1}^{\ell-1}\frac{{w_2}(\Fv'_j)}{{w_2}(\FV^{j}_{\mathbf{cl}}(t_{j+1}))}
e^{-\int_{t'_{j+1}}^{t'_j}
\CA^\eps(\tau,\FV_{\mathbf{cl}}^j(\tau))d\tau}d\si'_j,\notag
\end{multline}
with
$$
d\si'_j=\sqrt{2\pi}\mu (\Fv'_j)|v'_{j,z}|d\Fv'_j,\ \ \Fv'_j=(v'_{j,x},v'_{j,y},v'_{j,z}),\ 1\leq j\leq \RL-1,
$$
and
$$
\mathcal{V}'_{\ell}=\{\Fv'_\ell\in \R^{3}\ |\ \Fv'_\ell\cdot
n(z'_{\ell})>0\}.$$
\eqref{f2-itr} further provides
\begin{align}
\|w^{l_\infty}\hat{f}_{R,2}\|_{\infty}\leq \eta\|w^{l_\infty}\hat{f}_{R,2}\|_{\infty}+\eps^{-\frac{1}{2}}\|\hat{f}_{R,2}\|_2+\CQ.\label{f2-lif-sum}
\end{align}
To prove \eqref{f2-lif-sum}, we only compute the fourth term on the right hand side of \eqref{f2-itr}, because the other terms can be estimated similarly. For any sufficiently small $\ka_0>0$, we first divide $[t_{\ell+1}',t_{\ell}']$ as $[t_{\ell+1}',t_{\ell}'-\ka_0\eps]\cup(t_{\ell}'-\ka_0\eps,t_{\ell}']$, then rewrite the fourth term on the right hand side of \eqref{f2-itr} as
\begin{align}
\CJ:=& \frac{C}{\eps^2}e^{-\int_{t_1}^t\frac{\nu(\FV(\tau))}{2\eps}d\tau}\sum\limits_{l=1}^{\RL-1}\int_{\prod\limits_{j=1}^{\RL-1}\CV_j}
\int_{t_{l+1}}^{t_l}\int_{\R^3}{\bf k}_{w}(\FV^{l}_{\mathbf{cl}}(s;\Fv),\Fv')e^{-\int_{t_1'}^s\frac{\nu(\FV(\tau))}{2\eps}d\tau}
\notag\\
&\quad\times \sum\limits_{\ell=1}^{\RL-1}\int_{\prod\limits_{j=1}^{\RL-1}\CV_j}
\left(\int_{t'_{\ell+1}}^{t'_\ell-\ka_0\eps}+\int_{t'_\ell-\ka_0\eps}^{t'_\ell}\right) \int_{\R^3} {\bf k}_{w}(\FV^{\ell}_{\mathbf{cl}}(s';s,Z^{l}_{\mathbf{cl}}(s),\Fv'),\Fv'')\notag\\
&\quad\times
|(w^{l_\infty}\hat{f}_{R,2})(s',Z^{\ell}_{\mathbf{cl}}(s';s,Z^{l}_{\mathbf{cl}}(s),\Fv'),\Fv'')|
d\Fv''\,d\Sigma_{\ell}(s')ds'd\Fv'\,d\Sigma_{l}(s)ds\notag\\
:=& \CJ_1+\CJ_2.\notag
\end{align}
It should be pointed out that we always assume $t-t_{\RL}\leq T_0$ and $t'-t'_{\RL}\leq T_0$ in the following, because if $t-t_{\RL}> T_0$ or $t'-t_{\RL}'> T_0$ , the small contribution can be obtained as the way of showing \eqref{tdecay-sm}.

For $\CJ_2$, by Lemma \ref{Kop}, one has
\begin{align}
\CJ_2\leq& \frac{\ka_0C\RL}{\eps}e^{-\int_{t_1}^t\frac{\nu(\FV(\tau))}{2\eps}d\tau}\sum\limits_{l=1}^{\RL-1}\int_{\prod\limits_{j=1}^{\RL-1}\CV_j}
\int_{t_{l+1}}^{t_l}d\Sigma_{l}(s)ds
\notag\\
&\quad \times\int_{\Pi _{j=1}^{\RL-1}\mathcal{V}_{j}}
\Pi _{\ell=1}^{\RL-1}e^{\frac{\eta_0}{2}|\Fv'_{\ell}|^2}d\sigma _{\ell}\|w^{l_\infty}\hat{f}_{R,2}\|_{\infty}
\notag
\\
\leq& 
\frac{\ka_0C\RL^2}{\eps}
\int_{t_{l+1}}^{t_l}e^{-\frac{\nu_0(t-s)}{2\eps}}ds\int_{\Pi _{j=1}^{\RL-1}\mathcal{V}_{j}}
\Pi _{l=1}^{\RL-1}e^{\frac{\eta_0}{2}|\Fv_{l}|^2}d\sigma _{l}\|w^{l_\infty}\hat{f}_{R,2}\|_{\infty}
\notag
\\
\leq& 
 \ka_0C\RL^2\|w^{l_\infty}\hat{f}_{R,2}\|_{\infty}.\label{J2}
\end{align}
Note that $\ka_0\eps$ is chosen so that one can eliminate $\eps-$singularity of order one.

For  $\CJ_1$, the computation is divided into the following three cases.

\medskip
\noindent\underline{{\it Case 1.}}\\
\underline{{\it $|\FV^{l}_{\mathbf{cl}}(s;\Fv)|>M$ or $|\FV^{\ell}_{\mathbf{cl}}(s';Z^{l}_{\mathbf{cl}}(s),\Fv')|> M$.}} \\
In this case, by  Lemma \ref{Kop}, it follows that
\begin{align}
\int_{\R^3}|{\bf k}_{w}(\FV^{l}_{\mathbf{cl}}(s;\Fv),\Fv')|d\Fv',\ \textrm{or}\ \int_{\R^3}|{\bf k}_{w}(\FV^{\ell}_{\mathbf{cl}}(s';Z^{l}_{\mathbf{cl}}(s),\Fv'),\Fv'')|d\Fv''\leq\frac{C(l^\infty)}{1+M}.\notag
\end{align}
Therefore, one has by performing the similar calculations as for obtaining \eqref{J2} that
\begin{align}
|\CJ_1|\leq\frac{C(l^\infty)}{1+M}\|w^{l_\infty}\hat{f}_{R,2}\|_{\infty}.\notag
\end{align}
%Note that here and in the sequel, $l$ and $\ell$ run over $[1,k-1]$ and $[1,\imath-1]$, respectively.
\noindent\underline{{\it Case 2.}}\\
\underline{{\it $|\FV^{l}_{\mathbf{cl}}(s;\Fv)|\leq M$ and $|\Fv'|> 2M$,}}\\
\underline{{\it or $|\FV^{\ell}_{\mathbf{cl}}(s';Z^{l}_{\mathbf{cl}}(s),\Fv')|\leq M$ and $|\Fv''|> 2M$.}} \\
In this regime, we have either $|\FV^{l}_{\mathbf{cl}}(s;\Fv)-\Fv'|>M$ or $|\FV^{\ell}_{\mathbf{cl}}(s';Z^{l}_{\mathbf{cl}}(s),\Fv')-\Fv''|>M$. Then either of the following two estimates holds correspondingly
\begin{equation*}
\begin{split}
&{\bf k}_{w}(\FV^{l}_{\mathbf{cl}}(s;\Fv),\Fv')
\leq Ce^{-\frac{\vps M^2}{16}}{\bf k}_{w}(\FV^{l}_{\mathbf{cl}}(s;\Fv),\Fv')e^{\frac{\vps |\FV^{l}_{\mathbf{cl}}-\Fv'|^2}{16}},\\
&{\bf k}_{w}(\FV^{\ell}_{\mathbf{cl}}(s';Z^{l}_{\mathbf{cl}}(s),\Fv'),\Fv'')
\leq Ce^{-\frac{\vps M^2}{16}}{\bf k}_{w}(\FV^{\ell}_{\mathbf{cl}}(s';Z^{l}_{\mathbf{cl}}(s),\Fv'),\Fv'')e^{\frac{\vps |\FV^{\ell}_{\mathbf{cl}}-\Fv''|^2}{16}}.
\end{split}
\end{equation*}
This together with Lemma \ref{Kop} implies
\begin{align}
|\CJ_1|\leq C(l^\infty)e^{-\frac{\vps M^2}{16}}\|w^{l_\infty}\hat{f}_{R,2}\|_{\infty}.\notag
\end{align}
\noindent\underline{{\it Case 3.}}\\
\noindent\underline{{\it $|\FV^{l}_{\mathbf{cl}}(s;\Fv)|\leq M$, $|\Fv'|\leq2M$, $|\FV^{\ell}_{\mathbf{cl}}(s';Z^{l}_{\mathbf{cl}}(s),\Fv')|\leq M$ and $|\Fv''|\leq 2M$.}} \\The key point here is to convert  the $L^1$ integral with respect to the double $\Fv$ variables into the $L^2$ norm with respect to the variables $z$ and $\Fv$. To do so, for any large $N>0$,
we choose a number $M(N)$ to define ${\bf k}_{w,M}(\Fv,\Fv')$ as
\begin{align}
\label{kw-M}
{\bf k}_{w,M}(\Fv,\Fv')
&\equiv \mathbf{1}
_{|\Fv-\Fv^{\prime }|\geq \frac{1}{M},|\Fv^{\prime}|\leq 2M}{\bf k}_{w}(\Fv,\Fv'), {\bf k}_{w,M}(\Fv',\Fv'')
 \notag\\
&\equiv \mathbf{1}
_{|\Fv'-\Fv''|\geq \frac{1}{M},|\Fv''|\leq 2M}{\bf k}_{w}(\Fv',\Fv''),
\end{align}
and then decompose
\begin{align*}
{\bf k}_{w}(\FV^{l}_{\mathbf{cl}}(s),\Fv')&
{\bf k}_{w}(\FV^{\ell}_{\mathbf{cl}}(s'),\Fv^{\prime \prime}) 
\\&
=\{{\bf k}_{w}(\FV^{l}_{\mathbf{cl}}(s),\Fv^{\prime})-{\bf k}_{w,M}(\FV^{l}_{\mathbf{cl}}(s),\Fv^{\prime})\}{\bf k}_{w}(\FV^{\ell}_{\mathbf{cl}},\Fv^{\prime \prime})
\\&
+\{{\bf k}_{w}(\FV^{\ell}_{\mathbf{cl}}(s'),\Fv^{\prime \prime})
-{\bf k}_{w,M}(\FV^{\ell}_{\mathbf{cl}}(s'),\Fv^{\prime \prime})\}{\bf k}_{w,M}(\FV^{l}_{\mathbf{cl}},\Fv')\notag\\&+{\bf k}_{w,M}(\FV^{l}_{\mathbf{cl}}(s),\Fv^{\prime}){\bf k}_{w,M}(\FV^{\ell}_{\mathbf{cl}}(s'),\Fv^{\prime \prime}).
\end{align*}
As for deducing \eqref{J2}, the first two difference terms lead to a small
contribution in $\CJ_1$ as
\begin{equation}
\frac{C(l^\infty)}{M}\|w^{l_\infty}\hat{f}_{R,2}\|_{\infty}.  \notag
\end{equation}
For the remaining main contribution of the bounded product corresponding to the last term,
%${\bf k}_{w,M}(\FV^{l}_{\mathbf{cl}}(s),\Fv^{\prime}){\bf k}_{w,M}(\FV^{\ell}_{\mathbf{cl}}(s'),\Fv^{\prime \prime})$, 
we denote
\begin{align}\label{z-ti}
\tilde{z}
&=Z^{\ell}_{\mathbf{cl}}(s';s,Z^{l}_{\mathbf{cl}}(s),\Fv')=\mathbf{1}_{[t'_{\ell+1},t'_{\ell})}(s')
Z(s';t'_\ell,z'_\ell,\Fv'_\ell)
\notag\\ 
&=z_\ell'+(s'-t_\ell')v'_{\ell,z}
+\eps^2\int_{t_\ell'}^{s'}\int_{t_\ell'}^{\tau'}\Phi_{z}(Z(\eta'))d\eta' d\tau',
\end{align}
where $Z(\eta')=Z(\eta';t'_\ell,z'_\ell,\Fv'_\ell)$.
Applying a change of variable $v'_{\ell,z}\rightarrow\tilde{z}$, one gets
\begin{align}
\left|\frac{\pa \tilde{z}}{\pa v'_{\ell,z}}\right|=\left|\frac{\pa(z_\ell'-(t_\ell'-s')v'_{\ell,z})}{\pa v'_{\ell,z}}+\eps^2\int_{t_\ell'}^{s'}\int_{t_\ell'}^{\tau'}\frac{\pa[\Phi_{z}(Z(\eta'))]}{\pa v'_{\ell,z} }d\eta' d\tau'\right|.
\label{Jco1}
\end{align}
On the other hand, from \eqref{z-ti}, we have
\begin{align}
\frac{\pa Z(\eta')}{\pa v'_{\ell,z}}=(\eta'-t_\ell')
+\eps^2\int_{t_\ell'}^{\eta'}\int_{t_\ell'}^{\tau'}\frac{d\Phi_{z}(Z(\eta''))}{dZ}\frac{\pa Z(\eta'')}{\pa v'_{\ell,z}}d\eta'' d\tau',\notag
\end{align}
which together with Gronwall's inequality gives
\begin{align}
\left|\frac{\pa Z(\eta')}{\pa v'_{\ell,z}}\right|\leq |\eta'-t_\ell'|e^{\eps^2C(\Phi)T_0^2}.
\notag
\end{align}
Plugging this into \eqref{Jco1} and noticing that $\eps>0$ can be small enough, we further have
\begin{align}
\left|\frac{\pa \tilde{z}}{\pa v'_{\ell,z}}\right|\gtrsim |t_\ell'-s'|\geqslant \ka_0\eps,\label{Jco2}
\end{align}
due to $s'\in(t_{\ell+1}',t_{\ell}'-\ka_0\eps).$ We now compute the remaining delicate term as follows
\begin{align}
\frac{C}{\eps^2}&e^{-\int_{t_1}^t\frac{\nu(\FV(\tau))}{2\eps}d\tau}\notag\\&\qquad \times
\sum\limits_{l=1}^{\RL-1}\int_{\prod\limits_{j=1}^{\RL-1}\CV_j}
\int_{t_{l+1}}^{t_l}\int_{|\Fv'|\leqslant2M}{\bf k}_{w,M}(\FV^{l}_{\mathbf{cl}}(s;\Fv),\Fv')e^{-\int_{t_1'}^s\frac{\nu(\FV(\tau))}{2\eps}d\tau}
\notag\\
&\qquad\times\sum\limits_{\ell=1}^{\RL-1}\int_{\prod\limits_{j=1}^{\RL-1}\CV'_j}
\int_{t'_{\ell+1}}^{t'_\ell-\ka_0\eps} \int_{|\Fv''|\leqslant2M} {\bf k}_{w,M}(\FV^{\ell}_{\mathbf{cl}}(s';s,Z^{l}_{\mathbf{cl}}(s),\Fv'),\Fv'')
\notag\\&\qquad \times
|(w^{l_\infty}\hat{f}_{R,2})(s',Z^{\ell}_{\mathbf{cl}}(s';s,Z^{l}_{\mathbf{cl}}(s),\Fv'),\Fv'')|
d\Fv''\,d\Sigma'_{\ell}(s')ds'd\Fv'\,d\Sigma_{l}(s)ds\notag\\
\lesssim&\frac{C(l^\infty,M)}{\eps^2}e^{-\int_{t_1}^t\frac{\nu(\FV(\tau))}{2\eps}d\tau}\sum\limits_{l=1}^{\RL-1}
\int_{\prod\limits_{j=1}^{\RL-1}\CV_j}
\int_{t_{l+1}}^{t_l}\int_{|\Fv'|\leqslant2M}e^{-\int_{t_1'}^s\frac{\nu(\FV(\tau))}{2\eps}d\tau}
\notag\\& \qquad \times \sum\limits_{\ell=1}^{\RL-1}\int_{\prod\limits_{j=1,j\neq\ell}^{\RL-1}\CV'_j}
\int_{t'_{\ell+1}}^{t'_\ell-\ka_0\eps}\prod\limits_{j=\ell+1}^{\RL-1}d\si'_j\int_{\CV_\ell}\int_{|\Fv''|\leqslant2M} \notag\\
&\qquad\times|\hat{f}_{R,2}(s',Z^{\ell}_{\mathbf{cl}}(s';s,Z^{l}_{\mathbf{cl}}(s),\Fv'),\Fv'')|
d\Fv'' e^{-\int_{s'}^{t'_\ell}
\CA^\eps(\tau,\FV_{\mathbf{cl}}^\ell(\tau))d\tau}{w_2}(\Fv'_\ell)d\si'_\ell\notag\\ &\qquad \times\prod\limits_{j=1}^{\ell-1}e^{\frac{\eta_0}{2}|\Fv'_{j}|^2}
e^{-\int_{t'_{j+1}}^{t'_j}
\CA^\eps(\tau,\FV_{\mathbf{cl}}^j(\tau))d\tau}d\si'_j
ds'd\Fv'\,d\Sigma_{l}(s)ds
\notag\\
\lesssim&\frac{C(l^\infty,M)}{\eps^2}
\int_{t_{l+1}}^{t_l}e^{-\frac{\nu_0(t-s)}{2\eps}}ds
\int_{\CV'_\ell}\int_{t'_{\ell+1}}^{t'_\ell-\ka_0\eps}e^{-\frac{\nu_0(s-s')}{2\eps}}\notag\\
&\qquad\times\int_{|\Fv''|\leqslant2M} |\hat{f}_{R,2}(s',Z^{\ell}_{\mathbf{cl}}(s';s,Z^{l}_{\mathbf{cl}}(s),\Fv'),\Fv'')|
d\Fv''{w_2}(\Fv'_\ell)d\si'_\ell ds'\notag\\
\lesssim&\frac{C(l^\infty,M)}{\eps^2}
\int_{t_{l+1}}^{t_l}e^{-\frac{\nu_0(t-s)}{2\eps}}ds
\int_{v'_{\ell,z}>0,\ |\Fv'_\ell|\leq M}\int_{t'_{\ell+1}}^{t'_\ell-\ka_0\eps}e^{-\frac{\nu_0(s-s')}{2\eps}}\notag\\&\qquad\times\int_{|\Fv''|\leqslant2M} |\hat{f}_{R,2}(s',Z^{\ell}_{\mathbf{cl}}(s';s,Z^{l}_{\mathbf{cl}}(s),\Fv'),\Fv'')|
d\Fv''d{w_2}(\Fv'_\ell)\si'_\ell ds'\notag\\
&+\frac{C(l^\infty,M)}{\eps^2}
\int_{t_{l+1}}^{t_l}e^{-\frac{\nu_0(t-s)}{2\eps}}ds
\int_{v'_{\ell,z}>0,\ |\Fv'_\ell|\geq M}\int_{t'_{\ell+1}}^{t'_\ell-\ka_0\eps}e^{-\frac{\nu_0(s-s')}{2\eps}}\notag\\&\qquad\times\int_{|\Fv''|\leqslant2M} |\hat{f}_{R,2}(s',Z^{\ell}_{\mathbf{cl}}(s';s,Z^{l}_{\mathbf{cl}}(s),\Fv'),\Fv'')|
d\Fv''d{w_2}(\Fv'_\ell)\si'_\ell ds'\notag\\
\lesssim&\frac{C(l^\infty,M)}{\eps^2}
\int_{t_{l+1}}^{t_l}e^{-\frac{\nu_0(t-s)}{2\eps}}ds
\int_{v'_{\ell,z}>0,\ |\Fv'_\ell|\leq M'}\int_{t'_{\ell+1}}^{t'_\ell-\ka_0\eps}e^{-\frac{\nu_0(s-s')}{2\eps}}\notag\\&\qquad\times\int_{|\Fv''|\leqslant2M} |\hat{f}_{R,2}(s',Z^{\ell}_{\mathbf{cl}}(s';s,Z^{l}_{\mathbf{cl}}(s),\Fv'),\Fv'')|
d\Fv''{w_2}(\Fv'_\ell)d\si'_\ell ds'\notag\\
&+\frac{C(l^\infty,M)}{M'}\|\hat{f}_{R,2}\|_\infty\notag\\
\lesssim& C(l^\infty,M)
\int_{v'_{\ell,z}>0,\ |\Fv'_\ell|\leq M'}\int_{|\Fv''|\leqslant2M} |\hat{f}_2(s',Z^{\ell}_{\mathbf{cl}}(s';s,Z^{l}_{\mathbf{cl}}(s),\Fv'),\Fv'')|
d\Fv''\notag\\
&\qquad\times\sqrt{2\pi\mu(\Fv_\ell')}v'_{\ell,z}d\Fv'_{\ell} \notag\\
&+\frac{C(l^\infty,M)}{M'}\|\hat{f}_{R,2}\|_\infty.
\notag
\end{align}
Utilizing H\"older's inequality and \eqref{Jco2}, we see that the above bound is further dominated by
\begin{align}
C(l^\infty,M)&
\left\{\int_{v'_{\ell,z}>0,\ |\Fv'_\ell|\leq M'}\int_{|\Fv''|\leqslant2M} |\hat{f}_{R,2}(s',Z^{\ell}_{\mathbf{cl}}(s'),\Fv'')|^2
d\Fv''d\Fv'_{\ell}\right\}^{\frac{1}{2}}
\notag \\ \notag
& 
+\frac{C(l^\infty,M)}{M'}\|\hat{f}_{R,2}\|_\infty\\
\lesssim &
\frac{C(l^\infty,M)}{\eps^{\frac{1}{2}}}
\left\{\int_{-1}^1\int_{|\Fv''|\leqslant2M} |\hat{f}_{R,2}(s',\tilde{z},\Fv'')|^2
d\Fv''d\tilde{z} \right\}^{\frac{1}{2}}
\notag\\&+
\frac{C(l^\infty,M)}{M'}\|\hat{f}_{R,2}\|_\infty,\label{de-j2}
\end{align}
where $M'$ is a large positive constant.
Putting all the estimates above together, we see that \eqref{f2-lif-sum} is valid. Furthermore, \eqref{f2-lif-sum} together with Lemma \ref{coef-es} gives
\begin{align}\label{f2-sum}
\sum\limits_{\bar{k}\in\Z^2}\|w^{l_\infty}\hat{f}_{R,2}\|_{\infty}
\leq&
\eps^{-\frac{1}{2}}\sum\limits_{\bar{k}\in\Z^2}\|\hat{f}_{R,2}\|_2
+C\sum\limits_{\bar{k}\in\Z^2}\|w^{l_\infty}\hat{f}_{R,1}\|_{\infty}
\notag\\
& +C\eps^{\frac{3}{2}}\left(\sum\limits_{\bar{k}\in\Z^2}\|w^{l_\infty}\hat{f}_{R,2}\|_{\infty}\right)^2+C\vps_{\Phi,\al}\eps^{\frac{1}{2}}.
\end{align}
Consequently, we get from \eqref{f1-sum} and \eqref{f2-sum} that
\begin{align}\label{f1f2-sum}
\eps^{-\frac{3}{2}}&\sum\limits_{\bar{k}\in\Z^2}\|w^{l_\infty}\hat{f}_{R,1}\|_{\infty}
+\sqrt{\eps}\sum\limits_{\bar{k}\in\Z^2}\|w^{l_\infty}\hat{f}_{R,2}\|_{\infty}
\notag\\ \notag
\leq& C\sum\limits_{\bar{k}\in\Z^2}\|\hat{f}_{R,2}\|_2
+C\eps^2\left(\sum\limits_{\bar{k}\in\Z^2}\|w^{l_\infty}\hat{f}_{R,2}\|_{\infty}\right)^2
\\ 
&+C\sum\limits_{\bar{k}\in\Z^2}\|w^{l_\infty}\hat{f}_{R,1}\|_{\infty}
\left\{\sum\limits_{\bar{k}\in\Z^2}\|w^{l_\infty}\hat{f}_{R,2}\|_{\infty}
+\sum\limits_{\bar{k}\in\Z^2}\|w^{l_\infty}\hat{f}_{R,1}\|_{\infty}\right\}\notag\\
&+C\vps_{\Phi,\al}\eps.
\end{align}
This is the end of the second step. Next, we need to close our final estimates, that's to obtain the $L^1_{\bar{k}}L^2_{z,\Fv}$ estimates on $f_{R,1}$ and $f_{R,2}$.

\subsection{\texorpdfstring{$L^1_{\bar{k}}L^2_{z,\Fv}$}{Lg} estimate on \texorpdfstring{$f_{R,2}$}{Lg}}
In this step, we intend to obtain the $L^1_{\bar{k}}L^2_{z,\Fv}$ control of $f_{R,2}$. Taking the inner product of \eqref{f2-ft} with $\hat{f}_{R,2}$ over $(\bar{k},z,\Fv)\in\Z^2\times(-1,1)\times\R^3$ and taking the real part of the identity, one has
\begin{align}\label{f2-ip}
\eps^{-1}Re(v_{z} &\pa_{z}\hat{f}_{R,2},\hat{f}_{R,2})
+\frac{1}{\eps^2}Re (L\hat{f}_{R,2},\hat{f}_{R,2})\notag\\
=&Re\left(\eps^{-2}(1-\chi_M)\mu^{-1/2}\CK \hat{f}_{R,1}-\frac{\al  v_xv_{z}}{2}\FP \hat{f}_{R,2},\hat{f}_{R,2}\right)
\notag\\&+\eps^{-1}Re\left(\{\hat{\Ga}(\hat{f}_{R,2},\hat{f}_1+\eps \hat{f}_2)+\Ga(\hat{f}_1+\eps \hat{f}_2,\hat{f}_{R,2})\},\hat{f}_{R,2}\right)
\notag\\&
+\eps^{-\frac{1}{2}}Re(\hat{\Ga}(\hat{f}_{R,2},\hat{f}_{R,2}),\hat{f}_{R,2}).
\end{align}
Next, using Lemma \ref{es-tri} and
the generalized Minkowski's inequality \eqref{gm-ine}, one has
\begin{align}\label{tri-es-ga}
&\eps^{-\frac{1}{2}}|(\hat{\Ga}(\hat{f}_{R,2},\hat{f}_{R,2}),\hat{f}_{R,2})|\notag\\
\lesssim&
\frac{1}{\eps^2}\|\{\FI-\FP\}\hat{f}_{R,2}\|^2_{\nu}
+
\eps\int_{-1}^1\left(\sum_{\bar{l}}\|\hat{f}_{R,2}(\bar{k}-\bar{l})\|_2
\|\hat{f}_{R,2}(\bar{l})\|_\nu\right)^2dz \notag\\
\lesssim
&
\frac{1}{\eps^2}\|\{\FI-\FP\}\hat{f}_{R,2}\|^2_{\nu}
+
\eps\left\{\sum_{\bar{l}}\|w^{l_\infty}\hat{f}_{R,2}(\bar{k}-\bar{l})\|_{\infty}\|\hat{f}_{R,2}(\bar{l})\|_{\nu}\right\}^2.
\end{align}
Thus, \eqref{tri-es-ga}, \eqref{f2-ip} and \eqref{f2-ft-bd} give
\begin{align}
\frac{\de_0}{\eps^2}\|\{\FI-\FP\}&\hat{f}_{R,2}\|^2_\nu+\frac{1}{2\eps}|\{\FI-P_\ga\}\hat{f}_{R,2}|_{2,+}^2
\notag\\
\lesssim& \frac{C_\eta}{\eps^4}\|\hat{f}_{R,1}\|_2^2+(\eta+C\al)\|\hat{f}_{R,2}\|_2^2
\notag\\&
+C\eps^{-1}Re\left(\{\hat{\Ga}(\hat{f}_{R,2},\hat{f}_1+\eps \hat{f}_2)+\Ga(\hat{f}_1+\eps \hat{f}_2,\hat{f}_{R,2})\},\hat{f}_{R,2}\right)
\notag\\&
+\eps^{-\frac{1}{2}}|Re(\hat{\Ga}(\hat{f}_{R,2},\hat{f}_{R,2}),\hat{f}_{R,2})|+\eta|P_\ga\hat{f}_{R,2}|_{2,+}^2
\notag\\&
+\frac{C_\eta}{\eps^2}\|w^{l_\infty}\hat{f}_{R,1}\|_\infty^2+C|\hat{r}|_{2,-}^2
\notag\\
\lesssim& \frac{C_\eta}{\eps^4}\|\hat{f}_{R,1}\|_2^2+(\eta+C\al)\|\hat{f}_{R,2}\|_2^2
+\frac{\eta}{\eps^2}\|\{\FI-\FP\}\hat{f}_{R,2}\|^2_\nu
\notag\\&
+C_\eta\left\{\sum\limits_{\bar{l}\in Z^2}\|\hat{f}_{R,2}(\bar{k}-\bar{l})\|_\nu\left\{\|w^{l_\infty}\hat{f}_{1}(\bar{l})\|_{\infty}
+\|w^{l_\infty}\hat{f}_{2}(\bar{l})\|_{\infty}\right\}\right\}^2
\notag\\
&+\eps\left\{\sum_{\bar{l}\in\Z^2}\|w^{l_\infty}\hat{f}_{R,2}(\bar{k}-\bar{l})\|_{\infty}\|\hat{f}_{R,2}(\bar{l})\|_{\nu}\right\}^2
+\eta|P_\ga\hat{f}_{R,2}|_{2,+}^2
\notag\\&
+\frac{C_\eta}{\eps^2}\|w^{l_\infty}\hat{f}_{R,1}\|_\infty^2+C|\hat{r}|_{2,-}^2,\label{f2-ip-2}
\end{align}
where the following estimate on the boundary term has been used
\begin{align}
\int_{\R^3}&v_{z}\hat{f}_{R,2}^2(1)d\Fv-\int_{\R^3}v_{z}\hat{f}_{R,2}^2(-1)d\Fv\notag\\=&\int_{v_{z}>0}v_{z}\hat{f}_{R,2}^2(1)d\Fv
-\int_{v_{z}<0}v_{z}( P_\ga\hat{f}_{R,2}+\bar{P}_{\ga}\hat{f}_{R,1}+\eps^{\frac{1}{2}}\hat{r})^2(1)d\Fv
\notag\\&-\int_{v_{z}<0}v_{z}\hat{f}_2^2(-1)d\Fv
-\int_{v_{z}>0}v_{z}( P_\ga\hat{f}_{R,2}+\bar{P}_{\ga}\hat{f}_{R,1}+\eps^{\frac{1}{2}}\hat{r})^2(-1)d\Fv\notag\\
\geq& \int_{v_{z}>0}v_{z}(\{\FI-P_\ga\}\hat{f}_{R,2})^2(1)d\Fv
+\int_{v_{z}<0}|v_{z}|(\{\FI-P_\ga\}\hat{f}_{R,2})^2(-1)d\Fv
\notag\\&-\eta\eps\int_{v_{z}<0}|v_{z}|(P_\ga\hat{f}_{R,2})^2(1)d\Fv
-\eta\eps\int_{v_{z}>0}|v_{z}|(P_\ga\hat{f}_{R,2})^2(-1)d\Fv
\notag\\
&-\frac{C_\eta}{\eps}\int_{v_{z}<0}|v_{z}||\bar{P}_{\ga}\hat{f}_{R,1}(1)|^2d\Fv
-\frac{C_\eta}{\eps}\int_{v_{z}>0}|v_{z}||\bar{P}_{\ga}\hat{f}_{R,1}(-1)|^2d\Fv\notag\\
&-C\eps|\hat{r}|^2_{2,-}\notag\\
\geq& |\{\FI-P_\ga\}\hat{f}_{R,2}|^2_{2,+}-\eta\eps|P_\ga\hat{f}_{R,2}|^2_{2,+}
-\frac{C_\eta}{\eps}\|w^{l_\infty}\hat{f}_{R,1}\|^2_{\infty}-C\eps|\hat{r}|^2_{2,-}.
\notag
\end{align}
Moreover, we have also used the notation
$$
\bar{P}_{\ga}\hat{f}_{R,1}(\pm1)=\sqrt{2\pi\mu}\int_{v_{z}\gtrless0}\hat{f}_{R,1}(\pm1)|v_{z}|d\Fv,
$$
and the orthogonality relation \eqref{r-bd} as well as the estimate
\begin{align*}
\left|\int_{v_{z}\gtrless0}\hat{f}_{R,1}(\pm1)|v_{z}|d\Fv\right|\leq C\|w^{l_\infty}\hat{f}_{R,1}\|_{\infty},\ \textrm{for}\ l_\infty>5/2.
\end{align*}
One the other hand, note that
\begin{align}\label{pga-f2}
|P_{\gamma }\hat{f}_{R,2}(\pm1)|_{2,\pm }^{2} =&\int_{v_{z}\gtrless0}\left[ \int_{\{v_{z}\gtrless0\}}\hat{f}_{R,2}(\pm1)\sqrt{\mu }|v_{z}|d\Fv\right] ^{2}2\pi\mu |v_{z}| d\Fv
 \notag\\ 
=&\left[ \int_{\{v_{z}\gtrless0\}}\hat{f}_{R,2}(\pm1)\sqrt{\mu }|v_{z}|d\Fv\right] ^{2}.
\end{align}
Then, by dividing the integral domain as
\begin{multline}\nonumber
%\begin{split}
\{\Fv\in\R^{3}:v_{z}>0\} \\
=\underbrace{\{\Fv\in \R^{3}:0<v_{z}<\varepsilon \ \text{or}\ v_{z}>1/\varepsilon\}}_{V^\vps}
\cup \{\Fv\in \R^{3}:\varepsilon \leq v_{z}\leq 1/\varepsilon \},
%\end{split}
\end{multline}%
one sees that
the grazing part of
$|P_{\gamma}\hat{f}_{R,2}(1)|_{2,+}^{2}$ is bounded by the H\"{o}lder inequality as
\begin{equation}
\begin{split}
 \Bigg(\int_{V^\vps}\mu (\Fv)|v_{z}|d\Fv\Bigg)&
\int_{v_{z}>0}|\hat{f}_{R,2}(1)|^{2}v_{z}d\Fv  \lesssim \vps\int_{v_{z}>0}|\hat{f}_{R,2}(1)|^{2}v_{z}d\Fv.
\end{split}
\label{trace1}
\end{equation}
For non-grazing region, we have by using Lemmas \ref{ukai} and \ref{es-tri} that
\begin{align}\label{trace2}
&\int_{\{\Fv\in \R^{3}:v_{z}>0\}\backslash V^\vps}|\hat{f}_{R,2}(1)|^{2}v_{z}d\Fv
\notag\\
\leq& 
C \|\hat{f}_{R,2}^2\|_{L^1} +C\|Re(\{v_{z}\pa _{z}+\eps^2\Phi(z)\cdot\na_\Fv-\al\eps v_{z}\pa_{v_x}\}\hat{f}_{R,2}|\hat{f}_{R,2})\| _{L^1}
\notag\\
\leq& 
C\|\hat{f}_{R,2}\|_2^{2}+\frac{C}{\eps}|(L\hat{f}_{R,2},\hat{f}_{R,2})|
\notag\\&
+\left|Re\left(\eps^{-1}(1-\chi_M)\mu^{-1/2}\CK \hat{f}_{R,1}-\frac{\al\eps  v_xv_{z}}{2}\FP \hat{f}_{R,2},\hat{f}_{R,2}\right)\right|
\notag\\&
+\left|Re\left(\{\hat{\Ga}(\hat{f}_{R,2},\hat{f}_1+\eps \hat{f}_2)+\hat{\Ga}(\hat{f}_1+\eps \hat{f}_2,\hat{f}_{R,2})\},\hat{f}_{R,2}\right)\right|
\notag\\&
+\eps^{\frac{1}{2}}\left|Re(\hat{\Ga}(\hat{f}_{R,2},\hat{f}_{R,2}),\hat{f}_{R,2})\right|\notag\\
\leq& C\|\hat{f}_{R,2}\|_2^{2}+\frac{C}{\eps}\|\{\FI-\FP\}\hat{f}_{R,2}\|^2_\nu+\frac{C}{\eps^2}\|w^{l_2}\hat{f}_{R,1}\|_{2}^{2}
\notag\\&+\eps^2\left\{\sum_{\bar{l}}\|w^{l_\infty}\hat{f}_{R,2}(\bar{k}-\bar{l})\|_{\infty}\|\hat{f}_{R,2}(\bar{l})\|_{\nu}\right\}^2.
\end{align}
%where we also used the triple estimate \eqref{tri-es-ga}.
As a consequence, from \eqref{pga-f2}, \eqref{trace1} and \eqref{trace2}, we arrive at
\begin{align}\label{pga-f2-2}
|P_{\gamma }\hat{f}_{R,2}(\pm1)|_{2,\pm }^{2}\leq& \vps\int_{v_{z}\gtrless0}|\hat{f}_{R,2}(\pm1)|^{2}|v_{z}|d\Fv
+C\|\hat{f}_{R,2}\|_2^{2}\notag\\&+\frac{C}{\eps}\|\{\FI-\FP\}\hat{f}_{R,2}\|^2_\nu
+\frac{C}{\eps^2}\| w^{l_2}\hat{f}_{R,1}\|_{2}^{2}\notag\\&+\eps^2\left\{\sum_{\bar{l}}\|w^{l_\infty}\hat{f}_{R,2}(\bar{k}-\bar{l})\|_{\infty}\|\hat{f}_{R,2}(\bar{l})\|_{\nu}\right\}^2.
\end{align}

Next, letting $1\gg\ka_0\gg \eta>0$, we get from the summation of $\ka_0\times\eqref{pga-f2-2}$ and \eqref{f2-ip-2} that
\begin{align}%\label{f2-ip-3}
\frac{1}{\eps^2}\|&\{\FI-\FP\}\hat{f}_{R,2}\|^2_\nu+\frac{1}{\eps}|\{\FI-P_\ga\}\hat{f}_{R,2}|_{2,+}^2
+\frac{\ka_0}{2}|P_{\gamma }\hat{f}_{R,2}(\pm1)|_{2,\pm }^{2}
\notag\\
\lesssim& 
\frac{C_\eta}{\eps^4}\|w^{l_2}\hat{f}_{R,1}\|_2^2+(\eta+C\al+\ka_0)\|\hat{f}_{R,2}\|_2^2
\notag\\&
+C_\eta\left\{\sum\limits_{\bar{l}\in Z^2}\|\hat{f}_{R,2}(\bar{k}-\bar{l})\|_\nu\{\|w^{l_\infty}\hat{f}_{1}(\bar{l})\|_{\infty}
+\|w^{l_\infty}\hat{f}_{2}(\bar{l})\|_{\infty}\}\right\}^2
\notag\\&
+\eps\left\{\sum_{\bar{l}\in\Z^2}\|w^{l_\infty}\hat{f}_{R,2}(\bar{k}-\bar{l})\|_{\infty}\|\hat{f}_{R,2}(\bar{l})\|_{\nu}\right\}^2+
\frac{C_\eta}{\eps^2}\|w^{l_\infty}\hat{f}_{R,1}\|_\infty^2+C|\hat{r}|^2_{2,-},\notag
\end{align}
which further implies
\begin{align}
\frac{1}{\eps}\sum\limits_{\bar{k}\in\Z^2}&\|\{\FI-\FP\}\hat{f}_{R,2}\|_\nu+\frac{1}{\sqrt{\eps}}\sum\limits_{\bar{k}\in\Z^2}|\{\FI-P_\ga\}\hat{f}_{R,2}|_{2,+}
\notag\\&+\sqrt{\frac{\ka_0}{2}}\sum\limits_{\bar{k}\in\Z^2}|P_{\gamma }\hat{f}_{R,2}(\pm1)|_{2,\pm }\notag\\
\lesssim& \frac{C_\eta}{\eps^2}\sum\limits_{\bar{k}\in\Z^2}\|w^{l_2}\hat{f}_{R,1}\|_2
+\sqrt{(\eta+C\al+\ka_0)}\sum\limits_{\bar{k}\in\Z^2}\|\hat{f}_{R,2}\|_2
\notag\\&+C_\eta\vps_{\Phi,\al}\sum\limits_{\bar{k}\in Z^2}\|\hat{f}_{R,2}\|_\nu
+\sqrt{\eps}\sum_{\bar{k}\in\Z^2}\|w^{l_\infty}\hat{f}_{R,2}\|_{\infty}\sum_{\bar{k}\in\Z^2}\|\hat{f}_{R,2}\|_{\nu}\notag\\&
+\frac{C_\eta}{\eps}\sum_{\bar{k}\in\Z^2}\|w^{l_\infty}\hat{f}_{R,1}\|_\infty
+C\sum_{\bar{k}\in\Z^2}|\hat{r}|^2_{2,-}.\label{f2-ip-4}
\end{align}
Finally, combing \eqref{abc-2-es} and \eqref{f2-ip-4} and utilizing Lemma \ref{coef-es}, we conclude that
\begin{align}\label{f2-l2-sum}
&\eps^{-1}\sum\limits_{\bar{k}\in\Z^2}\|\{\FI-\FP\}\hat{f}_{R,2}\|_\nu
+\sum\limits_{\bar{k}\in\Z^2}\|[\hat{a}^{(2)}_s,\hat{\Fb}^{(2)}_s,\hat{c}^{(2)}_s\|_2
\notag\\
&
\quad+\frac{1}{\sqrt{\eps}}\sum\limits_{\bar{k}\in\Z^2}|\{\FI-P_\ga\}\hat{f}_{R,2}|_{2,+}
+\sum\limits_{\bar{k}\in\Z^2}|P_{\gamma }\hat{f}_{R,2}(\pm1)|_{2,\pm }\notag\\
&\lesssim \eps^{-2}\sum\limits_{\bar{k}\in\Z^2}\|w^{l_2}\hat{f}_{R,1}\|_2+
\eps^{-1}\sum_{\bar{k}\in\Z^2}\|w^{l_\infty}\hat{f}_{R,1}\|_\infty
\notag\\
&\quad
+\sum\limits_{\bar{k}\in\Z^2}\|w^{l_\infty}\hat{f}_{R,1}\|_\infty
\sum\limits_{\bar{k}\in\Z^2}\|\hat{f}_{R,2}\|_2\notag\\
&
\quad+\sqrt{\eps}\sum_{\bar{k}\in\Z^2}\|w^{l_\infty}\hat{f}_{R,2}\|_{\infty}\sum_{\bar{k}\in\Z^2}\|\hat{f}_{R,2}\|_{\nu}
+\left(\sum\limits_{\bar{k}\in\Z^2}\|w^{l_\infty}\hat{f}_{R,1}\|_\infty\right)^2+\vps_{\Phi,\al}.
\end{align}
We then end  the $L^1_{\bar{k}}L^2_{z,\Fv}$ control of $f_{R,2}$.  The rest is to obtain the $L^1_{\bar{k}}L^2_{z,\Fv}$ estimate on $f_{R,1}$.

\subsection{\texorpdfstring{$L^1_{\bar{k}}L^2_{z,\Fv}$}{Lg} estimate on \texorpdfstring{$f_{R,1}$}{Lg}}
%\noindent\underline{{\bf Step 4. $L^1_{\bar{k}}L^2_{z,\Fv}$ estimate on $f_{R,1}$.}} 
To close the final $L^1_{\bar{k}}L^2_{z,\Fv}$ estimate for $f_{R,2}$, it is necessary to deduce the $L^1_{\bar{k}}L^2_{z,\Fv}$ estimate on $f_{R,1}$. To do this, we get from the inner product of \eqref{f1-ft} and $w^{2l_2}\hat{f}_{R,1}$ over $\Z^2\times\R^3$ and take the real part of the identity part of the resulting equality to deduce that
\begin{align}%\label{f1-ip}
\frac{1}{\eps}&\|w^{l_2}\hat{f}_{R,1}\|_\nu^2+
\int_{v_{z}>0}v_{z}w^{2l_2}\hat{f}_{R,1}^2(1)d\Fv-\int_{v_z<0}v_{z}w^{2l_2}\hat{f}_{R,1}^2(-1)d\Fv\notag\\
=&Re\left(\frac{1}{\eps}\chi_M\CK \hat{f}_{R,1},w^{2l_2}\hat{f}_{R,1}\right)
+Re\left(\frac{\eps^2}{2}\Phi(z)\cdot \Fv\sqrt{\mu}\hat{f}_{R,2},w^{2l_2}\hat{f}_{R,1}\right)
\notag\\&
-
Re\left(\frac{\al \eps v_xv_z}{2}\sqrt{\mu}\{\FI-\FP\}\hat{f}_{R,2},w^{2l_2}\hat{f}_{R,1}\right)
\notag\\&
-\eps^{\frac{5}{2}}Re\left(\Phi\cdot \na_\Fv(\sqrt{\mu}\hat{f}_{2}),w^{2l_2}\hat{f}_{R,1}\right)
\notag\\&
-
\al\eps^{\frac{3}{2}} Re\left(izk_1(\sqrt{\mu}\hat{f}_{2}),w^{2l_2}\hat{f}_{R,1}\right)+\al\eps^{\frac{3}{2}} Re\left(v_z\pa_{v_x}(\sqrt{\mu}\hat{f}_{2}),w^{2l_2}\hat{f}_{R,1}\right)
\notag\\&
+\eps^{\frac{1}{2}}Re\left(\hat{Q}(\hat{f}_{R,1},\hat{f}_{R,1}),w^{2l_2}\hat{f}_{R,1}\right)
\notag\\&
+\eps^{\frac{1}{2}}Re\left(\hat{Q}(\hat{f}_{R,1},\sqrt{\mu}\hat{f}_{R,2})
+\hat{Q}(\sqrt{\mu}\hat{f}_{R,2},\hat{f}_{R,1}),w^{2l_2}\hat{f}_{R,1}\right)
\notag\\&
+Re\left(\hat{Q}(\hat{f}_{R,1},\sqrt{\mu}\{\hat{f}_1+\eps \hat{f}_2\})
+\hat{Q}(\sqrt{\mu}\{\hat{f}_1+\eps \hat{f}_2\},\hat{f}_{R,1}),w^{2l_2}\hat{f}_{R,1}\right)
\notag\\&+\eps^{\frac{3}{2}}Re\left(\hat{Q}(\sqrt{\mu}\hat{f}_2,\sqrt{\mu}\hat{f}_2),w^{2l_2}\hat{f}_{R,1}\right).\notag
\end{align}
The above estimate together with Lemma \ref{CK-l2-lem} gives
\begin{align}%\label{f1-ip}
\frac{1}{\eps}\|w^{l_2}\hat{f}_{R,1}\|_\nu^2
\lesssim &\vps_{\Phi,\al}^2\eps^5\|\hat{f}_{R,2}\|^2_2
+\al^2\eps^3 \|\{\FI-\FP\}\hat{f}_{R,2}\|_2^2
+\eps^4\|(1+|\bar{k}|)\hat{f}_2\|_2^2
\notag\\&+\eps^2\|w^{l_\infty}\nu^{-1}\hat{Q}(\hat{f}_{R,1},\hat{f}_{R,1})\|^2_{\infty}
+\eps^2\|w^{l_\infty}\nu^{-1}\hat{Q}(\hat{f}_{R,2},\hat{f}_{R,1})\|^2_{\infty}
\notag\\&+\eps^2\|w^{l_\infty}\nu^{-1}\hat{Q}(\hat{f}_{R,1},\hat{f}_{R,2})\|^2_{\infty}\notag\\
&+\eps\|w^{l_\infty}\nu^{-1}\hat{Q}(\hat{f}_{R,1},\sqrt{\mu}\{\hat{f}_1+\eps \hat{f}_2\})\|^2_{\infty}\notag\\
&+\eps\|w^{l_\infty}\nu^{-1}\hat{Q}(\sqrt{\mu}\{\hat{f}_1+\eps \hat{f}_2\},\hat{f}_{R,1})\|^2_{\infty}
\notag\\&
+C_\eta\eps^{4}\|\nu^{-1}w^{l_\infty}\hat{Q}(\sqrt{\mu}\hat{f}_2,\sqrt{\mu}\hat{f}_2)\|_\infty^2,\notag
\end{align}
which further yields that
\begin{align}%\label{f1-ip-2}
\frac{1}{\eps}\|w^{l_2}\hat{f}_{R,1}\|_\nu^2
\lesssim &\vps_{\Phi,\al}^2\eps^5\|w^{l_\infty}\hat{f}_{R,2}\|^2_\infty
+\al^2\eps^3 \|\{\FI-\FP\}\hat{f}_{R,2}\|_2^2
+\eps^4\|(1+|\bar{k}|)\hat{f}_2\|_2^2
\notag\\&+\eps^2\left\{\sum_{\bar{l}\in\Z^2}\|w^{l_\infty}\hat{f}_{R,1}(\bar{k}-\bar{l})\|_{\infty}
\|w^{l_\infty}\hat{f}_{R,1}(\bar{l})\|_{\infty}\right\}^2
\notag\\&+\eps^2\left\{\sum_{\bar{l}\in\Z^2}\|w^{l_\infty}\hat{f}_{R,2}(\bar{k}-\bar{l})\|_\infty
\|w^{l_\infty}\hat{f}_{R,1}(\bar{l})\|_{\infty}\right\}^2
\notag\\
&+\eps\left\{\sum_{\bar{l}\in\Z^2}\{\|\hat{f}_{1}(\bar{k}-\bar{l})\|_\infty+\|\hat{f}_{2}(\bar{k}-\bar{l})\|_\infty\}
\|w^{l_\infty}\hat{f}_{R,1}(\bar{l})\|_{\infty}\right\}^2
\notag\\&+\eps^{4}\left\{\sum_{\bar{l}\in\Z^2}\|\hat{f}_{2}(\bar{k}-\bar{l})\|_\infty\|\hat{f}_{2}(\bar{l})\|_\infty\right\}^2.\notag
\end{align}
Therefore, it follows
\begin{align}\label{f1-l2-sum}
\eps^{-\frac{5}{2}}\sum\limits_{\bar{k}\in\Z^2}\|w^{l_2}\hat{f}_{R,1}\|_\nu
\lesssim &\vps_{\Phi,\al}\sqrt{\eps}\sum\limits_{\bar{k}\in\Z^2}\|w^{l_\infty}\hat{f}_{R,2}\|_\infty
+\al\eps^{-1}\sum\limits_{\bar{k}\in\Z^2}\|\{\FI-\FP\}\hat{f}_{R,2}\|_2
\notag\\&
+\vps_{\Phi,\al}
+\eps^{-1}\left\{\sum_{\bar{k}\in\Z^2}\|w^{l_\infty}\hat{f}_{R,1}\|_{\infty}\right\}^2
\notag\\&
+\eps^{-1}\sum_{Z\in\Z^2}\|w^{l_\infty}\hat{f}_{R,2}(\bar{k})\|_{\infty}\sum_{\bar{k}\in\Z^2}\|w^{l_\infty}\hat{f}_{R,1}\|_{\infty}
\notag\\
&+\vps_{\Phi,\al}\eps^{-\frac{3}{2}}\sum_{\bar{k}\in\Z^2}\|w^{l_\infty}\hat{f}_{R,1}(\bar{k})\|_{\infty}.
\end{align}
Finally, by putting \eqref{f1f2-sum}, \eqref{f2-l2-sum} and \eqref{f1-l2-sum} together, we conclude that
\begin{align}%\label{fr-sum}
\eps^{-\frac{3}{2}}\sum\limits_{\bar{k}\in\Z^2}&\|w^{l_\infty}\hat{f}_{R,1}\|_{\infty}+\eps^{-\frac{5}{2}}\sum\limits_{\bar{k}\in\Z^2}\|w^{l_2}\hat{f}_{R,1}\|_\nu
+\eps^{\frac{1}{2}}\sum\limits_{\hat{k}\in\Z^2}\|w^{l_\infty}\hat{f}_{R,2}\|_{\infty}
\notag\\&+\sum\limits_{\bar{k}\in\Z^2}\|\FP\hat{f}_{R,2}\|_2
+\eps^{-1}\sum\limits_{\bar{k}\in\Z^2}\|\{\FI-\FP\}\hat{f}_{R,2}\|_\nu\lesssim \vps_{\Phi,\al}.\notag
\end{align}
Hence, the desired a priori estimate \eqref{fr-es-lem} has been proved.
\qed
 
%this ends the proof of Theorem \ref{st-sol-th}.
%\end{proof}

%\subsection{Proof of Theorem \ref{st-sol-th}} \Red{To add a few words...?}

\section{Unsteady problem}\label{sec-usp}
In this section, we are concerned with the following time-evolutionary problem
\begin{align}\label{us-rbe}
\eps\pa_tF^\eps+\Fv\cdot\na_\Fx F^\eps+\eps^2\Phi\cdot\na_\Fv F^\eps+\al\eps z\pa_{x}F^\eps-\al\eps v_z\pa_{v_x}F^\eps=\frac{1}{\eps}Q(F^\eps,F^\eps),
\end{align}
for $t>0$, $\Fx\in\Om$, $\Fv\in\R^3$, where
\begin{align}\label{us-id}
F^\eps(0,\Fx,\Fv)=F_0^\eps(\Fx,\Fv),
\end{align}
and 
\begin{align}\label{us-obd}
F^\eps(t,x,y,\pm1,\Fv)|_{v_z\lessgtr0}=\sqrt{2\pi}\mu\int_{\tv_z\gtrless0}F^\eps(t,x,y,\pm1,\FTv)|\tv_z|d\FTv.
\end{align}

To solve \eqref{us-rbe}, \eqref{us-id} and \eqref{us-obd} around the obtained steady solution $F^\eps_{st}$, we set
\begin{align}
F^\eps &=F^\eps_{st}+\eps\sqrt{\mu}\{g_1+\eps g_2+\eps^{\frac{1}{2}}g_R\}
\notag \\
& =\mu+\eps\sqrt{\mu}\{f_1+\eps f_2+\eps^{\frac{1}{2}}f_R\}+\eps\sqrt{\mu}\{g_1+\eps g_2+\eps^{\frac{1}{2}}g_R\},
\notag
\end{align}
where the expansion \eqref{st-exp} for $F^\eps_{st}$ has been used.
Substituting the above into \eqref{us-rbe} and comparing the orders of $\eps$, one has the following equations for coefficients $g_1$ and $g_2$:
\begin{align}\label{g1}
Lg_1=0,
\end{align}
\begin{align}\label{mi-g2}
\Fv\cdot\na_\Fx g_1+Lg_2=\Ga(g_1,g_1)+\Ga(f_1,g_1)+\Ga(g_1,f_1),
\end{align}
and
\begin{align}\label{ev-g1}
\pa_tg_1&+\Fv\cdot\na_\Fx g_2+\al z\pa_{x}g_1-\al\mu^{-\frac{1}{2}}v_z\pa_{v_x}\{\sqrt{\mu}g_1\}\notag\\
=&\Ga(g_1,g_2)+\Ga(g_2,g_1)+\Ga(f_1,g_2)+\Ga(g_2,f_1)+\Ga(g_1,f_2)+\Ga(f_2,g_1),
\end{align}
as well as the equation for the remainder $g_R$:
\begin{align}%\label{gr}
\eps\pa_tg_R&+\Fv\cdot\na_\Fx g_R+\eps^2\Phi\cdot\na_\Fv g_R+\al\eps z\pa_{x}g_R-\al\eps v_z\pa_{v_x}g_R
+\frac{1}{\eps}Lg_R
\notag\\
=&
\eps^{\frac{3}{2}}\pa_tg_2-\eps^{\frac{3}{2}}\al z\pa_xg_2+\eps^{\frac{3}{2}}\al\mu^{-\frac{1}{2}}v_z\pa_{v_x}\{\sqrt{\mu}g_2\}
\notag\\&-\eps^{\frac{3}{2}}\mu^{-\frac{1}{2}}\Phi\cdot\na_\Fv[\sqrt{\mu}(g_1+\eps g_2)]
+\frac{\eps^2}{2}\Phi\cdot\Fv g_R-\frac{\al\eps v_xv_z}{2}g_R\notag\\&+\eps^{\frac{1}{2}}\Ga(g_R,g_R)
+\Ga(g_R,g_1+\eps g_2)
+\Ga(g_1+\eps g_2,g_R)+\eps^{\frac{3}{2}}\Ga(g_2,g_2)\notag\\&+\Ga(g_R,f_1+\eps f_2)
+\Ga(f_1+\eps f_2,g_R)
+\Ga(f_R,g_1+\eps g_2)
\notag\\&+\Ga(g_1+\eps g_2,f_R)+\eps^{\frac{1}{2}}\{\Ga(g_R,f_R)+\Ga(f_R,g_R)\},\notag
\end{align}
supplemented with initial data
\begin{align}%\label{gr-id}
\sqrt{\mu}g_R(0,\Fx,\Fv)=\sqrt{\mu}g_{R,0}(\Fx,\Fv),\notag
\end{align}
and boundary data
\begin{align}\label{gr-bd}
g_R(t,\pm1,\Fv)|_{v_z\lessgtr0}=P_\ga g_R+\eps^{\frac{1}{2}}\tilde{r},
\end{align}
where $\tilde{r}=-g_2+P_\ga g_2.$ It is easy to see that the conservation of mass is valid, i.e.
$$
\int_{\Om\times\R^3}\sqrt{\mu}g_Rd\Fx d\Fv=0,\quad \forall\,t\geq 0.
$$

As in the steady case, we first determine $g_1$ and $g_2$. In fact, from \eqref{g1}, it follows
\begin{align}\label{g1-ex}
g_1=\left\{\rho(t,\Fx)+\Fu(t,\Fx)\cdot \Fv+\frac{|\Fv|^2-3}{2}\ta(t,\Fx)\right\}\sqrt{\mu}.
\end{align}
By \eqref{mi-g2}, one has
\begin{align}\label{bq}
\na_{\Fx}\cdot\Fu=0,\ \na_\Fx(\rho+\ta)=0,
\end{align}
and
\begin{align}\label{g2-def}
g_2=&L^{-1}\left\{-\Fv\cdot\na_\Fx g_1+\Ga(g_1,g_1)+\Ga(f_1,g_1)+\Ga(g_1,f_1)\right\}
\notag\\&
+\left\{\Fu_2\cdot \Fv+\frac{|\Fv|^2-3}{2}\ta_2\right\}\sqrt{\mu}
\notag\\
=&
-\sum\limits_{i,j=1}^3\bar{A}_{ij}\pa_iu_{j}-\sum\limits_{i=1}^3\bar{B}_i\pa_i\ta
+\{\FI-\FP\}\left\{\frac{(\Fv\cdot\Fu)^2}{2}\sqrt{\mu}\right\}
\notag\\&+(|\Fv|^2-5)(\Fv\cdot \Fu)\ta\sqrt{\mu} 
+\{\FI-\FP\}\left\{\frac{(|\Fv|^2-5)^2\ta^2}{8}\sqrt{\mu}\right\}
\notag\\&+\{\FI-\FP\}\left\{(\Fv\cdot\Fu)(\Fv\cdot\Fu_s)\sqrt{\mu}\right\}
+\{\FI-\FP\}\left\{\frac{(|\Fv|^2-5)^2\ta \ta_s}{4}\sqrt{\mu}\right\}\notag\\&+\left\{\Fu_2\cdot \Fv+\frac{|\Fv|^2-5}{2}\ta_2\right\}\sqrt{\mu},
\end{align}
where we also have used the fact that
$$
\Ga(\FP f,\FP g)+\Ga(\FP g,\FP f)=L\left\{\frac{\FP f \cdot\FP g}{\sqrt{\mu}}\right\},
$$
and $\Fu_2$ is chosen as
\begin{align}\label{uu2}
\Fu_2=\na_\Fx\Delta^{-1}\{\pa_t\rho-\al z\pa_x\rho\},\ u_{2,z}(x,y,\pm1)=0,\ \Fu_2=(u_{2,x},u_{2,y},u_{2,z}).
\end{align}
Moreover, the macroscopic temperature of $g_2$, denoted by $\ta_2$, is given by \eqref{up}. Now, by \eqref{g1-ex}, \eqref{bq}, \eqref{g2-def} and \eqref{ev-g1}, we can derive the following unsteady Navier-Stokes equations around the Couette flow $(\al z,0,0)$:
\begin{eqnarray}%\label{u-ins-s}
\left\{\begin{array}{rll}
&\na_\Fx\cdot \Fu=0,\ \ \rho=-\ta,\\[2mm]
&\pa_t\Fu+\Fu\cdot\na_x \Fu+\na_\Fx \tilde{P}+\al z\pa_{x}\Fu\\[2mm]
&\qquad+\Fu\cdot\na_x \Fu_s+\Fu_s\cdot\na_x \Fu+\al(u_{z},0,0)^T=\eta\Delta_\Fx \Fu,\\[2mm]
&\pa_t\ta+\al z\pa_{x}\ta+\na_\Fx\ta\cdot \Fu+\na_\Fx\ta\cdot \Fu_s+\na_\Fx\ta_s\cdot \Fu=\frac{2}{5}\ka\Delta_\Fx \ta,\\[2mm]
&\Fu(x,y,\pm1)=0,\ \ta(x,y,\pm1)=0,\\
&\Fu(0,\Fx)=\Fu_0(\Fx),\ \ta(0,\Fx)=\ta_0(\Fx),\notag
\end{array}\right.
\end{eqnarray}
where
\begin{align}\label{up}
\na_\Fx\tilde{P}=\na_\Fx\left\{\ta_2-\frac{1}{3}|\Fu|^2-\frac{1}{3}\Fu\cdot\Fu_s\right\}.
\end{align}
Note that we may choose
\begin{align}\label{ta2}
\ta_2=\tilde{P}-|\Om|^{-1}\int_{\Om}\tilde{P}d\Fx+\frac{1}{3}|\Fu|^2+\frac{1}{3}\Fu\cdot\Fu_s.
\end{align}

To proceed solving the remainder, let us now set
\begin{align}
\sqrt{\mu}g_R=g_{R,1}+\sqrt{\mu}g_{R,2},\label{gr-def}
\end{align}
where
\begin{align}\label{gr1}
\eps\pa_tg_{R,1}&+\Fv\cdot\na_\Fx g_{R,1}+\eps^2\Phi\cdot\na_\Fv g_{R,1}+\al\eps z\pa_{x}g_{R,1}-\al\eps v_z\pa_{v_x}g_{R,1}
+\frac{1}{\eps}\nu g_{R,1}\notag\\
=&\eps^{\frac{3}{2}}\sqrt{\mu}\pa_tg_2-\eps^{\frac{3}{2}}\al z\sqrt{\mu}\pa_xg_2+\eps^{\frac{3}{2}}\al v_z\pa_{v_x}\{\sqrt{\mu}g_2\}
\notag\\&-\eps^{\frac{3}{2}}\Phi\cdot\na_\Fv[\sqrt{\mu}(g_1+\eps g_2)]
+\frac{1}{\eps}\chi_M\CK g_{R,1}+\frac{\eps^2}{2}\sqrt{\mu}\Phi\cdot\Fv g_{R,2}
\notag\\&-\frac{\al\eps v_xv_z}{2}\sqrt{\mu}\{\FI-\FP\}g_{R,2}
+\eps^{\frac{1}{2}}Q(g_{R,1},g_{R,1})+\eps^{\frac{1}{2}}Q(g_{R,1},\sqrt{\mu}g_{R,2})\notag\\&+\eps^{\frac{1}{2}}Q(\sqrt{\mu}g_{R,2},g_{R,1})
+Q(\sqrt{\mu}g_R,\sqrt{\mu}(g_1+\eps g_2))
\notag\\&+Q(\sqrt{\mu}(g_1+\eps g_2),\sqrt{\mu}g_R)+\eps^{\frac{3}{2}}Q(\sqrt{\mu}g_2,\sqrt{\mu}g_2)\notag\\&+Q(g_{R,1},\sqrt{\mu}(f_1+\eps f_2))
+Q(\sqrt{\mu}(f_1+\eps f_2),g_{R,1})\notag\\&+Q(\sqrt{\mu}f_R,\sqrt{\mu}(g_1+\eps g_2))
+Q(\sqrt{\mu}(g_1+\eps g_2),\sqrt{\mu}f_R)
\notag\\&+\eps^{\frac{1}{2}}\{Q(\sqrt{\mu}g_{R},f_{R,1})+Q(f_{R,1},\sqrt{\mu}g_{R})\}
+\eps^{\frac{1}{2}}\{Q(g_{R,1},\sqrt{\mu}f_{R,2})\notag\\& +Q(\sqrt{\mu}f_{R,2},g_{R,1})\},
\end{align}
\begin{align}\label{gr1-id}
g_{R,1}(0,\Fx,\Fv)=\eps^{\frac{3}{2}} g^{(1)}_{R,0}(\Fx,\Fv),
\end{align}
\begin{align}\label{gr1-bd}
g_{R,1}(t,\pm1,\Fv)|_{v_z\lessgtr0}=0,
\end{align}
and
\begin{align}\label{gr2}
\eps\pa_tg_{R,2}&+\Fv\cdot\na_\Fx g_{R,2}+\eps^2\Phi\cdot\na_\Fv g_{R,2}+\al\eps z\pa_{x}g_{R,2}-\al\eps v_z\pa_{v_x}g_{R,2}
+\frac{1}{\eps}Lg_{R,2}\notag\\
=&\frac{1}{\eps}\mu^{-\frac{1}{2}}(1-\chi_M)\CK g_{R,1}-\frac{\al\eps v_xv_z}{2}\FP g_{R,2}
+\eps^{\frac{1}{2}}\Ga(g_{R,2},g_{R,2})\notag\\
&+\Ga(g_{R,2},g_1+\eps g_2)
+\Ga(g_1+\eps g_2,g_{R,2})+\Ga(g_{R,2},f_1+\eps f_2)
\notag\\&+\Ga(f_1+\eps f_2,g_{R,2})+\eps^{\frac{1}{2}}\{\Ga(g_{R,2},f_{R,2})+\Ga(f_{R,2},g_{R,2})\},
\end{align}
\begin{align}\label{gr2-id}
g_{R,2}(0,\Fx,\Fv)=g^{(2)}_{R,0}(\Fx,\Fv),
\end{align}
\begin{align}\label{gr2-bd}
g_{R,2}(t,\pm1,\Fv)|_{v_z\lessgtr0}=P_\ga g_{R,2}+\bar{P}_\ga g_{R,1}+\eps^{\frac{1}{2}}\tilde{r}.
\end{align}
Note that $\sqrt{\mu}g^{(2)}_{R,0}=\eps^{\frac{3}{2}} g^{(1)}_{R,0}+\sqrt{\mu}g^{(2)}_{R,0}$.
To analyze the long time behavior of the solutions, we may set
\begin{align}
[h_{R,1},h_{R,2}]=e^{\la_0 t}[g_{R,1},g_{R,2}](t),\notag
\end{align}
where $\la_0>0$ is a positive small constant to be given in Lemma \ref{ust-sh-lem}. Consequently, the system \eqref{gr1}, \eqref{gr1-id}, \eqref{gr1-bd}, \eqref{gr2}, \eqref{gr2-id} and \eqref{gr2-bd} can be rewritten as
\begin{align}\label{hr1}
\eps\pa_th_{R,1}&+\Fv\cdot\na_\Fx h_{R,1}+\eps^2\Phi\cdot\na_\Fv h_{R,1}+\al\eps z\pa_{x}h_{R,1}
\notag\\
&-\al\eps v_z\pa_{v_x}h_{R,1}
+\frac{1}{\eps}\nu h_{R,1}-\eps\la_0 h_{R,1}\notag\\
=&\eps^{\frac{3}{2}}\sqrt{\mu}\pa_th_2-\eps^{\frac{3}{2}}\al z\sqrt{\mu}\pa_xh_2+\eps^{\frac{3}{2}}\al v_z\pa_{v_x}\{\sqrt{\mu}h_2\}
\notag\\
&-\eps^{\frac{3}{2}}\Phi\cdot\na_\Fv[\sqrt{\mu}(h_1+\eps h_2)]
+
\frac{1}{\eps}\chi_M\CK h_{R,1}+\frac{\eps^2}{2}\sqrt{\mu}\Phi\cdot\Fv h_{R,2}
\notag\\
&-\frac{\al\eps v_xv_z}{2}\sqrt{\mu}\{\FI-\FP\}h_{R,2}
+\eps^{\frac{1}{2}}e^{-\la_0 t}Q(h_{R,1},h_{R,1})\notag\\
&+\eps^{\frac{1}{2}}e^{-\la_0 t}Q(h_{R,1},\sqrt{\mu}h_{R,2})
+\eps^{\frac{1}{2}}e^{-\la_0 t}Q(\sqrt{\mu}h_{R,2},h_{R,1})
\notag\\&+e^{-\la_0 t}Q(\sqrt{\mu}h_R,\sqrt{\mu}(h_1+\eps h_2))+e^{-\la_0t}Q(\sqrt{\mu}(h_1+\eps h_2),\sqrt{\mu}h_R)\notag\\
&+\eps^{\frac{3}{2}}e^{-\la_0t}Q(\sqrt{\mu}h_2,\sqrt{\mu}h_2)
+Q(h_{R,1},\sqrt{\mu}(f_1+\eps f_2))\notag\\
&+Q(\sqrt{\mu}(f_1+\eps f_2),h_{R,1})+Q(f_{R,1},\sqrt{\mu}(h_1+\eps h_2))
\notag\\&+Q(\sqrt{\mu}(h_1+\eps h_2),f_{R,1})+\eps^{\frac{1}{2}}\{Q(\sqrt{\mu}h_{R},f_{R,1})+Q(f_{R,1},\sqrt{\mu}h_{R})\}
\notag\\&+\eps^{\frac{1}{2}}\{Q(h_{R,1},\sqrt{\mu}f_{R,2})+Q(\sqrt{\mu}f_{R,2},h_{R,1})\},
\end{align}
\begin{align}\label{hr1-id}
h_{R,1}(0,\Fx,\Fv)=\eps^{\frac{3}{2}} g^{(1)}_{R,0},
\end{align}
\begin{align}\label{hr1-bd}
h_{R,1}(t,\pm1,\Fv)|_{v_z\lessgtr0}=0,
\end{align}
and
\begin{align}\label{hr2}
\eps\pa_th_{R,2}&+\Fv\cdot\na_{\Fx}h_{R,2}+\eps^2\Phi\cdot\na_\Fv h_{R,2}+\al \eps z\pa_{x}h_{R,2}\notag\\
&-\al\eps v_z\pa_{v_x}h_{R,2}
+\frac{1}{\eps}Lh_{R,2}-\eps\la_0 h_{R,2}\notag\\
=&\frac{1}{\eps}\mu^{-\frac{1}{2}}(1-\chi_M)\CK h_{R,1}-\frac{\al \eps v_xv_z}{2}\FP h_{R,2}
+\eps^{\frac{1}{2}}e^{-\la_0t}\Ga(h_{R,2},h_{R,2})\notag\\
&+e^{-\la_0t}\Ga(h_{R,2},h_1+\eps h_2)
%\notag\\&
+e^{-\la_0t}\Ga(h_1+\eps h_2,h_{R,2})\notag\\
&+\Ga(h_{R,2},f_1+\eps f_2)
+\Ga(f_1+\eps f_2,h_{R,2})+\Ga(f_{R,2},h_1+\eps h_2)
\notag\\&+\Ga(h_1+\eps h_2,f_{R,2})+\eps^{\frac{1}{2}}\{\Ga(h_{R,2},f_{R,2})+\Ga(f_{R,2},h_{R,2})\},
\end{align}
\begin{align}\label{hr2-id}
h_{R,2}(0,\Fx,\Fv)=g^{(2)}_{R,0}(\Fx,\Fv),
\end{align}
\begin{align}\label{hr2-bd}
h_{R,2}(t,\pm1,\Fv)|_{v_z\lessgtr0}=P_\ga h_{R,2}+\bar{P}_\ga h_{R,1}+\eps^{\frac{1}{2}}e^{\la_0t}\tilde{r},
\end{align}
where $h_R=e^{\la_0t}g_R$, $h_i=e^{\la_0t}g_i$.

We are now ready to complete the proof of Theorem \ref{sta-th}.

\begin{proof}[The proof of Theorem \ref{sta-th}]The proof of Theorem \ref{sta-th} is established on the basis of the local existence of the system
\eqref{hr1}, \eqref{hr1-id}, \eqref{hr1-bd}, \eqref{hr2}, \eqref{hr2-id} and \eqref{hr2-bd} and the {\it a priori} estimate as well as a continuity argument. Here we will focus solely on deducing the {\it a priori} estimate as \eqref{rm-de} for the sake of brevity.
We now turn to establish the $L^\infty$ estimates for $h_{R,1}$ and $h_{R,2}$ based on the {\it a priori} assumption that
\begin{align}\label{ust-apa}
\eps^{-\frac{3}{2}}\sum\limits_{\bar{k}\in\Z^2}\sup\limits_{0\leq \tau\leq t}\|w^{l_\infty}\hat{h}_{R,1}(\tau)\|_{\infty}
+\sqrt{\eps}\sum\limits_{\bar{k}\in\Z^2}\sup\limits_{0\leq \tau\leq t}\|w^{l_\infty}\hat{h}_{R,2}(\tau)\|_{\infty}\leq \sqrt{\vps_0},
\end{align}
with $t\in[0,+\infty).$

We divide the proof in the  following four steps.

\medskip
\noindent\underline{{\bf Step 1. $L_{\bar{k}}^1L_{z,\Fv}^\infty$ estimates.}} Taking the Fourier transform of \eqref{hr1}, \eqref{hr1-id}, \eqref{hr1-bd}, \eqref{hr2}, \eqref{hr2-id} and \eqref{hr2-bd}, one has
\begin{align}\label{f-hr1}
\pa_t\hat{h}_{R,1}&+\eps^{-1}i\bar{k}\cdot\bar{v}\hat{h}_{R,1}+\eps^{-1}v_z\pa_z\hat{h}_{R,1}+\eps\Phi\cdot\na_\Fv \hat{h}_{R,1}\notag\\
&\qquad\qquad \qquad  +i\al zk_x\hat{h}_{R,1}-\al v_z\pa_{v_x}\hat{h}_{R,1}
+\frac{1}{\eps^2}\nu \hat{h}_{R,1}-\la_0 \hat{h}_{R,1}\notag\\
=&-\eps^{\frac{1}{2}}\sqrt{\mu}\pa_t\hat{h}_2+\eps^{\frac{1}{2}}\al z\sqrt{\mu}i k_x\hat{h}_2+\eps^{\frac{1}{2}}\al v_z\pa_{v_x}\{\sqrt{\mu}\hat{h}_2\}
\notag\\
&-\eps^{\frac{1}{2}}\Phi\cdot\na_\Fv[\sqrt{\mu}(h_1+\eps h_2)]
%\notag\\&
+\frac{1}{\eps^2}\chi_M\CK \hat{h}_{R,1}+\frac{\eps}{2}\sqrt{\mu}\Phi\cdot\Fv \hat{h}_{R,2}
\notag\\
&-\frac{\al v_xv_z}{2}\sqrt{\mu}\{\FI-\FP\}\hat{h}_{R,2}
%\notag\\&
+\SH_1,
\end{align}
with
\begin{align}
\SH_1=&\eps^{-\frac{1}{2}}e^{-\la_0t}\hat{Q}(\hat{h}_{R,1},\hat{h}_{R,1})
+\eps^{-\frac{1}{2}}e^{-\la_0t}\hat{Q}(\hat{h}_{R,1},\sqrt{\mu}\hat{h}_{R,2})\notag\\
&+\eps^{-\frac{1}{2}}e^{-\la_0t}\hat{Q}(\sqrt{\mu}\hat{h}_{R,2},\hat{h}_{R,1})
%\notag\\&
+\eps^{-1}e^{-\la_0t}\hat{Q}(\hat{h}_{R,1},\sqrt{\mu}(\hat{h}_1+\eps \hat{h}_2))\notag\\
&
+\eps^{-1}e^{-\la_0t}\hat{Q}(\sqrt{\mu}(\hat{h}_1+\eps \hat{h}_2),\hat{h}_{R,1})+\eps^{\frac{1}{2}}e^{-\la_0t}\hat{Q}(\sqrt{\mu}\hat{h}_2,\sqrt{\mu}\hat{h}_2)
\notag\\&
+\eps^{-1}\hat{Q}(\hat{h}_{R,1},\sqrt{\mu}(\hat{f}_1+\hat{f}_2))
+\eps^{-1}\hat{Q}(\sqrt{\mu}(\hat{f}_1+\eps \hat{f}_2),\hat{h}_{R,1})\notag\\
&+\hat{Q}(\sqrt{\mu}\hat{f}_{R,1}, \sqrt{\mu}\hat{h}_2)
+\hat{Q}(\sqrt{\mu}\hat{h}_2,\sqrt{\mu}\hat{f}_{R,1})
+\eps^{-1}\hat{Q}(\hat{f}_{R,1},\sqrt{\mu}\hat{h}_1)
\notag\\
&+\eps^{-1}\hat{Q}(\sqrt{\mu}\hat{h}_1,\hat{f}_{R,1})
+\eps^{-\frac{1}{2}}\left\{\hat{Q}(\sqrt{\mu}\hat{h}_{R},\hat{f}_{R,1})+\hat{Q}(\hat{f}_{R,1},\sqrt{\mu}\hat{h}_{R})\right\}
\notag\\
&+\eps^{-\frac{1}{2}}\left\{\hat{Q}(\hat{h}_{R,1},\sqrt{\mu}\hat{f}_{R,2})+\hat{Q}(\sqrt{\mu}\hat{f}_{R,2},\hat{h}_{R,1})\right\},\notag
\end{align}
\begin{align}\label{f-hr1-id}
\hat{h}_{R,1}(0,z,\Fv)=\eps^{\frac{3}{2}}\hat{g}^{(1)}_{R,0}(\bar{k},z,\Fv),
\end{align}
\begin{align}\label{f-hr1-bd}
\hat{h}_{R,1}(t,\pm1,\Fv)|_{v_z\lessgtr0}=0,
\end{align}
and
\begin{align}\label{f-hr2}
\pa_t\hat{h}_{R,2}&+\eps^{-1}i\bar{k}\cdot\bar{v}\hat{h}_{R,2}+\eps^{-1}v_z\pa_z\hat{h}_{R,2}+\eps\Phi\cdot\na_\Fv \hat{h}_{R,2}\notag\\
&\qquad \qquad +i\al zk_x\hat{h}_{R,2}-\al v_z\pa_{v_x}\hat{h}_{R,2}
+\frac{1}{\eps^2}L\hat{h}_{R,2}-\la_0 \hat{h}_{R,2}\notag\\
=&\frac{1}{\eps^2}\mu^{-\frac{1}{2}}(1-\chi_M)\CK h_{R,1}-\frac{\al v_xv_z}{2}\FP \hat{h}_{R,2}+\SH_2
\end{align}
with
\begin{align}
\SH_2=&\eps^{-\frac{1}{2}}e^{-\la_0t}\hat{\Ga}(\hat{h}_{R,2},\hat{h}_{R,2})+\eps^{-1}e^{-\la_0t}\hat{\Ga}(\hat{h}_{R,2},\hat{h}_1+\eps \hat{h}_2)
\notag\\&+\eps^{-1}e^{-\la_0t}\hat{\Ga}(\hat{h}_1+\eps \hat{h}_2,\hat{h}_{R,2})+\eps^{-1}\hat{\Ga}(\hat{h}_{R,2},\hat{f}_1+\eps \hat{f}_2)\notag\\
&+\eps^{-1}\hat{\Ga}(\hat{f}_1+\eps \hat{f}_2,\hat{h}_{R,2})
%\notag\\&
+\eps^{-\frac{1}{2}}\{\hat{\Ga}(\hat{h}_{R,2},\hat{f}_{R,2})+\hat{\Ga}(\hat{f}_{R,2},\hat{h}_{R,2})\}\notag\\
&+\eps^{-1}\hat{\Ga}(\hat{f}_{R,2},\hat{h}_1+\eps\hat{h}_2)
+\eps^{-1}\hat{\Ga}(\hat{h}_1+\eps\hat{h}_2,\hat{f}_{R,2}),\notag
\end{align}
\begin{align}\label{f-hr2-id}
\hat{h}_{R,2}(0,\bar{k},z,\Fv)=\hat{g}^{(2)}_{R,0}(\bar{k},z,\Fv),
\end{align}
\begin{align}\label{f-hr2-bd}
\hat{h}_{R,2}(t,\bar{k},\pm1,\Fv)|_{v_z\lessgtr0}=P_\ga \hat{h}_{R,2}+\bar{P}_\ga \hat{h}_{R,1}+\SR,
\end{align}
where $\SR=\eps^{\frac{1}{2}}e^{\la_0t}\hat{\tilde{r}}.$
Similar to \eqref{chl}, we
define the following characteristic line
\begin{align}\label{chl-u}
\frac{d \tilde{Z}}{ds}=\eps^{-1}\tilde{V}_{z},\
\frac{d \tilde{\FV}}{ds}=\eps\Phi(\tilde{Z})-\al \tilde{V}_{z}\Fe_1,\ \tilde{\FV}=(\tilde{V}_x,\tilde{V}_y,\tilde{V}_{z}),
\end{align}
with $[\tilde{Z}(t;t,z,\Fv),\tilde{\FV}(t;t,z,\Fv)]=(z,\Fv)$ and $\Fe_1=(1,0,0)$. It follows from \eqref{chl-u} that
\begin{align}\label{chl-sol-u}
\left\{\begin{array}{rll}
%\begin{cases}
\dis\tilde{Z}(s)=&\dis z+\eps^{-1}(s-t)v_{z}+\eps\int_t^s\int_t^\tau\Phi_z(\tilde{Z}(\eta))d\eta d\tau,\\[3mm]
\dis\tilde{\FV}(s)=&\dis \Fv+\eps\int_t^s\Phi(\tilde{Z}(\tau))d\tau-\al v_{z}(s-t)\Fe_1\\[2mm]
&\dis \quad -\al\eps^2\int_t^s\int_t^\tau\Phi_z(\tilde{Z}(\eta))d\eta d\tau \Fe_1.
\end{array}\right.
%\end{cases}
\end{align}
Next, for any $(t,z,\Fv)\in(0,+\infty)\times[-1,1]\times\R^3$ with $z\neq0$, we define
the {\it backward exit time} as
\begin{align}\label{u-ex-t}
\tilde{t}_{b}(z,\Fv)=\inf\{\tau\geq0|\tilde{Z}(t-\tau;t,z,\Fv)\notin(-1,1)\},
\end{align}
and the {\it backward exit position} as
$$\tilde{z}_{b}(z,\Fv)=\tilde{Z}(t-\tilde{t}_b;t,z,\Fv)\in\{\pm1\}.$$
%Since $|\Phi|\leq \vps_{\Phi}$, it is straightforward to see
%\begin{align}\label{etp-re}
%\tilde{t}_{b}(z,\Fv)\sim\eps t_{b}(z,\Fv),\ \tilde{z}_{b}(z,\Fv)\sim z_b(z,\Fv).
%\end{align}
Furthermore, let $\Fv_l\in\R^3$ be a random variable, we define the following stochastic backward time cycles
\begin{equation}\label{cyc-u}
\left\{\begin{aligned}
(\tilde{t}_0,\tilde{z}_0,\Fv_0)&=(t,z,\Fv),\\
(\tilde{t}_{l+1},\tilde{z}_{l+1},\Fv_{l+1})
&=(\tilde{t}_l-\tilde{t}_{\mathbf{b}}(\tilde{z}_l,\Fv_l),\tilde{z}_{b}(\tilde{z}_l,\Fv_l),\Fv_{l+1}),\\
\ \Fv_l&=(v_{l,1},v_{l,2},v_{l,3}),
\end{aligned}\right.
\end{equation}
and
\begin{equation}
\label{cyc-u2}
\left\{\begin{aligned}
\tilde{Z}^{l}_{\mathbf{cl}}(s;t,z,\Fv) &=\mathbf{1}_{[\tilde{t}_{l+1},\tilde{t}_{l})}(s)%
\tilde{Z}(s;\tilde{t}_l,\tilde{z}_l,\Fv_l),\\ 
\tilde{\FV}^{l}_{\mathbf{cl}}(s;t,z,\Fv) &=\mathbf{1}%
_{[\tilde{t}_{l+1},\tilde{t}_{l})}(s)\tilde{\FV}(s;\tilde{t}_l,\tilde{z}_l,\Fv_l).
\end{aligned}\right.
\end{equation}
Note that $[\tilde{Z}^{0}_{\mathbf{cl}}(s),\tilde{\FV}^{0}_{\mathbf{cl}}(s)]
=[\tilde{Z}(s),\tilde{\FV}(s)]$ and $\tilde{z}_{l}\in\{\pm1\}$ for $l\geq1$.

With those definitions, we now write the solution of \eqref{f-hr1}, \eqref{f-hr1-id}, \eqref{f-hr1-bd}, \eqref{f-hr2}, \eqref{f-hr2-id} and \eqref{f-hr2-bd} as the following mild form
\begin{align}\label{hr1-mf}
(&w^{l_\infty}\hat{h}_{R,1})(t,z,\Fv)\notag\\
=&\underbrace{\frac{1}{\eps^2}\int_{\max\{0,\tilde{t}_1\}}^{t}
e^{-\int_{s}^t\tilde{\CA}^\eps(\tau,\tilde{\FV}(\tau))d\tau}\left\{\chi_{M}w^{l_\infty}\CK
\hat{h}_{R,1}\right\}(\tilde{Z}(s),\tilde{\FV}(s))\,ds}_{\CH_1}\notag
\\
  &+\eps^{\frac{1}{2}}\int_{\max\{0,\tilde{t}_1\}}^{t}e^{-\int_{s}^t\tilde{\CA}^\eps(\tau,\tilde{\FV}(\tau))d\tau}
\left\{w^{l_\infty}[-
\sqrt{\mu}\pa_t\hat{h}_2-\al z\sqrt{\mu}ik_x\hat{h}_2\right.\notag\\
&\underbrace{\quad \quad\quad \quad\quad \quad\quad \quad \left.+\al v_z\pa_{v_x}\{\sqrt{\mu}\hat{h}_2\}-\Phi\cdot\na_\Fv[\sqrt{\mu}(\hat{h}_1+\eps \hat{h}_2)]]\right\}\,ds}_{\CH_2}\notag\\
&\underbrace{-\al\int_{\max\{0,\tilde{t}_1\}}^{t}e^{-\int_{s}^t\tilde{\CA}^\eps(\tau,\tilde{\FV}(\tau))d\tau}
\left\{w^{l_\infty}
\frac{v_xv_{z}}{2}\sqrt{\mu}\{\FI-\FP\}\hat{h}_{R,2}\right\}(\tilde{Z}(s),\tilde{\FV}(s))\,ds}_{\CH_3}\notag\\
&\underbrace{+\frac{\eps}{2}\int_{\max\{0,\tilde{t}_1\}}^{t}e^{-\int_{s}^t\tilde{\CA}^\eps(\tau,\tilde{\FV}(\tau))d\tau}
\left\{w^{l_\infty}\Phi\cdot \Fv\sqrt{\mu}\hat{h}_{R,2}\right\}(\tilde{Z}(s),\tilde{\FV}(s))\,ds}_{\CH_4}\notag\\
&+\underbrace{\int_{\max\{0,\tilde{t}_1\}}^{t}e^{-\int_{s}^t\tilde{\CA}^\eps(\tau,\tilde{\FV}(\tau))d\tau}
\left\{w^{l_\infty}\SH_1\right\}(\tilde{Z}(s),\tilde{\FV}(s))\,ds}_{\CH_5}
\notag\\&+\underbrace{{\bf 1}_{\tilde{t}_1\leq0}\eps^{\frac{3}{2}}e^{-\int_{0}^t\tilde{\CA}^\eps(\tau,\tilde{\FV}(\tau))d\tau}\left\{ w^{l_\infty}\hat{g}^{(1)}_{R,0}\right\}(\tilde{Z}(0),\tilde{\FV}(0))}_{\CH_6},
\end{align}
and
\begin{align}\label{hr2-mf}
(&w^{l_\infty}\hat{h}_{R,2})(t,z,\Fv)\notag\\
=&\underbrace{{\bf 1}_{\tilde{t}_1<0}e^{-\int_{0}^t\tilde{\CA}^\eps(\tau,\tilde{\FV}(\tau))d\tau}
\left\{w^{l_\infty}\hat{g}^{(2)}_{R,0}\right\}(\tilde{Z}(0),\tilde{\FV}(0))}_{\CI_0}
\notag\\
&+\frac{1}{\eps^2}\underbrace{\int_{\max\{0,\tilde{t}_1\}}^{t}e^{-\int_{s}^t\tilde{\CA}^\eps(\tau,\tilde{\FV}(\tau))d\tau}\left\{w^{l_\infty}K
\hat{h}_{R,2}\right\}(\tilde{Z}(s),\tilde{\FV}(s))\,ds}_{\CI_1}\notag
\\
&+\frac{1}{\eps^2}\underbrace{\int_{\max\{0,\tilde{t}_1\}}^{t}e^{-\int_{s}^t\tilde{\CA}^\eps(\tau,\tilde{\FV}(\tau))d\tau}
\left\{w^{l_\infty}(1-\chi_{M})\mu^{-\frac{1}{2}}\CK \hat{h}_{R,1}\right\}(\tilde{Z}(s),\tilde{\FV}(s))\,ds}_{\CI_2}\notag\\
&-\frac{\al}{2} \underbrace{\int_{\max\{0,\tilde{t}_1\}}^{t}e^{-\int_{s}^t\tilde{\CA}^\eps(\tau,\tilde{\FV}(\tau))d\tau}\left\{w^{l_\infty}v_xv_{z}\FP \hat{h}_{R,2}\right\}(\tilde{Z}(s),\tilde{\FV}(s))\,ds}_{\CI_3}
\notag\\
&+\underbrace{\int_{\max\{0,\tilde{t}_1\}}^{t}e^{-\int_{s}^t\tilde{\CA}^\eps(\tau,\tilde{\FV}(\tau))d\tau}
\left\{w^{l_\infty}\SH_2\right\}(\tilde{Z}(s),\tilde{\FV}(s))\,ds}_{\CI_4}+\sum\limits_{n=1}^7\CJ_n,
\end{align}
where
\begin{align}
\CJ_1=&\underbrace{e^{-\int_{\tilde{t}_1}^t\tilde{\CA}^\eps(\tau,\tilde{V}(\tau))d\tau}
{w_2}^{-1}(\tilde{V}(\tilde{t}_1))}_{\CW_1}\notag\\
&\times \int_{\prod\limits_{j=1}^{\RL-1}\CV_j}{\bf 1}_{\tilde{t}_k>0}
\left\{w^{l_\infty}\hat{h}_{R,2}\right\}(\tilde{t}_k,\tilde{z}_k,\tilde{V}_{\mathbf{cl}}^{\RL-1}(\tilde{t}_k))\,d\tilde{\Sigma}_{\RL-1}(\tilde{t}_k),\notag
\end{align}
\begin{align}
\CJ_2=&\CW_1\sum\limits_{l=1}^{k-1}\int_{\prod\limits_{j=1}^{\RL-1}\CV_j}{\bf 1}_{\tilde{t}_{l+1}\leq 0<\tilde{t}_l}
\left\{w^{l_\infty}\hat{g}_{R,0}\right\}(\tilde{Z}^{l}_{\mathbf{cl}}(0),\tilde{\FV}^{l}_{\mathbf{cl}}(0))\,d\tilde{\Sigma}_{l}(0),\notag
\end{align}
\begin{align}
\CJ_3=&\frac{\CW_1}{\eps^2}\sum\limits_{l=1}^{\RL-1}\int_{\prod\limits_{j=1}^{\RL-1}\CV_j}
\int_{\max\{0,\tilde{t}_{l+1}\}}^{\tilde{t}_l}
\left\{w^{l_\infty} K\hat{h}_{R,2}\right\}(\tilde{Z}^{l}_{\mathbf{cl}},\tilde{V}^{l}_{\mathbf{cl}})(s)\,
d\tilde{\Sigma}_{l}(s)ds,\notag
\end{align}
\begin{align}
\CJ_4=&\frac{\CW_1}{\eps^2}\sum\limits_{l=1}^{\RL-1}\int_{\prod\limits_{j=1}^{\RL-1}\CV_j}
\int_{\max\{0,\tilde{t}_{l+1}\}}^{\tilde{t}_l}
\left\{w^{l_\infty}(1-\chi_{M})\mu^{-\frac{1}{2}}\CK \hat{h}_{R,1}\right\}\notag\\
&\times(\tilde{Z}^{l}_{\mathbf{cl}},\tilde{V}^{l}_{\mathbf{cl}})(s)\,
d\tilde{\Sigma}_{l}(s)ds,\notag
\end{align}
\begin{align}
\CJ_5=&-\frac{\al}{2}\CW_1\sum\limits_{l=1}^{\RL-1}\int_{\prod\limits_{j=1}^{\RL-1}\CV_j}
\int_{\max\{0,\tilde{t}_{l+1}\}}^{\tilde{t}_l}
\left\{w^{l_\infty}v_xv_{z}\FP \hat{h}_{R,2}\right\}(\tilde{Z}^{l}_{\mathbf{cl}},\tilde{V}^{l}_{\mathbf{cl}})(s)\,
d\tilde{\Sigma}_{l}(s)ds,\notag
\end{align}
\begin{align}
\CJ_6=&\CW_1\sum\limits_{l=1}^{\RL-1}\int_{\prod\limits_{j=1}^{\RL-1}\CV_j}
\int_{\max\{0,\tilde{t}_{l+1}\}}^{\tilde{t}_l}
\left\{w^{l_\infty}\SH_2\right\}(\tilde{Z}^{l}_{\mathbf{cl}},\tilde{V}^{l}_{\mathbf{cl}})(s)\,
d\tilde{\Sigma}_{l}(s)ds,\notag
\end{align}
\begin{align}
\CJ_7=&{\bf1}_{\tilde{t}_1>0}e^{-\int_{\tilde{t}_1}^t\tilde{\CA}^\eps(\tau,\tilde{\FV}(\tau))d\tau}\left(w^{l_\infty}\SR\right)
(\tilde{t}_1,\tilde{z}_1,\tilde{\FV}(\tilde{t}_1))\notag\\&+\CW_1 \sum\limits_{l=1}^{\RL-1}{\bf1}_{\tilde{t}_l>0}\int_{\prod\limits_{j=1}^{\RL-1}\CV_j}\left(w^{l_\infty}\SR\right)
(\tilde{t}_{l+1},\tilde{z}_{l+1},\tilde{\FV}_{\mathbf{cl}}^{l}(\tilde{t}_{l+1}))d\tilde{\Sigma}_{l}(\tilde{t}_{l+1}),\notag
\end{align}
\begin{align}
\CJ_8=\CW_1\sum\limits_{l=1}^{\RL-1}\int_{\prod\limits_{j=1}^{\RL-1}\CV_j}\left(\frac{w^{l_\infty}}{\sqrt{\mu}}\hat{h}_{R,1}\right)
(t_l,z_l,\FV_{\mathbf{cl}}^{l}(t_l))d\tilde{\Sigma}_{l}(\tilde{t}_l).\notag
\end{align}
Moreover, the generator of the above semi-group is given by
\begin{align}%\label{CAe-def}
\tilde{\CA}^\eps(\tau,\tilde{\FV}(\tau))=&
\frac{1}{\eps^2}\nu(\tilde{\FV}(\tau))-2\eps l_\infty\frac{\tilde{V}(\tau)\cdot\Phi(\tilde{Z})}{{1+|\tilde{\FV}(\tau)|^2}}+2l_\infty \al \frac{\tilde{V}_x(\tau)\tilde{V}_{z}(\tau)}{{1+|\tilde{\FV}(\tau)|^2}}
\notag\\&
+\eps^{-1}i\bar{k}\cdot \bar{\tilde{\FV}}
+ik_x\tilde{Z}\al,
\notag\\
 \bar{\tilde{\FV}}=&(\tilde{V}_{x},\tilde{V}_{y}),\notag
\end{align}
the weighted measure $\tilde{\Sigma}_{l}^{(i)}(s)$ is defined as
\begin{multline}\label{Sigma-i}
\tilde{\Sigma}_{l}(s)=\prod\limits_{j=l+1}^{\RL-1}d\si_j e^{-\int_s^{\tilde{t}_l}
\tilde{\CA}^\eps(\tau,\tilde{\FV}_{\mathbf{cl}}^l(\tau))d\tau}{w_2}(\Fv_l)d\si_l \\
\prod\limits_{j=1}^{l-1}\frac{{w_2}(\Fv_j)}{{w_2}(\tilde{\FV}^{j}_{\mathbf{cl}}(\tilde{t}_{j+1}))}
e^{-\int_{\tilde{t}_{j+1}}^{\tilde{t}_j}
\tilde{\CA}^\eps(\tau,\tilde{\FV}_{\mathbf{cl}}^j(\tau))d\tau}d\si_j,
\end{multline}
and ${w_2}$ is already defined as \eqref{wt-2}.

As \eqref{CAe-lbd}, it can be shown that
\begin{align}
\left|e^{-\int_{s}^t\tilde{\CA}^\eps(\tau,\tilde{\FV}(\tau))d\tau}\right|\leq e^{-\frac{1}{2\eps^2}\int_{s}^t\nu(\tilde{\FV}(\tau))d\tau}.\label{t-CAe-lbd}
\end{align}

We now turn to estimate the right hand side of both \eqref{hr1-mf} and  \eqref{hr2-mf}.
By \eqref{t-CAe-lbd}, it is direct to see
\begin{align}
|\CH_6|\lesssim \eps^{\frac{3}{2}} e^{-\frac{\nu_0t}{2\eps^2}}\|w^{l_\infty}\hat{g}^{(1)}_{R,0}\|_\infty,\
|\CH_2|\lesssim \eps^{\frac{5}{2}}\vps_{\Phi,\al}\sup\limits_{0\leq s\leq t}\|[\hat{h}_1,\hat{h}_2](s)\|_\infty,
\notag
\end{align}
and
\begin{align}
|\CH_3|\lesssim \al\eps^2\sup\limits_{0\leq s\leq t}\left\|w^{l_\infty}\hat{h}_{R,2}(s)\right\|_{\infty},\ |\CH_4|\lesssim \vps_{\Phi,\al}\eps^2\sup\limits_{0\leq s\leq t}\left\|w^{l_\infty}\hat{h}_{R,2}(s)\right\|_{\infty}.
\notag
\end{align}
Next, in light of Lemma \ref{g-ck-lem}, it follows
\begin{align}
|\CH_1|\lesssim \{M^{-\ga}+\zeta\}\sup\limits_{0\leq s\leq t}\left\|w^{l_\infty}\hat{h}_{R,1}(s)\right\|_{\infty}.\notag
\end{align}
Using Lemma \ref{es-tri}, one has, for any $t>0$
\begin{align}
|\CH_5|\lesssim& \eps^{\frac{3}{2}}\sup\limits_{0\leq s\leq t}\sum\limits_{\bar{l}}\|w^{l_\infty}\hat{h}_{R,1}(\bar{k}-\bar{l})\|_\infty
\left\{\|w^{l_\infty}\hat{h}_{R,1}(\bar{l})\|_\infty+\|w^{l_\infty}\hat{h}_{R,2}(\bar{l})\|_\infty\right\}
\notag\\&
+\eps\sup\limits_{0\leq s\leq t}\sum\limits_{\bar{l}}\|w^{l_\infty}\hat{h}_{R,1}(\bar{k}-\bar{l})\|_\infty
\|[\hat{h}_{1},\hat{h}_{2}](\bar{l})\|_\infty
\notag\\&
+\eps^{\frac{5}{2}}\sup\limits_{0\leq s\leq t}\sum\limits_{\bar{l}}\|\hat{h}_{2}(\bar{k}-\bar{l})\|_\infty
\|\hat{h}_{2}(\bar{l})\|_\infty
\notag\\&
+\eps\sup\limits_{0\leq s\leq t}\sum\limits_{\bar{l}}\|w^{l_\infty}\hat{h}_{R,1}(\bar{k}-\bar{l})\|_\infty
\|[\hat{f}_{1},\hat{f}_{2}](\bar{l})\|_\infty
\notag\\
&+\eps\sup\limits_{0\leq s\leq t}\sum\limits_{\bar{l}}\|w^{l_\infty}\hat{f}_{R,1}(\bar{k}-\bar{l})\|_\infty
\|[\hat{h}_{1},\hat{h}_{2}](\bar{l})\|_\infty
\notag\\&+\eps^{\frac{3}{2}}\sup\limits_{0\leq s\leq t}\sum\limits_{\bar{l}}\|w^{l_\infty}\hat{f}_{R,1}(\bar{k}-\bar{l})\|_\infty
\|w^{l_\infty}[\hat{h}_{R,1},\hat{h}_{R,2}](\bar{l})\|_\infty
\notag\\&+\eps^{\frac{3}{2}}\sup\limits_{0\leq s\leq t}\sum\limits_{\bar{l}}\|w^{l_\infty}\hat{f}_{R,2}(\bar{k}-\bar{l})\|_\infty
\|w^{l_\infty}\hat{h}_{R,1}(\bar{l})\|_\infty=:\CP_0(t).\notag
\end{align}
Putting the estimates above together and using the {\it a priori} assumption, we get the following estimate for $\hat{h}_{R,1}$:
\begin{align}
&\eps^{-\frac{3}{2}} \|w^{l_\infty}\hat{h}_{R,1}(t)\|_\infty \notag\\
&\lesssim
e^{-\frac{\nu_0t}{2\eps^2}}\|w^{l_\infty}\hat{g}^{(1)}_{R,0}\|_\infty+
\{M^{-1}+\zeta\}\eps^{-\frac{3}{2}}\sup\limits_{0\leq s\leq t}\left\|w^{l_\infty}\hat{h}_{R,1}(s)\right\|_{\infty}\notag\\
&\quad+
(\al+\vps_{\Phi,\al})\eps^{\frac{1}{2}}\sup\limits_{0\leq s\leq t}\left\|w^{l_\infty}\hat{h}_{R,2}(s)\right\|_{\infty}+\CP_1(t),\label{lif-hr1}
\end{align}
where
\begin{align}
\CP_1(t)=\eps^{-\frac{3}{2}}\CP_0(t)+\eps\vps_{\Phi,\al}\sup\limits_{0\leq s\leq t}\|[\hat{h}_1,\hat{h}_2](s)\|_\infty.\label{cp1}
\end{align}
As for the corresponding estimate for $\hat{h}_{R,2}$,  we first get from \eqref{hr2-mf} that
\begin{align}\label{hr2-sum1}
&|w^{l_\infty}\hat{h}_{R,2}(t)|\notag\\
&\leq  \frac{C}{\eps^2}\int_{\max\{0,\tilde{t}_1\}}^te^{-\int_{s}^t\frac{\nu(\tilde{\FV}(\tau))}{2\eps^2}d\tau}
\int_{\R^3}{\bf k}_{w}(\tilde{\FV}(s),\Fv')\notag\\
&\qquad\qquad\qquad \times|(w^{l_\infty}\hat{h}_{R,2})(s,\tilde{Z}(s;t,z,\Fv),\Fv')|d\Fv'ds\notag\\
&\quad+\frac{C(l_\infty)}{\eps^2}e^{-\int_{\tilde{t}_1}^t\frac{\nu(\tilde{\FV}(\tau))}{2\eps^2}d\tau}
\sum\limits_{l=1}^{\RL-1}\int_{\prod\limits_{j=1}^{\RL-1}\CV_j}
\int_{\max\{0,\tilde{t}_{l+1}\}}^{\tilde{t}_l}\int_{\R^3}{\bf k}_{w}(\tilde{\FV}^{l}_{\mathbf{cl}}(s),\Fv')\notag\\
&
\qquad\qquad\qquad\qquad\times|(w^{l_\infty}\hat{h}_{R,2})
(s,\tilde{Z}^{l}_{\mathbf{cl}}(s;t,z,\Fv),\Fv')|d\Fv'\,d\tilde{\Sigma}_{l}(s)ds
\notag\\
&\quad+\left\|e^{-\frac{\nu_0t}{2\eps^2}}w^{l_\infty}\hat{g}^{(2)}_{R,0}\right\|_{\infty}+\tilde{\CQ}(t),
\end{align}
where
\begin{align}
\tilde{\CQ}(t)=&C(l_\infty)\sup\limits_{0\leq s\leq t}\|w^{l_\infty}\hat{h}_{R,1}(s)\|_{\infty}+C(T_0)\eta\sup\limits_{0\leq s\leq t}\|w^{l_\infty}\hat{h}_{R,2}(s)\|_{\infty}
\notag\\&+
C \eps\sup\limits_{0\leq s\leq t}\sum\limits_{\bar{l}\in \Z^2}\left\{\|w^{l_\infty}\hat{f}_{1}(\bar{k}-\bar{l})(s)\|_{\infty}\right.
\notag\\&\left.\qquad \qquad \qquad\qquad  \qquad \qquad +\|w^{l_\infty}\hat{f}_{2}(\bar{k}-\bar{l})(s)\|_{\infty}\right\}
\|w^{l_\infty}\hat{f}_{R,2}(\bar{l})\|_{\infty}\notag\\
&+C \sup\limits_{0\leq s\leq t}\sum\limits_{\bar{l}\in \Z^2}\eps\left\{ e^{-\la_0 s}\{\|w^{l_\infty}\hat{h}_{1}(\bar{k}-\bar{l})\|_{\infty}\right.
\notag\\&\left.\qquad \qquad \qquad\qquad \qquad\qquad +\|w^{l_\infty}\hat{h}_{2}(\bar{k}-\bar{l})\|_{\infty}\}
\|w^{l_\infty}\hat{h}_{R,2}(\bar{l})\|_{\infty}\right\}
\notag\\
&+C\eps^{\frac{3}{2}}\sup\limits_{0\leq s\leq t}\sum\limits_{\bar{l}\in \Z^2}\left\{ e^{-\la_0 s} \|w^{l_\infty}\hat{h}_{R,2}(\bar{k}-\bar{l})\|_{\infty}
\|w^{l_\infty}\hat{h}_{R,2}(\bar{l})\|_{\infty}\right\}\notag\\&+C\eps^{\frac{3}{2}} \sup\limits_{0\leq s\leq t}\sum\limits_{\bar{l}\in \Z^2}\|w^{l_\infty}\hat{f}_{R,2}(\bar{k}-\bar{l})\|_{\infty}
\|w^{l_\infty}\hat{h}_{R,2}(\bar{l})\|_{\infty}
\notag\\&+C\sup\limits_{0\leq s\leq t}\|w^{l_\infty}\SR(s,\pm1,\bar{k})\|_\infty.
\label{t-cq}
\end{align}
Here, $\eta$ is positive and suitably small.

Next, iterating \eqref{hr2-sum1} again, one further has
\begin{align}\label{hr2-itr}
|&w^{l_\infty}\hat{h}_{R,2}|\leq \frac{C}{\eps^4}\int_{\max\{0,\tilde{t}_1\}}^te^{-\int_{s}^t\frac{\nu(\tilde{\FV}(\tau))}{2\eps^2}d\tau}\int_{\R^3}{\bf k}_{w}(\tilde{\FV}(s),\Fv')
\notag\\&\quad\times\int_{\max\{0,\tilde{t}'_1\}}^{s} e^{-\int_{s'}^s\frac{\nu(\tilde{\FV}(\tau))}{2\eps^2}d\tau}
\int_{\R^3}{\bf k}_{w}(\tilde{\FV}(s';s,\tilde{Z}(s),\Fv'),\Fv'')|\notag\\
&\qquad\quad \times (w^{l_\infty}\hat{h}_{R,2})(s',\tilde{Z}(s';s,\tilde{Z}(s),\Fv'),\Fv'')|~d\Fv''ds'd\Fv'ds\notag\\
&+\frac{C}{\eps^4}\int_{\max\{\tilde{t}_1,0\}}^te^{-\int_{s}^t\frac{\nu(\tilde{\FV}(\tau))}{2\eps^2}d\tau}\int_{\R^3}{\bf k}_{w}(\tilde{\FV}(s),\Fv')e^{-\int_{\tilde{t}_1'}^s\frac{\nu(\tilde{\FV}(\tau))}{2\eps^2}d\tau}\notag\\
&\qquad \times \sum\limits_{\ell=1}^{\RL-1}\int_{\prod\limits_{j=1}^{\RL-1}\CV'_j}
\int_{\max\{0,\tilde{t}'_{\ell+1}\}}^{\tilde{t}'_\ell}
%\notag\\&\qquad\times
\int_{\R^3}{\bf k}_{w}(\tilde{\FV}^{\ell}_{\mathbf{cl}}(s';s,\tilde{Z}(s),\Fv'),\Fv'')\notag\\
&\qquad \times |(w^{l_\infty}\hat{h}_{R,2})(s',\tilde{Z}^{\ell}_{\mathbf{cl}}(s';s,\tilde{Z}(s),\Fv'),\Fv'')|
d\Fv''\,d\tilde{\Sigma}_{\ell}(s')ds'd\Fv'ds\notag\\
&+\frac{C}{\eps^4}e^{-\int_{\tilde{t}'_1}^t\frac{\nu(\tilde{\FV}(\tau))}{2\eps^2}d\tau}\sum\limits_{l=1}^{\RL-1}
\int_{\prod\limits_{j=1}^{\RL-1}\CV_j}
\int_{\max\{0,\tilde{t}_{l+1}\}}^{\tilde{t}_l}\int_{\R^3}{\bf k}_{w}(\tilde{\FV}^{l}_{\mathbf{cl}}(s),\Fv')\notag\\
&\quad \times
\int_{\max\{0,\tilde{t}'_1\}}^{s}e^{-\int_{s'}^s\frac{\nu(\tilde{\FV}(\tau))}{2\eps^2}d\tau}
%\notag\\&\quad\times
\int_{\R^3}{\bf k}_{w}(\tilde{\FV}(s';s,\tilde{Z}^{l}_{\mathbf{cl}}(s),\Fv'),\Fv'')\notag\\
&\quad \times|(w^{l_\infty}\hat{h}_{R,2})
(s',\tilde{Z}(s';s,\tilde{Z}^{l}_{\mathbf{cl}}(s),\Fv'),\Fv'')|~d\Fv''ds'd\Fv'\,d\tilde{\Sigma}_{l}(s)ds
\notag\\&+\frac{C}{\eps^4}e^{-\int_{\tilde{t}_1}^t\frac{\nu(\tilde{\FV}(\tau))}{2\eps^2}d\tau}
\sum\limits_{l=1}^{\RL-1}\int_{\prod\limits_{j=1}^{\RL-1}\CV_j}
\int_{\max\{0,\tilde{t}_{l+1}\}}^{\tilde{t}_l}\int_{\R^3}{\bf k}_{w}(\tilde{\FV}^{l}_{\mathbf{cl}}(s;v),v')\notag\\
&\quad \times e^{-\int_{\tilde{t}_1'}^s\frac{\nu(\tilde{\FV}(\tau))}{2\eps^2}d\tau}
\sum\limits_{\ell=1}^{\RL-1}\int_{\prod\limits_{j=1}^{\RL-1}\CV'_j}
\int_{\max\{0,\tilde{t}'_{\ell+1}\}}^{\tilde{t}'_\ell} 
%\notag\\&\quad\times 
\int_{\R^3}{\bf k}_{w}(\tilde{\FV}^{\ell}_{\mathbf{cl}}(s';s,\tilde{Z}^{l}_{\mathbf{cl}}(s),\Fv'),\Fv'')
\notag\\&\quad \times
|(w^{l_\infty}\hat{h}_{R,2})(s',\tilde{Z}^{\ell}_{\mathbf{cl}}(s';s,\tilde{Z}^{l}_{\mathbf{cl}}(s),\Fv'),\Fv'')|
d\Fv''\,d\tilde{\Sigma}_{\ell}(s')ds'd\Fv'\,d\tilde{\Sigma}_{l}(s)ds
\notag\\
&+ \frac{C}{\eps^2}\int_{\max\{0,\tilde{t}_1\}}^te^{-\int_{s}^t\frac{\nu(\tilde{\FV}(\tau))}{2\eps^2}d\tau}\int_{\R^3}{\bf k}_{w}(\tilde{\FV}(s),\Fv')\tilde{\CQ}(s)d\Fv'ds
\notag\\&+\frac{C(l_\infty)}{\eps^2}e^{-\int_{\tilde{t}_1}^t\frac{\nu(\tilde{\FV}(\tau))}{2\eps^2}d\tau}\int_{\prod\limits_{j=1}^{\RL-1}\CV_j}
\int_{\max\{0,\tilde{t}_{l+1}\}}^{\tilde{t}_l}\notag\\
&\qquad \qquad\qquad\times\int_{\R^3}{\bf k}_{w}(\tilde{\FV}^{l}_{\mathbf{cl}}(s),\Fv')\tilde{\CQ}(s)d\Fv'\,d\tilde{\Sigma}_{l}(s)ds\notag\\
&+ \frac{C}{\eps^2}\int_{\max\{0,\tilde{t}_1\}}^te^{-\int_{s}^t\frac{\nu(\tilde{\FV}(\tau))}{2\eps^2}d\tau}\int_{\R^3}{\bf k}_{w}(\tilde{\FV}(s),\Fv')\left\|e^{-\frac{\nu_0s}{2\eps^2}}w^{l_\infty}\hat{g}^{(2)}_{R,0}\right\|_{\infty}d\Fv'ds
\notag\\&+\frac{C(l_\infty)}{\eps^2}e^{-\int_{\tilde{t}_1}^t\frac{\nu(\tilde{\FV}(\tau))}{2\eps^2}d\tau}\int_{\prod\limits_{j=1}^{\RL-1}\CV_j}
\int_{\max\{0,\tilde{t}_{l+1}\}}^{\tilde{t}_l} \int_{\R^3}{\bf k}_{w}(\tilde{\FV}^{l}_{\mathbf{cl}}(s),\Fv')\notag\\
&\qquad\qquad\qquad \times \left\|e^{-\frac{\nu_0s}{2\eps^2}}w^{l_\infty}\hat{g}^{(2)}_{R,0}\right\|_{\infty}d\Fv'\,d\tilde{\Sigma}_{l}(s)ds,
\end{align}
where $(\tilde{t}_{\ell+1}',\tilde{z}_{\ell+1}',\Fv_{\ell+1}')$ is defined as \eqref{cyc-u} with the starting points replaced by
\begin{align}\notag
(\tilde{t}_{0}',\tilde{z}_{0}',\Fv_{0}')=(s,z',\Fv'):=(s,\tilde{Z}(s),\Fv')\ \textrm{or}\ (s,\tilde{Z}^l_{\mathbf{cl}}(s),\Fv'),
\end{align}
and similar to \eqref{cyc-u2}, we also have introduced
\begin{align}
\tilde{Z}^\ell_{\mathbf{cl}}(s';s,z',\Fv') &=\mathbf{1}_{[\tilde{t}'_{\ell+1},\tilde{t}'_{\ell})}(s')
\tilde{Z}(s';\tilde{t}'_\ell,\tilde{z}'_\ell,\Fv'_\ell),\notag\\
\tilde{\FV}^{\ell}_{\mathbf{cl}}(s';s,z',\Fv') &=\mathbf{1}_{[\tilde{t}'_{\ell+1},\tilde{t}'_{\ell})}(s')\tilde{\FV}(s';\tilde{t}'_\ell,\tilde{z}'_\ell,\Fv'_\ell),\notag\\
[\tilde{Z}^{0}_{\mathbf{cl}}(s'),\tilde{\FV}^{0}_{\mathbf{cl}}(s')] &:=[\tilde{Z}(s'),\tilde{\FV}(s')].\notag
\end{align}
Here, $[\tilde{Z}^\ell_{\mathbf{cl}}(s'),\tilde{\FV}^{\ell}_{\mathbf{cl}}(s')]$ is defined as \eqref{cyc-u2} and $[\tilde{Z}(s'),\tilde{\FV}(s')]$ is defined as \eqref{chl-sol-u}.
Moreover, $\tilde{\Sigma}_{\ell}(s')$ is given by
\begin{multline}%\label{Sigma-i}
\tilde{\Sigma}_{\ell}(s')=\prod\limits_{j=\ell+1}^{\RL-1}d\si'_j e^{-\int_{s'}^{\tilde{t}_\ell'}
\tilde{\CA}^\eps(\tau,\tilde{\FV}_{\mathbf{cl}}^\ell(\tau))d\tau}w_2(\Fv'_\ell)d\si'_\ell \\
\prod\limits_{j=1}^{\ell-1}\frac{{w_2}(\Fv'_j)}{w_2(\tilde{\FV}^{j}_{\mathbf{cl}}(\tilde{t}'_{j+1}))}
e^{-\int_{\tilde{t}'_{j+1}}^{\tilde{t}'_j}
\tilde{\CA}^\eps(\tau,\tilde{\FV}_{\mathbf{cl}}^j(\tau))d\tau}d\si'_j.\notag
\end{multline}
Our aim now is to show that for $t\in[0,\eps T_0]$,
\begin{align}\label{hr2-if-es}
\|w^{l_\infty}\hat{h}_{R,2}(t)\|_{\infty}\leq& C\left\|e^{-\frac{\nu_0t}{2\eps^2}}w^{l_\infty}\hat{g}^{(2)}_{R,0}\right\|_{\infty}
+\tilde{\eta}\sup\limits_{t\in[0,\eps T_0]}\|w^{l_\infty}\hat{h}_{R,2}(t)\|_{\infty}
\notag\\&+C(T_0)\eps^{-\frac{1}{2}}\sup\limits_{t\in[0,\eps T_0]}\|\hat{h}_{R,2}(t)\|_2
+C\sup\limits_{t\in[0,\eps T_0]}\tilde{\CQ}(t),
\end{align}
where $\tilde{\eta}>0$ is suitably small.
To establish \eqref{hr2-if-es}, our focus lies on estimating the fourth term within the right hand side of \eqref{hr2-itr}. The remaining terms can be handled in a similar manner. For any sufficiently small $\ka_0>0$, we initially partition $[t_{\ell+1}',t_{\ell}']$ as $[t_{\ell+1}',t_{\ell}'-\ka_0\eps^2]\cup(t_{\ell}'-\ka_0\eps^2,t_{\ell}']$. Subsequently, we express the fourth term on the right hand side of \eqref{hr2-itr} as follows:
\begin{align}
\tilde{\CJ}:=& \frac{C}{\eps^4}e^{-\int_{\max\{0,\tilde{t}_1\}}^t\frac{\nu(\tilde{\FV}(\tau))}{2\eps^2}d\tau}
\sum\limits_{l=1}^{\RL-1}\int_{\prod\limits_{j=1}^{\RL-1}\CV_j}
\int_{\max\{0,\tilde{t}_{l+1}\}}^{\tilde{t}_l}\int_{\R^3}{\bf k}_{w}(\tilde{\FV}^{l}_{\mathbf{cl}}(s;\Fv),\Fv')\notag\\
&\quad\times e^{-\int_{\tilde{t}_1'}^s\frac{\nu(\tilde{\FV}(\tau))}{2\eps^2}d\tau}
%\notag\\&\quad\times 
\sum\limits_{\ell=1}^{\RL-1}\int_{\prod\limits_{j=1}^{\RL-1}\CV'_j}
\left(\int_{\max\{0,\tilde{t}'_{\ell+1}\}}^{\tilde{t}'_\ell-\ka_0\eps^2}+\int_{\tilde{t}'_\ell-\ka_0\eps^2}^{\tilde{t}'_\ell}\right)\notag\\
&\quad\times\int_{\R^3} {\bf k}_{w}(\tilde{\FV}^{\ell}_{\mathbf{cl}}(s';s,\tilde{Z}^{l}_{\mathbf{cl}}(s),\Fv'),\Fv'')\notag\\
&\quad\times
|(w^{l_\infty}\hat{h}_{R,2})(s',\tilde{Z}^{\ell}_{\mathbf{cl}}(s';s,\tilde{Z}^{l}_{\mathbf{cl}}(s),\Fv'),\Fv'')|
d\Fv''\,d\tilde{\Sigma}_{\ell}(s')ds'd\Fv'\,d\tilde{\Sigma}_{l}(s)ds\notag\\
:=& \tilde{\CJ}_1+\tilde{\CJ}_2.\notag
\end{align}
In view of Lemma \ref{k-cyc}, as for estimating \eqref{J2}, one has
\begin{align}
\tilde{\CJ}_2\lesssim \CL\ka_0\|w^{l_\infty}\hat{h}_{R,2}\|_{\infty}.\notag
\end{align}
Regarding $\tilde{\CJ}_1$, we can prove that
\begin{align}
|\tilde{\CJ}_1|\leq\left\{\frac{C(l^\infty)}{1+M}+C(l^\infty)e^{-\frac{\vps M^2}{16}}\right\}\|w^{l_\infty}\hat{h}_{R,2}\|_{\infty}
+\eps^{-\frac{1}{2}}\|\hat{h}_{R,2}\|_2.\label{tj2}
\end{align}
The derivation of \eqref{tj2} closely mirrors the treatment of the steady case $\CJ_1$. The calculations are partitioned into three cases, with the only distinctive case being:
\begin{center}
\underline{$\tilde{\CI}=$\{ \it $|\tilde{\FV}^{l}_{\mathbf{cl}}(s;\Fv)|\leq M$,$|\Fv'|\leq2M$,$|\tilde{\FV}^{\ell}_{\mathbf{cl}}(s';\tilde{Z}^{l}_{\mathbf{cl}}(s),\Fv')|\leq M$ and $|\Fv''|\leq 2M$\}}.
\end{center}
At this stage, using \eqref{kw-M}, we have the decomposition
\begin{equation*}
\begin{split}
{\bf k}_{w}&(\tilde{\FV}^{l}_{\mathbf{cl}}(s),\Fv')
{\bf k}_{w}(\tilde{\FV}^{\ell}_{\mathbf{cl}}(s'),\Fv^{\prime \prime})\notag\\
=&\{{\bf k}_{w}(\tilde{\FV}^{l}_{\mathbf{cl}}(s),\Fv^{\prime})-{\bf k}_{w,M}(\tilde{\FV}^{l}_{\mathbf{cl}}(s),\Fv^{\prime})\}{\bf k}_{w}(\tilde{\FV}^{\ell}_{\mathbf{cl}}(s'),\Fv^{\prime \prime})
\\&+\{{\bf k}_{w}(\tilde{\FV}^{\ell}_{\mathbf{cl}}(s'),\Fv^{\prime \prime})
-{\bf k}_{w,M}(\tilde{\FV}^{\ell}_{\mathbf{cl}}(s'),\Fv^{\prime \prime})\}{\bf k}_{w,M}(\tilde{\FV}^{l}_{\mathbf{cl}}(s),\Fv')
\\&
+{\bf k}_{w,M}(\tilde{\FV}^{l}_{\mathbf{cl}}(s),\Fv^{\prime}){\bf k}_{w,M}(\tilde{\FV}^{\ell}_{\mathbf{cl}}(s'),\Fv^{\prime \prime}).
\end{split}
\end{equation*}
Incorporating this decomposition, the contribution from the first two terms is small:
\begin{equation}
\frac{C(l^\infty)}{1+M}\|w^{l_\infty}\hat{h}_{R,2}\|_{\infty}.  \notag
\end{equation}
For the contribution of the last bounded product corresponding to the last term,
%${\bf k}_{w,M}(\tilde{\FV}^{l}_{\mathbf{cl}}(s),\Fv^{\prime}){\bf k}_{w,M}(\tilde{\FV}^{\ell}_{\mathbf{cl}}(s'),\Fv^{\prime \prime})$, 
we introduce
\begin{align}
\CZ=\tilde{Z}^{\ell}_{\mathbf{cl}}(s';s,\tilde{Z}^{l}_{\mathbf{cl}}(s),\Fv')=&\mathbf{1}_{[\tilde{t}'_{\ell+1},\tilde{t}'_{\ell})}(s')
\tilde{Z}(s';\tilde{t}'_\ell,\tilde{z}'_\ell,\Fv'_\ell)=\tilde{z}_\ell'+\eps^{-1}(s'-\tilde{t}_\ell')v'_{\ell,z}
\notag\\&+\eps\int_{\tilde{t}_\ell'}^{s'}\int_{\tilde{t}_\ell'}^{\tau'}\Phi_{z}(\tilde{Z}(\eta'))d\eta' d\tau',\notag
\end{align}
where $\tilde{Z}(\eta')=\tilde{Z}(\eta';t'_\ell,\tilde{z}'_\ell,\Fv'_\ell)$.
Applying a change of variable $v'_{\ell,z}\rightarrow\CZ$, one gets
\begin{align}%\label{Jco3}
\left|\frac{\pa \CZ}{\pa v'_{\ell,z}}\right|=\left|\frac{\pa(\tilde{z}_\ell'-\eps^{-1}(\tilde{t}_\ell'-s')v'_{\ell,z})}{\pa v'_{\ell,z}}+\eps\int_{\tilde{t}_\ell'}^{s'}\int_{\tilde{t}_\ell'}^{\tau'}\frac{\pa[\Phi_{z}(\tilde{Z}(\eta'))]}{\pa v'_{\ell,z} }d\eta' d\tau'\right|\geq Ck_0\eps,\notag
\end{align}
where we have employed a calculation similar to that used to derive \eqref{Jco2}. With this preparation, we perform a computation analogous to \eqref{de-j2} to obtain
\begin{align}
|\tilde{\CJ}_{1}{\bf1}_{\tilde{\CI}}|\leq C\eps^{-\frac{1}{2}}\|\hat{h}_{R,2}\|_2+\frac{C(l^\infty)}{M}\|w^{l_\infty}\hat{h}_{R,2}\|_{\infty}.\notag
\end{align}
Now combing all the estimates above, we ascertain the validity of \eqref{hr2-if-es}. %Moreover, \eqref{hr2-if-es} together with Lemma \ref{lk1-ne}
%yields
%\begin{align}
%\sup\limits_{t\in[0,\eps T_0]}\|w^{l_\infty}\hat{h}_{R,2}(t)\|_{\infty}\leq&
%C\left\|w^{l_\infty}\hat{g}_{R,0}\right\|_{\infty}+
%\eps^{-\frac{1}{2}}\sup\limits_{t\in[0,\eps T_0]}\|\hat{h}_{R,2}\|_2+\vps_0\vps_{\Phi,\al}\eps
%\notag\\&+C\sup\limits_{t\in[0,\eps T_0]}\|w^{l_\infty}\hat{h}_{R,1}\|_{\infty}
%+C\eps^{\frac{3}{2}}\left(\sum\limits_{\bar{k}\in\Z^2}\sup\limits_{t\in[0,\eps T_0]}\|w^{l_\infty}\hat{h}_{R,2}\|_{\infty}\right)^2.\label{hr2-sum}
%\end{align}

Consequently, we get from \eqref{lif-hr1} and \eqref{hr2-if-es} that for $t\in[0,\eps T_0]$,
\begin{align}
\eps^{-\frac{3}{2}}\|w^{l_\infty}&\hat{h}_{R,1}(t)\|_\infty
+\eps^{\frac{1}{2}}\|w^{l_\infty}\hat{h}_{R,2}(t)\|_{\infty}\notag\\
\lesssim&
C\left\|e^{-\frac{\nu_0t}{2\eps^2}}w^{l_\infty}\hat{g}^{(1)}_{R,0}\right\|_{\infty}
+C(T_{0})\eps^{\frac{1}{2}}\left\|e^{-\frac{\nu_0t}{2\eps^2}}w^{l_\infty}\hat{g}^{(2)}_{R,0}\right\|_{\infty}
\notag\\&+\{M^{-1}+\zeta\}\eps^{-\frac{3}{2}}\sup\limits_{0\leq t\leq \eps T_0}\left\|w^{l_\infty}\hat{h}_{R,1}(t)\right\|_{\infty}\notag\\
&+(\al+\vps_{\Phi,\al})\eps^{\frac{1}{2}}\sup\limits_{0\leq t\leq \eps T_0}\left\|w^{l_\infty}\hat{h}_{R,2}(t)\right\|_{\infty}\notag\\&+C(T_{0})\sup\limits_{t\in[0,\eps T_0]}\left\{\tilde{\eta}\eps^{\frac{1}{2}}\|w^{l_\infty}\hat{h}_{R,2}(t)\|_{\infty}+\CP_1(t)+\eps^{\frac{1}{2}}\tilde{\CQ}(t)\right\}\notag\\
&+C(T_{0})\sup\limits_{t\in[0,\eps T_0]}\|\hat{g}_{R,2}(t)\|_2\notag\\
=&C(T_{0})\left\|e^{-\frac{\nu_0t}{2\eps^2}}w^{l_\infty}[\hat{g}^{(1)}_{R,0},\eps^{\frac{1}{2}}\hat{h}^{(2)}_{R,0}]\right\|_{\infty}
+\sup\limits_{0\leq t\leq \eps T_0}C(T_{0})\CD(t),\label{hr1-hr2}
\end{align}
where
\begin{align}
\CD(t)=&\eps^{-\frac{3}{2}}\{M^{-1}+\zeta\}\left\|w^{l_\infty}\hat{h}_{R,1}(t)\right\|_{\infty}
+(\al+\vps_{\Phi,\al})\eps^{\frac{1}{2}}\sup\limits_{0\leq t\leq \eps T_0}\left\|w^{l_\infty}\hat{h}_{R,2}(t)\right\|_{\infty}
\notag\\&+\tilde{\eta}\eps^{\frac{1}{2}}\|w^{l_\infty}\hat{h}_{R,2}(t)\|_{\infty}+\CP_1(t)+\eps^{\frac{1}{2}}\tilde{\CQ}(t)
+\|\hat{h}_{R,2}(t)\|_2.\notag
\end{align}
%Then it follows that for any $t\in[0,\eps T_0]$,
%\begin{align}\label{g1g2decay.p0}
%\|w^{l_\infty}[\hat{g}_{R,1},\eps^{\frac{1}{2}}\hat{g}_{R,2}](t)\|_{\infty}\leq&
%Ce^{-{\la}t}\eps^{\frac{1}{2}}\left\|w^{l_\infty}\hat{g}_{R,0}\right\|_{\infty}+C(T_0)e^{-{\la}t}\sup\limits_{s\in[0,\eps T_0]}\left\{e^{-{\la}s}[\CP_1(s)+\eps^{\frac{1}{2}}\tilde{\CQ}(s)]\right\}
%\notag\\&+C(T_0)e^{-{\la}t}\sup\limits_{0\leq s\leq \eps T_0}\|g_{R,2}(s)\|.
%\end{align}
In particular, we have
\begin{align}\label{g1g2decay}
\|w^{l_\infty}[\eps^{-\frac{3}{2}}\hat{h}_{R,1},\eps^{\frac{1}{2}}\hat{h}_{R,2}](\eps T_0)\|_{\infty}
\leq& Ce^{-\frac{\nu_0T_0}{4\eps}}\left\|w^{l_\infty}[\hat{g}^{(1)}_{R,0},\eps^{\frac{1}{2}}\hat{g}^{(2)}_{R,0}]\right\|_{\infty}\notag\\
&+\sup\limits_{0\leq t\leq \eps T_0}C(T_{0})\CD(s).
\end{align}
Moreover, \eqref{hr1-hr2} can be extended to
\begin{align}\label{g1g2decay.p1}
\|w^{l_\infty}[\eps^{-\frac{3}{2}}\hat{h}_{R,1},\eps^{\frac{1}{2}}\hat{h}_{R,2}](t)\|_{\infty}\leq &Ce^{-\frac{\nu_0(t-s)}{2\eps^2}}\|w^{l_\infty}[\eps^{-\frac{3}{2}}\hat{h}_{R,1},\eps^{\frac{1}{2}}\hat{h}_{R,2}](s)\|_{\infty}\notag\\
&+\sup\limits_{s\leq t\leq s+\eps T_0}C(T_{0})\CD(s),
\end{align}
for any $t\in[s,s+\eps T_0]$ with $s\geq0.$

Next, for any integer $m\geq 1,$ we can repeat the estimate \eqref{g1g2decay}  in finite times so that
the functions $[\eps^{-\frac{3}{2}}\hat{h}_{R,1},\eps^{\frac{1}{2}}\hat{h}_{R,2}](\eps lT_{0})$ for $l=m-1,m-2,...,0$  satisfy
\begin{align}\label{lif-lift}
\|w^{l_\infty}&[\eps^{-\frac{3}{2}}\hat{h}_{R,1},\eps^{\frac{1}{2}}\hat{h}_{R,2}](\eps mT_{0})\|_{\infty}\notag\\
\leq&Ce^{-\frac{\nu_0T_0}{4\eps}}\left\|w^{l_\infty}[\eps^{-\frac{3}{2}}\hat{h}_{R,1},\eps^{\frac{1}{2}}\hat{h}_{R,2}](\eps\{m-1\}T_0)\right\|_{\infty}\notag\\
&
%\qquad \qquad \qquad \qquad \qquad \qquad \qqaud 
+\sup\limits_{\eps (m-1)T_0\leq t\leq\eps mT_{0}}C(T_{0})\CD(t)\notag\\
\leq&Ce^{-\frac{\nu_0T_0}{2\eps}}\left\|w^{l_\infty}[\eps^{-\frac{3}{2}}\hat{h}_{R,1},\eps^{\frac{1}{2}}\hat{h}_{R,2}](\eps\{m-2\}T_0)\right\|_{\infty}\notag\\
&+e^{-\frac{\nu_0T_0}{4\eps}}\sup\limits_{\eps (m-2)T_0\leq t\leq\eps (m-1)T_{0}}C(T_{0})\CD(t)
%\notag\\&
+\sup\limits_{\eps (m-1)T_0\leq t\leq\eps mT_{0}}C(T_{0})\CD(t)\notag \\
\leq&\cdots\notag\\
\leq&Ce^{-\frac{m\nu_0T_0}{4\eps}}\left\|w^{l_\infty}[\hat{g}^{(1)}_{R,0},\eps^{\frac{1}{2}}\hat{g}^{(2)}_{R,0}]\right\|_{\infty}\notag\\
&
%\qquad \qquad \qquad \qquad 
+\sum\limits_{j=0}^{m-1}e^{-\frac{j\nu_0T_0}{4\eps}}\sup\limits_{\eps (m-1-j)T_0\leq t\leq\eps (m-j)T_{0}}C(T_{0})\CD(t)
\notag\\
\leq&C\left\|w^{l_\infty}[\hat{g}^{(1)}_{R,0},\eps^{\frac{1}{2}}\hat{g}^{(2)}_{R,0}]\right\|_{\infty}
+\frac{1}{1-e^{-\frac{\nu_0T_0}{4\eps}}}\sup\limits_{0\leq t\leq\eps mT_{0}}C(T_{0})\CD(t).
\end{align}
Furthermore, for any $t\geq T_0 $ and fixed $\eps>0$, we can find an integer $m\geq0$ such that $t=\eps m T_0+s$ with $0\leq s\leq \eps T_0$. Then we have, on the one hand,
by \eqref{lif-lift}, that
\begin{align}\label{hr12-com-p1}
&\|w^{l_\infty}[\eps^{-\frac{3}{2}}\hat{h}_{R,1},\eps^{\frac{1}{2}}\hat{h}_{R,2}](\eps m T_0)\|_{\infty}\notag\\
&\leq C\left\|w^{l_\infty}[\hat{g}^{(1)}_{R,0},\eps^{\frac{1}{2}}\hat{g}^{(2)}_{R,0}]\right\|_{\infty}
+C(T_0)\sup\limits_{0\leq s\leq \eps m T_0}\left\{\CP_1(s)+\eps^{\frac{1}{2}}\tilde{\CQ}(s)\right\}
\notag\\
&\quad+C(T_0)\sup\limits_{0\leq s\leq \eps m T_0}\|\hat{h}_{R,2}(s)\|_2.
\end{align}
On the other hand,  \eqref{g1g2decay.p1} implies that
\begin{align}\label{hr12-com-p2}
%\begin{split}
&\left\|w^{l_\infty}[\eps^{-\frac{3}{2}}\hat{g}_{R,1},\eps^{\frac{1}{2}}\hat{g}_{R,2}](t)\right\|_{\infty }=\left\|w^{l_\infty}[\eps^{-\frac{3}{2}}\hat{g}_{R,1},\eps^{\frac{1}{2}}\hat{g}_{R,2}](\eps m T_0+s)\right\|_{\infty}\notag\\
&\leq Ce^{-\frac{\nu_0 s}{4\eps^2}}\left\|w^{l_\infty}[\eps^{-\frac{3}{2}}\hat{g}_{R,1},\eps^{\frac{1}{2}}\hat{g}_{R,2}](\eps m T_0)\right\|_{\infty }\notag\\
&\quad+C(T_0)\sup\limits_{\eps m T_0\leq \tau\leq \eps
m T_0+s}\|\hat{h}_{R,2}(\tau)\|_2
\notag\\
&\quad+C(T_0)\sup\limits_{\eps m T_0\leq \tau\leq \eps m T_0+s}\left\{\CP_1(\tau)+\eps^{\frac{1}{2}}\tilde{\CQ}(\tau)\right\}.
%\end{split}
\end{align}
Now, \eqref{hr12-com-p1} and \eqref{hr12-com-p2} give that
\begin{align}\label{hr12-com-p3}
&\sum\limits_{\bar{k}\in\Z^2}\sup_{0\leq s\leq t}\|w^{l_\infty}[\eps^{-\frac{3}{2}}\hat{h}_{R,1},\eps^{\frac{1}{2}}\hat{h}_{R,2}](t)\|_{\infty}\notag\\
&
\leq C\left\|w^{l_\infty}[\hat{g}^{(1)}_{R,0},\eps^{\frac{1}{2}}\hat{g}^{(2)}_{R,0}]\right\|_{\infty}
+C(T_0)\sum\limits_{\bar{k}\in\Z^2}\sup\limits_{0\leq s\leq t}\left\{\CP_1(s)+\eps^{\frac{1}{2}}\tilde{\CQ}(s)\right\}
\notag\\
&\quad+C(T_0)\sum\limits_{\bar{k}\in\Z^2}\sup\limits_{0\leq s\leq t}\|\hat{h}_{R,2}(s)\|_2,
\end{align}
for any $t\in[0,\infty).$

Furthermore, recalling \eqref{cp1} and \eqref{t-cq}, one has
\begin{align}\label{s-pq}
&\sum\limits_{\bar{k}\in\Z^2}\sup\limits_{0\leq s\leq t}\left\{\CP_1(s)+\eps^{\frac{1}{2}}\tilde{\CQ}(s)\right\} \notag\\
&
\lesssim \eps^{-\frac{1}{2}}\sum\limits_{\bar{k}\in\Z^2}\sup_{0\leq s\leq t}\|w^{l_\infty}\hat{h}_{R,1}(t)\|_{\infty}
+\eps\sum\limits_{\bar{k}\in\Z^2}\sup_{0\leq s\leq t}\|w^{l_\infty}\hat{h}_{R,2}(t)\|_{\infty}
\notag\\
&\quad+\eps\vps_0\vps_{\Phi,\al}+\eps^{\frac{1}{2}}\sum\limits_{\bar{k}\in\Z^2}\|w^{l_\infty} \hat{g}_{R,0}\|_{\infty}
+\eps \vps_0.
\end{align}
Therefore, we get from \eqref{hr12-com-p3} and \eqref{s-pq} that
\begin{align}\label{hr12-if-sum}
\sum\limits_{\bar{k}\in\Z^2}\sup_{0\leq s\leq t}\|w^{l_\infty}[\eps^{-\frac{3}{2}}\hat{h}_{R,1},\eps^{\frac{1}{2}}\hat{h}_{R,2}](t)\|_{\infty}
\leq& C\left\|w^{l_\infty}[\hat{g}^{(1)}_{R,0},\eps^{\frac{1}{2}}\hat{g}^{(2)}_{R,0}]\right\|_{\infty}
+\eps\vps_0\notag\\&+C(T_0)\sum\limits_{\bar{k}\in\Z^2}\sup\limits_{0\leq s\leq t}\|\hat{h}_{R,2}(s)\|_2,
\end{align}
for any $t\in[0,\infty)$. This ends the first step for the $L_{\bar{k}}^1L_{z,\Fv}^\infty$ estimates.

\medskip
\noindent
\underline{{\bf Step 2. $L^1_{\bar{k}}L^2_{T,z,\Fv}$ estimates for $h_{R,2}$.}} The $L^1_{\bar{k}}L^2_{T,z,\Fv}$ estimates for $h_{R,2}$ are obtained from two perspectives: one is the $L^1_{\bar{k}}L^2_{T,z,\Fv}$ estimate for the macroscopic component, and the other is the microscopic part. In line with Section \ref{sec-sape}, we
begin by defining the macroscopic components of the solutions as follows:
\begin{align}
\bar{\FP} \hat{h}_{R,1}=[\hat{a}^{(1)}+\Fv\cdot \hat{\Fb}^{(1)}+\frac{1}{2}(|\Fv|^2-3)\hat{c}^{(1)}]\mu,\notag
\end{align}
\begin{align}
\FP \hat{h}_{R,2}=[\hat{a}^{(2)}+\Fv\cdot \hat{\Fb}^{(2)}+\frac{1}{2}(|\Fv|^2-3)\hat{c}^{(2)}]\sqrt{\mu},\notag
\end{align}
and
\begin{align}
\FP \hat{h}_R=[\hat{a}+\Fv\cdot \hat{\Fb}+\frac{1}{2}(|\Fv|^2-3)\hat{c}]\sqrt{\mu}.
\notag
\end{align}
With these definitions, it follows that
\begin{align}%\label{abc-re}
\hat{a}=\hat{a}^{(1)}+\hat{a}^{(2)},\ \hat{\Fb}=\hat{\Fb}^{(1)}+\hat{\Fb}^{(2)},\ \hat{c}=\hat{c}^{(1)}+\hat{c}^{(2)}.\notag
\end{align}
Note that
\begin{align}\label{mac-hr1}
\|[\hat{a}^{(1)},\hat{\Fb}^{(1)},\hat{c}^{(1)}](\bar{k})\|_2\leq C\|w^{l_2}\hat{h}_{R,1}(\bar{k})\|_2,
\end{align}
for $l_2\geq2$. To establish the $L^1_{\bar{k}}L^2_{T,z}$ estimate for $[a^{(2)}, \Fb^{(2)}, c^{(2)}]$, it suffices to derive the $L^1_{\bar{k}}L^2_{T,z}$ estimate for $[a, \Fb, c]$. For this, similar to the definition of $g_R$ in \eqref{gr-def}, we first define $h_R$ as 
$$
\sqrt{\mu}h_R=h_{R,1}+\sqrt{\mu}h_{R,2},
$$
and then we consider the following equations that the Fourier transform of $h_R$, denoted as $\hat{h}_R$, satisfies
\begin{align}\label{hr}
v_z\pa_z\hat{h}_{R}&+i\bar{k}\cdot\bar{v}\hat{h}_{R}=-\eps\pa_t\hat{h}_{R}+\SH_3,
\end{align}
with
\begin{align}%\label{hr}
\SH_3=&-\eps^2\Phi\cdot\na_\Fv \hat{h}_{R}
-i\al\eps zk_x\hat{h}_{R}+\al\eps v_z\pa_{v_x}\hat{h}_{R}
-\frac{1}{\eps}L\hat{h}_{R}-\eps\la_0 \hat{h}_{R}\notag\\
&-\eps^{\frac{3}{2}}\pa_t\hat{h}_2-\eps^{\frac{3}{2}}\al zik_x\hat{h}_2+\eps^{\frac{3}{2}}\al \mu^{-\frac{1}{2}}v_z\pa_{v_x}\{\sqrt{\mu}\hat{h}_2\}\notag\\
&
-\eps^{\frac{3}{2}}\mu^{-\frac{1}{2}}\Phi\cdot\na_\Fv[\sqrt{\mu}(\hat{h}_1+\eps \hat{h}_2)]+\frac{\eps^2}{2}\Phi\cdot\Fv \hat{h}_{R}-\frac{\al\eps v_xv_z}{2}\hat{h}_R\notag\\
&+\eps^{\frac{1}{2}}e^{-\la_0t}\hat{\Ga}(\hat{h}_R,\hat{h}_R)+e^{-\la_0t}\hat{\Ga}(\hat{h}_R,\hat{h}_1+\eps\hat{h}_2)
+e^{-\la_0t}\hat{\Ga}(\hat{h}_1+\eps \hat{h}_2,\hat{h}_R)\notag\\
&+\eps^{\frac{3}{2}}e^{-\la_0t}\hat{\Ga}(\hat{h}_2,\hat{h}_2)
+\Ga(\hat{h}_R,\hat{f}_1+\eps \hat{f}_2)
+\hat{\Ga}(\hat{f}_1+\eps \hat{f}_2,\hat{h}_R)\notag\\
&+\hat{\Ga}(\hat{f}_R,\hat{h}_1+\eps \hat{h}_2)
+\hat{\Ga}(\hat{h}_1+\eps \hat{h}_2,\hat{f}_R)+\eps^{\frac{1}{2}}\{\hat{\Ga}(\hat{h}_R,\hat{f}_R)+\hat{\Ga}(\hat{f}_R,\hat{h}_R)\},\notag
\end{align}
\begin{align}%\label{hr-id}
\sqrt{\mu}\hat{h}_R(0,\Fx,\Fv)=\sqrt{\mu}\hat{g}_{R,0}(\Fx,\Fv),\notag
\end{align}
and 
\begin{align}\notag %\label{gr-bd}
\hat{h}_R(t,\pm1,\Fv)|_{v_z\lessgtr0}=P_\ga \hat{h}_R+\SR.
\end{align}
Note that the mass of $\hat{h}_R$ at zero frequency is conserved, i.e. 
$$
\int_{-1}^1\int_{\R^3}\hat{h}_R(t,z,0,\Fv)\sqrt{\mu}d\Fv dz=0,
$$
and such conservation is of significant importance in the estimating of $a$.

In the following, we intend to show that
\begin{align}\label{mac-es}
&\eps\CE^{int}_{mac}(t)+\sum\limits_{\bar{k}}\|[\hat{a},\hat{\Fb},\hat{c}](t)\|_{L^2_{T,z}}\notag\\
&\lesssim \eps\CE^{int}_{mac}(0)\notag\\
&\quad
+\eps^{\frac{1}{2}}\sum\limits_{\bar{k}\in\Z^2}\left\{\|w^{l_\infty}\hat{h}_{R,1}\|_{L_T^\infty L^2_{z,\Fv}}+\|w^{l_\infty}\hat{h}_{R,2}\|_{L_T^\infty L^2_{z,\Fv}}\right\}
\sum\limits_{\bar{k}\in\Z^2}\|\hat{h}_{R,2}\|_{L^2_{T,z,\Fv}}
\notag\\
&\quad+\sum\limits_{\bar{k}\in\Z^2}|\{\FI-\bar{P}_\ga\}\hat{f}_{R,1}(\pm1)|_{L^2_TL^2_{\ga_+}}
+\sum\limits_{\bar{k}\in\Z^2}|\{\FI-P_\ga\}\hat{f}_{R,2}(\pm1)|_{L^2_TL^2_{\ga_+}}
\notag\\
&\quad+\eps^{\frac{1}{2}}\sum\limits_{\bar{k}\in\Z^2}\|w^{l_\infty}\hat{h}_{R,1}\|_{L_T^\infty L^2_{z,\Fv}}
\sum\limits_{\bar{k}\in\Z^2}\|\hat{h}_{R,1}\|_{L^2_{T,z,\Fv}}
+\eps^{-1}\sum\limits_{\bar{k}\in\Z^2}
\|w^{l_2}\hat{h}_{R,1}\|_{L^2_{T,z,\Fv}}\notag\\
&\quad+
\eps^{-1}\sum\limits_{\bar{k}\in\Z^2}\|\{\FI-\FP\}\hat{h}_{R,2}\|_{L^2_{T,z,\Fv}}+\vps_0,
\end{align}
where
\begin{align}
|\CE^{int}_{mac}(t)|\lesssim\|w^{l_2}\hat{h}_{R,1}\|_{L_T^\infty L^2_{z,\Fv}}+\|\hat{h}_{R,2}\|_{L_T^\infty L^2_{z,\Fv}}.\notag
\end{align}

To prove \eqref{mac-es},
we only present the $L^1_{\bar{k}}L^2_{T,z}$ estimate for $\Fb$ for brevity, as the corresponding estimates for $a$ and $c$ can be obtained similarly. To achieve this, we introduce
\begin{eqnarray*}
\hat{\Psi}_{\Fb}=\sum\limits_{m=1}^3\hat{\Psi}^{J,m}_{\Fb}%=\sum\limits_{m=1}^3\hat{\Phi}^{j,m}_b
,\ J=1,2,3,\ \Fb=(b_{1},b_{2},b_{3}),
\end{eqnarray*}
with
\begin{eqnarray*}
\hat{\Psi}^{J,m}_{\Fb}
=\left\{\begin{array}{l}
\dis |\Fv|^2v_mv_J\widehat{\pa_m\phi_{J}}-\frac{7}{2}(v_m^2-1)\widehat{\pa_J\phi_{J}}\mu^{\frac{1}{2}},\ \ J\neq m,\\[2mm]
\dis \frac{7}{2}(v_J^2-1)\widehat{\pa_J\phi_{J}}\mu^{\frac{1}{2}},\ \ J=m,
\end{array}\right.
\end{eqnarray*}
where
\begin{equation}%\label{ep-ub}
(|\bar{k}|^2-\pa_z^2)\hat{\phi}_{J}=\hat{b}_{s,J},\ \hat{\phi}_{J}(\bar{k},\pm1)=0,\
  {\rm and}\ \pa_1=\pa_{x}, \pa_2=\pa_{y}, \pa_3=\pa_{z}.\notag
\end{equation}
Moreover, by elliptic estimates and trace theorem, it follows
\begin{align}%\label{epes-ub}
\||\bar{k}|^2\hat{\phi}_{J}(\bar{k},z)\|_{2}+\||\bar{k}|\hat{\phi}_{J}(\bar{k},z)\|_{H^1_z}
+\|\hat{\phi}_{J}\|_{H^{2}_z}\lesssim \|\hat{b}_{s,J}\|_{2},\ \forall\bar{k}\in\Z^2,\notag
\end{align}
\begin{align}%\label{epes-ub2}
(1+|\bar{k}|)\left\{\||\bar{k}|\hat{\phi}_{J}(\bar{k},z)\|_{2}+\|\hat{\phi}_{J}(\bar{k},z)\|_{H^1_z}\right\}
\lesssim \|\hat{b}_{s,J}\|_{2},\ \forall\bar{k}\in\Z^2,\notag
\end{align}
and
\begin{align}%\label{trace-ub}
(1+|\bar{k}|)\|\hat{\phi}_{J}(\bar{k},\pm1)\|_2+\|\pa_z\hat{\phi}_{J}(\bar{k},\pm1)\|_2\lesssim\|\hat{b}_{s,J}\|_{2},\ \forall\bar{k}\in\Z^2.\notag
\end{align}

Now taking the inner product of \eqref{hr} and $\hat{\Psi}^{J,m}_{\Fb}$ over $(\bar{k},v)\in\Z^2\times\R^3$, one has
\begin{align}
&-\sum\limits_{m=1}^3(\Fv\cdot\widehat{\na_\Fx\Psi^{J,m}_{\Fb}}, \FP \hat{h}_{R})\notag\\
&=-\sum\limits_{J=1}^3( \hat{\Psi}^{J,m}_{\Fb}(1), v_z \hat{h}_{R}(1))
+\sum\limits_{J=1}^3( \hat{\Psi}^{J,m}_{\Fb_s}(-1), v_z \hat{h}_{R}(-1))\notag\\
&\quad-\eps(\pa_t\hat{h}_{R},\hat{\Psi}^{J,m}_{\Fb})
-\sum\limits_{m=1}^3(\Fv\cdot\widehat{\na_\Fx\Psi^{J,m}_{\Fb}}, \{\FI-\FP\} \hat{h}_{R})+(\SH_3,\hat{\Psi}^{J,m}_{\Fb}).\label{b-ust}
\end{align}
In accordance with the computation in \eqref{bs-es}, the left hand side of \eqref{b-ust} gives
$7\|\hat{\Fb}\|_2^2.$ In order to control the third term on the right hand side of \eqref{b-ust}, by taking the moment of \eqref{hr}
with $\Fv\sqrt{\mu}$, we obtain
\begin{align}
\eps \pa_t\hat{\Fb}+\widehat{\na_{\Fx}(a+2c)}=-\lag\widehat{\Fv\cdot\na_\Fx\{\FI-\FP\}h_R},\Fv\sqrt{\mu}\rag
+\lag\SH_3,\Fv\sqrt{\mu}\rag,\notag
\end{align}
with this, we further have
\begin{align}
-\eps(\pa_t\hat{h}_{R},\hat{\Psi}^{J,m}_{\Fb})=&-\eps\frac{d}{dt}(\hat{h}_{R},\hat{\Psi}^{J,m}_{\Fb})
+\eps(\hat{h}_{R},\pa_t\hat{\Psi}^{J,m}_{\Fb})\notag\\
\leq&-\eps\frac{d}{dt}(\hat{h}_{R},\hat{\Psi}^{J,m}_{\Fb})+C\|[\hat{a},\hat{c}]\|_2^2+\eta\|\hat{\Fb}\|_2^2
+C_\eta\|w^{l_2}\hat{h}_{R,1}\|_2^2
\notag\\&+C_\eta\|\{\FI-\FP\}\hat{h}_{R,2}\|_2^2+C_\eta\|\lag\SH_3,\Fv\sqrt{\mu}\rag\|_2^2.\label{t-der}
\end{align}
Plugging \eqref{t-der} into \eqref{b-ust}, we then have
\begin{align}%\label{mac-bes}
\eps\sum\limits_{\bar{k}}&(\hat{h}_{R}(t),\hat{\Psi}^{J,m}_{\Fb}(t))+\sum\limits_{\bar{k}}\|\hat{\Fb}(t)\|_{L^2_{T,z}}\notag\\
\lesssim& \eps\sum\limits_{\bar{k}}(\hat{h}_{R}(0),\hat{\Psi}^{J,m}_{\Fb}(0))\notag\\
&+\eps^{\frac{1}{2}}\sum\limits_{\bar{k}\in\Z^2}\left\{\|w^{l_\infty}\hat{h}_{R,1}\|_{L_T^\infty L^2_{z,\Fv}}+\|w^{l_\infty}\hat{h}_{R,2}\|_{L_T^\infty L^2_{z,\Fv}}\right\}
\sum\limits_{\bar{k}\in\Z^2}\|\hat{h}_{R,2}\|_{L^2_{T,z,\Fv}}
\notag\\
&+\sum\limits_{\bar{k}\in\Z^2}|\{\FI-\bar{P}_\ga\}\hat{f}_{R,1}(\pm1)|_{L^2_TL^2_{\ga_+}}
+\sum\limits_{\bar{k}\in\Z^2}|\{\FI-P_\ga\}\hat{f}_{R,2}(\pm1)|_{L^2_TL^2_{\ga_+}}
\notag\\&+\eps^{\frac{1}{2}}\|w^{l_\infty}\hat{h}_{R,1}\|_{L_T^\infty L^2_{z,\Fv}}
\sum\limits_{\bar{k}\in\Z^2}\|\hat{h}_{R,1}\|_{L^2_{T,z,\Fv}}
+\eps^{-1}\sum\limits_{\bar{k}\in\Z^2}
\|w^{l_2}\hat{h}_{R,1}\|_{L^2_{T,z,\Fv}}\notag\\&+
\eps^{-1}\sum\limits_{\bar{k}\in\Z^2}\|\{\FI-\FP\}\hat{h}_{R,2}\|_{L^2_{T,z,\Fv}}
+\sum\limits_{\bar{k}}\|[\hat{a},\hat{c}](t)\|_{L^2_{T,z}}+\vps_0.\notag
\end{align}
Here, we also employed a calculation similar to that in \eqref{s7-as} to manage the nonlinear estimates in $\SH_3$. Consequently, \eqref{mac-es} holds true. Recalling \eqref{mac-hr1}, \eqref{mac-es} further gives
\begin{align}\label{mac-hr2-es}
\eps\CE^{int}_{mac}(t)&+\sum\limits_{\bar{k}}\|[\hat{a}^{(2)},\hat{\Fb}^{(2)},\hat{c}^{(2)}](t)\|_{L^2_{T,z}}\notag\\
\lesssim& \eps\CE^{int}_{mac}(0)\notag\\
&+\eps^{\frac{1}{2}}\sum\limits_{\bar{k}\in\Z^2}\left\{\|w^{l_\infty}\hat{h}_{R,1}\|_{L_T^\infty L^\infty_{z,\Fv}}+\|w^{l_\infty}\hat{h}_{R,2}\|_{L_T^\infty L^\infty_{z,\Fv}}\right\}
\sum\limits_{\bar{k}\in\Z^2}\|\hat{h}_{R,2}\|_{L^2_{T,z,\Fv}}
\notag\\
&+\sum\limits_{\bar{k}\in\Z^2}|\{\FI-\bar{P}_\ga\}\hat{f}_{R,1}(\pm1)|_{L^2_TL^2_{\ga_+}}
+\sum\limits_{\bar{k}\in\Z^2}|\{\FI-P_\ga\}\hat{f}_{R,2}(\pm1)|_{L^2_TL^2_{\ga_+}}
\notag\\&+\eps^{\frac{1}{2}}\|w^{l_\infty}\hat{h}_{R,1}\|_{L_T^\infty L^\infty_{z,\Fv}}
\sum\limits_{\bar{k}\in\Z^2}\|\hat{h}_{R,1}\|_{L^2_{T,z,\Fv}}
+\eps^{-1}\sum\limits_{\bar{k}\in\Z^2}
\|w^{l_2}\hat{h}_{R,1}\|_{L^2_{T,z,\Fv}}\notag\\&+
\eps^{-1}\sum\limits_{\bar{k}\in\Z^2}\|\{\FI-\FP\}\hat{h}_{R,2}\|_{L^2_{T,z,\Fv}}+\vps_0.
\end{align}
We now turn to obtain the estimate for $\{\FI-\FP\}\hat{h}_{R,2}$, which represents the microscopic component of $\hat{h}_{R,2}$.
To achieve this, we take the inner product of \eqref{f-hr2} with $\hat{h}_{R,2}$ over $(z,\Fv)\in(-1,1)\times\R^3$. Further, by considering the real part of the resulting identity, we obtain
\begin{align}\label{hr2-ip}
&\frac{1}{2}\frac{d}{dt}\|\hat{h}_{R,2}\|_2^2+\eps^{-1}|\{\FI-P_\ga\}\hat{h}_{R,2}|^2_{2,+}
+\frac{\de_0}{\eps^2}\|\{\FI-\FP\}\hat{h}_{R,2}\|_\nu^2\notag\\
&\qquad-\{\la_0+\eta+\al\} \|\hat{h}_{R,2}\|_2^2\notag\\
&\leq \frac{1}{\eps^4}\| h_{R,1}\|_2^2+|(\SH_2,\hat{h}_{R,2})|+\eta|P_\ga\hat{h}_{R,2}|^2_{2,-}
+\frac{C_\eta}{\eps^2}|\bar{P}_\ga\hat{h}_{R,1}|^2_{2,-}+C\eps^{-1}\|\SR\|_2^2.
\end{align}
On the other hand, trace Lemma \ref{ukai} leads us to
\begin{align}\label{pga-h2-2}
|P_{\gamma }\hat{h}_{R,2}(\pm1)|_{2,\pm }^{2}\leq& \vps\int_{v_{z}\gtrless0}|\hat{h}_{R,2}(\pm1)|^{2}|v_{z}|d\Fv
+C\|\hat{h}_{R,2}\|_2^{2}+\frac{C}{\eps^2}\|\{\FI-\FP\}\hat{h}_{R,2}\|^2_\nu\notag\\&+\frac{C}{\eps^4}\| w^{l_2}\hat{h}_{R,1}\|_{2}^{2}
+|(\SH_2,\hat{h}_{R,2})|.
\end{align}
Next, by utilizing Lemma \eqref{es-L} together with \eqref{hr2-ip} and \eqref{pga-h2-2}, we can deduce that
\begin{align}\label{hr2-ip2}
&\sum\limits_{\bar{k}\in\Z^2}\sup\limits_{0\leq t\leq T}\|\hat{h}_{R,2}\|_2+\eps^{-\frac{1}{2}}\sum\limits_{\bar{k}\in\Z^2}\left(\int_0^T|\hat{h}_{R,2}(\pm1)|^2dt\right)^{\frac{1}{2}}\notag\\
&\qquad\qquad+\frac{\de_0}{2\eps}\sum\limits_{\bar{k}\in\Z^2}\left(\int_0^T\|\{\FI-\FP\}\hat{h}_{R,2}\|_\nu^2dt\right)^{\frac{1}{2}}\notag\\
&\lesssim\sum\limits_{\bar{k}\in\Z^2}\left\|\hat{g}^{(2)}_{R,0}\right\|_{2}
+\eps^{-2}\sum\limits_{\bar{k}\in\Z^2}\left(\int_0^T\| \hat{h}_{R,1}\|_2^2dt\right)^{\frac{1}{2}}
+\sum\limits_{\bar{k}\in\Z^2}\left(\int_0^T|(\SH_2,\hat{h}_{R,2})|dt\right)^{\frac{1}{2}}\notag\\
&\quad+\eps^{-1}\sum\limits_{\bar{k}\in\Z^2}\left(\int_0^T|\bar{P}_\ga\hat{h}_{R,1}|^2_{2,-}dt\right)^{\frac{1}{2}}
+\eps^{-\frac{1}{2}}\sum\limits_{\bar{k}\in\Z^2}\left(\int_0^T\|\SR\|_2^2dt\right)^{\frac{1}{2}}.
\end{align}
Next, by applying Lemma \ref{es-tri}, one has
\begin{align}
\sum\limits_{\bar{k}\in\Z^2}&\left(\int_0^T|(\SH_2,\hat{h}_{R,2})|dt\right)^{\frac{1}{2}}\notag\\
\leq&\frac{\eta}{\eps}\sum\limits_{\bar{k}\in\Z^2}\left(\int_0^T\|\{\FI-\FP\}\hat{h}_{R,2}\|_\nu^2dt\right)^{\frac{1}{2}}\notag\\\
&\qquad \qquad \qquad \qquad +\eps^{\frac{1}{2}}C_\eta\sum\limits_{\bar{k}\in\Z^2}\|w^{l_\infty}\hat{h}_{R,2}\|_{L_T^\infty L^\infty_{z,\Fv}}
\sum\limits_{\bar{k}\in\Z^2}\|\hat{h}_{R,2}\|_{L^2_{T,z,\Fv}}\notag\\
&+C_\eta\left(\sum\limits_{\bar{k}\in\Z^2}\|w^{l_\infty}[\hat{h}_{1},\hat{h}_{2}]\|_{L_T^\infty L^\infty_{z,\Fv}}\right.\notag\\
&\qquad \qquad \qquad \qquad \left.+\sum\limits_{\bar{k}\in\Z^2}\|w^{l_\infty}[\hat{f}_{1},\hat{f}_{2},\eps^{\frac{1}{2}}\hat{f}_{R,2}]\|_{L^\infty_{z,\Fv}}\right)
\sum\limits_{\bar{k}\in\Z^2}\|\hat{h}_{R,2}\|_{L^2_{T,z,\Fv}}\notag\\
&+C_\eta\sum\limits_{\bar{k}\in\Z^2}\|w^{l_\infty}\hat{f}_{R,2}\|_{L^2_{z,\Fv}}
\sum\limits_{\bar{k}\in\Z^2}\|w^{l_\infty}[\hat{h}_{1},\hat{h}_{2}]\|_{L_T^\infty L^\infty_{z,\Fv}}.
%\notag\\&+C_\eta\eps\sum\limits_{\bar{k}\in\Z^2}\|w^{l_\infty}\hat{f}_{R,2}\|_{L^\infty_{z,v}}
%\sum\limits_{\bar{k}\in\Z^2}\|w^{l_\infty}\hat{h}_{2}\|_{L^2_{T,z,\Fv}}.
\label{h2-nn-es}
\end{align}
%furthermore, using the fact that
%\begin{align}
%\sum\limits_{\bar{k}\in\Z^2}\|w^{l_\infty}e^{{\la}t}[\hat{h}_{1},\hat{h}_{2}]\|_{L_T^\infty L^2_{z,v}}\leq C\vps_0.
%\end{align}
Consequently, \eqref{hr2-ip2}, \eqref{h2-nn-es}, \eqref{ust-apa} and Lemma \ref{coef-es} lead to
\begin{align}\label{hr2-mic-es}
&\sum\limits_{\bar{k}\in\Z^2}\|\hat{h}_{R,2}\|_{L_T^\infty L^2_{z,\Fv}}+\eps^{-\frac{1}{2}}\sum\limits_{\bar{k}\in\Z^2}|\{\FI-P_\ga\}\hat{h}_{R,2}(\pm1)|_{L^2_TL^2_{\ga_+}}\notag\\
&\quad
+\eps^{-1}\sum\limits_{\bar{k}\in\Z^2}\|\{\FI-\FP\}\hat{h}_{R,2}\|_{L_T^2 L^2_{z}L^2_{\nu}}\notag\\
&\lesssim\sum\limits_{\bar{k}\in\Z^2}\left\|\hat{g}^{(2)}_{R,0}\right\|_{2}+
(\la_0+\eta+\al)\sum\limits_{\bar{k}}\|[\hat{a}^{(2)},\hat{\Fb}^{(2)},\hat{c}^{(2)}](t)\|_{L^2_{T}L^2_{z}}\notag\\
&
\quad+\eps^{-2}\sum\limits_{\bar{k}\in\Z^2}\|\hat{h}_{R,1}\|_{L^2_{T,z,\Fv}}+\eps^{-1}\sum\limits_{\bar{k}\in\Z^2}|w^{l_2}\hat{h}_{R,1}(\pm1)|_{L^2_TL^2_{\ga_+}}+\vps_0.
\end{align}
This ends the second step for the $L^1_{\bar{k}}L^2_{T,z,\Fv}$ estimates of  $h_{R,2}$.

\medskip
\noindent
\underline{{\bf Step 3. $L^1_{\bar{k}}L^2_{T,z,\Fv}$ estimates for $\hat{h}_{R,1}$.}} In this step, we turn to derive the $L^1_{\bar{k}}L^2_{T,z,\Fv}$ estimates for $\hat{h}_{R,1}$. For this, taking the inner product of \eqref{f-hr1} with $w^{l_2}\hat{h}_{R,1}$
over $(z,\Fv)\in(-1,1)\times\R^3$, one has
\begin{align}\label{f-hr1-ip}
(\pa_t\hat{h}_{R,1},&w^{2l_2}\hat{h}_{R,1})+\eps^{-1}(i\bar{k}\cdot\bar{v}\hat{h}_{R,1},w^{2l_2}\hat{h}_{R,1})
+\eps^{-1}(v_z\pa_z\hat{h}_{R,1},w^{2l_2}\hat{h}_{R,1})\notag\\
&+\eps(\Phi\cdot\na_\Fv \hat{h}_{R,1},w^{2l_2}\hat{h}_{R,1})
%\notag\\&
+i\al (zk_x\hat{h}_{R,1},w^{2l_2}\hat{h}_{R,1})\notag\\
&-\al (v_z\pa_{v_x}\hat{h}_{R,1},w^{2l_2}\hat{h}_{R,1})
%\notag\\&
+\frac{1}{\eps^2}(\nu \hat{h}_{R,1},w^{2l_2}\bar{h}_{R,1})-\la_0 (\hat{h}_{R,1},w^{2l_2}\hat{h}_{R,1})\notag\\
=&\frac{1}{\eps^2}(\chi_M\CK \hat{h}_{R,1},w^{2l_2}\bar{h}_{R,1})+\frac{\eps}{2}(\sqrt{\mu}\Phi\cdot\Fv \hat{h}_{R,2},w^{2l_2}\hat{h}_{R,1})
\notag\\&+\eps^{\frac{1}{2}}(-
\sqrt{\mu}\pa_t\hat{h}_2-\al z\sqrt{\mu}ik_x\hat{h}_2+\al v_z\pa_{v_x}\{\sqrt{\mu}\hat{h}_2\}\notag\\
&-\Phi\cdot\na_\Fv[\sqrt{\mu}(h_1+\eps h_2)]],w^{2l_2}\hat{h}_{R,1})
\notag\\&
-\frac{\al}{2} (v_xv_z\sqrt{\mu}\{\FI-\FP\}\hat{h}_{R,2},w^{2l_2}\hat{h}_{R,1})
\notag\\
&+(\SH_1,w^{2l_2}\hat{h}_{R,1}).
\end{align}
Taking the real part of \eqref{f-hr1-ip} and using Lemma \ref{es-tri}, we then have
\begin{align}%\label{f-hr1-ip2}
\frac{d}{dt}\|&w^{l_2}\hat{h}_{R,1}\|_2^2
+\eps^{-1}\int_{v_z>0}v_zw^{2l_2}|\hat{h}_{R,1}(1)|^2d\Fv
\notag\\
&\qquad \qquad \qquad -\eps^{-1}\int_{v_z<0}v_zw^{2l_2}|\hat{h}_{R,1}(-1)|^2d\Fv
+\frac{1}{2\eps^2}\|w^{l_2}\hat{h}_{R,1}\|_\nu^2\notag\\
\lesssim& \eps^4\|\hat{h}_{R,2}\|_2^2+\al^2\eps^{2}\|\{\FI-\FP\}\hat{h}_{R,2}\|_2^2
+\eps^{3}\|\pa_{t}\hat{h}_2\|_2^2+\eps^{3}\|\lag\bar{k}\rag[\hat{h}_1,\hat{h}_2]\|_2^2
\notag\\
&+\eps^{-\frac{1}{2}}\int_{-1}^1\|w^{l_2}\hat{h}_{R,1}(\bar{k})\|_\nu
\sum\limits_{\bar{l}}
\left\{\|w^{l_2}\hat{h}_{R,1}(\bar{k}-\bar{l})\|_2\|w^{l_2}\hat{h}_{R,1}(\bar{l})\|_\nu \right. \notag\\
& \qquad \qquad\qquad\qquad\qquad\qquad\left.+\|w^{l_2}\hat{h}_{R,1}(\bar{k}-\bar{l})\|_\nu\|w^{l_2}\hat{h}_{R,1}(\bar{l})\|_2\right\}dz
\notag\\&
+\eps^{-\frac{1}{2}}\int_{-1}^1\|w^{l_2}\hat{h}_{R,1}(\bar{k})\|_\nu\sum\limits_{\bar{l}}
\left\{\|\hat{h}_{R,2}(\bar{k}-\bar{l})\|_2\|w^{l_2}\hat{h}_{R,1}(\bar{l})\|_\nu\right.
\notag\\
&\qquad \quad\qquad\qquad\qquad\qquad\qquad+\left. \|\hat{h}_{R,2}(\bar{k}-\bar{l})\|_\nu\|w^{l_2}\hat{h}_{R,1}(\bar{l})\|_2\right\}dz
\notag\\
&+\eps^{-1}\int_{-1}^1\|w^{l_2}\hat{h}_{R,1}(\bar{k})\|_\nu\sum\limits_{\bar{l}}
\left\{\|[\hat{h}_{1},\hat{h}_{2}](\bar{k}-\bar{l})\|_2\|w^{l_2}\hat{h}_{R,1}(\bar{l})\|_\nu
\right.\notag\\
&\qquad \qquad\qquad\qquad\qquad\qquad\left.+\|[\hat{h}_{1},\hat{h}_{2}](\bar{k}-\bar{l})\|_\nu\|w^{l_2}\hat{h}_{R,1}(\bar{l})\|_2\right\}dz
\notag\\&+\eps^{-1}\int_{-1}^1\|w^{l_2}\hat{h}_{R,1}(\bar{k})\|_\nu\sum\limits_{\bar{l}}
\left\{\|[\hat{f}_{1},\hat{f}_{2}](\bar{k}-\bar{l})\|_\nu\|w^{l_2}\hat{h}_{R,1}(\bar{l})\|_2
\right. \notag \\
&\qquad\qquad \qquad\qquad\qquad\qquad\left.+\|[\hat{f}_{1},\hat{f}_{2}](\bar{k}-\bar{l})\|_\nu\|w^{l_2}\hat{h}_{R,1}(\bar{l})\|_\nu\right\}dz
\notag\\&+\eps^{-\frac{1}{2}}\int_{-1}^1\|w^{l_2}\hat{h}_{R,1}(\bar{k})\|_\nu\sum\limits_{\bar{l}}
\Big\{\|w^{l_2}[\hat{f}_{R,1},\sqrt{\mu}\hat{f}_{R,2}](\bar{k}-\bar{l})\|_\nu\|w^{l_2}\hat{h}_{R,1}(\bar{l})\|_2
\notag\\&\qquad\qquad\qquad\qquad+\|w^{l_2}[\hat{f}_{R,1},\sqrt{\mu}\hat{f}_{R,2}](\bar{k}-\bar{l})\|_2
\|w^{l_2}\hat{h}_{R,1}(\bar{l})\|_\nu\Big\}dz
\notag\\
&+\eps^{-\frac{1}{2}}\int_{-1}^1\|w^{l_2}\hat{h}_{R,1}(\bar{k})\|_\nu\sum\limits_{\bar{l}}
\left\{\|w^{l_2}\hat{f}_{R,1}(\bar{k}-\bar{l})\|_\nu\|\hat{h}_{R,2}(\bar{l})\|_2\right. \notag \\
&\qquad\qquad \qquad\qquad\qquad\qquad\qquad\left.+\|w^{l_2}\hat{f}_{R,1}(\bar{k}-\bar{l})\|_2
\|\hat{h}_{R,2}(\bar{l})\|_\nu\right\}dz\notag\\
&+\eps^{-1}\int_{-1}^1\|w^{l_2}\hat{h}_{R,1}(\bar{k})\|_\nu\sum\limits_{\bar{l}}
\left\{\|w^{l_2}\hat{f}_{R,1}(\bar{k}-\bar{l})\|_\nu \|[\hat{h}_{1},\hat{h}_{2}](\bar{l})\|_2\right. \notag \\
& \qquad\qquad\qquad\qquad\qquad\qquad+\left. \|w^{l_2}\hat{f}_{R,1}(\bar{k}-\bar{l})\|_2 \|[\hat{h}_{1},\hat{h}_{2}](\bar{l})\|_\nu
\right\}dz.\notag
\end{align}
Consequently, we have
\begin{align}%\label{f-hr1-ip3}
\|&w^{l_2}\hat{h}_{R,1}(t)\|_2
+\eps^{-\frac{1}{2}}\left(\int_0^t\int_{v_z>0}v_zw^{2l_2}|\hat{h}_{R,1}(1)|^2d\Fv ds\right)^{\frac{1}{2}}
\notag\\
&\quad  +\eps^{-\frac{1}{2}}\left(\int_0^t\int_{v_z<0}|v_z|w^{2l_2}|\hat{h}_{R,1}(-1)|^2d\Fv ds\right)^{\frac{1}{2}}
%\notag\\&
+\frac{1}{2\eps}\left(\int_0^t\|w^{l_2}\hat{h}_{R,1}\|_\nu^2ds\right)^{\frac{1}{2}}\notag\\
\lesssim&
\sum\limits_{\bar{k}\in\Z^2}\eps^{\frac{3}{2}}\left\|w^{l_2}\hat{g}^{(1)}_{R,0}\right\|_{2}
+\eps^2\left(\int_0^t\|\hat{h}_{R,2}\|_2^2ds\right)^{\frac{1}{2}}
+\al\eps\left(\int_0^t\|\{\FI-\FP\}\hat{h}_{R,2}\|_2^2ds\right)^{\frac{1}{2}}
\notag\\&+\eps^{\frac{3}{2}}\left(\int_0^t\|\pa_{t}\hat{h}_2\|_2^2ds\right)^{\frac{1}{2}}
+\eps^{\frac{3}{2}}\left(\int_0^t\|\lag\bar{k}\rag[\hat{h}_1,\hat{h}_2]\|_2^2ds\right)^{\frac{1}{2}}
\notag\\&+\frac{\eta}{\eps}\left(\int_0^t\|w^{l_2}\hat{h}_{R,1}\|_\nu^2ds\right)^{\frac{1}{2}}\notag\\
&+C_\eta\sqrt{\eps}\sum\limits_{\bar{l}}\left(\int_0^t\notag
\|w^{l_\infty}\hat{h}_{R,1}(\bar{k}-\bar{l})\|_\infty^2\|w^{l_2}\hat{h}_{R,1}(\bar{l})\|_\nu^2ds\right)^{\frac{1}{2}}
\\&+C_\eta\sqrt{\eps}\sum\limits_{\bar{l}}\left(\int_0^t\|w^{l_\infty}\hat{h}_{R,2}(\bar{k}-\bar{l})\|_\infty^2
\|w^{l_2}\hat{h}_{R,1}(\bar{l})\|_\nu^2ds\right)^{\frac{1}{2}}
\notag\\
&+C_\eta\sum\limits_{\bar{l}}\left(\int_0^t\|w^{l_\infty}[\hat{f}_{1},\hat{f}_{2},\hat{h}_{1},\hat{h}_{2}]
(\bar{k}-\bar{l})\|_\infty^2\|w^{l_2}\hat{h}_{R,1}(\bar{l})\|_\nu^2ds\right)^{\frac{1}{2}}
\notag\\&+C_\eta\sqrt{\eps}\sum\limits_{\bar{l}}\left(\int_0^t\|w^{l_\infty}[\hat{f}_{R,1},\hat{f}_{R,2}](\bar{k}-\bar{l})\|_\infty^2
\|w^{l_2}\hat{h}_{R,1}(\bar{l})\|_\nu^2ds\right)^{\frac{1}{2}}
\notag\\
&+C_\eta\sqrt{\eps}\sum\limits_{\bar{l}}\left(\int_0^t\|w^{l_\infty}\hat{f}_{R,1}(\bar{k}-\bar{l})\|_\infty^2
\|\hat{h}_{R,2}(\bar{l})\|_\nu^2ds\right)^{\frac{1}{2}}\notag\\&
+C_\eta\sum\limits_{\bar{l}}\left(\int_0^t\|w^{l_\infty}\hat{f}_{R,1}(\bar{k}-\bar{l})\|_\infty^2
\|[\hat{h}_{1},\hat{h}_{2}](\bar{l})\|_\nu^2ds\right)^{\frac{1}{2}}.\notag
\end{align} 
%\eqref{f-hr1-ip3} 
The above estimate further yields
\begin{align}\label{hr1-l2-es}
&\eps^{-\frac{3}{2}}\sum\limits_{\bar{k}\in\Z^2}\|w^{l_2}\hat{h}_{R,1}\|_{L_T^\infty L^2_{z,\Fv}}+\eps^{-2}\sum\limits_{\bar{k}\in\Z^2}|w^{l_2}\hat{h}_{R,1}(\pm1)|_{L^2_TL^2_{\ga_+}}\notag\\
&\quad 
+\eps^{-\frac{5}{2}}\sum\limits_{\bar{k}\in\Z^2}\|w^{l_2}\hat{h}_{R,1}\|_{L^2_{T,z,\nu}}\notag\\
&
\lesssim \sum\limits_{\bar{k}\in\Z^2}\left\|w^{l_2}\hat{g}^{(1)}_{R,0}\right\|_{\infty}+
(\eps^{\frac{1}{2}}+\sqrt{\vps_0}+\al+\vps_{\Phi,\al})\sum\limits_{\bar{k}\in\Z^2}\|\hat{h}_{R,2}\|_{L^2_{T,z,\nu}}
+\vps_0.
\end{align}
This ends the $L^1_{\bar{k}}L^2_{T,z,\Fv}$ estimates for $\hat{h}_{R,1}$.

Finally, we get from \eqref{hr12-if-sum}, \eqref{mac-hr2-es}, \eqref{hr2-mic-es} and \eqref{hr1-l2-es} that
\begin{align}\label{rm-es-fin}
&\eps^{-\frac{3}{2}}\sum\limits_{\bar{k}\in\Z^2}\|w^{l_\infty}\hat{h}_{R,1}\|_{L_T^\infty L^\infty_{z,\Fv}}+
\eps^{-\frac{3}{2}}\sum\limits_{\bar{k}\in\Z^2}\|w^{l_2}\hat{h}_{R,1}\|_{L_T^\infty L^2_{z,\Fv}}\notag\\
&
\quad+\eps^{-\frac{5}{2}}\sum\limits_{\bar{k}\in\Z^2}\|w^{l_2}\hat{h}_{R,1}\|_{L^2_{T,z,\nu}}
+\eps^{\frac{1}{2}}\sum\limits_{\bar{k}\in\Z^2}\|w^{l_\infty}\hat{h}_{R,2}\|_{L_T^\infty L^\infty_{z,\Fv}}\notag\\
&\quad
+\sum\limits_{\bar{k}\in\Z^2}\|\hat{h}_{R,2}\|_{L_T^\infty L^2_{z,\Fv}}+\sum\limits_{\bar{k}}\|[\hat{a}^{(2)},\hat{\Fb}^{(2)},\hat{c}^{(2)}](t)\|_{L^2_{T,z}}
\notag\\
&\quad+\eps^{-1}\sum\limits_{\bar{k}\in\Z^2}\|\{\FI-\FP\}\hat{h}_{R,2}\|_{L^2_{T,z,\nu}}
\leq C\sum\limits_{\bar{k}\in\Z^2}\left\|w^{l_\infty}[\hat{g}^{(1)}_{R,0},\hat{g}^{(2)}_{R,0}]\right\|_{\infty}+C\vps_0.
\end{align}
Then, the desired estimate \eqref{rm-de}  follows from \eqref{rm-es-fin}.

\medskip
\noindent
\underline{{\bf Step 4. Non-negativity of the solution.}}
To complete the proof of Theorem \ref{sta-th},
we now turn to prove that the unique global solution constructed above is non-negative, i.e.
$$
F^\eps(t,\Fx,\Fv)=F_{st}^\eps(\Fx,\Fv)+\eps\sqrt{\mu}\{[g_1+\eps g_2+\eps^{\frac{1}{2}}g_R](t,\Fx,\Fv)\}\geq0,
$$ 
under the condition that 
$$
F^\eps_0(\Fx,\Fv)=F_{st}^\eps(\Fx,\Fv)+\eps\sqrt{\mu}\{g_1(0,\Fx,\Fv)+\eps g_2(0,\Fx,\Fv)+\eps^{\frac{1}{2}}g_{R,0}(\Fx,\Fv)\}\geq0,
$$
which also indicates the non-negativity of the steady solution $F_{st}^\eps(\Fx,\Fv)$ obtained in Theorem \ref{st-sol-th} due to the large time asymptotic behavior \eqref{rm-de}. To do so, let us start from the following approximation system to the equation \eqref{s-rbe}:
\begin{eqnarray}\label{ap.bef}
\left\{
\begin{array}{l}
\dis \pa_tF^{n+1}+\eps^{-1}\Fv\cdot\na_\Fx F^{n+1}+\eps\Phi\cdot\na_\Fv F^{n+1}+\al z\pa_{x}F^{n+1}-\al v_z\pa_{v_x}F^{n+1}\\[2mm]
\dis \qquad\qquad+\eps^{-2}F^{n+1}\SV(F^{n})=\eps^{-2}Q_{\rm{gain}}(F^{n},F^{n}),\ t>0,\ \Fx\in\Om,\ \Fv\in\R^3,\\[2mm]
\dis F^{n+1}(t,x,y,\pm1,\Fv)|_{v_{z}\lessgtr0}=\sqrt{2\pi}\mu\int_{v_{z}\gtrless0}F^{n}(t,x,y,\pm1,\Fv)|v_{z}|d\Fv,\\[2mm]
\dis F^{n+1}(0,\Fx,\Fv)=F_0(\Fx,\Fv),
\end{array}\right.
\end{eqnarray}
where
$$
\SV(F^{n})=\int_{\R^3\times\S^2_+}B_0(\Fv-\Fv_\ast,\om)F^{n}(\Fv_\ast)d\Fv_\ast d\om.
$$
One can see that if $F_0(\Fx,\Fv)\geq0$ and $F^{n}(t,\Fx,\Fv)\geq0$, then any solution of \eqref{ap.bef} should be non-negative.
Let $$F^{n+1}=F_{st}^\eps+\eps\sqrt{\mu}\{[g_1+\eps g_2+\eps^{\frac{1}{2}}g^{n+1}_R](t,\Fx,\Fv)\},$$
%and $$F'=F_{st}^\eps+\eps\sqrt{\mu}\{[g_1+\eps g_2+\eps^{\frac{3}{2}}g'_R](t,\Fx,\Fv)\},$$
where $g_1$ and $g_2$ are given by \eqref{g1-ex} and \eqref{g2-def}, respectively.
We further decompose $g^{n+1}_R$ as
\begin{align}\notag
\sqrt{\mu}g^{n+1}_R=g^{n+1}_{R,1}+\sqrt{\mu}g^{n+1}_{R,2}.
\end{align}
We shall show that the approximation sequence $[g^{n+1}_{R,1},g^{n+1}_{R,2}]|_{n=1}^{+\infty}$ converges to $[g_{R,1},g_{R,2}]$ which satisfies \eqref{gr1},
\eqref{gr1-bd}, \eqref{gr1-id}, \eqref{gr2},
\eqref{gr2-bd}, \eqref{gr2-id}
in function space $X_{\eps^2T_1}\times X_{\eps^2T_1}$ for some $T_1>0$.

Next, we consider the following coupled equations for $g_{R,1}$ and $g_{R,2}$:
\begin{align}\label{pg-hr1}
&\pa_t\widehat{g^{n+1}}_{R,1}+\eps^{-1}i\bar{k}\cdot\bar{v}\widehat{g^{n+1}}_{R,1}
+\eps^{-1}v_z\pa_z\widehat{g^{n+1}}_{R,1}+\eps\Phi\cdot\na_\Fv \widehat{g^{n+1}}_{R,1}\notag\\&+i\al zk_x\widehat{g^{n+1}}_{R,1}-\al v_z\pa_{v_x}\widehat{g^{n+1}}_{R,1}
+\frac{1}{\eps^2}\nu \widehat{g^{n+1}}_{R,1}\notag\\
&=-\eps^{-\frac{1}{2}}\widehat{g^{n+1}}_{R}\SV(\widehat{g^{n}}_{R})
-\eps^{-1}\widehat{g^{n+1}}_{R}\SV\left[\sqrt{\mu}(\hat{f}_1+\eps \hat{f}_2)+\sqrt{\mu}(\hat{g}_1+\eps \hat{g}_2)\right]
\notag\\
&-\eps^{\frac{1}{2}}\sqrt{\mu}\pa_t\hat{g}_2-\eps^{\frac{1}{2}}\al z\sqrt{\mu}ik_x\hat{g}_2+\eps^{\frac{1}{2}}\al v_z\pa_{v_x}\{\sqrt{\mu}\hat{g}_2\}
-\eps^{\frac{1}{2}}\Phi\cdot\na_\Fv[\sqrt{\mu}(\hat{g}_1+\eps \hat{g}_2)]
\notag\\
&+\frac{1}{\eps^2}\chi_M\CK \widehat{g^{n}}_{R,1}+\frac{\eps}{2}\sqrt{\mu}\Phi\cdot\Fv \widehat{g^{n+1}}_{R,2}
-\frac{\al v_xv_z}{2}\sqrt{\mu}\{\FI-\FP\}\widehat{g^{n+1}}_{R,2}+\tilde{\SH}_1,
\end{align}
with
\begin{align}
\tilde{\SH}_1=&\eps^{-\frac{1}{2}}\hat{Q}_{\rm{gain}}(\widehat{g^{n}}_{R,1},\widehat{g^{n}}_{R,1})\notag\\
&+\eps^{-1}\hat{Q}(\widehat{g^{n}}_{R,1},\sqrt{\mu}(\hat{g}_1+\eps \hat{g}_2))
+\eps^{-1}\hat{Q}(\sqrt{\mu}(\hat{g}_1+\eps \hat{g}_2),\widehat{g^{n}}_{R,1})\notag\\
&+\eps^{\frac{1}{2}}\hat{Q}(\sqrt{\mu}\hat{g}_2,\sqrt{\mu}\hat{g}_2)\notag\\
&+\eps^{-1}\hat{Q}(\widehat{g^{n}}_{R,1},\sqrt{\mu}(\hat{f}_1+\hat{f}_2))
+\eps^{-1}\hat{Q}(\sqrt{\mu}(\hat{f}_1+\eps \hat{f}_2),\widehat{g^{n}}_{R,1})\notag\\
&+\hat{Q}(\sqrt{\mu}\hat{f}_{R,1}, \sqrt{\mu}\hat{g}_2)
+\hat{Q}(\sqrt{\mu}\hat{g}_2,\sqrt{\mu}\hat{f}_{R,1})\notag\\
&+\eps^{-1}\hat{Q}(\hat{f}_{R,1},\sqrt{\mu}\hat{g}_1)
+\eps^{-1}\hat{Q}(\sqrt{\mu}\hat{g}_1,\hat{f}_{R,1})
\notag\\
&+\eps^{-\frac{1}{2}}\left\{\hat{Q}(\sqrt{\mu}\widehat{g^n}_{R},\hat{f}_{R,1})+\hat{Q}(\hat{f}_{R,1},\sqrt{\mu}\widehat{g^n}_{R})\right\}\notag\\
&+\eps^{-\frac{1}{2}}\left\{\hat{Q}(\widehat{g^{n}}_{R,1},\sqrt{\mu}\hat{f}_{R,2})
+\hat{Q}(\sqrt{\mu}\hat{f}_{R,2},\widehat{g^{n}}_{R,1})\right\},\notag
\end{align}
\begin{align}\label{pg-hr1-id}
\widehat{g^{n+1}}_{R,1}(0,z,\Fv)=\eps^{\frac{3}{2}}\hat{g}^{(1)}_{R,0}(z,\Fv),
\end{align}
\begin{align}\label{pg-hr1-bd}
\widehat{g^{n+1}}_{R,1}(t,\pm1,\Fv)|_{v_z\lessgtr0}=0,
\end{align}
and
\begin{align}\label{pg-hr2}
\pa_t\widehat{g^{n+1}}_{R,2}&+\eps^{-1}i\bar{k}\cdot\bar{v}\widehat{g^{n+1}}_{R,2}+\eps^{-1}v_z\pa_z\widehat{g^{n+1}}_{R,2}
+\eps\Phi\cdot\na_\Fv \widehat{g^{n+1}}_{R,2}
\notag\\&+i\al zk_x\widehat{g^{n+1}}_{R,2}-\al v_z\pa_{v_x}\widehat{g^{n+1}}_{R,2}
+\frac{1}{\eps^2}\nu \widehat{g^{n+2}}_{R,2}\notag\\
=&\frac{1}{\eps^2}K\widehat{g^{n}}_{R,2}+\frac{1}{\eps^2}\mu^{-\frac{1}{2}}(1-\chi_M)\CK \widehat{g^{n}}_{R,1}-\frac{\al v_xv_z}{2}\FP \widehat{g^{n}}_{R,2}+\tilde{\SH}_2
\end{align}
with
\begin{align}
\tilde{\SH}_2=&\eps^{-\frac{1}{2}}\hat{\Ga}_{\rm{gain}}(\widehat{g^{n}}_{R,2},\widehat{g^{n}}_{R,2})
+\eps^{-1}\hat{\Ga}(\widehat{g^{n}}_{R,2},\hat{g}_1+\eps \hat{g}_2)
+\eps^{-1}\hat{\Ga}(\hat{g}_1+\eps \hat{g}_2,\widehat{g^{n}}_{R,2})\notag\\
&+\eps^{-1}\hat{\Ga}(\widehat{g^{n}}_{R,2},\hat{f}_1+\eps \hat{f}_2)
+\eps^{-1}\hat{\Ga}(\hat{f}_1+\eps \hat{f}_2,\widehat{g^{n}}_{R,2})\notag\\
&+\eps^{-\frac{1}{2}}\{\hat{\Ga}(\widehat{g^{n}}_{R,2},\hat{f}_{R,2})
+\hat{\Ga}(\hat{f}_{R,2},\widehat{g^{n}}_{R,2})\}\notag\\
&+\eps^{-1}\hat{\Ga}(\hat{f}_{R,2},\hat{g}_1+\eps\hat{g}_2)
+\eps^{-1}\hat{\Ga}(\hat{g}_1+\eps\hat{g}_2,\hat{f}_{R,2}),\notag
\end{align}
\begin{align}\label{pg-hr2-id}
\widehat{g^{n+1}}_{R,2}(0,\bar{k},z,\Fv)=\hat{g}^{(2)}_{R,0}(\bar{k},\Fv),
\end{align}
\begin{align}\label{pg-hr2-bd}
\widehat{g^{n+1}}_{R,2}(t,\bar{k},\pm1,\Fv)|_{v_z\lessgtr0}=P_\ga \widehat{g^n}_{R,2}+\bar{P}_\ga \widehat{g^n}_{R,1}+\SR.
\end{align}
We now intend to show inductively that there exists a finite $T_{1}>0$ and a constant $\tilde{C}_0>0$ such that
\begin{align}
\sum\limits_{\bar{k}\in\Z^2}&\sup_{0\leq t\leq\eps^2 T_1}\|w^{l_\infty}[\eps^{-\frac{3}{2}}\widehat{g^m}_{R,1},\eps^{\frac{1}{2}}\widehat{g^m}_{R,2}](t)\|_{\infty}\leq \tilde{C}_0\vps_0,\label{fn1.bd}
\end{align}
for any $m\geq0$, on the condition that
\begin{align}
\sum\limits_{\bar{k}\in\Z^2}&\sup_{0\leq t\leq \eps^2T_1}\|w^{l_\infty}[\widehat{g^0}_{R,1},\widehat{g^0}_{R,2}](t)\|_{\infty}
+\sum\limits_{m\leq8}\|\pa_i^m\Fu_0(\Fx)\|_{H^4_z}
\notag\\=&\sum\limits_{\bar{k}\in\Z^2}\|w^{l_\infty}[\hat{g}^{(1)}_{R,0},\hat{g}^{(2)}_{R,0}](\bar{k},\Fv)\|_{\infty}
+\sum\limits_{m\leq8}\|\pa_i^m\Fu_0(\Fx)\|_{H^4_z}\leq \vps_0.\notag
\end{align}
Note that, by choosing $\tilde{C}_0>$ to be suitably large, one can easily deduce
\begin{align}%\label{fs-ktm}
\sup\limits_{0\leq l\leq \CL}&\sum\limits_{\bar{k}\in\Z^2}\sup_{0\leq t\leq \eps^2T_1}\|w^{l_\infty}[\eps^{-\frac{3}{2}}\widehat{g^l}_{R,1},\eps^{\frac{1}{2}}\widehat{g^l}_{R,2}](t)\|_{\infty}\notag\\
\leq& C(\CL)\sum\limits_{\bar{k}\in\Z^2}\|w^{l_\infty}[\hat{g}^{(1)}_{R,0},\hat{g}^{(2)}_{R,0}](\bar{k},\Fv)\|_{\infty}
+\sum\limits_{m\leq8}\|\pa_i^m\Fu_0(\Fx)\|_{H^4_z}\leq \frac{1}{2}\tilde{C}_0\vps_0.
\notag
\end{align}
In fact, the above estimate is achieved by utilizing \eqref{pg-hr1},  \eqref{pg-hr1-id}, \eqref{pg-hr1-bd}, \eqref{pg-hr2}, \eqref{pg-hr2-id} and \eqref{pg-hr2-bd} recursively, since $\CL$ is finite.

In the following, we prove \eqref{fn1.bd} for $m=n+1$ under the assumption that it holds for $m\leq n.$
Performing the similar calculations as for obtaining \eqref{lif-hr1} and \eqref{hr2-if-es}, one has
\begin{align}%\label{t-g1212-if-sum}
\sum\limits_{\bar{k}\in\Z^2}&\sup_{0\leq t\leq \eps^2 T_1}\|w^{l_\infty}\widehat{g^{n+1}}_{R,1}(t)\|_{\infty}\notag\\
\leq&\left[(1+M)^{-\ga}+\vps_{\Phi,\al}+\vps_0+\eps+\varsigma+T_1+\bar{\vps}\right]\notag\\
&\quad\times\sup\limits_{1\leq l\leq\CL}\sum\limits_{\bar{k}\in\Z^2}\sup_{0\leq t\leq \eps^2T_1}\|w^{l_\infty}\widehat{g^{n+1-l}}_{R,1}(t)\|_{\infty}
\notag\\
&+\eps^2\sup\limits_{1\leq l\leq\CL}\sum\limits_{\bar{k}\in\Z^2}\sup_{0\leq t\leq \eps^2T_1}\|\widehat{g^{n+1-l}}_{R,2}(t)\|_{\infty}
+C\eps^{\frac{3}{2}}\sum\limits_{\bar{k}\in\Z^2}\|w^{l_\infty} \hat{g}^{(1)}_{R,0}\|_{\infty}\notag \\
&+\eps^{\frac{3}{2}}\{\vps_{\Phi,\al}+\vps_0\},\notag
\end{align}
and
\begin{align}%\label{t-g1212-if-sum}
\sum\limits_{\bar{k}\in\Z^2}&\sup_{0\leq t\leq \eps^2 T_1}\|w^{l_\infty}\widehat{g^{n+1}}_{R,2}(t)\|_{\infty}\notag\\
\leq&C\sum\limits_{\bar{k}\in\Z^2}\|w^{l_\infty} \hat{g}^{(2)}_{R,0}\|_{\infty}+CT_1\sup\limits_{1\leq l\leq\CL}\sum\limits_{\bar{k}\in\Z^2}\sup_{0\leq t\leq \eps^2T_1}\|w^{l_\infty}\widehat{g^{n+1-l}}_{R,1}(t)\|_{\infty}\notag\\&+
(\vps_{\Phi,\al}+\vps_0+\eps+T_1+\bar{\vps})\sup\limits_{1\leq l\leq\CL}\sum\limits_{\bar{k}\in\Z^2}\sup_{0\leq t\leq \eps^2T_1}\|w^{l_\infty}\widehat{g^{n+1-l}}_{R,2}(t)\|_{\infty}.\notag
\end{align}
Thus by taking $\vps_{\Phi,\al}$, $\vps_0$, $\eps$, $T_1$ and $\bar{\vps}$ to be suitably small, we see that
\eqref{fn1.bd} holds true for $m=n+1$. Furthermore, one can also prove that $[g^{n+1}_{R,1},g^{n+1}_{R,2}]|_{n=1}^{+\infty}$
is a Cauchy sequence in $X_{\eps^2T_1}\times X_{\eps^2T_1}$ and thus is convergent.
Therefore,
$$
F(t,\Fx,\Fv)=\lim\limits_{n\rightarrow\infty}\left\{F_{st}^\eps+\eps\sqrt{\mu}\{[g_1+\eps g_2+\eps^{\frac{1}{2}}g^n_R](t,\Fx,\Fv)\}\right\}\geq0,
$$ 
for $t\in[0,\eps^2T_1]$.
 This completes the proof of Theorem \ref{sta-th}.
\end{proof}

\appendix 
\section{Estimates }\label{app-sec}
In this appendix, we will give some necessary results and estimates which have been used in previous sections.
The first one is concerned with the {\it a priori} estimates for the fluid equations around the Couette flow.

\subsection{Fluid equations with shear force}
In this subsection, we will derive the energy estimates in the function space $L^1_{\bar{k}}$ for both the steady and unsteady Navier-Stokes equations with shear force. We start with the steady problem \eqref{ins-s}.  Then we have
% \begin{eqnarray}\label{sh-st}
% \left\{\begin{array}{rll}
% &\na_\Fx\cdot \Fu_s=0,\ \ \rho_s=-\ta_s,\\[2mm]
% &\Fu_s\cdot\na_x \Fu_s+\na_\Fx P+\al z\pa_{x}\Fu_s+\al(u_{s,z},0,0)^T-\Phi=\eta\Delta_\Fx \Fu_s,\\[2mm]
% &\al z\pa_{x}\ta_s+\na_\Fx\ta_s\cdot \Fu_s=\frac{2}{5}\ka\Delta_\Fx \ta_s,\\[2mm]
% &\Fu_s(x,y,\pm1)=0,\ \ta_s(x,y,\pm1)=0.
% \end{array}\right.
% \end{eqnarray}
% For results in this direction, one has

\begin{lemma}\label{st-sh-lem}
Let $[\Fu_s,\ta_s]$ be a classical solution of \eqref{ins-s}, then for any integer $m>0$, it holds that
\begin{align}\label{st-sh-es}
\sum\limits_{\bar{k}\in\Z^2}\|(1+|\bar{k}|^m)[\hat{\Fu}_s,\hat{\ta}_s]\|_{H^2_z}\leq C\vps_{\Phi,\al}.
\end{align}
\end{lemma}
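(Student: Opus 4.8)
\textbf{Proof proposal for Lemma \ref{st-sh-lem}.}

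The plan is to view \eqref{sh-st} as a perturbation of the trivial solution and run a standard energy/elliptic bootstrap, but carefully in the Fourier-in-$\bar x$ setting so that the final bound is measured in the Wiener-type norm $\sum_{\bar k}\|(1+|\bar k|^m)\cdot\|_{H^2_z}$. First I would set $U_s=(\alpha z,0,0)+\Fu_s$, rewriting the system \eqref{ins-soo}, so that the only genuinely inhomogeneous forcing is $\Phi$; then the estimate must be linear in $\vps_{\Phi,\al}=\|\Phi\|_2+\alpha$. The natural strategy is a contraction/continuity argument: assume \emph{a priori} that $\sum_{\bar k}\|(1+|\bar k|^{m})[\hat\Fu_s,\hat\ta_s]\|_{H^2_z}\leq \delta$ for a small $\delta$, and prove it in fact improves to $\lesssim\vps_{\Phi,\al}$, which is admissible once $\vps_{\Phi,\al}$ is small enough.

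The first substantive step is the basic energy estimate. Testing the momentum equation with $\Fu_s$ and the temperature equation with $\ta_s$ over $\Omega$, the transport terms $\Fu_s\cdot\na\Fu_s$ and $\alpha z\pa_x\Fu_s$ integrate to zero (using $\na\cdot\Fu_s=0$, periodicity in $\bar x$, and the Dirichlet conditions), the pressure drops out, the term $\alpha(u_{s,z},0,0)^T$ is absorbed as $\lesssim\alpha\|\Fu_s\|_2^2$, and $\Phi$ contributes $\|\Phi\|_2\|\Fu_s\|_2$. With the Poincaré inequality on $z\in(-1,1)$ (available because of the homogeneous boundary data) this gives $\|\na_\Fx\Fu_s\|_2+\|\na_\Fx\ta_s\|_2\lesssim \vps_{\Phi,\al}$ provided $\alpha$ is small. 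The second step is to upgrade regularity: differentiate the equations in the horizontal variables $\pa_{\bar x}^\beta$ up to order $2m+4$ — this is clean since $\alpha z\pa_x$ and $\Phi(z)$ commute nicely with $\pa_{\bar x}$, and the nonlinear terms are handled by the algebra property of $H^s_{\bar x}H^2_z$ for $s$ large, yielding control of $\|\Fu_s\|_{H^{2m+4}_{\bar x}H^1_z}+\|\ta_s\|_{H^{2m+4}_{\bar x}H^1_z}\lesssim\vps_{\Phi,\al}$ under the smallness assumption. The third step recovers the two $z$-derivatives: treat the second-order ODE in $z$ (for each fixed $\bar k$) as $-\eta\pa_z^2\hat\Fu_s=\eta|\bar k|^2\hat\Fu_s - \widehat{\text{(transport)}} - i\bar k\hat P+\hat\Phi$, etc., together with the Dirichlet data $\hat\Fu_s(\bar k,\pm1)=0$, to bound $\|\pa_z^2\hat\Fu_s\|_{L^2_z}$ and similarly $\|\pa_z^2\hat\ta_s\|_{L^2_z}$ (the pressure gradient in $z$ is estimated from the divergence-free condition and the equation for $u_{s,z}$). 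This produces $\|\Fu_s\|_{H^{2m+4}_{\bar x}H^2_z}+\|\ta_s\|_{H^{2m+4}_{\bar x}H^2_z}\lesssim\vps_{\Phi,\al}$.

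Finally I would convert the $H^{2m+4}_{\bar x}H^2_z$ bound into the desired $L^1_{\bar k}$ bound by Cauchy–Schwarz: since
\begin{align*}
\sum_{\bar k\in\Z^2}(1+|\bar k|^m)\|[\hat\Fu_s,\hat\ta_s](\bar k)\|_{H^2_z}
&\leq \Big(\sum_{\bar k}(1+|\bar k|)^{-4}\Big)^{1/2}\Big(\sum_{\bar k}\lag\bar k\rag^{2m+4}\|[\hat\Fu_s,\hat\ta_s](\bar k)\|_{H^2_z}^2\Big)^{1/2},
\end{align*}
and $\sum_{\bar k\in\Z^2}(1+|\bar k|)^{-4}<\infty$, this is exactly the interpolation inequality flagged in the Strategies of the proof, and the right-hand side is $\lesssim\vps_{\Phi,\al}$ by the previous step. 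I expect the main obstacle to be the third step — the $z$-direction elliptic estimates — because the boundary conditions are Dirichlet on $\Fu_s$ and $\ta_s$ but only a compatibility condition is available for the pressure, so one must carefully extract $\pa_z P$ from the $u_{s,z}$-component of the momentum equation and the constraint $\na\cdot\Fu_s=0$ rather than from a clean elliptic problem for $P$; this is precisely the ``mixed boundary conditions'' difficulty mentioned in the introduction, and it is the reason the estimate is done in $H^m_{\bar x}H^2_z$ and then interpolated rather than directly in $L^1_{\bar k}H^2_z$. The unsteady analogue \eqref{ns-ust} would be treated the same way with an extra $\pa_t$ and the exponential weight $e^{\lambda_0 t}$, using that the linearization around the small steady solution $\Fu_s$ retains the dissipative structure.
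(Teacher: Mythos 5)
Your proposal is correct and takes essentially the same route as the paper: a priori energy estimates in $H^{2m+4}_{\bar x}H^2_z$ using horizontal derivatives $\pa_i^{m'}$, recovery of the two $z$-derivatives from the equation, the pressure controlled via the $u_{s,z}$-component together with $\na_{\Fx}\cdot\Fu_s=0$, and finally Cauchy--Schwarz against $\sum_{\bar k}\langle\bar k\rangle^{-4}<\infty$ to convert the $\ell^2_{\bar k}$ bound into the desired $\ell^1_{\bar k}$ (Wiener-type) norm. The paper streamlines two of your steps: it observes at the outset that the temperature equation with Dirichlet data has only the trivial solution $\ta_s\equiv 0$ (your $L^2$ energy test also gives $\|\na_{\Fx}\ta_s\|_2=0$, since there is no source), and it introduces the shifted pressure $\bar P=P-\int_{-1}^z\Phi_z$ so that $\pa_z\bar P(\pm1)=0$ and $\Delta_{\Fx}\bar P=-\na_{\Fx}\cdot(\Fu_s\cdot\na_{\Fx}\Fu_s)-2\alpha\pa_x u_{s,z}$ becomes a clean Neumann problem, rather than the implicit bootstrap you describe.
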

\begin{proof}
We take $\ta_s(t,x)=0$.
We then derive the estimate for the pressure $P$. For this, we first get from $\eqref{ins-s}_2$ and $\eqref{ins-s}_1$ that
\begin{align}%\label{P-z}
\pa_zP=-(\Fu_s\cdot\na_\Fx\Fu_s)_z-\al zu_{s,z}+\Phi_{z}+\eta\Delta_{\bar{x}} u_{s,z}-\eta\pa_z(\pa_x u_{s,x}+\pa_y u_{s,y}),\notag
\end{align}
integrating with respect to $z$, we find
\begin{multline*}
    P(z)-P(-1)=-\int_{-1}^z(\Fu_s\cdot\na_\Fx\Fu_s)_z(\tau)d\tau+
\int_{-1}^{z}\Phi_{z}(\tau)d\tau
\\-\al\int_{-1}^{z}z\pa_xu_{s,z}(\tau)d\tau+\eta\int_{-1}^{z}\Delta_{\bar{x}} u_{s,z}(\tau)d\tau-\eta(\pa_x u_{s,x}+\pa_y u_{s,y})(z),
\end{multline*}
% \begin{align}%\label{P-z}
% P(z)-P(-1)=&-\int_{-1}^z(\Fu_s\cdot\na_\Fx\Fu_s)_z(\tau)d\tau+
% \int_{-1}^{z}\Phi_{z}(\tau)d\tau-\al\int_{-1}^{z}z\pa_xu_{s,z}(\tau)d\tau\notag\\&+\eta\int_{-1}^{z}\Delta_{\bar{x}} u_{s,z}(\tau)d\tau-\eta(\pa_x u_{s,x}+\pa_y u_{s,y})(z),\notag
% \end{align}
which further leads to
\begin{align}%\label{P-z-2}
\int_{\T^2}P(x,y,1)dxdy=\int_{\T^2}P(x,y,-1)dxdy,\notag
\end{align}
under the assumption that $\int_{-1}^{1}\Phi_{z}(\tau)d\tau=0$. Actually, we can further assume
\begin{align}\label{P-z}
\int_{\T^2}P(x,y,1)dxdy=\int_{\T^2}P(x,y,-1)dxdy=0.
\end{align}
Next, 
by $\eqref{ins-s}_2$, one has
\begin{align}%\label{P-z}
\Delta_\Fx \bar{P}=-\na_{\Fx}\cdot(\Fu_s\cdot\na_\Fx\Fu_s)-2\al \pa_xu_{s,z}.\notag
\end{align}
Denote $\pa_i$ by $\pa_x$ or $\pa_y$. By $L^2$ estimate and Sobolev's inequality, we obtain
\begin{align}
\sum\limits_{m'\leq m+2}\|\na_\Fx \pa^{m'}_iP\|_2\leq&\|\pa^{m'}_i\na_{\Fx}\cdot[\Fu_s\cdot\na_\Fx \Fu_s]\|_2+\al\|\pa^{m'}_i\pa_xu_{s,z}\|_2+
\|\Phi_{z}\|_{2}\notag\\
\lesssim& \sum\limits_{\substack{m_1+m_2\leq m+1\\m_2\leq2 }}\|\pa_i^{m_1}\pa^{m_2}_z\Fu_s\|_2^{2}
+\al\|\pa^{m'}_i\pa_xu_{s,z}\|_2+
\|\Phi_{z}\|_{2}.\notag
\end{align}
Next, we act $\pa_i^m$ with $m\geq 4$ to $\eqref{ins-s}_2$ to obtain
\begin{multline}%\label{im-st}
\pa^m_i(\Fu_s\cdot\na_\Fx \Fu_s)+\na_\Fx \pa^m_iP
+\al z\pa^m_i\pa_{x}\Fu_s+\al\pa^m_i(u_{s,z},0,0)^T-{\bf 1}_{m=0}\Phi\\
=\eta\Delta_\Fx\pa_i^m \Fu_s.\notag
\end{multline}
On the other hand, since $\pa_i^m \Fu_s(x,y,\pm1)=0$,  one has by $L^2$ energy estimate that 
\begin{align}
&\sum\limits_{\substack{m_1+m_2\leq m+2\\ m_2\leq2} }\|\pa_i^{m_1}\pa^{m_2}_z\Fu_s\|_2\notag\\
&\lesssim \|\na_\Fx \pa^m_iP\|_2
+\sum\limits_{\substack{m_1+m_2\leq m+2\\ m_2\leq 2}}\|\pa_i^{m_1}\pa^{m_2}_z\Fu_s\|_2^{2}
+\al\sum\limits_{\substack{m_1+m_2\leq m+2\\ m_2\leq 2}}\|\pa_i^{m_1}\pa^{m_2}_z\Fu_s\|_2\notag\\
&\quad+\|\Phi\|_{2}\notag\\
&\lesssim
\sum\limits_{\substack{m_1+m_2\leq m+2\\ m_2\leq2}}\|\pa_i^{m_1}\pa^{m_2}_z\Fu_s\|_2^{2}
+\al\sum\limits_{\substack{m_1+m_2\leq m+2\\ m_2\leq2}}\|\pa_i^{m_1}\pa^{m_2}_z\Fu_s\|_2+\|\Phi\|_{2}.\notag
\end{align}
Consequently, it follows
\begin{align}
\sum\limits_{\substack{m_1+m_2\leq m+2\\ m_2\leq2}}\|\pa_i^{m_1}\pa^{m_2}_z\Fu_s\|_2\lesssim\|\Phi\|_{2},\notag
\end{align}
which further implies
\begin{align}\label{us-sh-l2es}
\sum\limits_{\bar{k}\in\Z^2}\left(\lag\bar{k}\rag^{2m+4}\|\hat{\Fu}_s\|^2_{H^2_z}\right)^{\frac{1}{2}}\leq C\vps_{\Phi,\al}.
\end{align}
Finally, \eqref{us-sh-l2es} and the interpolation inequality give the desired estimate \eqref{st-sh-es}, and this then ends the proof of Lemma \ref {st-sh-lem}.
\end{proof}

We now consider the unsteady problem \eqref{ns-ust}.  For this problem, we have
% \begin{eqnarray}\label{ust-sh-eq}
% \left\{\begin{array}{rll}
% &\na_\Fx\cdot \Fu=0,\ \ \rho=-\ta,\\[2mm]
% &\pa_t\Fu+\Fu\cdot\na_x \Fu+\na_\Fx \tilde{P}+\al z\pa_{x}\Fu+\Fu\cdot\na_x \Fu_s+\Fu_s\cdot\na_x \Fu \\
% &\hspace{40mm}+\al(u_{z},0,0)^T=\eta\Delta_\Fx \Fu,
% \\[2mm]
% &\pa_t\ta+\al z\pa_{x}\ta+\na_\Fx\ta\cdot \Fu+\na_\Fx\ta\cdot \Fu_s+\na_\Fx\ta_s\cdot \Fu=\frac{2}{5}\ka\Delta_\Fx \ta,\\[2mm]
% &\Fu(x,y,\pm1)=0,\ \ta(x,y,\pm1)=0,\\
% &\Fu(0,\Fx)=\Fu_0(\Fx),\ \ta(0,\Fx)=\ta_0(\Fx).
% \end{array}\right.
% \end{eqnarray}

\begin{lemma}\label{ust-sh-lem}
Let $[\Fu,\ta]$ be a classical solution of \eqref{ns-ust}.  Then for $\la_0>0$, it holds that
\begin{align}\label{us-sh-es}
\sum\limits_{\bar{k}\in\Z^2}\sup\limits_{m_0\leq1}\|e^{\la_0t}\lag\bar{k}\rag^2\pa_t^{m_0}[\hat{\Fu},\hat{\ta}]\|_{L^\infty_tH^2_z}
&
+\sum\limits_{\bar{k}\in\Z^2}\sup\limits_{m_0\leq1}\|e^{\la_0t}\lag\bar{k}\rag^2\pa_t^{m_0}[\hat{\Fu},\hat{\ta}]\|_{L^2_tH^2_z}
\notag\\
&\leq
C\sum\limits_{m\leq8}\|\pa_i^m\Fu_0(\Fx)\|_{H^4_z}.
\end{align}
\end{lemma}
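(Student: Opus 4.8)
The plan is to treat \eqref{ust-sh-eq} as a perturbation of the heat flow that is driven by the linear shear term $\al z\pa_x$ together with the stationary profile $[\Fu_s,\ta_s]$ from Lemma \ref{st-sh-lem}. Since $[\Fu_s,\ta_s]$ is already controlled by $\vps_{\Phi,\al}$ (which we take much smaller than $\vps_0$), the coupling terms $\Fu\cdot\na_\Fx\Fu_s$, $\Fu_s\cdot\na_\Fx\Fu$, $\na_\Fx\ta\cdot\Fu_s$, $\na_\Fx\ta_s\cdot\Fu$ are genuinely small perturbations and will be absorbed. First I would work in the horizontal Fourier variable $\bar k$ and in Sobolev regularity $H^2_z$ in the normal direction, i.e. estimate $\lag\bar k\rag^{2m+4}\|\hat\Fu\|_{H^2_z}^2$ as in the steady case, so that summation in $\bar k$ and interpolation (exactly the inequality displayed in the Strategies section, $\sum_{\bar k}\|(1+|\bar k|^m)[\hat\Fu_s,\hat\ta_s]\|_{H^2_z}\lesssim\sum_{\bar k}(\lag\bar k\rag^{2m+4}\|\hat\Fu_s\|_{H^2_z}^2)^{1/2}$) upgrades the weighted $L^2_{\bar k}$-type bound to the $L^1_{\bar k}$ statement \eqref{us-sh-es}. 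The pressure is handled as in Lemma \ref{st-sh-lem}: take the divergence of the momentum equation to get $\Delta_\Fx\tilde P$ in terms of quadratic and shear terms, use the Neumann-type boundary condition $\pa_z\tilde P(\pm1)$ coming from the normal component of the momentum equation and $u_z(x,y,\pm1)=0$, and bound $\na_\Fx\pa_i^m\tilde P$ by $\|\Fu\|_{H^{m+1}}^2$-type quantities plus lower order.

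Next I would run the weighted energy estimate with the exponential factor $e^{\la_0 t}$ built in. Multiply the momentum equation (after applying $\pa_i^m$, $m\le 8$) by $e^{2\la_0 t}\pa_i^m\Fu$ and integrate over $\Om$; the $\pa_t$ term produces $\tfrac12\tfrac{d}{dt}(e^{2\la_0 t}\|\pa_i^m\Fu\|^2)-\la_0 e^{2\la_0 t}\|\pa_i^m\Fu\|^2$, the viscosity gives the good term $\eta e^{2\la_0 t}\|\na_\Fx\pa_i^m\Fu\|^2$, and the crucial point is that the linear shear term $\al z\pa_x\Fu$ and the rotation-type term $\al(u_z,0,0)^T$ are bounded by $\al$ times energy — hence absorbable once $\al$ is small — while $\Fu\cdot\na_\Fx\Fu$ is quadratic and absorbed by the {\it a priori} smallness of the solution. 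Because the domain is the bounded channel $\T^2\times(-1,1)$ with Dirichlet conditions, the Poincar\'e inequality $\|\pa_i^m\Fu\|_2\lesssim\|\na_\Fx\pa_i^m\Fu\|_2$ gives a spectral gap, so choosing $\la_0>0$ smaller than $\eta$ times the Poincar\'e constant (and $\al,\vps_0,\vps_{\Phi,\al}$ small) lets the dissipation dominate both $\la_0\|\cdot\|^2$ and all the shear/coupling/nonlinear contributions. Closing the loop over $m\le 8$ and over one normal derivative count $m_2\le 2$ (as in Lemma \ref{st-sh-lem}) yields the $L^\infty_t H^2_z$ bound and, after time integration of the differential inequality, the $L^2_t H^2_z$ bound, with the right-hand side $\sum_{m\le 8}\|\pa_i^m\Fu_0\|_{H^4_z}$; note $\ta_0=0$ so the temperature equation just gives $\ta\equiv$ a decaying correction driven by $\Fu$ and is controlled identically.

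The time-derivative estimate $m_0=1$ is obtained by differentiating \eqref{ust-sh-eq} in $t$ and repeating the same energy argument for $\pa_t\Fu$, $\pa_t\ta$; here one reads $\pa_t^2$ from the equation to express $\pa_t[\hat\Fu,\hat\ta](0)$ in terms of spatial derivatives of the data, which is where the loss from $H^2_z$ to $H^4_z$ and from $\pa_i^m$ up to $m\le 8$ on the right of \eqref{us-sh-es} comes from — two extra normal derivatives and a comfortable margin of horizontal derivatives to feed the parabolic smoothing and the product estimates.

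I expect the main obstacle to be the anisotropic bookkeeping forced by the mixed boundary conditions: one cannot directly close an $L^1_{\bar k}H^2_z$ estimate because the elliptic pressure estimate and the velocity regularity estimate only close in the isotropic energy space $H^m_{\bar x}H^2_z$, so the whole argument must be carried out there and only at the very end converted to the $L^1_{\bar k}$ norm via the interpolation inequality quoted above. Keeping track of which terms genuinely require the small parameters ($\al$ for the shear and rotation terms, $\vps_{\Phi,\al}$ for the $\Fu_s,\ta_s$-coupling, $\vps_0$ for the quadratic self-interaction) versus which are absorbed purely by the dissipation-Poincar\'e gap — and doing so uniformly for all $m\le 8$ and for $m_0\le 1$ — is the delicate part; the parabolic estimates themselves are standard once the structure is set up.
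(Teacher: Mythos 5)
Your proposal is correct and follows essentially the same route as the paper: the paper also sets $\ta\equiv 0$ (justified by $\ta_0=0$ and $\ta_s\equiv 0$), passes to $\SU=e^{\la_0 t}\Fu$, derives the Neumann problem for the pressure and bounds $\na_\Fx\pa_t^{m_0}\pa_i^m\SP$ from the divergence of the momentum equation, closes a weighted energy estimate using Poincar\'e together with an elliptic-regularity step that trades normal derivatives for time derivatives, and finally converts the isotropic $H^m_{\bar x}H^2_z$ bounds to the $L^1_{\bar k}H^2_z$ norm via Cauchy--Schwarz and the $\lag\bar k\rag$-weight trade-off. The only minor inaccuracy is that with $\ta_0=0$ and $\ta_s\equiv 0$ the temperature is identically zero rather than a decaying correction, but this does not affect the argument.
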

\begin{proof}
Similar to the proof of Lemma \ref{st-sh-lem}, we can assume $\ta(t,x) = 0$.
Therefore, our task now is to establish the estimate for $\Fu(t,x)$. Setting $\SU=e^{\la_0t}\Fu$.
The estimate \eqref{us-sh-es} is based on the following {\it a priori} assumption
\begin{align}\label{aps-sh}
\sup_t\sum\limits_{\substack{m_1+m_2\leq6\\ m_2\leq2}}&\sum\limits_{m_0\leq1}\|\pa_t^{m_0}\pa_i^{m_1}\pa^{m_2}_z\SU\|_2
\leq \sqrt{\vps_0}.
\end{align}
First of all, one has the equations for
$\SU=(\SU_x,\SU_y,\SU_z)$ as follows
\begin{align}\label{su}
\pa_t\SU-\la_0\SU & +e^{-\la_0t}\SU\cdot\na_x \SU+\na_\Fx \SP+\al z\pa_{x}\SU
\notag \\
&+\SU\cdot\na_x \Fu_s+\Fu_s\cdot\na_x \SU+\al(\SU_{z},0,0)^T=\eta\Delta_\Fx \SU,
\end{align}
and
\begin{align}
\SU(x,y,\pm1)=0,\ \SU(0,\Fx)=\Fu_0(\Fx),\notag
\end{align}
where $\SP=e^{\la_0t}\tilde{P}.$
First of all, like \eqref{P-z}, we also have
we also have
\begin{align}%\label{sp-z}
\int_{\T^2}\SP(x,y,1)dxdy=\int_{\T^2}\SP(x,y,-1)dxdy=0.\notag
\end{align}
Next,  from $\eqref{ns-ust}_2$, it follows
\begin{align*}
\Delta_\Fx\SP=&-\na_\Fx\cdot\left\{e^{-\la_0t}\SU\cdot\na_x \SU+\al z\pa_{x}\SU+\SU\cdot\na_x \Fu_s
+\Fu_s\cdot\na_x \SU\right\}
\\
&
-2\al\pa_x\SU_z,\notag
\end{align*}
then by $L^2$ estimates, we obtain
% \begin{align}\label{su-div}
% \|\na_\Fx \pa_t^{m_0}\pa_{i}^{m}\SP\|_2\leq
% & 
% \left\|\pa_t^{m_0}\pa_{i}^{m}\left\{e^{-\la_0t}\SU\cdot\na_x \SU\right\}\right\|_2+
% \al \left\|z\pa_{x}\pa_t^{m_0}\pa_{i}^{m}\SU\right\|_2
% \notag\\
% &
% +\left\|\pa_t^{m_0}\pa_{i}^{m}(\SU\cdot\na_x \Fu_s)\right\|_2+\|\pa_t^{m_0}\pa_{i}^{m}(\Fu_s\cdot\na_x \SU)\|_2
% \notag\\
% &
% +\al\|\pa_t^{m_0}\pa_{i}^{m}(\SU_{z},0,0)^T\|_2
% % \notag\\
% \leq
% % &
% \sum\limits_{\substack{m_1+m_2\leq4\\ m_2\leq2}}\sum\limits_{m_0\leq1}\|\pa_t^{m_0}\pa_i^{m_1}\pa^{m_2}_z\SU\|_2^{2}
% \notag\\
% &
% +(\vps_{\Phi,\al}+\al)\sum\limits_{\substack{m_1+m_2\leq4\\ m_2\leq2}}\sum\limits_{m_0\leq1}\|\pa_t^{m_0}\pa_i^{m_1}\pa^{m_2}_z\SU\|_2.
% \end{align}  
\begin{align}\label{su-div}
    \|\na_\Fx \pa_t^{m_0}\pa_{i}^{m}\SP\|_2\leq&
\left\|\pa_t^{m_0}\pa_{i}^{m}\left\{e^{-\la_0t}\SU\cdot\na_x \SU\right\}\right\|_2+
\al \left\|z\pa_{x}\pa_t^{m_0}\pa_{i}^{m}\SU\right\|_2 \notag
\\
&+\left\|\pa_t^{m_0}\pa_{i}^{m}(\SU\cdot\na_x \Fu_s)\right\|_2+\|\pa_t^{m_0}\pa_{i}^{m}(\Fu_s\cdot\na_x \SU)\|_2 \notag
\\
&+\al\|\pa_t^{m_0}\pa_{i}^{m}(\SU_{z},0,0)^T\|_2
 \notag\\
\leq&
% &
\sum\limits_{\substack{m_1+m_2\leq4\\ m_2\leq2}}\sum\limits_{m_0\leq1}\|\pa_t^{m_0}\pa_i^{m_1}\pa^{m_2}_z\SU\|_2^{2} \notag
\\
&+(\vps_{\Phi,\al}+\al)\sum\limits_{\substack{m_1+m_2\leq4\\ m_2\leq2}}\sum\limits_{m_0\leq1}\|\pa_t^{m_0}\pa_i^{m_1}\pa^{m_2}_z\SU\|_2.
\end{align}

Next, on one hand, $L^2$ energy estimate gives
% \begin{multline}
%     \sum\limits_{\substack{m_1+m_2\leq6\\ m_2\leq2}}\sum\limits_{m_0\leq1}\|\pa_t^{m_0}\pa_i^{m_1}\pa^{m_2}_z\SU\|_2
% \lesssim
% \sum\limits_{m_0\leq1,m\leq4}\|\pa_t\pa_t^{m_0}\pa_{i}^{m}\SU\|_2
% \\
% +
% \sum\limits_{m_0\leq1,m\leq4}\|\na_\Fx \pa_t^{m_0}\pa_{i}^{m}\SP\|_2
% +\sum\limits_{\substack{m_1+m_2\leq6\\ m_2\leq2}}\sum\limits_{m_0\leq1}\|\pa_t^{m_0}\pa_i^{m_1}\pa^{m_2}_z\SU\|_2^{2}
% \\
% +(\vps_{\Phi,\al}+\al+\la_0)\sum\limits_{\substack{m_1+m_2\leq6
% \\ m_2\leq2}}\sum\limits_{m_0\leq1}\|\pa_t^{m_0}\pa_i^{m_1}\pa^{m_2}_z\SU\|_2
% \notag\\
% \lesssim
% \sum\limits_{m_0\leq1,m\leq2}\|\pa_t\pa_t^{m_0}\pa_{i}^{m}\SU\|_2
% +\sum\limits_{\substack{m_1+m_2\leq6
% \\ m_2\leq2}}\sum\limits_{m_0\leq1}\|\pa_t^{m_0}\pa_i^{m_1}\pa^{m_2}_z\SU\|_2^{2}
% \notag\\
% +(\vps_{\Phi,\al}+\al+\la_0)\sum\limits_{\substack{m_1+m_2\leq6\\ m_2\leq2}}\sum\limits_{m_0\leq1}\|\pa_t^{m_0}\pa_i^{m_1}\pa^{m_2}_z\SU\|_2.
% \end{multline}

\begin{align}\label{su-ep}
\sum\limits_{\substack{m_1+m_2\leq6\\ m_2\leq2}}&\sum\limits_{m_0\leq1}\|\pa_t^{m_0}\pa_i^{m_1}\pa^{m_2}_z\SU\|_2
\lesssim
\sum\limits_{m_0\leq1,m\leq4}\|\pa_t\pa_t^{m_0}\pa_{i}^{m}\SU\|_2
\notag\\
&
+
\sum\limits_{m_0\leq1,m\leq4}\|\na_\Fx \pa_t^{m_0}\pa_{i}^{m}\SP\|_2
+\sum\limits_{\substack{m_1+m_2\leq6\\ m_2\leq2}}\sum\limits_{m_0\leq1}\|\pa_t^{m_0}\pa_i^{m_1}\pa^{m_2}_z\SU\|_2^{2}
\notag\\
&
+(\vps_{\Phi,\al}+\al+\la_0)\sum\limits_{\substack{m_1+m_2\leq6\\ m_2\leq2}}\sum\limits_{m_0\leq1}\|\pa_t^{m_0}\pa_i^{m_1}\pa^{m_2}_z\SU\|_2
\notag\\
\lesssim&
\sum\limits_{m_0\leq1,m\leq2}\|\pa_t\pa_t^{m_0}\pa_{i}^{m}\SU\|_2
+\sum\limits_{\substack{m_1+m_2\leq6\\ m_2\leq2}}\sum\limits_{m_0\leq1}\|\pa_t^{m_0}\pa_i^{m_1}\pa^{m_2}_z\SU\|_2^{2}
\notag\\&+(\vps_{\Phi,\al}+\al+\la_0)\sum\limits_{\substack{m_1+m_2\leq6\\ m_2\leq2}}\sum\limits_{m_0\leq1}\|\pa_t^{m_0}\pa_i^{m_1}\pa^{m_2}_z\SU\|_2.
\end{align}
On the other hand, acting $\pa_t^{m_0}\pa_{i}^{m}$ with $m_0\leq2$ and $m\leq4$ to \eqref{su}, and taking the inner product of the resulting equation with $\pa_t^{m_0}\pa_{i}^{m}\SU$, we have
\begin{align}%\label{su-de-ip}
\frac{d}{dt}&\|\pa_t^{m_0}\pa_{i}^{m}\SU\|_2^2
+\frac{\eta}{2}\|\pa_t^{m_0}\pa_{i}^{m}\na_\Fx\SU\|_2^2
\lesssim
\sum\limits_{\substack{m_1+m_2\leq6\\ m_2\leq2}}\sum\limits_{m_0\leq1}\|\pa_t^{m_0}\pa_i^{m_1}\pa^{m_2}_z\SU\|_2^{2}
\notag\\
&
+(\vps_{\Phi,\al}+\al+\la_0)\sum\limits_{\substack{m_1+m_2\leq6\\ m_2\leq2}}\sum\limits_{m_0\leq1}\|\pa_t^{m_0}\pa_i^{m_1}\pa^{m_2}_z\SU\|^2_2,\notag
\end{align}
which further yields
\begin{align}\label{su-de-ip2}
\sup_t\sum\limits_{m_0\leq2,m\leq4}&\|\pa_t^{m_0}\pa_{i}^{m}\SU\|_2^2
+\frac{\eta}{2}\sum\limits_{m_0\leq2,m\leq4}\int_0^\infty\|\pa_t^{m_0}\pa_{i}^{m}\SU\|_2^2dt\notag\\
\lesssim& \sum\limits_{m\leq8}\|\pa_i^m\Fu_0(\Fx)\|^2_{H^4_z}+
\sum\limits_{\substack{m_1+m_2\leq6\\ m_2\leq2}}\sum\limits_{m_0\leq1}\int_0^\infty\|\pa_t^{m_0}\pa_i^{m_1}\pa^{m_2}_z\SU\|_2^{2}dt
\notag\\&
+(\vps_{\Phi,\al}+\al+\la_0)\sum\limits_{\substack{m_1+m_2\leq6\\ m_2\leq2}}\sum\limits_{m_0\leq1}
\int_0^\infty\|\pa_t^{m_0}\pa_i^{m_1}\pa^{m_2}_z\SU\|^2_2dt.
\end{align}
Here we have also used the following Poincar\'e inequality
\begin{align}%\label{pi-ine}
\|\pa_t^{m_0}\pa_{i}^{m}\SU\|_2\leq C\|\pa_t^{m_0}\pa_{i}^{m}\na_\Fx\SU\|_2.\notag
\end{align}
Consequently, we get from \eqref{su-ep}, \eqref{su-de-ip2} and \eqref{aps-sh} that
\begin{align*}%\label{su-es-h}
\sup_t & \sum\limits_{\substack{m_1+m_2\leq6\\ m_2\leq2}}
\sum\limits_{m_0\leq1}\|\pa_t^{m_0}\pa_i^{m_1}\pa^{m_2}_z\SU\|^2_2
\\&
+\int_0^\infty\sum\limits_{\substack{m_1+m_2\leq6 \\ m_2\leq2}}\sum\limits_{m_0\leq1}\|\pa_t^{m_0}\pa_i^{m_1}\pa^{m_2}_z\SU\|^2_2dt
\lesssim
 \sum\limits_{m\leq8}\|\pa_i^m\Fu_0(\Fx)\|^2_{H^4_z}.
\end{align*}
Note that
\begin{align}
&\sum\limits_{\bar{k}\in\Z^2}\sup\limits_{m_0\leq1}\lag\bar{k}\rag^2\|e^{\la_0t}\pa_t^{m_0}\hat{\Fu}\|_{L^\infty_tH^2_z}\notag\\
&\leq C\left(\sum\limits_{\bar{k}\in\Z^2}\left(\sup\limits_{m_0\leq1}\sup_t\|\lag \bar{k}\rag ^4e^{\la_0t}\pa_t^{m_0}\hat{\Fu}\|_{H^2_z}\right)^2\right)^{\frac{1}{2}}\notag\\
&\leq C\sum\limits_{\substack{m_1+m_2\leq8\\ m_2\leq2}}\sum\limits_{m_0\leq1}\|\pa_t^{m_0}\pa_i^{m_1}\pa^{m_2}_z\SU\|_2,\notag
\end{align}
and
\begin{align}
\sum\limits_{\bar{k}\in\Z^2}\sup\limits_{m_0\leq1}\lag\bar{k}\rag^2\|e^{\la_0t}\pa_t^{m_0}\hat{\Fu}\|_{L^2_tH^2_z}
\leq& C\left(\sum\limits_{\bar{k}\in\Z^2}\left(\sup\limits_{m_0\leq1}\|\lag \bar{k}\rag ^4e^{\la_0t}\pa_t^{m_0}\hat{\Fu}\|_{L^2_tH^2_z}\right)^2\right)^{\frac{1}{2}}\notag\\
\leq& C\sum\limits_{\substack{m_1+m_2\leq8\\ m_2\leq2}}\sum\limits_{m_0\leq1}\|\pa_t^{m_0}\pa_i^{m_1}\pa^{m_2}_z\SU\|_2.\notag
\end{align}
Therefore, \eqref{us-sh-es} is valid, and this then finishes the proof of Lemma \ref{ust-sh-lem}.
\end{proof}

The following result is based on Lemmas  \ref{st-sh-lem} and \ref{ust-sh-lem}.

\begin{lemma}[Estimates on coefficients]\label{coef-es}
Under the same conditions as in Theorems \ref{st-sol-th} and \ref{sta-th}, for any $m\geq0$, the following estimate holds:
\begin{align}\label{fi-es}
\sum\limits_{\bar{k}\in\Z^2} \|w^{l_\infty}(1+|\bar{k}|^m)[\hat{f}_1,\hat{f}_2](\bar{k},z,\Fv)\|_{\infty}\lesssim \vps_{\Phi,\al}.
\end{align}
Moreover, for $\la_0>0$ given by Lemma \ref{ust-sh-lem} and for $m_0\leq1$ and $m_1\leq1$, the subsequent estimate is satisfied:
\begin{align}\label{gi-es2}
\sum\limits_{\bar{k}\in\Z^2}&
\|e^{\la_0t}w^{l_\infty}\widehat{\pa^{m_0}_t\pa^{m_1}_x[g_1,g_2]}(t,\bar{k},z,\Fv)\|_{L^\infty_{T,z,\Fv}}
\notag\\&+\sum\limits_{\bar{k}\in\Z^2}
\|e^{\la_0t}w^{l_\infty}\widehat{\pa^{m_0}_t\pa^{m_1}_x[g_1,g_2]}(t,\bar{k},z,\Fv)\|_{L^2_{T,z,\Fv}}\lesssim \vps_0,
\end{align}
for any $0\leq T<+\infty.$

Furthermore, for estimates for the error terms $r$ and $\tilde{r}$ at the boundary given by \eqref{st-bd} and \eqref{gr-bd}, respectively, it holds that
\begin{align}\label{r-es}
\sum\limits_{\bar{k}\in\Z^2} \|w^{l_\infty}\hat{r}(\bar{k},\pm1,\Fv)\|_{L^\infty_{\Fv}}\lesssim \vps_{\Phi,\al},
\end{align}
and
\begin{align}\label{tir-es}
\sum\limits_{\bar{k}\in\Z^2}
\|e^{\la_0t}w^{l_\infty}\widehat{\tilde{r}}(\bar{k},\pm1,\Fv)\|_{L^\infty_{T,\Fv}}
+\sum\limits_{\bar{k}\in\Z^2}
\|e^{\la_0t}w^{l_\infty}\widehat{\tilde{r}}(\bar{k},\pm1,\Fv)\|_{L^2_{T,\Fv}}\lesssim \vps_0,
\end{align}
for any $0\leq T<+\infty.$

\end{lemma}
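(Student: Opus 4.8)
The proof of Lemma \ref{coef-es} is a matter of transferring the $H^m_{\bar{x}}H^2_z$ estimates on $[\Fu_s,\ta_s]$ and $[\Fu,\ta]$ furnished by Lemma \ref{st-sh-lem} and Lemma \ref{ust-sh-lem} to the weighted $L^1_{\bar{k}}L^\infty_{z,\Fv}$ and $L^1_{\bar{k}}L^2_{z,\Fv}$ norms of the kinetic coefficients $f_1,f_2,g_1,g_2$, using the explicit formulas \eqref{f1-def}, \eqref{f2-def}, \eqref{g1-ex}, \eqref{g2-def}. First I would record the elementary bound: for any $\bar{k}$-dependent scalar profile $\phi(\bar{k},z)$, the polynomial-times-Maxwellian structure of all these coefficients gives pointwise in $\Fv$ that $w^{l_\infty}(\Fv)|\widehat{(\text{coeff})}(\bar{k},z,\Fv)|\lesssim P(z,\bar{k})$ where $P$ is a sum of products of the macroscopic quantities $\hat\rho_s,\hat{\Fu}_s,\hat\ta_s$ (resp. with the unsteady ones) and their $\bar{x}$-derivatives, since $w^{l_\infty}\mu^{1/2}$-type weights are bounded and $w^{l_\infty}$ times any velocity polynomial times $\mu^{1/2}$ is in $L^\infty_\Fv$; the operators $L^{-1}$, $\{\FI-\FP\}$, $\bar A_{ij}$, $\bar B_i$ and $L^{-1}(v_zv_x\sqrt\mu)$ all preserve this structure with a fixed loss in the polynomial degree, which is absorbed since $l_\infty$ is taken large. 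Thus after taking $L^\infty_{z,\Fv}$ the problem reduces to controlling $\sum_{\bar{k}}\sup_z (1+|\bar{k}|^m)$ times a finite sum of products of $\hat{\Fu}_s,\hat\ta_s$ and their derivatives in $\bar{k}$.

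The second step handles the summation over $\bar{k}$ and the loss of the product structure. Convolution in $\bar{k}$ (arising from products in $\bar{x}$) is controlled by Young's inequality for $\ell^1(\Z^2)$: $\sum_{\bar{k}}\sup_z|\widehat{\phi\psi}(\bar{k})|\le \big(\sum_{\bar{k}}\sup_z|\hat\phi(\bar{k})|\big)\big(\sum_{\bar{k}}\sup_z|\hat\psi(\bar{k})|\big)$, and the weight $(1+|\bar{k}|^m)$ is distributed via $(1+|\bar{k}|^m)\lesssim (1+|\bar{k}-\bar{l}|^m)+(1+|\bar{l}|^m)$. The remaining one-dimensional quantity $\sum_{\bar{k}}(1+|\bar{k}|^m)\|\hat{\Fu}_s(\bar{k},\cdot)\|_{L^\infty_z}$ is dominated, by the one-dimensional Sobolev embedding $H^1_z\hookrightarrow L^\infty_z$, by $\sum_{\bar{k}}(1+|\bar{k}|^m)\|\hat{\Fu}_s(\bar{k},\cdot)\|_{H^2_z}$, and then by Cauchy--Schwarz in $\bar{k}$ together with the interpolation inequality quoted in the Strategies section,
\begin{align}
\sum\limits_{\bar{k}\in\Z^2}(1+|\bar{k}|^m)\|[\hat{\Fu}_s,\hat{\ta}_s]\|_{H^2_z}\lesssim \Big(\sum\limits_{\bar{k}\in\Z^2}\lag\bar{k}\rag^{2m+6}\|[\hat{\Fu}_s,\hat{\ta}_s]\|^2_{H^2_z}\Big)^{\frac12},\notag
\end{align}
which is exactly $\sum_{\bar{k}}\|(1+|\bar{k}|^{m+2})[\hat{\Fu}_s,\hat{\ta}_s]\|_{H^2_z}$ up to constants and is $\lesssim\vps_{\Phi,\al}$ by Lemma \ref{st-sh-lem} applied with exponent $m+2$. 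This proves \eqref{fi-es}. For \eqref{gi-es2} the same scheme applies verbatim, now keeping the factor $e^{\la_0 t}$ inside all norms, using Lemma \ref{ust-sh-lem} (which already contains $e^{\la_0t}$, $\pa_t^{m_0}$ and both $L^\infty_t$ and $L^2_t$ norms), and noting that $\pa_t g_1,\pa_t g_2$ and $\pa_x g_1,\pa_x g_2$ inherit the same structure with $[\pa_t^{m_0}\pa_i^{m_1}\Fu,\ta]$ in place of $[\Fu_s,\ta_s]$; the nonlinear terms in \eqref{g2-def} that mix $g$ and the steady quantities $\Fu_s,\ta_s$ are handled by the $\ell^1_{\bar{k}}$-Young inequality combined with \eqref{fi-es} and the smallness $\vps_{\Phi,\al}\ll\vps_0$. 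The choice $u_{2,z}(x,y,\pm1)=0$ and the elliptic definitions \eqref{u2}, \eqref{uu2} of $\Fu_{s,2},\Fu_2$ require the auxiliary elliptic estimate $\sum_{\bar{k}}(1+|\bar{k}|^m)\|\na_\Fx\Delta^{-1}\{z\pa_x\rho_s\}\|_{H^2_z}\lesssim\vps_{\Phi,\al}$, which follows from the same $H^m_{\bar{x}}H^2_z$ elliptic regularity used inside the proofs of Lemma \ref{st-sh-lem} and \ref{ust-sh-lem}.

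Finally, for the boundary error terms, recall $r=-f_2+P_\ga f_2$ and $\tilde r=-g_2+P_\ga g_2$; since $P_\ga$ is an $L^\infty_\Fv$-bounded averaging operator against $\sqrt{2\pi\mu}$ that only reads the boundary traces, we have $w^{l_\infty}|\hat r(\bar{k},\pm1,\Fv)|\lesssim \|w^{l_\infty}\hat f_2(\bar{k},\pm1,\cdot)\|_{L^\infty_\Fv}$, and likewise for $\tilde r$. Summing in $\bar{k}$ and invoking the trace inequality $\|\hat f_2(\bar{k},\pm1,\cdot)\|\lesssim\|\hat f_2(\bar{k},\cdot,\cdot)\|_{H^1_z(L^\infty_\Fv)}$ (one-dimensional trace in $z$, which again costs one $z$-derivative absorbed by the $H^2_z$ control from \eqref{fi-es}/\eqref{gi-es2}), \eqref{r-es} and \eqref{tir-es} reduce to the already-proven estimates on $f_2$ and $g_2$ together with the boundary conditions $\eqref{ins-s}_4$, $\eqref{ns-ust}$, which ensure that the macroscopic part of $f_2,g_2$ evaluated at $z=\pm1$ only involves $\pa_z\Fu_s,\pa_z\ta_s$ (resp. the unsteady analogues) and hence is controlled in $H^1_z$ by the $H^2_z$ bulk bound.

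\textbf{Main obstacle.} The conceptually routine part is the structural pointwise-in-$\Fv$ reduction; the step that needs the most care is \emph{the derivative/trace bookkeeping}: every passage $L^\infty_z\hookleftarrow H^1_z$, every trace at $z=\pm1$, and every application of $L^{-1}$, $\bar A_{ij}$, $\bar B_i$ costs either a $z$-derivative or a few powers of $\Fv$, so one must verify that the final count of $z$-derivatives never exceeds $2$ (matching the $H^2_z$ output of Lemmas \ref{st-sh-lem}, \ref{ust-sh-lem}, which is sharp because of the mixed Dirichlet/Neumann structure) and that the final count of velocity weights never exceeds $l_\infty$. This is precisely why the hypothesis $l_\infty\gg l_2\gg4$ and the extra exponent $m+2$ (rather than $m$) in the interpolation inequality are needed, and it is the only place where the argument is not a mechanical consequence of $\ell^1_{\bar{k}}$-Young plus the cited fluid estimates.
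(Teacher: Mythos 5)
Your proposal is correct and takes essentially the same route as the paper: exploit that every term in \eqref{f1-def}, \eqref{f2-def}, \eqref{g1-it}, \eqref{g2-def} is a polynomial in $\Fv$ times $\sqrt\mu$ applied to macroscopic quantities, so the $w^{l_\infty}$-weighted $L^\infty_\Fv$ norm reduces to macroscopic amplitudes; control products by $\ell^1_{\bar k}$-Young with the $(1+|\bar k|^m)$ weight split across the convolution; pass $L^\infty_z\hookleftarrow H^2_z$ and use the $\ell^1$--$\ell^2$ Cauchy--Schwarz in $\bar k$ with the extra $\lag\bar k\rag^{-2}$ to land on the $H^m_{\bar x}H^2_z$ output of Lemmas~\ref{st-sh-lem} and~\ref{ust-sh-lem}; and use the one-dimensional trace in $z$ for $r,\tilde r$, exactly as the paper does via \eqref{tr-as}.

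The one place where your proposal diverges (not a gap, but worth noting) is in the treatment of $\Fu_{s,2}$, $\Fu_2$, and $\ta_2$. You invoke a genuinely auxiliary elliptic estimate for $\na_\Fx\Delta^{-1}\{z\pa_x\rho_s\}$ and $\na_\Fx\Delta^{-1}\{\pa_t\rho-\al z\pa_x\rho\}$, whereas the paper's proof short-circuits this entirely: in the proof of Lemma~\ref{st-sh-lem} it observes that $\ta_s\equiv0$ (the temperature equation $\eqref{ins-s}_3$ is a linear advection--diffusion equation with zero Dirichlet data and no source, so $\ta_s=0$ by uniqueness, hence $\rho_s=-\ta_s=0$), and in the proof of Lemma~\ref{ust-sh-lem} similarly uses the hypothesis $\ta_0=0$ to conclude $\ta\equiv0$, $\rho\equiv0$. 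Therefore $\Fu_{s,2}=\Fu_2=0$ by \eqref{u2}, \eqref{uu2}, and the only nontrivial auxiliary quantity is $\ta_2$, which the paper bounds through the pressure $\tilde P$ via \eqref{up}, \eqref{ta2}, and the elliptic pressure estimate \eqref{su-div} (this last step is the one ingredient your proposal does not make explicit, though it is the same kind of elliptic bookkeeping you describe). Your more general route with $\Fu_2\neq0$ would also work but requires a Neumann-type elliptic estimate that the paper deliberately avoids; since $\rho_s=\rho=0$ your auxiliary estimate is trivially true, so there is no error, only an unnoticed simplification.
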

\begin{proof}
We begin by establishing \eqref{gi-es2}. From \eqref{g2-def}, we get
\begin{align}\label{g2-es-p1}
&\sum\limits_{\bar{k}\in\Z^2}\|e^{\la_0t}w^{l_\infty}\widehat{\pa^{m_0}_t\pa^{m_1}_xg}_2(t,\bar{k},z,\Fv)\|_{L^\infty_{T,z,\Fv}}\notag\\
&\quad+\sum\limits_{\bar{k}\in\Z^2}
\|e^{\la_0t}w^{l_\infty}\widehat{\pa^{m_0}_t\pa^{m_1}_xg}_2(t,\bar{k},z,\Fv)\|_{L^2_{T,z,\Fv}}\notag\\
&\lesssim \sum\limits_{\bar{k}\in\Z^2}
\Bigg\{\|e^{\la_0t}[\widehat{\na_\Fx\pa^{m_0}_t\pa^{m_1}_x\Fu},\widehat{\na_\Fx\pa^{m_0}_t\pa^{m_1}_x\ta}]\|_{L^\infty_{T,z}}\notag\\
&\qquad\qquad+\|e^{\la_0t}[\widehat{\pa^{m_0}_t\pa^{m_1}_x\Fu},\widehat{\pa^{m_0}_t\pa^{m_1}_x\ta}]\|_{L^\infty_{T,z}}
\notag\\&\qquad\qquad+\|e^{\la_0t}[\widehat{\pa^{m_0}_t\pa^{m_1}_x\Fu}_2,\widehat{\pa^{m_0}_t\pa^{m_1}_x\ta}_2]\|_{L^\infty_{T,z}}\Bigg\}\notag\\
&+\sum\limits_{\bar{k}\in\Z^2}
\Bigg\{\|e^{\la_0t}[\widehat{\na_\Fx\pa^{m_0}_t\pa^{m_1}_x\Fu},\widehat{\na_\Fx\pa^{m_0}_t\pa^{m_1}_x\ta}]\|_{L^2_{T,z}}
\notag\\
&\qquad\qquad
+\|e^{\la_0t}[\widehat{\pa^{m_0}_t\pa^{m_1}_x\Fu},\widehat{\pa^{m_0}_t\pa^{m_1}_x\ta}]\|_{L^2_{T,z}}\notag\\
&\qquad\qquad
+\|e^{\la_0t}[\widehat{\pa^{m_0}_t\pa^{m_1}_x\Fu}_2,\widehat{\pa^{m_0}_t\pa^{m_1}_x\ta}_2]\|_{L^2_{T,z}}\Bigg\}.
\end{align}
On the other hand, in view of \eqref{uu2}, we may take $\Fu_2=0,$ and by \eqref{up}, \eqref{su-div} and \eqref{ta2} and utilizing Sobolev's inequality, we also have, 
for $m_0\leq1$ and $m\leq4$
\begin{align}
\sum\limits_{m_0\leq1,m\leq4}\|\pa_t^{m_0}\pa_{i}^{m}\ta_2\|_2
\lesssim&
\sum\limits_{m_0\leq1,m\leq4}\|\na_\Fx \pa_t^{m_0}\pa_{i}^{m}\SP\|_2
\notag\\
&
+(\vps_0+\vps_{\Phi,\al})\sum\limits_{m_0\leq1,m\leq4}\|\na_\Fx \pa_t^{m_0}\pa_{i}^{m}\SU\|_2
\notag\\
\lesssim&(\vps_0+\vps_{\Phi,\al})\sum\limits_{\substack{m_1+m_2\leq4\\ m_2\leq2}}\sum\limits_{m_0\leq1}\|\pa_t^{m_0}\pa_i^{m_1}\pa^{m_2}_z\SU\|_2,\notag
\end{align}
which further implies
\begin{align}\label{g2-es-p2}
\sum\limits_{\bar{k}\in\Z^2}\sup\limits_{m_0\leq1}\|e^{\la_0t}\lag\bar{k}\rag^2\pa_t^{m_0}\hat{\ta}_2\|_{L^\infty_tH^2_z}
+\sum\limits_{\bar{k}\in\Z^2}\sup\limits_{m_0\leq1}\|e^{\la_0t}\lag\bar{k}\rag^2\pa_t^{m_0}\hat{\ta}_2\|_{L^2_tH^2_z}
\lesssim\vps_0.
\end{align}
Thus \eqref{gi-es2} for the component $g_2$  follows from \eqref{g2-es-p1}, \eqref{g2-es-p2} and \eqref{us-sh-es}, and the corresponding estimates for $g_1$ directly follow from Lemma \ref{ust-sh-lem}.

We now turn to prove \eqref{tir-es} and the estimate \eqref{r-es} can be obtained similarly. Similar to \eqref{g2-es-p1}, we have
by using \eqref{uu2} again
\begin{align}
\sum\limits_{\bar{k}\in\Z^2}&\|e^{\la_0t}w^{l_\infty}\widehat{\tilde{r}}(\bar{k},\pm1,\Fv)\|_{L^\infty_{T,\Fv}}
+\sum\limits_{\bar{k}\in\Z^2}
\|e^{\la_0t}w^{l_\infty}\widehat{\tilde{r}}(\bar{k},\pm1,\Fv)\|_{L^2_{T,\Fv}}\notag\\
\lesssim& \sum\limits_{\bar{k}\in\Z^2}
\Bigg\{\|e^{\la_0t}[\widehat{\na_\Fx\Fu}(\pm1),\widehat{\na_\Fx\ta}(\pm1)]\|_{L^\infty_{T}}
+\|e^{\la_0t}\hat{\ta}_2(\pm1)]\|_{L^\infty_{T}}\Bigg\}\notag\\
&+\sum\limits_{\bar{k}\in\Z^2}
\Bigg\{\|e^{\la_0t}[\widehat{\na_\Fx\Fu}(\pm1),\widehat{\na_\Fx\ta}(\pm1)]\|_{L^2_{T}}
+\|e^{\la_0t}\hat{\ta}_2(\pm1)]\|_{L^2_{T}}\Bigg\}\notag\\
\lesssim&\sum\limits_{\bar{k}\in\Z^2}\|e^{\la_0t}\lag\bar{k}\rag^2[\hat{\Fu},\hat{\ta}]\|_{L^\infty_tH^2_z}
+\sum\limits_{\bar{k}\in\Z^2}\|e^{\la_0t}\lag\bar{k}\rag^2[\hat{\Fu},\hat{\ta}]\|_{L^2_tH^2_z}
\notag\\&+\sum\limits_{\bar{k}\in\Z^2}\|e^{\la_0t}\lag\bar{k}\rag^2\hat{\ta}_2\|_{L^\infty_tH^2_z}
+\sum\limits_{\bar{k}\in\Z^2}\|e^{\la_0t}\lag\bar{k}\rag^2\hat{\ta}_2\|_{L^2_tH^2_z}\lesssim\vps_0,\notag
\end{align}
where we have used the trace inequality given as \eqref{tr-as}. Note that the proof for \eqref{fi-es} follows a similar approach, and we omit the details for brevity.
This ends the proof of Lemma \ref{coef-es}.
\end{proof}

\subsection{Other key estimates}
In this subsection, we will collect some important estimates which have been used in the previous sections.
Recall the backward time cycle starting at $(t_0,z_0,\Fv_0)=(t,z,\Fv)$ in \eqref{cyc}, the boundary probability measure $d\si_l$ on $\CV_l$ in \eqref{def.sil} and the product measure $d\Sigma_{l}(s)$ and $d\tilde{\Sigma}_{l}(s)$ over $\prod_{j=1}^{\RL-1}\CV_j$ as follows; see also \eqref{Sigma} and \eqref{Sigma-i}:
\begin{multline}%\label{Sigma-}
\Sigma_{l}(s)=\prod\limits_{j=l+1}^{\RL-1}d\si_j e^{-\int_s^{t_l}
\CA^\eps(\tau,V_{\mathbf{cl}}^l(\tau))d\tau}{w_2}(\Fv_l)d\si_l \\
\prod\limits_{j=1}^{l-1}\frac{{w_2}(\Fv_j)}{{w_2}(V^{j}_{\mathbf{cl}}(t_{j+1}))}
\prod\limits_{j=1}^{l-1}e^{-\int_{t_{j+1}}^{t_j}
\CA^\eps(\tau,V_{\mathbf{cl}}^l(\tau))d\tau}d\si_j,\notag
\end{multline}
and
\begin{multline}%\label{Sigma-i}
\tilde{\Sigma}_{l}(s)=\prod\limits_{j=l+1}^{\RL-1}d\si_j e^{-\int_s^{\tilde{t}_l}
\tilde{\CA}^\eps(\tau,\tilde{\FV}_{\mathbf{cl}}^l(\tau))d\tau}{w_2}(\Fv_l)d\si_l \\
\prod\limits_{j=1}^{l-1}\frac{{w_2}(\Fv_j)}{{w_2}(\tilde{\FV}^{j}_{\mathbf{cl}}(\tilde{t}_{j+1}))}
e^{-\int_{\tilde{t}_{j+1}}^{\tilde{t}_j}
\tilde{\CA}^\eps(\tau,\tilde{\FV}_{\mathbf{cl}}^j(\tau))d\tau}d\si_j,\notag
\end{multline}
where
$$
w_2(\Fv)=(\sqrt{2\pi}w^{l_\infty} \mu^{\frac{1}{2}})^{-1}.
$$

The following two lemmas provide estimates on the weighted measure of the phase space $\Pi _{j=1}^{\RL-1}\mathcal{V}_{j}$ when there are $\RL$ times bounce for two distinct backward exit times $t_{b}(z,\Fv)$ and $\tilde{t}_{b}(z,\Fv)$ defined by \eqref{ex-t} and \eqref{u-ex-t}, respectively.

\begin{lemma}\label{k-cyc}
For any $\bar{\vps}>0$ and any $T_0>0$, there exists an integer $\RL_{0}=\RL_0(\bar{\vps}
,T_{0})$ such that for any integer $\RL\geq \RL_{0}$ and any $1\gg\eta_0\geq0$ and for all $(t,z,\Fv)\in[0, +\infty)\times [-1,1]\times\R^{3}$, it holds
\begin{equation}\label{cy-1}
\int_{\Pi_{l=1}^{\RL-1}\mathcal{V}_{l}}\mathbf{1}_{\mathcal{\{}%
t_{\RL}(t,z,\Fv,\Fv_{1},\Fv_{2}...,\Fv_{\RL-1})>t-T_0\}}\Pi _{l=1}^{\RL-1}e^{\frac{\eta_0}{2}|\Fv_{l}|^2}d\sigma _{l}\leq
\bar{\vps}.
\end{equation}%
In particular, let $T_{0}>0$ be large enough, there exist constants $
C_{1}$ and $C_{2}>0$ independent of $T_{0}$ such that for $\RL=C_{1}T_{0}^{\frac{5}{4}}$ with a suitable choice of $C_1$ such that $\RL$ is an integer and for all $(t,z,\Fv)\in \lbrack 0,\infty)\times [-1,1]\times \R^{3}$, it holds
\begin{equation}\label{cy-2}
\int_{\Pi _{j=1}^{\RL-1}\mathcal{V}_{j}}
\mathbf{1}_{\{t_{\RL}(t,z,\Fv,\Fv_{1},\Fv_{2},\cdots ,\Fv_{\RL-1})>t-T_0\}}\Pi _{l=1}^{\RL-1}e^{\frac{\eta_0}{2}|\Fv_{l}|^2}d\sigma _{l}\leq
\left\{ \frac{1}{2}\right\} ^{C_{2}T_{0}^{5/4}}.
\end{equation}%
Furthermore, for any $q>0$ in the weight function $w^q(v)$, there exist constants $C_3$ and $C_4>0$ independent of $\RL$ and $T_0$ such that
\begin{equation}\label{cy-3}
\begin{split}
\int_{\Pi _{j=1}^{\RL-1}\mathcal{V}_{j}}\sum_{l=1}^{\RL-1}\mathbf{1}_{\{t_{l+1}\leq t-T_0<t_{l}\}}
\int_{t_{l}-T_0}^{t_l} d\Sigma_l(s)ds\leq C_3,
\end{split}
\end{equation}
and
\begin{equation}\label{cy-4}
\begin{split}
\int_{\Pi _{j=1}^{\RL-1}\mathcal{V}_{j}}\sum_{l=1}^{\RL-1}\mathbf{1}_{\{t_{l+1}>t-T_0\}}\int_{t_{l+1}}^{t_l} d\Sigma_l(s)ds\leq
C_4.
\end{split}
\end{equation}
%where $\tilde{\Sigma}_l(s)$ stands for both $\Sigma_l(s)$ and $\bar{\Sigma}_l(s)$.
\end{lemma}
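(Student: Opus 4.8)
\textbf{Proof plan for Lemma \ref{k-cyc}.}

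The plan is to follow the now-standard Guo-type argument for estimating the measure of the set of "slowly-bouncing" backward characteristics, adapted to the present shear-force setting where the exit time is defined via \eqref{ex-t}--\eqref{u-ex-t}. First I would establish the single, uniform lower bound on the traveling time between two consecutive bounces that forms the heart of the argument. From the characteristic formulas \eqref{chl-sol} and \eqref{chl-sol-u}, for $z_l\in\{\pm 1\}$ the normal component of position satisfies $Z(t_l-\tau) = z_l - \tau v_{l,z} + \eps^2(\text{double integral of }\Phi_z)$, and since $\eps>0$ may be taken small, the perturbation is negligible on bounded time intervals; hence $|t_l - t_{l+1}|\gtrsim \min\{1, 1/|v_{l,z}|\}$ (the channel has width $2$). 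Consequently, if $|v_{l,z}|\leq N$ for all $l$ in some index range, then $t_{\RL}\leq t - c\RL/N$, so a backward trajectory that has not exited more than $T_0$ into the past with $\RL$ bounces must have had many bounces with large normal velocity. This is where the Gaussian factor $e^{\frac{\eta_0}{2}|\Fv_l|^2}$ matters: since $\eta_0\ll 1$, we still have $e^{\frac{\eta_0}{2}|\Fv_l|^2}d\sigma_l = e^{\frac{\eta_0}{2}|\Fv_l|^2}\sqrt{2\pi}\mu(\Fv_l)|v_{l,z}|\,d\Fv_l$ which is an \emph{integrable} weight with a still-strictly-negative exponent; call the resulting finite measure on $\CV_l$ by $d\tilde\sigma_l$, with total mass $\leq 1$ after trivial renormalization, or at worst $\leq C$.

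Next I would carry out the combinatorial splitting that yields \eqref{cy-1}. Fix a threshold $N$; split $\CV_l = \{|v_{l,z}|\leq N\}\cup\{|v_{l,z}|>N\}$. On the large-velocity part, $\int_{|v_{l,z}|>N} d\tilde\sigma_l \leq \delta(N)$ with $\delta(N)\to 0$ as $N\to\infty$. On the set $\{t_{\RL}>t-T_0\}$, by the traveling-time bound at most $NT_0/c$ of the indices $l$ can have $|v_{l,z}|\leq N$; equivalently, at least $\RL - NT_0/c$ of them satisfy $|v_{l,z}|>N$. Summing over the choice of which indices are "large" and using independence of the $d\tilde\sigma_l$ across $l$, the measure in \eqref{cy-1} is bounded by $\binom{\RL-1}{\lfloor NT_0/c\rfloor}\delta(N)^{\RL-1-\lfloor NT_0/c\rfloor}$ times a harmless constant. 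Choosing first $N$ large so $\delta(N)<1/2$, then $\RL$ large relative to $NT_0$, this bound is $\leq\bar\vps$; this proves \eqref{cy-1}. For \eqref{cy-2} one makes the quantitative choice $N = O(T_0^{1/4})$ and $\RL = C_1 T_0^{5/4}$: then $NT_0/c = O(T_0^{5/4})$ is a fixed fraction, say half, of $\RL$, the binomial coefficient is $\leq 2^{\RL}$, and $\delta(N)^{\RL/2}$ with $\delta(N)\leq e^{-cN^2}= e^{-cT_0^{1/2}}$ beats it, giving $(1/2)^{C_2 T_0^{5/4}}$; I would track the exponents carefully here since this is the one place constants must be pinned down.

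Finally, for \eqref{cy-3} and \eqref{cy-4} I would absorb the extra time-integral $\int_{t_{l+1}}^{t_l}ds$ (length $\leq t_l-t_{l+1}$) and the exponential-decay factors inside $d\Sigma_l(s)$. Using $|e^{-\int_s^{t_l}\CA^\eps(\tau,\FV_{\mathbf{cl}}^l(\tau))d\tau}|\leq e^{-\frac{\nu_0}{2\eps}(t_l-s)}$ from \eqref{CAe-lbd}--\eqref{CAe-lbd2}, the inner time integral contributes a factor $\lesssim \eps/\nu_0\lesssim 1$, and the telescoping product of exponentials over $j<l$ yields $e^{-\frac{\nu_0}{2\eps}(t-t_l)}\leq 1$; the weight ratios ${w_2}(\Fv_j)/{w_2}(\FV^j_{\mathbf{cl}}(t_{j+1}))$ are controlled by \eqref{diff-w} by $C(l_\infty)e^{\frac{\eta_0}{2}|\Fv_j|^2}$. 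Thus $d\Sigma_l(s)$ is dominated by $C(l_\infty)\prod_{j} e^{\frac{\eta_0}{2}|\Fv_j|^2}d\sigma_j$ up to the harmless time factor, and summing over $l$ reduces \eqref{cy-3}--\eqref{cy-4} to a bound of the form $\sum_{l=1}^{\RL-1}\int \mathbf{1}_{\{\cdots\}}\prod_j e^{\frac{\eta_0}{2}|\Fv_j|^2}d\sigma_j$, which by the same independence/combinatorics (now summing a geometrically small series in $\RL$, since each additional bounce costs a factor $<1$) is bounded by an absolute constant $C_3$, resp.\ $C_4$. The main obstacle I anticipate is the quantitative bookkeeping in \eqref{cy-2}: making the choice $\RL=C_1 T_0^{5/4}$ work requires balancing the entropy $2^\RL$ from the binomial coefficient against the Gaussian smallness $e^{-cN^2(\RL - NT_0/c)}$, and the $5/4$ exponent is exactly what makes these two compatible; everything else is routine once the single-bounce traveling-time lower bound (robust under the $\eps^2\Phi$ and $\al\eps$ perturbations) is in hand.
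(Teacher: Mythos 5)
Your overall strategy (partition $\CV_l$ into a ``good'' set with long single-bounce travel times and a small-measure ``bad'' set, then count) is the same as the paper's, and the rest of your sketch, including the reduction of \eqref{cy-3}--\eqref{cy-4} to \eqref{cy-1}, is sound. However there is a genuine gap in the single-bounce travel-time estimate that propagates through the combinatorics.

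You split $\CV_l$ only into $\{|v_{l,z}|\leq N\}$ and $\{|v_{l,z}|>N\}$ and claim $t_b\gtrsim\min\{1,1/|v_{l,z}|\}$ on the whole of the low-velocity part. This is false once the normal force $\eps^2\Phi_z$ is switched on. Going backward from the plate $z_l=1$ with small $v_{l,z}>0$, the characteristic is $Z(t_l-\tau)=1-\tau v_{l,z}+\tfrac{\eps^2\Phi_z}{2}\tau^2+\cdots$; if $\Phi_z(1)>0$ the trajectory turns around and re-exits through the \emph{same} plate after time $t_b\sim v_{l,z}/\eps^2$, with $z_b=z_l$. When $|v_{l,z}|\lesssim\eps^2$ this is arbitrarily short, so neither $t_b\gtrsim 1$ nor $t_b\gtrsim 1/|v_{l,z}|$ holds, and the core counting claim ``at most $NT_0/c$ indices have $|v_{l,z}|\leq N$'' is no longer justified. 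The paper circumvents this by also throwing $\{|v_{l,z}|\leq\de\}$ (and $\{|\Fv_l|\geq 1/\de\}$, $\{|\Fv_l|\leq\de\}$) into the small-measure bad set, and crucially taking $\eps\ll\de$: then on the good set $\CV_l^\de$ one has $|v_{l,z}|>\de\gg\eps^2 t_b$, which forces $z_b\neq z_l$, hence $|z_l-z_b|=2$ and $t_b\gtrsim 1/|v_{l,z}|\geq\de$ from the upper bound $|v_{l,z}|<1/\de$.

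This also changes your quantitative bookkeeping: once $\{|v_{l,z}|\leq\de\}$ is included in the bad set, its measure is only polynomially small in $\de$, of order $\tilde C_0\de$, and you lose the Gaussian smallness $e^{-cN^2}$ that you invoked in the proof of \eqref{cy-2}. The combinatorics still close with polynomial smallness: with $\RL-1=N_*([T_0/(C_0\de)]+1)$ the bound becomes $(C_{N_*}T_0^2\de^{N_*-4})^{[T_0/(C_0\de)]+1}$, so choosing $\de\sim T_0^{-1/(2(N_*-4))}$ and $N_*=6$ gives $\de\sim T_0^{-1/4}$, $\RL\sim T_0^{5/4}$, and the bound $(1/2)^{C_2 T_0^{5/4}}$ as in \eqref{cy-2}. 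So the exponent $5/4$ is driven by the interplay between the number of good bounces ($\sim T_0/\de$) and the polynomial bad-set measure, not by a Gaussian tail.
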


\begin{lemma}\label{t-k-cyc}
For any $\bar{\vps}>0$ and any $T_0>0$, there exists an integer $\RL_{0}=\RL_0(\bar{\vps}
,T_{0})$ such that for any integer $\RL\geq \RL_{0}$ and any $1\gg\eta_0\geq0$ and for all $(t,z,\Fv)\in[0, \eps T_0)\times [-1,1]\times\R^{3}$, it holds
\begin{equation}\label{t-cy-1}
\int_{\Pi_{l=1}^{\RL-1}\mathcal{V}_{l}}\mathbf{1}_{\mathcal{\{}%
\tilde{t}_{\RL}(t,z,\Fv,\Fv_{1},\Fv_{2}...,\Fv_{\RL-1})>0\}}\Pi _{l=1}^{\RL-1}e^{\frac{\eta_0}{2}|\Fv_{l}|^2}d\sigma _{l}\leq
\bar{\vps}.
\end{equation}%
In particular, let $T_{0}>0$ be large enough, there exist constants $
C_{1}$ and $C_{2}>0$ independent of $T_{0}$ such that for $\RL=C_{1}T_{0}^{\frac{5}{4}}$ with a suitable choice of $C_1$ such that $\RL$ is an integer and for all $(t,z,\Fv)\in \lbrack 0,\eps T_0)\times [-1,1]\times \R^{3}$, it holds
\begin{equation}\label{t-cy-2}
\int_{\Pi _{j=1}^{\RL-1}\mathcal{V}_{j}}
\mathbf{1}_{\{\tilde{t}_{\RL}(t,z,\Fv,\Fv_{1},\Fv_{2},\cdots ,\Fv_{\RL-1})>0\}}\Pi _{l=1}^{\RL-1}e^{\frac{\eta_0}{2}|\Fv_{l}|^2}d\sigma _{l}\leq
\left\{ \frac{1}{2}\right\} ^{C_{2}T_{0}^{5/4}}.
\end{equation}%
Furthermore, for any $q>0$ in the weight function $w^q(\Fv)$, there exist constants $C_3$ and $C_4>0$ independent of $\RL$ and $T_0$ such that
\begin{equation}\label{t-cy-3}
\begin{split}
\int_{\Pi _{j=1}^{\RL-1}\mathcal{V}_{j}}\sum_{l=1}^{\RL-1}\mathbf{1}_{\{\tilde{t}_{l+1}\leq 0<\tilde{t}_{l}\}}
\int_{0}^{\tilde{t}_l} d\Sigma_l(s)ds\leq C_3,
\end{split}
\end{equation}
and
\begin{equation}\label{t-cy-4}
\begin{split}
\int_{\Pi _{j=1}^{\RL-1}\mathcal{V}_{j}}\sum_{l=1}^{\RL-1}\mathbf{1}_{\{\tilde{t}_{l+1}>0\}}\int_{\tilde{t}_{l+1}}^{\tilde{t}_l} d\Sigma_l(s)ds\leq
C_4.
\end{split}
\end{equation}
%where $\tilde{\Sigma}_l(s)$ stands for both $\Sigma_l(s)$ and $\bar{\Sigma}_l(s)$.
\end{lemma}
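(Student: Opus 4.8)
\textbf{Proof proposal for Lemma \ref{t-k-cyc}.}

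The plan is to reduce the unsteady statement to the steady one proved in Lemma \ref{k-cyc} by exploiting the time rescaling that relates the two characteristic flows. Recall that the unsteady characteristics \eqref{chl-u} are obtained from the steady ones \eqref{chl} under the substitution $s\mapsto s/\eps$ (equivalently $t\mapsto t/\eps$), so that $\tilde{Z}(s)=Z(s/\eps)$ modulo the lower-order force terms and $\tilde{t}_b(z,\Fv)=\eps\, t_b(z,\Fv)$ when $\Phi\equiv0$; when $\Phi\not\equiv0$ the force terms are multiplied by extra powers of $\eps$ and one checks directly from \eqref{chl-sol-u} that they perturb the exit times and exit positions by $O(\eps^2 T_0^2)$ uniformly for $t\in[0,\eps T_0]$, which is harmless after choosing $\eps$ small. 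Under this change of variables the event $\{\tilde t_{\RL}(t,z,\Fv,\Fv_1,\dots,\Fv_{\RL-1})>0\}$ for $t\in[0,\eps T_0)$ corresponds exactly (again up to the $O(\eps^2)$ perturbation absorbed into a slightly enlarged $T_0$) to the event $\{t_{\RL}(t',z,\Fv,\Fv_1,\dots,\Fv_{\RL-1})>t'-T_0\}$ for $t'=t/\eps\in[0,T_0)$ in the steady setting. Since the probability measures $d\sigma_l=\sqrt{2\pi}\mu(\Fv_l)|v_{l,z}|\,d\Fv_l$ and the Gaussian correction factors $e^{\eta_0|\Fv_l|^2/2}$ do not see the time rescaling at all, the left-hand sides of \eqref{t-cy-1}, \eqref{t-cy-2} coincide with those of \eqref{cy-1}, \eqref{cy-2} after this substitution, and the bounds $\bar\vps$ and $\{1/2\}^{C_2 T_0^{5/4}}$ follow immediately with the same constants $\RL_0$, $C_1$, $C_2$.

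For the two cycle-integral bounds \eqref{t-cy-3} and \eqref{t-cy-4} the argument is parallel but one must track the weighted measures $d\tilde\Sigma_l(s)$ defined in \eqref{Sigma-i}. First I would use the pointwise bound \eqref{t-CAe-lbd}, namely $|e^{-\int_s^t\tilde{\CA}^\eps(\tau,\tilde\FV(\tau))d\tau}|\leq e^{-\frac{1}{2\eps^2}\int_s^t\nu(\tilde\FV(\tau))d\tau}$, together with the weight-ratio estimate \eqref{diff-w} (whose proof only uses that $|\tilde\FV^j_{\mathbf{cl}}(\tilde t_{j+1})-\Fv_j|$ is $O(\eps^2 T_0^2)$ along each leg, again valid on $[0,\eps T_0]$), to dominate $d\tilde\Sigma_l(s)$ by $e^{-\nu_0(\tilde t_l-s)/(2\eps^2)}\prod_{j\neq l}e^{\eta_0|\Fv_j|^2/2}d\sigma_j\cdot w_2(\Fv_l)d\sigma_l$. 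Then the $s$-integral $\int_0^{\tilde t_l}e^{-\nu_0(\tilde t_l-s)/(2\eps^2)}ds\leq 2\eps^2/\nu_0$ for \eqref{t-cy-3}, and in \eqref{t-cy-4} one uses the telescoping identity analogous to \eqref{CAe-lbd2} to convert the product of exponentials over the legs back into a single exponential $e^{-\nu_0(t-s)/(2\eps^2)}$; the remaining phase-space integral over $\Pi_{j=1}^{\RL-1}\CV_j$ of $\prod e^{\eta_0|\Fv_j|^2/2}d\sigma_j$ is bounded by a constant independent of $\RL$ because each $d\sigma_j$ is a probability measure and $\int e^{\eta_0|\Fv|^2/2}\sqrt{2\pi}\mu|v_z|d\Fv<\infty$ for $\eta_0<1/2$. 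Summing over $l=1,\dots,\RL-1$ is where one needs the extra decay supplied by the exponential-in-$\RL$ smallness of the long-excursion contributions — exactly as in the steady proof of \eqref{cy-3}, \eqref{cy-4} — so that the geometric series converges and the sum is $O(1)$ uniformly in $\RL$ and $T_0$.

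The main obstacle, and the only genuinely new point compared to Lemma \ref{k-cyc}, is verifying that the external force $\Phi$ does not spoil the change of variables: one must show that the nonautonomous, $z$-dependent drift in \eqref{chl-u} perturbs the hitting times, hitting positions, and the Jacobians/weight ratios by quantities that are uniformly $O(\eps^2 T_0^2)$ on the window $[0,\eps T_0]$, and then absorb these perturbations by enlarging $T_0$ slightly (which changes $C_1, C_2$ only by harmless factors) and by shrinking $\eps$. This is done via Gronwall on $\pa Z/\pa v_{\ell,z}$ exactly as in \eqref{Jco1}–\eqref{Jco2}, giving $|\pa\tilde Z(\eta')/\pa v'_{\ell,z}|\leq|\eta'-\tilde t'_\ell|e^{\eps^2 C(\Phi)T_0^2}$, so all the needed comparisons hold with constants depending only on $\|\Phi\|$ and $T_0$. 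Once this is in place the four inequalities of Lemma \ref{t-k-cyc} follow verbatim from the corresponding four inequalities of Lemma \ref{k-cyc}, and the proof is complete.
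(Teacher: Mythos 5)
Your proof is essentially correct and, modulo one misapprehension, takes the same route as the paper. The paper's argument for \eqref{t-cy-2} is one sentence: in the unsteady setting the lower bound for the exit time becomes $\tilde t_b(z_l,\Fv_l)\geq C_0\eps\delta$ instead of $t_b\geq C_0\delta$, and since the time window has shrunk from length $T_0$ to length $\eps T_0$, the count $\bigl[\frac{\eps T_0}{C_0\eps\delta}\bigr]+1=\bigl[\frac{T_0}{C_0\delta}\bigr]+1$ is unchanged, so the combinatorial estimate from Lemma~\ref{k-cyc} runs verbatim. Your "rescale $s\mapsto s/\eps$" is exactly this observation, just phrased as a change of variables.

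The one thing you get wrong is the claim that the rescaling is only approximate when $\Phi\not\equiv0$, so that hitting times and positions are perturbed by $O(\eps^2 T_0^2)$ and one must enlarge $T_0$. In fact the rescaling is \emph{exact}: if you set $W(\sigma):=\tilde Z(\eps\sigma)$, $\FU(\sigma):=\tilde\FV(\eps\sigma)$, then from \eqref{chl-u} you compute $\frac{dW}{d\sigma}=\eps\cdot\eps^{-1}\tilde V_z=U_z$ and $\frac{d\FU}{d\sigma}=\eps\bigl(\eps\Phi(W)-\al U_z\Fe_1\bigr)=\eps^2\Phi(W)-\al\eps U_z\Fe_1$, which is precisely the steady system \eqref{chl}. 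So $\tilde Z(s;t,z,\Fv)=Z(s/\eps;t/\eps,z,\Fv)$ and $\tilde t_b=\eps\, t_b$ with no error term, for any $\Phi$. The force terms in the two systems carry exactly the powers of $\eps$ needed to make the correspondence exact, and the Gronwall/$T_0$-enlargement discussion in your last paragraph is therefore unnecessary. (You also describe the drift as "nonautonomous," which it is not.) None of this affects correctness — an $O(\eps^2)$ error would be harmless as you say — but it makes the argument look harder than it is, and it obscures the fact that the reduction to the steady lemma is clean.

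Your sketches for \eqref{t-cy-3}–\eqref{t-cy-4} (pointwise bound \eqref{t-CAe-lbd}, weight-ratio bound, telescoping of the exponential factors, $s$-integral $\lesssim\eps^2$, and noting that for \eqref{t-cy-3} at most one $l$ contributes) are plausible; the paper dismisses these with "can be verified similarly," so there is nothing to compare against, but your outline is consistent with the steady-case machinery used throughout Section~\ref{sec-sape}.
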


We are ready to complete
\begin{proof}[The proof of Lemmas \ref{k-cyc} and \ref{t-k-cyc}]
We only prove \eqref{cy-2} and \eqref{t-cy-2}, since \eqref{cy-1} \eqref{cy-3}, \eqref{cy-4} \eqref{t-cy-1}, \eqref{t-cy-2} \eqref{t-cy-3} and \eqref{t-cy-4} can be verified
similarly. It should be pointed out that the main strategies used here follow from Lemma 23 in \cite[pp. 781]{Guo-2010}. The new difficulty is the velocity growth caused by the difference of the velocity weights at two different time.
Let us first define, for $0<\de\ll1$, 
$$
\mathcal{V}_{l}^\de=\left\{\Fv_l\in\mathcal{V}_{l}\big||v_{l,z}|>\de,\ \textrm{and}\ \de<|\Fv_l|<\frac{1}{\de}\right\},
$$
and
$$
\ga_{\pm}^\de=\left\{(z,\Fv)\in\ga_{\pm}\big||v_{z}|>\de,\ \textrm{and}\ \de<|\Fv|<\frac{1}{\de}\right\}.
$$
Obviously, it follows that for $0<\eta_0\ll1$,
$$
\int_{\mathcal{V}_{l}\backslash \mathcal{V}_{l}^\de}e^{\frac{\eta_0}{2}|\Fv_{l}|^2}d\si_l\leq \tilde{C}_0\de,
$$
where $\tilde{C}_0>0$ is independent of $l.$ Next, for $(z_l,\Fv_l)\in\ga_+^\de $, we {\it claim} that
\begin{align}\label{tb-lbde}
|t_{l+1}-t_l|=t_b(z_l,\Fv_l)\geq C_0\de.
\end{align}
To show this, it suffices to prove for $(z_l,\Fv_l)\in\ga_+^\de$ and $0<\eps\ll1$ that
\begin{align}\label{tb-lb}
t_b(z_l,\Fv_l)\gtrsim\frac{1}{|v_{l,z}|}.
\end{align}
Note that $\frac{1}{|v_{l,z}|}\leq \frac{1}{\de}$. It therefore remains to prove the above inequality in the case of $t_b(z_l,\Fv_l)\leq \frac{1}{\de}.$ By $\eqref{chl-sol}_1$ and \eqref{ex-t}, we have
\begin{align}\label{zb1}
z_b(z_l,\Fv_l)=z_l-t_b(z_l,\Fv_l)v_{l,z}+O(1)\vps_{\Phi,\al}\eps^2t_b^2(z_l,\Fv_l),
\end{align}
which gives
\begin{align}%\label{tb-lb2}
t_b(z_l,\Fv_l)\geq |z_l-z_b(z_l,\Fv_l)||v_{l,z}|^{-1},\notag
\end{align}
provided that $0<\eps<\eps_0\ll \de.$ Consequently, \eqref{tb-lb} is valid by the fact that $|z_l-z_b(z_l,\Fv_l)|\leq2$.
Therefore \eqref{tb-lbde} is true. Furthermore, one can see that there are at most $[\frac{T_0}{C_0\de}]+1$ number of $\Fv_l\in\mathcal{V}_{l}^\de$ for $1\leq l\leq \RL-1$. Hence we get the following bound for $0<\tilde{C}_0\de<1$ and $0<\eta_0<\frac{1}{2}$:
\begin{align}\label{k-cyc-ub}
\int_{\Pi _{l=1}^{\RL-1}\mathcal{V}_{l}}&
\mathbf{1}_{\{t_{\RL}(t,z,\Fv,\Fv_{1},\Fv_{2},\cdots ,\Fv_{\RL-1})>t-T_0\}}\Pi _{l=1}^{\RL-1}e^{\frac{\eta_0}{2}|\Fv_{l}|^2}d\sigma _{l}
\notag
\\
\leq&
\sum\limits_{j=0}^{[\frac{T_0}{C_0\de}]+1}C_{\RL-1}^{j}
\left(\sup\limits_{l\in\{l_1,\cdots,l_{j}\}}\int_{\mathcal{V}_{l}^\de}e^{\frac{\eta_0}{2}|\Fv_{l}|^2}d\sigma _{l}\right)^{j}
\notag \\
& \quad \quad\times
\left(\sup\limits_{l \in \{1,\cdots,\RL-1\}\backslash\{l_1,\cdots,l_{j}\}}\int_{\mathcal{V}_{l}\backslash \mathcal{V}_{l}^\de}e^{\frac{\eta_0}{2}|\Fv_{l}|^2}d\sigma _{l}\right)^{\RL-1-j}
\notag
\\
\leq&
\sum\limits_{j=0}^{[\frac{T_0}{C_0\de}]+1}C_{\RL-1}^{j}(1-\eta_0)^{-2j}(\tilde{C}_0\de)^{\RL-1-j}
\notag\\
\leq&
\sum\limits_{j=0}^{[\frac{T_0}{C_0\de}]+1}(\RL-1)^{[\frac{T_0}{C_0\de}]+1}(1-\eta_0)^{-2[\frac{T_0}{C_0\de}]-2}
(\tilde{C}_0\de)^{\RL-2-[\frac{T_0}{C_0\de}]}
\notag\\
\leq&
\left([\frac{T_0}{C_0\de}]+1\right)\left[4(\RL-1)\right]^{[\frac{T_0}{C_0\de}]+1}(\tilde{C}_0\de)^{\RL-2-[\frac{T_0}{C_0\de}]}.
\end{align}
Next, by taking $\RL-1=N\left([\frac{T_0}{C_0\de}]+1\right)$, there exists $C_N>0$ such that the right hand side of \eqref{k-cyc-ub} can be bounded by
\begin{align}%\label{k-cyc-ub-p2}
\int_{\Pi _{l=1}^{\RL-1}\mathcal{V}_{l}}&
\mathbf{1}_{\{t_{\RL}(t,z,\Fv,\Fv_{1},\Fv_{2},\cdots ,\Fv_{\RL-1})>t-T_0\}}\Pi _{l=1}^{\RL-1}e^{\frac{\eta_0}{2}|\Fv_{l}|^2}d\sigma _{l}\notag
\\
\leq&
\left[4N\left([\frac{T_0}{C_0\de}]+1\right)^{1+\frac{1}{[\frac{T_0}{C_0\de}]+1}}(\tilde{C}_0\de)^{N-2}\right]^{[\frac{T_0}{C_0\de}]+1}
\notag
\\
\leq&
\left[4N\left([\frac{T_0}{C_0\de}]+1\right)^{2}(\tilde{C}_0\de)^{N-2}\right]^{[\frac{T_0}{C_0\de}]+1}\leq (C_NT_0^2\de^{N-4})^{[\frac{T_0}{C_0\de}]+1}.\notag
\end{align}
Then we let $C_NT_0^2\de^{N-4}=\frac{1}{2}$ and equivalently we choose 
$$
\de= (\sqrt{2C_N}T_0)^{-\frac{1}{2(N-4)}}.
$$
Finally, let $N=6$, then for $T_0$ sufficiently large, one sees that 
$$
\RL\backsim T_0^{1+\frac{1}{2(N-4)}}=T_0^{\frac{5}{4}}.
$$
Thus \eqref{cy-2} is valid.
We now turn to prove \eqref{t-cy-2}. Compared with \eqref{cy-2}, the only distinction should be the lower bound of the backward exist time $\tilde{t}_b(\Fx,\Fv)$. At this stage, corresponding to \eqref{tb-lb},  we have
\begin{align}%\label{ttb-lbde}
|\tilde{t}_{l+1}-\tilde{t}_l|=\tilde{t}_b(z_l,\Fv_l)\geq C_0\eps\de.\notag
\end{align}
Thus $\eps T_0$ is chosen so that the quantity $\RL=N\left([\frac{\eps T_0}{C_0\eps\de}]+1\right)+1$ keeps unchanged. The remaining computations for the proof of \eqref{t-cy-2} are the same as above.
 This completes the proof of Lemmas \ref{k-cyc} and \ref{t-k-cyc}.
\end{proof}

The following lemma is devoted to the trace theorem of the transport equation with shear force as well as external field in the complex variable space.

\begin{lemma}\label{ukai}
Let $\vps>0$ and $z\in[-h,h]$ with $0<h<+\infty$, and denote the near-grazing set of $\gamma_+$ or $\gamma_-$ as
\begin{equation*}
\gamma^{\varepsilon}_{\pm}\ \equiv \
\left\{(z,\Fv)\in \gamma_{\pm}: |v_{z}|\leq \varepsilon \ \text{or} \ |v_{z}|\geq \frac{1}{\varepsilon },\ \Fv=(v_x,v_y,v_{z})\right\}.
\end{equation*}%
Then, there exists a constant $C_{\varepsilon,h}>0$ depending on $\vps$ and $h$ such that
\begin{equation}\label{utrace}
\begin{split}
\sum\limits_{\bar{k}\in\Z^2}|\hat{f}^2\mathbf{1}_{\gamma_\pm\backslash\gamma^{\varepsilon }_\pm}|_{L^1}
\leq & C_{\varepsilon,h}\sum\limits_{\bar{k}\in\Z^2}\|Re(\{v_{z}\pa _{z}+\eps^2\Phi(z)\cdot\na_\Fv-\al\eps v_{z}\pa_{v_x}\}\hat{f}|\hat{f})\| _{L^1}
\\
&
+C_{\varepsilon,h}
\sum\limits_{\bar{k}\in\Z^2}\|\hat{f}^2\|_{L^1}. 
\end{split}
\end{equation}
Moreover, it also holds
\begin{align}\label{ttrace}
\sum\limits_{\bar{k}\in\Z^2}&\int_{0}^{T}|\hat{f}^2\mathbf{1}_{\gamma_+\backslash\gamma^{\varepsilon}_+}(t)|_{L^1}dt
\leq  C_{\varepsilon,h}\sum\limits_{\bar{k}\in\Z^2}\| \hat{f}^2(0)\|_{L^1}
+C_{\varepsilon,h}\sum\limits_{\bar{k}\in\Z^2}\int_{0}^{T}
\|\hat{f}^2(t)\|_{L^1} dt,
\notag\\
&
+C_{\varepsilon,h}\sum\limits_{\bar{k}\in\Z^2}\int_{0}^{T}
\|Re(\{\pa_t+v_{z}\pa _{z}+\eps^2\Phi(z)\cdot\na_\Fv-\al\eps v_{z}\pa_{v_x}\}\hat{f}|\hat{f})\|_{L^1}dt,
\end{align}
for any $T\geq0.$
\end{lemma}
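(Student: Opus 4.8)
\textbf{Proof proposal for Lemma \ref{ukai}.}

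The plan is to prove \eqref{utrace} first and then upgrade it to the time-integrated version \eqref{ttrace} by the same device applied on the slab $(0,T)\times(-h,h)\times\R^3$. For \eqref{utrace}, the starting point is the observation that for each fixed $\bar{k}\in\Z^2$ the quantity $\mathcal{L}\hat f := v_z\pa_z\hat f+\eps^2\Phi(z)\cdot\na_\Fv\hat f-\al\eps v_z\pa_{v_x}\hat f$ is, modulo the purely imaginary lower-order term $i\bar k\cdot\bar v\,\hat f$ (which does not contribute to $\mathrm{Re}(\mathcal{L}\hat f\,|\,\hat f)$ since $\mathrm{Re}(i\bar k\cdot\bar v\,|\hat f|^2)=0$), the transport part of the characteristic flow \eqref{chl}. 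Hence $\mathcal{L}\hat f$ is nothing but $\frac{d}{ds}\hat f$ along the bicharacteristics $s\mapsto(Z(s),\FV(s))$ solving \eqref{chl-sol} with $\eps$ frozen, and $2\,\mathrm{Re}(\mathcal{L}\hat f\,|\,\hat f)=\mathcal{L}|\hat f|^2$ — here one uses that $\na_\Fx\cdot(\eps^2\Phi(z))=0$ and $\pa_{v_x}(v_z)=0$ so the vector field is divergence-free in $(z,\Fv)$. So the divergence theorem on $(-h,h)\times\R^3$ applied to the divergence-free field $(v_z,\eps^2\Phi,-\al\eps v_z\Fe_1)$ acting on $|\hat f|^2$ converts the spatial integral of $\mathcal{L}|\hat f|^2$ into the boundary flux $\int_{v_z>0}|\hat f(h)|^2 v_z\,d\Fv+\int_{v_z<0}|\hat f(-h)|^2|v_z|\,d\Fv$ at $z=\pm h$, i.e. exactly $|\hat f|_{2,\pm}^2$ restricted to the appropriate pieces. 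The point of the near-grazing cutoff is that on $\gamma_\pm\setminus\gamma_\pm^\vps$ one has $\vps\le|v_z|\le 1/\vps$, which is needed to control the flux by $L^1$ norms after integrating along characteristics; this is the usual Ukai trace argument, and the constant $C_{\vps,h}$ absorbs both $1/\vps$ and the transverse length $2h$.

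Concretely, I would introduce, for $(z,\Fv)\in\gamma_+\setminus\gamma_+^\vps$, the backward exit time $t_b$ and exit position $z_b\in\{\pm h\}$ of the characteristic \eqref{chl-sol}, and write
\begin{align}
|\hat f(z_{+},\Fv)|^2 = |\hat f(Z(s_0),\FV(s_0))|^2 + \int_{s_0}^{t}\mathcal{L}|\hat f|^2\big(Z(s),\FV(s)\big)\,ds,\notag
\end{align}
for a suitable interior reference time $s_0$; then averaging the reference point over an $O(\vps)$-interval of interior times (a Fubini/averaging trick in the spirit of the classical trace theorem, using $|v_z|\ge\vps$ to guarantee that the characteristic stays inside an interior sub-slab for a time comparable to $\vps$) gives
\begin{align}
\int_{v_z>0}|\hat f(h,\Fv)|^2 v_z\,d\Fv \lesssim_{\vps,h}\ \|\hat f^2\|_{L^1}+\|\mathrm{Re}(\mathcal{L}\hat f\,|\,\hat f)\|_{L^1},\notag
\end{align}
and similarly at $z=-h$; summing over $\bar k\in\Z^2$ yields \eqref{utrace}. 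For \eqref{ttrace} one repeats the argument with the augmented vector field $(\pa_t+v_z\pa_z+\eps^2\Phi\cdot\na_\Fv-\al\eps v_z\pa_{v_x})$ on the space-time slab, which is again divergence-free in $(t,z,\Fv)$; the divergence theorem now produces both the spatial boundary flux $\int_0^T|\hat f^2\mathbf 1_{\gamma_+\setminus\gamma_+^\vps}(t)|_{L^1}dt$ and the temporal boundary terms $\|\hat f^2(0)\|_{L^1}$ and $\|\hat f^2(T)\|_{L^1}$, the latter being discarded by nonnegativity, and the remaining interior term is $\int_0^T\|\mathrm{Re}((\pa_t+\mathcal{L})\hat f\,|\,\hat f)\|_{L^1}dt$, giving \eqref{ttrace}.

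The main obstacle is the one flagged in the paper itself: because the velocity weight is absent here (we are working with $\hat f$, not $w^l\hat f$), and because along a characteristic the magnitude $|\FV(s)|$ drifts by $O(\al\eps|v_z||t-t_b|)$ and $O(\eps^2)$ relative to $|\Fv|$ (cf. \eqref{chl-sol}, \eqref{diff-w}), one must make sure that the change of variables from the boundary velocity to an interior velocity along the flow has Jacobian bounded above and below on the non-grazing set, uniformly in $\eps$ small. This is where the lower bound $|v_z|\ge\vps$ and the smallness of $\eps$ (so that the shear and force perturbations to the free transport $z-(t-s)v_z$ are lower-order, exactly as in the derivation of \eqref{Jco2}) are used decisively; once this Jacobian control is in hand, the rest is the standard Ukai trace estimate carried out frequency-by-frequency and then summed in $\bar k$, with all constants depending only on $\vps$ and $h$.
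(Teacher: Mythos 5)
Your proposal takes essentially the paper's own route: propagate $\hat f^2$ along the backward bicharacteristic of the divergence-free vector field $(v_z,\eps^2\Phi,-\al\eps v_z\Fe_1)$ (observing that $i\bar k\cdot\bar v$ drops out of $\mathrm{Re}(\cdot|\hat f)$), average over the time $t_b(z,\Fv)$ the characteristic spends in the slab, which on the non-grazing set lies in $[c\vps,C/\vps]$, and convert to an interior $L^1$ bound via the change of variables $(s,\Fv)\mapsto(Z(s),\FV(s))$ with Jacobian $\sim|v_z|$, then repeat on the space-time slab for \eqref{ttrace}, where the paper additionally splits $[0,T]$ at a small $\vps_1$ to handle backward trajectories that reach $t=0$ before the spatial boundary. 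One minor slip in your explanation: the lower bound $t_b\gtrsim h\vps$ that makes the averaging effective follows from the \emph{upper} bound $|v_z|\le 1/\vps$ on the non-grazing set, not from $|v_z|\ge\vps$ as you wrote; the latter is what keeps the Jacobian $\sim|v_z|$ bounded away from zero, as you correctly note elsewhere.
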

\begin{proof}
To prove \eqref{utrace}, we only consider the case that the boundary phase is outgoing, because the incoming case can be treated similarly. We introduce a parameter $t\in\R$ and treat $(z,\Fv)$ as functions of $t$.
Define the characteristic line $[s,Z(s;t,z,\Fv),\FV(s;t,z,\Fv)]$ passing through $(z,\Fv)=(t,z(t),\Fv(t))$ such that
\begin{align}%\label{chl}
\frac{d Z}{ds}=V_{Z},\
\frac{d \FV}{ds}=\eps^2\Phi(Z)-\al\eps V_{Z}\Fe_1,\ \FV=(V_x,V_y,V_{Z}).\notag
\end{align}
Then it follows
\begin{align}%\label{chl-sol-p2}
\left\{\begin{array}{rll}
&\dis Z(s)=z+(s-t)v_{z}+\eps^2\int_t^s\int_t^\tau\Phi_z(Z(\eta))d\eta d\tau,\\[3mm]
&\dis \FV(s)=\Fv+\eps^2\int_t^s\Phi(Z(\tau))d\tau-\al\eps v_{z}(s-t)\Fe_1\\
&\dis \qquad\qquad-\al\eps^3\int_t^s\int_t^\tau\Phi_z(Z(\eta))d\eta d\tau \Fe_1,\notag
\end{array}\right.
\end{align}
for $(z,\Fv)\in\gamma_+\backslash\gamma^{\varepsilon}_+$. Along this trajectory, one has the identity
\begin{align}\label{f-ex-ch}
\hat{f}^2(z,\Fv)=\hat{f}^2((Z,\FV)(s;t,z,\Fv))+\int_{s}^t\frac{d}{d\tau}\hat{f}^2((Z,\FV)(\tau;t,z,\Fv))d\tau.
\end{align}
Next, by taking $s\in[t-t_{\Fb}(z,\Fv),t]$, we get from \eqref{f-ex-ch} that 
\begin{align}\label{f-ex-ch-p1}
\int_{t-t_{b}(z,\Fv)}^t&\int_{\gamma_+\backslash\gamma^{\varepsilon}_+}|\hat{f}(z,\Fv)|^2|v_{z}|d\Fv ds
\\=& \int_{\gamma_+\backslash\gamma^{\varepsilon}_+}\int_{t-t_{b}(z,\Fv)}^t \hat{f}^2(Z(s;t,z,\Fv),\FV(s;t,z,\Fv))|v_{z}|dsd\Fv\notag\\
&+\int_{\gamma_+\backslash\gamma^{\varepsilon}_+}\int_{t-t_{b}(z,\Fv)}^t\int_{s}^t\frac{d}{d\tau}\hat{f}^2(Z(\tau;t,z,\Fv),\FV(\tau;t,z,\Fv))||v_{z}|d\tau ds d\Fv\notag\\
=& \int_{\gamma_+\backslash\gamma^{\varepsilon}_+}\int_{t-t_{b}(z,\Fv))}^t \hat{f}^2(Z(s;t,z,\Fv),\FV(s;t,z,\Fv))|v_{z}|dsd\Fv
\notag\\
&+\int_{\gamma_+\backslash\gamma^{\varepsilon}_+}\int_{t-t_{b}(z,\Fv)}^t\int_{s}^t \mathcal{T}_{0,\Phi}\hat{f}^2((Z,\FV)(s;t,z,\Fv))|v_{z}|d\tau ds d\Fv
\notag\\
=& \int_{\gamma_+\backslash\gamma^{\varepsilon}_+}\int_{t-t_{b}(z,\Fv)}^t \hat{f}^2(Z(s;t,z,\Fv),\FV(s;t,z,\Fv))|v_{z}|dsd\Fv
\notag\\
&+2\int_{\gamma_+\backslash\gamma^{\varepsilon}_+}\int_{t-t_{b}(z,\Fv)}^t\int_{s}^t Re\left( \mathcal{T}_{0,\Phi}\hat{f}|\hat{f}\right)|v_{z}|d\tau ds d\Fv, \notag
\end{align}
where $Re \mathcal {Z}$ denotes the real part of $\CZ$ and we have denoted the differential operator
\begin{equation}\label{def.drj.do}
    \mathcal{T}_{0,\Phi}=V_{Z}\pa_Z+\eps^2\Phi(Z)\cdot\na_\FV-\al\eps V_{Z}\pa_{V_x}.
\end{equation}

On the other hand, for $(z,\Fv)\in\gamma_+\backslash\gamma^{\varepsilon}_+$, by \eqref{tb-lbde} and \eqref{zb1}, one sees that
\begin{align}\label{tb-lub}
\vps\lesssim t_{\Fb}(z,\Fv)\lesssim\frac{1}{\vps}.
\end{align}
Thus \eqref{f-ex-ch-p1} further implies
\begin{align}\label{f-ex-ch-p2}
\int_{\gamma_+\backslash\gamma^{\varepsilon}_+}  |\hat{f}(z,\Fv)|^2|v_{z}|d\Fv 
&\lesssim \int_{\gamma_+\backslash\gamma^{\varepsilon}_+}\int_{t-t_{b}(z,\Fv))}^t \hat{f}^2((Z,\FV)(s;t,z,\Fv))|v_{z}|dsd\Fv
\notag\\ 
&\qquad+\int_{\gamma_+\backslash\gamma^{\varepsilon}_+}\int_{t-t_{b}(z,\Fv)}^t\left|Re\left( \mathcal{T}_{0,\Phi}\hat{f}|\hat{f}\right)\right||v_{z}|ds d\Fv.
\end{align}
Next, we compute the Jacobian as 
\begin{eqnarray}\label{def.matrM}
\begin{aligned} \frac{\pa(Z(s),\FV(s))}{\pa(s,\Fv)}=&\left|\begin{array} {cccc}
v_{z}+\eps^2\int_t^s\Phi_z(Z(\eta))d\eta \  \  \  & 0 \ \ \ & 0 \ \ \ &s-t\\[3mm]
\eps^2\Phi_x-\al\eps^3\int_t^s\Phi_z(Z(\eta))d\eta-\al\eps v_{z}  \  \  \  & 1 \ \ \ & 0 \ \ \ &-\al\eps(s-t)\\[3mm]
\eps^2\Phi_y \  \  \  & 0 \ \ \ & 1 \ \ \ &0\\[3mm]
\eps^2\Phi_z \  \  \  & 0 \ \ \ & 0 \ \ \ &1
\end{array} \right|\notag\\=&v_{z}+\eps^2\int_t^s\Phi_z(Z(\eta))d\eta-(s-t)\eps^2\Phi_z\\
=&v_{z}+\eps^2\int_t^s[\Phi_z(Z(\eta)-\Phi_z(Z(s))]d\eta.
\end{aligned}
\end{eqnarray}
On the other hand, using $\vps\lesssim t_{\Fb}(z,\Fv)\lesssim\frac{1}{\vps}$ again, one has for $0<\eps\ll \eps_0<\vps$,
\begin{align}
\left|\eps^2\int_t^s[\Phi_z(Z(\eta)-\Phi_z(Z(s))]d\eta\right|\leq C_\Phi\eps^2t_{\Fb}(z,\Fv)\leq C_\Phi\frac{\eps^2}{\vps}\leq \frac{\vps}{2},
\end{align}
Thus, if $(z,\Fv)\in\gamma_+\backslash\gamma^{\varepsilon}_+$ then it holds
\begin{align}
\left|\frac{\pa(Z(s),\FV(s))}{\pa(s,\Fv)}\right|\backsim |v_{z}|.\notag
\end{align}
Note that if $\Phi_{z}$ is a constant,
\begin{align}
\left|\frac{\pa(Z(s),\FV(s))}{\pa(s,\Fv)}\right|= |v_{z}|.\notag
\end{align}
By a change of variable
$$[\tilde{z},\Fu]=[Z(s;t,z,\Fv),\FV(s;t,y,\Fv)]\rightarrow(s,\Fv),$$
one gets
\begin{align}\label{bb-p1}
\int_{\gamma_+\backslash\gamma^{\varepsilon}_+}\int_{t-t_b(z,\Fv)}^t|g((Z,\FV)(s;t,z,\Fv))||v_{z}|dsd\Fv
\leq \int_{\R^3}\int_{-h}^h|g(\tilde{z},\Fu)|d\tilde{z}d\Fu.
\end{align}
for any $g\in L^1((-h,h)\times\R^3).$ Hence, \eqref{utrace} follows from \eqref{bb-p1} and \eqref{f-ex-ch-p2}.

We now turn to prove \eqref{ttrace}. For $\hat{f}^2\in L^1([T_1,T]\times[-h,h]\times\R^3)$, we first show that
\begin{align}\label{axch}
\int_{T_1}^T&\int_{\Fu\cdot n(z_f)>0}\int_{\max\{-t_{b}(z_f,\Fu),T_1-\tilde{t}\}}^0
\hat{f}^2(\tilde{t}+s,(Z,\FV)(\tilde{t}+s;\tilde{t},z_f,\Fu))|u_z|dsd\Fu d\tilde{t}\notag\\
\leq&\int_{T_1}^T\int_{-h}^h\int_{\R^3}\hat{f}^2(t,z,\Fv)dzd\Fv dt,
\end{align}
where $z_f=\pm h$, $T\geq T_1\geq0$,
\begin{align}
Z(\tilde{t}+s;\tilde{t},z_f,\Fu)=z_f+su_z+\eps^2\int_{\tilde{t}}^{\tilde{t}+s}\int_{\tilde{t}}^\tau\Phi_z(Z(\eta))d\eta d\tau,\notag
\end{align}
and
\begin{align*}
\FV(\tilde{t}+s;\tilde{t},z_f,\Fu)
=
&
\Fu+\eps^2\int_{\tilde{t}}^{\tilde{t}+s}\Phi(Z(\tau))d\tau-\al\eps v_{z}s\Fe_1
\\
& 
-\al\eps^3\int_{\tilde{t}}^{\tilde{t}+s}\int_{\tilde{t}}^\tau\Phi_z(Z(\eta))d\eta d\tau \Fe_1,
\end{align*}
with
\begin{align}
Z(\tilde{t};\tilde{t},z_f,\Fu)=z_f,\ \ \FV(\tilde{t};\tilde{t},z_f,\Fu)=\Fu=(u_x,u_y,u_z).\notag
\end{align}
Actually, given $(t,z,\Fu)\in[T_1,T]\times[-h,h]\times\R^3$, let us
denote
\begin{align}
z=Z(t;t-s,z_f,\Fu),\ \ \Fv=\FV(t;t-s,z_f,\Fu),\notag
\end{align}
for $\Fu\cdot n(z_f)>0$. It is easy to see that $0\geq s\geq -t_{b}(z_f,\Fu),$ and it is natural to require that $t-s\leq T.$
%We now compute the Jacobian determinant of the above transformation as follows
By a change of variable $(z,\Fv)\rightarrow(s,\Fu)$ and using \eqref{def.matrM},
one has
\begin{align}%\label{axch}
\int_{T_1}^T&\int_{\Fu\cdot n(z_f)>0}\int_{\max\{-t_{b}(z_f,\Fu),-(T-t)\}}^0
|\hat{f}^2(t,(Z,V)(t;t-s,z_f,\Fu))||u_z|dsd\Fu dt\notag\\
\leq&\int_{T_1}^T\int_{-h}^h\int_{\R^3}|\hat{f}^2(t,z,\Fv)|dzd\Fv dt.\label{chv-v}
\end{align}
On the other hand, if we denote $\tilde{t}=t-s$, then it follows
$s\geq T_1-\tilde{t}$ due to $t\geq T_1.$  In summary, one has
$$
s\geq\max\{-t_{b}(z_f,\Fu),T_1-\tilde{t}\}, \ T_1\leq\tilde{t}\leq T.
$$
Therefore, we have by  change of variable  $t\rightarrow\tilde{t}$ that
\begin{multline}%\label{axch}
\int_{T_1}^T\int_{\Fu\cdot n(z_f)>0}\int_{\max\{-t_{b}(z_f,\Fu),-(T-t)\}}^0
\hat{f}^2(t,(Z,V)(t;t-s,z_f,\Fu))|u_z|dsd\Fu dt\\
=\int_{T_1}^T\int_{\Fu\cdot n(z_f)>0}\int_{\max\{-t_{b}(z_f,\Fu),T_1-\tilde{t}\}}^0\\
\hat{f}^2(\tilde{t}+s,(Z,\FV)(\tilde{t}+s;\tilde{t},z_f,\Fu))|u_z|dsd\Fu d\tilde{t}.\label{cht-t}
\end{multline}
Consequently,
\eqref{chv-v} and \eqref{cht-t} imply \eqref{axch}.
In addition, it follows that
\begin{align}\label{difff}
&\hat{f}^2(t,z_f,\Fu)\notag\\
&=\hat{f}^2(t+s,Z(t+s;t,z_f,\Fu),\FV(t+s;t,z_f,\Fu))\notag
\\
&\quad+\int_{s}^0Re\left\{\frac{d}{d\tau}\hat{f}^2(t+\tau,Z(t+\tau;t,z_f,\Fu),\FV(t+\tau;t,z_f,\Fu))\right\}d\tau\notag\\
&=\hat{f}^2(t+s,Z(t+s;t,z_f,\Fu),\FV(t+s;t,z_f,\Fu))\notag\\
&\quad
+\int_{s}^0Re\left(\mathcal{T}_{1,\Phi}\hat{f}|\hat{f}\right)(t+\tau)d\tau.
\end{align}
where similar to \eqref{def.drj.do}, we have denoted the differential operator
\begin{equation}\notag
    \mathcal{T}_{1,\Phi}=\pa_\tau+V_{Z}\pa_Z+\eps^2\Phi(Z)\cdot\na_\FV-\al\eps V_{Z}\pa_{V_x}.
\end{equation}

For any $(t,z_f,\Fu)\in[\vps_1,T]\times\gamma_+\backslash\gamma^{\varepsilon}_+$ with $\vps_1>0$ to be determined later and for $0\geq s\geq\max\{-t_{b}(z_f,\Fu),\vps_1-t\}$,
we then get from \eqref{difff} and \eqref{axch} that
\begin{align}\label{bd.es1}
&\min\{h\vps,\vps_1\}\int_{\vps_1}^T\int_{\Fu\cdot n(z_f)>0}\hat{f}^2(t,z_f,\Fu)|u_z|d\Fu dt\notag\\
&\leq\int_{\vps_1}^T\int_{\Fu\cdot n(z_f)>0}\int_{\max\{-t_{b}(z_f,\Fu),-t\}}^0\notag\\
&\qquad\qquad\hat{f}^2(t+s,Z(t+s;t,z_f,\Fu),\FV(t+s;t,z_f,\Fu))|u_z|dtdsd\Fu\notag\\&
\quad+\int_{\vps_1}^T\int_{\max\{-t_{b}(z_f,\Fu),-t\}}^0\int_{\Fu\cdot n(z_f)>0}
\int_{s}^0\left|Re\left(\mathcal{T}_{1,\Phi}\hat{f}|\hat{f}\right)(t+\tau)\right||u_z|d\tau d\Fu dt\notag\\
&\leq\int_{0}^T\int_{\Fu\cdot n(z_f)>0}\int_{\max\{-t_{b}(z_f,\Fu),-t\}}^0\notag\\
&\qquad\qquad|f(t+s,Z(t+s;t,z_f,\Fu),\FV(t+s;t,z_f,\Fu))||u_3|dtdsd\Fu\notag\\&
\quad+\int_{0}^T\int_{\max\{-t_{b}(z_f,\Fu),-t\}}^0\int_{\Fu\cdot n(z_f)>0}
\int_{s}^0\left|Re\left(\mathcal{T}_{1,\Phi}\hat{f}|\hat{f}\right)(t+\tau)\right||u_z|d\tau dudt
\notag\\
&\leq\int_{0}^T\int_{-h}^h\int_{\R^3}|f(t,z,\Fu))|dtdzd\Fu\notag\\&
\quad+\int_{0}^T\int_{\max\{-t_{b}(z_f,\Fu),-t\}}^0\int_{\Fu\cdot n(z_f)>0}
\int_{s}^0\left|Re\left(\mathcal{T}_{1,\Phi}\hat{f}|\hat{f}\right)(t+\tau)\right||u_z|d\tau d\Fu dt,
\end{align}
where we have used \eqref{tb-lub} again.

Next, applying Fubini's Theorem and  using \eqref{axch} once more, one also has
\begin{align}\label{bd.es2}
\int_{0}^T&\int_{\Fu\cdot n(z_f)>0}\int_{\max\{-t_{b}(z_f,\Fu),-t\}}^0\int_{s}^0\left|Re\left(\mathcal{T}_{1,\Phi}\hat{f}|\hat{f}\right)(t+\tau)\right||u_z|d\tau d\Fu dtds\notag\\
=& \int_{0}^Tdt\int_{\Fu\cdot n(z_f)>0}d\Fu\int^{\tau}_{\max\{-t_{b}(z_f,\Fu),-t\}}ds\int_{\max\{-t_{\Fb}(z_f,\Fu),-t\}}^0 d\tau\notag\\
&\times\left|Re\left([\pa_\tau+V_{Z}\pa_Z+\eps^2\Phi(Z)\cdot\na_\FV-\al\eps V_{Z}\pa_{V_x}]\hat{f}|\hat{f}\right)(t+\tau)\right||u_z|
\notag\\
\leq&\int_{0}^Tdt\int_{\Fu\cdot n(y_f)>0}d\Fu\int_{\max\{-t_{b}(y_f,u),-t\}}^0 d\tau|\max\{-t_{\Fb}(y_f,u),-t\}|
\notag\\
&\times\left|Re\left([\pa_\tau+V_{Z}\pa_Z+\eps^2\Phi(Z)\cdot\na_\FV-\al\eps V_{Z}\pa_{V_x}]\hat{f}|\hat{f}\right)(t+\tau)\right||u_z|
\notag\\
\leq&\max\{h\vps,\vps_1\}\int_{0}^Tdt\int_{u\cdot n(y_f)>0}d\Fu\int_{\max\{-t_{\Fb}(z_f,\Fu),-t\}}^0 d\tau
\notag\\
&\times\left|Re\left([\pa_\tau+V_{Z}\pa_Z+\eps^2\Phi(Z)\cdot\na_\FV-\al\eps V_{Z}\pa_{V_x}]\hat{f}|\hat{f}\right)(t+\tau)\right||u_z|
\notag\\
\leq&\max\{h\vps,\vps_1\}\int_{0}^Tdt\int_{-h}^hdz\int_{\R^3}d\Fu \notag\\
&\qquad|Re([\pa_t+u_z\pa_z+\eps^2\Phi(z)\cdot\na_\Fu-\al u_z\pa_{u_x}]\hat{f}|\hat{f})(t,z,\Fu)|.
\end{align}
Once \eqref{bd.es1} and \eqref{bd.es2} are obtained, it remains now to compute
\begin{align*}%\label{bd.es1}
\int^{\vps_1}_0\int_{\Fu\cdot n(z_f)>0}\hat{f}^2(t,z_f,\Fu)|u_z|d\Fu dt.
\end{align*}
In fact, if we choose $\vps_1$ to be small enough so that $\vps_1\leq h\vps$, then the backward trajectory hits the initial plane first. Therefore, for $(t,z_f,\Fu)\in[0,\vps_1]\times\gamma_+\backslash\gamma^{\varepsilon}_+$, by directly using \eqref{def.matrM} and applying \eqref{axch} once again, it follows
\begin{align}%\label{0difff}
\int_0^{\vps_1}&\int_{\Fu\cdot n(z_f)>0}\hat{f}^2(t,z_f,\Fu)|u_z|d\Fu dt\notag\\
\leq&\int_0^{\vps_1}\int_{\Fu\cdot n(z_f)>0}\hat{f}^2(0,Z(0;t,z_f,\Fu),\FV(0;t,z_f,\Fu))|u_z|d\Fu dt\notag\\
&+\int_0^{\vps_1}\int_{\Fu\cdot n(z_f)>0}\int_{-t}^0\left|Re\left(\mathcal{T}_{1,\Phi}\hat{f}|\hat{f}\right)(t+\tau)\right||u_z|d\tau d\Fu dt
\notag\\
\leq&C\int_{-h}^h\int_{\R^3}\hat{f}^2(0,z,\Fu)dzd\Fu+C\int_0^{\vps_1}\int_{-h}^h\int_{\R^3}
|I(t,z,\Fu)|dzd\Fu dt,\notag
\end{align}
with
$$
I(t,z,\Fu)=Re([\pa_t+u_z\pa_z+\eps^2\Phi(z)\cdot\na_\Fu-\al u_z\pa_{u_x}]\hat{f}|\hat{f})(t,z,\Fu).
$$
The proof of Lemma \ref{ukai} is then completed.
\end{proof}

The following lemma is concerned with the integral operator $K$ given by \eqref{sp.L}, and its proof in case of the hard sphere model $(\ga=1)$ has been given by \cite[Lemma 3, pp.727]{Guo-2010}.
\begin{lemma}\label{Kop}
Let $K$ be defined as \eqref{sp.L}, then it holds that
\begin{align}%\label{K-sp}
Kf(\Fv)=K_2f(\Fv)-K_1f(\Fv)=\int_{\R^3}(\Fk_2(\Fv,\Fv_\ast)-\Fk_1(\Fv,\Fv_\ast))f(\Fv_\ast)\,d\Fv_\ast\notag
\end{align}
with
\begin{equation*}
\Fk_1(\Fv,\Fv_\ast)=\tilde{C}_1|\Fv-\Fv_\ast|^\ga e^{-\frac{|\Fv|^2+|\Fv_\ast|^{2}}{4}}, %\label{grad}
\end{equation*}
and
\begin{equation*}
\Fk_2(\Fv,\Fv_\ast)= \tilde{C}_2|\Fv-\Fv_\ast|^{-2+\ga}
e^{-\frac{1}{8}|\Fv-\Fv_\ast|^{2}-\frac{1}{8}\frac{\left||\Fv|^{2}-|\Fv_\ast|^{2}\right|^{2}}{|\Fv-\Fv_\ast|^{2}}}. %\label{grad}
\end{equation*}
Here both $\tilde{C}_1$ and $\tilde{C}_2$ are positive constants.
In addition, let
\begin{align}\label{kw-def}
\Fk(\Fv,\Fv_\ast)=\Fk_2(\Fv,\Fv_\ast)-\Fk_1(\Fv,\Fv_\ast),\
\Fk_w(\Fv,\Fv_\ast)=w^{\ell}(\Fv)\Fk(\Fv,\Fv_\ast)w^{-\ell}(\Fv_\ast)
\end{align}
with  $\ell\geq0$,
then it also holds that
\begin{equation}%\label{K-decay}
\int_{\R^3} \Fk_w(\Fv,\Fv_\ast)e^{\frac{\varepsilon|\Fv-\Fv_\ast|^2}{16}}dv_\ast\leq \frac{C}{1+|\Fv|},\notag
\end{equation}
for $\varepsilon=0$ or any $\varepsilon> 0$ small enough.

Moreover, for any $\ell\geq0$ it holds that
\begin{align}%\label{K-wif}
|w^\ell Kf|\leq C\|w^\ell f\|_{\infty}.\notag
\end{align}

\end{lemma}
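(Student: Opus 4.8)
\textbf{Proof proposal for Lemma \ref{Kop}.}

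The plan is to reduce the claim to the corresponding statement for the hard sphere model $\gamma=1$, which is established in \cite[Lemma 3]{Guo-2010}, by tracking how the exponent $\gamma\in[0,1]$ enters the collision kernel. First I would recall from \eqref{Q-op} and \eqref{sp.L} that $Kf=K_2f-K_1f$ where $K_1$ comes from the loss part $Q_{\mathrm{loss}}(\mu^{1/2}f,\mu)$ and $K_2$ from the gain part $Q_{\mathrm{gain}}(\mu,\mu^{1/2}f)+Q_{\mathrm{gain}}(\mu^{1/2}f,\mu)$. The loss term is immediate: using the explicit form $B_0=|\Fv-\Fv_\ast|^\gamma b_0(\cos\theta)$ together with $\int_{\S^2_+}b_0(\cos\theta)\,d\om\lesssim 1$, one gets directly $\Fk_1(\Fv,\Fv_\ast)=\tilde C_1|\Fv-\Fv_\ast|^\gamma\mu^{1/2}(\Fv)\mu^{1/2}(\Fv_\ast)$, giving exactly the stated bound $0\leq \Fk_1(\Fv,\Fv_\ast)\leq \tilde c_1|\Fv-\Fv_\ast|^\gamma e^{-\frac14(|\Fv|^2+|\Fv_\ast|^2)}$. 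For the gain term one performs the standard Carleman-type change of variables (as in Grad's classical computation, see also \cite{Guo-2010}) to write the kernel as an integral over the plane perpendicular to $\Fv-\Fv_\ast$; the outcome is a kernel of the form $\Fk_2(\Fv,\Fv_\ast)\lesssim |\Fv-\Fv_\ast|^{-2+\gamma}\exp\{-\tfrac18|\Fv-\Fv_\ast|^2-\tfrac18\tfrac{||\Fv|^2-|\Fv_\ast|^2|^2}{|\Fv-\Fv_\ast|^2}\}$, where the $|\Fv-\Fv_\ast|^{\gamma}$ factor is inherited from the kinetic factor in $B_0$ and the $|\Fv-\Fv_\ast|^{-2}$ singularity is produced by the Jacobian of the change of variables. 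The assumption $b_0(\cos\theta)\leq C\cos\theta$ is used here to absorb the angular weight and keep the exponent of the Gaussian uniform.

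Next I would verify the weighted integral bound. Writing $\Fk_w(\Fv,\Fv_\ast)=w^\ell(\Fv)\Fk(\Fv,\Fv_\ast)w^{-\ell}(\Fv_\ast)$, one estimates $\int_{\R^3}\Fk_w(\Fv,\Fv_\ast)e^{\vps|\Fv-\Fv_\ast|^2/16}\,d\Fv_\ast$ separately for $\Fk_1$ and $\Fk_2$. For the $\Fk_1$ part the Gaussian $e^{-\frac14(|\Fv|^2+|\Fv_\ast|^2)}$ dominates both the polynomial weight ratio $w^\ell(\Fv)w^{-\ell}(\Fv_\ast)\lesssim (1+|\Fv-\Fv_\ast|)^{|\ell|}$ and the factor $e^{\vps|\Fv-\Fv_\ast|^2/16}$ provided $\vps$ is small enough; this yields a bound that decays faster than any polynomial in $|\Fv|$, in particular $\lesssim (1+|\Fv|)^{-1}$. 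For the $\Fk_2$ part, one splits the $\Fv_\ast$-integral into the near region $|\Fv-\Fv_\ast|\leq 1$, where the singularity $|\Fv-\Fv_\ast|^{-2+\gamma}$ is integrable since $-2+\gamma>-3$, and the far region $|\Fv-\Fv_\ast|\geq 1$, where the Gaussian decay $e^{-\frac18|\Fv-\Fv_\ast|^2}$ handles both the polynomial weight and $e^{\vps|\Fv-\Fv_\ast|^2/16}$ for $\vps<2$; for the $(1+|\Fv|)^{-1}$ gain one uses the second exponential factor $e^{-\frac18||\Fv|^2-|\Fv_\ast|^2|^2/|\Fv-\Fv_\ast|^2}$ combined with a dyadic decomposition in $|\Fv-\Fv_\ast|$, exactly as in the hard sphere case; the exponent $\gamma\in[0,1]$ only affects the integrable singularity at the origin and otherwise plays no role. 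Finally, the pointwise bound $|w^\ell Kf|\leq C\|w^\ell f\|_\infty$ is a direct consequence: $|w^\ell(\Fv)Kf(\Fv)|\leq \int \Fk_w(\Fv,\Fv_\ast)|w^\ell(\Fv_\ast)f(\Fv_\ast)|\,d\Fv_\ast\leq \|w^\ell f\|_\infty\int\Fk_w(\Fv,\Fv_\ast)\,d\Fv_\ast\leq \tfrac{C}{1+|\Fv|}\|w^\ell f\|_\infty\leq C\|w^\ell f\|_\infty$.

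The main obstacle I expect is bookkeeping the change of variables for $\Fk_2$ so that the two Gaussian factors appear with the correct, uniform-in-$\gamma$ exponents while the kinetic factor degrades precisely to $|\Fv-\Fv_\ast|^{-2+\gamma}$; this is the only place where the argument genuinely differs from the $\gamma=1$ reference and where the hypothesis $b_0(\cos\theta)\leq C\cos\theta$ must be invoked carefully to control the Jacobian and the angular average simultaneously. Everything downstream — the weighted integral estimate and the $L^\infty$ bound — then follows by the dyadic-decomposition argument of \cite[Lemma 3]{Guo-2010} with only cosmetic changes, so I would state those parts briefly and refer to the cited proof for the routine details.
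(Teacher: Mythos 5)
Your proposal is correct and takes exactly the approach the paper implicitly relies on: the paper itself gives no proof but merely states that the hard-sphere case $\gamma=1$ appears in \cite[Lemma 3]{Guo-2010}, leaving the general $\gamma\in[0,1]$ case as a straightforward modification. Your sketch — isolating $\Fk_1$ from the loss term, applying the Carleman/Grad reduction to the gain terms to produce the $|\Fv-\Fv_\ast|^{-2+\gamma}$ singularity with the two Gaussian factors, then handling the weighted integral by splitting the singular and far regions and using the second Gaussian to extract the $(1+|\Fv|)^{-1}$ decay — is precisely the classical argument being referenced, and your pointwise $L^\infty$ bound follows directly as you indicate.
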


For the velocity weighted derivative estimates on the nonlinear operator $\Ga$, one has
\begin{lemma}\label{Ga}
Let $0\leq \ga\leq 1$ and $\ta\in[0,1]$. For any $p\in[1,+\infty]$ and any $\ell\geq0$, it holds that
\begin{align}\label{es1.Ga}
\|w^\ell\nu^{-\ta}\Ga(f,g)\|_{L_\Fv^p}\leq C
\left\{\|w^\ell\nu^{1-\ta}f\|_{L_\Fv^p}\|g\|_{L_\Fv^p}+\|f\|_{L_\Fv^p}
\|w^\ell\nu^{1-\ta} g\|_{L_\Fv^p}\right\}.
\end{align}

\end{lemma}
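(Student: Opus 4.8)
The statement to prove is Lemma~\ref{Ga}, the weighted $L^p_\Fv$ estimate for $\nu^{-\theta}\Ga(f,g)$ with polynomial weight $w^\ell$. This is a classical-type estimate for the cutoff Boltzmann nonlinearity, so the plan is to reduce it to pointwise kernel bounds for the bilinear operator and then apply H\"older/Minkowski in the velocity variable.

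\textbf{The plan.} First I would recall the explicit form of $\Ga(f,g)=\mu^{-1/2}Q(\sqrt{\mu}f,\sqrt{\mu}g)$ and split it into gain and loss parts. The loss part is essentially multiplication by a function comparable to $\nu(\Fv)\sim(1+|\Fv|)^\ga$ (after absorbing the $\sqrt{\mu}$ factors), so $w^\ell\nu^{-\ta}\Ga_{\rm loss}(f,g)$ is pointwise bounded by $C w^\ell\nu^{1-\ta}(\Fv)|f(\Fv)|\cdot(\text{an } L^1_\Fv\text{-type or }L^p\text{-type factor of }g)$; here one uses that $\int B_0(\Fv-\Fv_\ast,\om)\mu^{1/2}(\Fv_\ast)\,d\Fv_\ast d\om$ produces rapid Gaussian decay in $\Fv_\ast$ so $|g(\Fv_\ast)|$ is controlled by $\|g\|_{L^p_\Fv}$ against an $L^{p'}$ Gaussian weight, giving the term $\|w^\ell\nu^{1-\ta}f\|_{L^p_\Fv}\|g\|_{L^p_\Fv}$. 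For the gain part, I would change variables to the $\sigma$ (or $\om$) representation and use the standard pointwise bound: after distributing the weight via $w^\ell(\Fv)\lesssim w^\ell(\Fv')w^\ell(\Fv'_\ast)$ (consistent with the collision relations $|\Fv|^2\le |\Fv'|^2+|\Fv'_\ast|^2$ and Gaussian factors), one obtains a kernel operator with a kernel enjoying both local integrability (from $|\Fv-\Fv_\ast|^{\ga}b_0$, using $\ga\le 1$ and $b_0\lesssim\cos\theta$) and Gaussian decay at infinity — this is exactly of the Grad/Carleman type handled by Lemma~\ref{Kop}-style estimates.

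\textbf{Key steps, in order.} (1) Write $\nu^{-\ta}\Ga=\nu^{-\ta}\Ga_{\rm gain}-\nu^{-\ta}\Ga_{\rm loss}$ and dispose of the loss term as above, which already yields both summands on the right of \eqref{es1.Ga} with $\ta$ appearing only through the prefactor $\nu^{1-\ta}$. (2) For the gain term, symmetrize the weight: use $w^\ell(\Fv)\le C\,w^\ell(\Fv')w^\ell(\Fv'_\ast)$ and $\mu^{1/2}(\Fv')\mu^{1/2}(\Fv'_\ast)=\mu^{1/2}(\Fv)\mu^{1/2}(\Fv_\ast)$, so that $w^\ell\nu^{-\ta}\Ga_{\rm gain}(f,g)$ is pointwise dominated by an integral of $B_0\,\mu^{1/4}(\Fv_\ast)\,|w^\ell f(\Fv')|\,|w^\ell g(\Fv'_\ast)|$ (keeping a spare $\mu^{1/4}$ for integrability and weight-loss control, and noting $\nu^{-\ta}\nu^{1-\ta}\le \nu$ so one $\nu^{1-\ta}$ can be assigned to either argument). (3) Apply the generalized Minkowski inequality in $\Fv_\ast$ and then, for fixed $\Fv_\ast$, H\"older in the remaining $\om$-integral (or a Carleman-type change of variables turning the gain integral into a kernel form) to bound the $L^p_\Fv$ norm by $\|w^\ell\nu^{1-\ta}f\|_{L^p_\Fv}\|g\|_{L^p_\Fv}+\|f\|_{L^p_\Fv}\|w^\ell\nu^{1-\ta}g\|_{L^p_\Fv}$; the case $p=\infty$ is immediate and $p=1$ uses Fubini, with $1<p<\infty$ interpolating or handled directly via H\"older with the Gaussian. (4) Collect the gain and loss contributions.

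\textbf{Main obstacle.} The only genuinely delicate point is handling the gain term for general $p\in[1,\infty]$ uniformly: the Carleman representation is cleanest for $L^\infty$ and $L^2$, while for other $p$ one must be careful that the change of variables $\om\mapsto\Fv'$ (or $\Fv_\ast\mapsto\Fv'_\ast$) produces a kernel that is bounded in $L^1_{\Fv'}$ \emph{uniformly in} $\Fv$ and also bounded in $L^1_{\Fv}$ uniformly in $\Fv'$ — a Schur-test situation — so that interpolation gives $L^p\to L^p$. The singularity $|\Fv-\Fv_\ast|^{\ga-2}$-type weight from the Jacobian is integrable precisely because $\ga\le 1$ (so $\ga-2>-3$ locally) and is tamed at infinity by the Gaussian $\mu^{1/4}$; verifying this uniform Schur bound, together with the bookkeeping of which copy of $\nu^{1-\ta}$ lands on $f$ versus $g$, is where the real work lies, but it is entirely standard and parallels the proof of Lemma~\ref{Kop}.
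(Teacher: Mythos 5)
Since the paper states Lemma \ref{Ga} without proof (treating it as a standard nonlinear collision estimate), you are not being compared against the authors' argument; the question is whether your outline actually establishes \eqref{es1.Ga}.

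Your framework — split into gain and loss, treat the loss term pointwise with H\"older, handle the gain term via weight symmetrization followed by a Carleman/Schur argument — is the right one, and the loss-term estimate works as you sketch. (Only a relabelling remark: with the paper's convention $Q_{\rm loss}(F_1,F_2)=\int B_0\,F_1(\Fv_\ast)F_2(\Fv)\,d\Fv_\ast d\om$ one has $\Ga_{\rm loss}(f,g)(\Fv)=g(\Fv)\int B_0\sqrt{\mu}(\Fv_\ast)f(\Fv_\ast)\,d\Fv_\ast d\om$, so the pointwise factor is $g$ and the integrated one is $f$, and in the gain term $f$ is evaluated at $\Fv'_\ast$, $g$ at $\Fv'$; you have these swapped, but since \eqref{es1.Ga} carries both terms the conclusion is unaffected.)

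There is, however, a genuine gap in your treatment of the gain term. You distribute the polynomial weight with the \emph{product} bound $w^\ell(\Fv)\lesssim w^\ell(\Fv')w^\ell(\Fv'_\ast)$, which places a copy of $w^\ell$ on \emph{both} post-collisional arguments. After that, no amount of Minkowski/H\"older/Schur can remove the extra $w^\ell$, so the best conclusion available from your steps is of the form $\|w^\ell\nu^{1-\ta}f\|_{L^p}\|w^\ell g\|_{L^p}+\|w^\ell f\|_{L^p}\|w^\ell\nu^{1-\ta}g\|_{L^p}$, which is strictly weaker than \eqref{es1.Ga}. The loss is quantitative: if $f,g$ are concentrated near $|\Fv|\sim R$ then the weighted gain term and the right side of \eqref{es1.Ga} both scale like $R^{2\ell+\ga(1-\ta)}$, while your bound over-counts by an extra factor $R^{2\ell}$. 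The correct device is the \emph{maximum} bound. From $|\Fv|^2+|\Fv_\ast|^2=|\Fv'|^2+|\Fv'_\ast|^2$ one has $1+|\Fv|^2\leq 1+|\Fv'|^2+|\Fv'_\ast|^2\leq 2\max\{1+|\Fv'|^2,\,1+|\Fv'_\ast|^2\}$, hence $w^\ell(\Fv)\leq 2^\ell\max\{w^\ell(\Fv'),w^\ell(\Fv'_\ast)\}$, and similarly $\nu^{1-\ta}(\Fv)\lesssim\max\{\nu^{1-\ta}(\Fv'),\nu^{1-\ta}(\Fv'_\ast)\}$ and $|\Fv-\Fv_\ast|^\ga=|\Fv'-\Fv'_\ast|^\ga\lesssim\max\{\nu(\Fv'),\nu(\Fv'_\ast)\}$. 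Splitting the $(\Fv_\ast,\om)$-integral into the two regions $\{|\Fv'|\geq|\Fv'_\ast|\}$ and $\{|\Fv'|<|\Fv'_\ast|\}$ then sends the entire weight $w^\ell\nu^{1-\ta}$ onto exactly one of $f$ or $g$ in each region, with the other factor left bare — precisely the two terms of \eqref{es1.Ga}. Your parenthetical remark that ``one $\nu^{1-\ta}$ can be assigned to either argument'' gestures at this dichotomy, but it must be made explicit and, crucially, applied to $w^\ell$ as well, rather than combined with the multiplicative bound. Once the weight is correctly localized, the residual $b_0\,\mu^{1/4}(\Fv_\ast)$ kernel is handled by the Minkowski/Carleman/Schur scheme you describe, with bilinear interpolation between the $p=1$ (Fubini plus the pre-post change of variables) and $p=\infty$ endpoints closing the general $p\in[1,\infty]$ case.
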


The following lemma is concerned with  coercivity estimates for the linear collision operator $L$.

\begin{lemma}\label{es-L}
Let $0\leq \ga\leq1$, then there is a constant $\de_0>0$ such that
\begin{align}%\label{bL}
\lag Lf,f\rag=\lag L\FP_1f,\FP_1f\rag\geq\de_0\|\FP_1f\|_\nu^2,\notag
\end{align}
where $\|\cdot\|_\nu=\|\nu^{\frac{1}{2}}\cdot\|.$
\end{lemma}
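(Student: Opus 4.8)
The statement to prove is the coercivity (spectral gap) estimate for the linearized Boltzmann collision operator $L$ under the angular cutoff assumption for hard potentials and Maxwell molecules $\gamma\in[0,1]$:
\[
\lag Lf,f\rag=\lag L\FP_1 f,\FP_1 f\rag\geq \de_0\|\FP_1 f\|_\nu^2.
\]
Here I am using $\FP_1$ to denote the $L^2_\Fv$-orthogonal projection onto $\ker L=\mathrm{span}\{1,\Fv,|\Fv|^2\}\sqrt{\mu}$ (the same object as $\FP$ in the earlier notation), and $\{\FI-\FP_1\}$ its complement.

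\textbf{Plan of proof.} The plan is to invoke the classical Hilbert-space structure of $L$ together with the well-known decomposition $L=\nu-K$ from \eqref{sp.L}. First I would record the three standard facts: (i) $L$ is self-adjoint and nonnegative on $L^2_\Fv$ with $\ker L=\mathrm{span}\{1,\Fv,|\Fv|^2\}\sqrt\mu$; (ii) $L=\nu(\Fv)-K$ where $\nu(\Fv)\sim(1+|\Fv|)^\gamma$ as stated just after \eqref{CK-def}, and $K$ is the integral operator with kernel $\Fk=\Fk_2-\Fk_1$ bounded as in Lemma \ref{Kop}; (iii) $K$ is $\nu$-compact, i.e. $\nu^{-1/2}K\nu^{-1/2}$ is a compact operator on $L^2_\Fv$ — this follows from the kernel bounds in Lemma \ref{Kop} by a Hilbert–Schmidt / approximation argument (truncate the kernel to $|\Fv|,|\Fv_\ast|\le N$ and $|\Fv-\Fv_\ast|\ge 1/N$ to get a Hilbert–Schmidt piece, and control the remainder in operator norm using $\int \Fk_w(\Fv,\Fv_\ast)\,d\Fv_\ast\le C/(1+|\Fv|)$). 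Since $\lag Lf,f\rag$ depends only on $\{\FI-\FP_1\}f$ (because $\ker L\perp \mathrm{Ran}\,L$ and $L\FP_1f=0$), it suffices to prove the bound for $f\in(\ker L)^\perp$.

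\textbf{Key steps.} With those preliminaries in place, the argument proceeds by the standard Weyl/compactness dichotomy. Write, for $f\perp\ker L$,
\[
\lag Lf,f\rag=\|\nu^{1/2}f\|_2^2-\lag Kf,f\rag=\|f\|_\nu^2-\lag \nu^{-1/2}K\nu^{-1/2}(\nu^{1/2}f),\nu^{1/2}f\rag.
\]
Set $g=\nu^{1/2}f$ and $A=\nu^{-1/2}K\nu^{-1/2}$, a bounded self-adjoint compact operator; then $\lag Lf,f\rag=\|g\|_2^2-\lag Ag,g\rag$. One then argues that the bottom of the spectrum of $I-A$ restricted to $\nu^{1/2}(\ker L)^\perp$ is strictly positive: if not, by compactness there is a normalized sequence $g_n$ with $\lag (I-A)g_n,g_n\rag\to 0$, a weak limit $g_0$, and (using compactness of $A$) strong $L^2$ convergence $g_n\to g_0$ with $\|g_0\|=1$ and $(I-A)g_0=0$, i.e. $L(\nu^{-1/2}g_0)=0$, so $\nu^{-1/2}g_0\in\ker L$; but $g_n\in\nu^{1/2}(\ker L)^\perp$ — and one must check this subspace is closed and passes to the limit — forcing $\nu^{-1/2}g_0\perp\ker L$ as well, hence $g_0=0$, contradicting $\|g_0\|=1$. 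This yields a constant $\de_0>0$ with $\|g\|_2^2-\lag Ag,g\rag\ge\de_0\|g\|_2^2$ on that subspace, which is exactly $\lag Lf,f\rag\ge\de_0\|f\|_\nu^2$ for $f\perp\ker L$, and then for general $f$ after replacing $f$ by $\{\FI-\FP_1\}f$.

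\textbf{Main obstacle.} The routine but genuinely substantive point is the $\nu$-compactness of $K$, i.e. compactness of $A=\nu^{-1/2}K\nu^{-1/2}$: the kernel $\Fk_2$ has the mild singularity $|\Fv-\Fv_\ast|^{-2+\gamma}$ near the diagonal, so $A$ is not literally Hilbert–Schmidt without a truncation, and one needs the two-parameter cutoff (on large velocities and on the diagonal) from the proof of \eqref{kw-M}-type estimates together with the decay $\int\Fk_w\,d\Fv_\ast\le C/(1+|\Fv|)$ of Lemma \ref{Kop} to write $A=A_{\rm HS}+A_{\rm small}$ with $A_{\rm HS}$ Hilbert–Schmidt and $\|A_{\rm small}\|\to 0$, whence $A$ is a norm-limit of compact operators and hence compact. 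A secondary care point is the bookkeeping of $\ker L$ versus $\nu^{1/2}\ker L$: one should verify that $\nu^{1/2}(\ker L)^\perp$ is a closed subspace of $L^2_\Fv$ (true since $\nu^{\pm1/2}$ are bounded multiplication operators with bounded inverse on the relevant weighted spaces — here $\nu$ and $\nu^{-1}$ are both bounded since $\gamma\ge 0$ and $\nu\sim(1+|\Fv|)^\gamma$ is bounded below, but $\nu$ itself is unbounded, so one works with the form domain and notes $\ker L$ consists of rapidly decaying functions on which all the multiplications are harmless) and that the constraint $g_n\perp\nu^{1/2}\ker L$ survives the strong limit. Everything else — self-adjointness, $L\ge0$, the identification of $\ker L$, and the identity $\lag Lf,f\rag=\lag L\FP_1f,\FP_1f\rag=\lag L\{\FI-\FP_1\}f,\{\FI-\FP_1\}f\rag$ — is standard and can be cited or dispatched in a line.
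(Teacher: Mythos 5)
The paper itself states Lemma \ref{es-L} without proof, treating it as a classical fact about the cutoff linearized Boltzmann operator, so there is no paper argument to compare against; I'll evaluate your proposal on its own.

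Your proof is, in substance, the standard Weyl-sequence / compactness argument and it does yield the correct statement, but you have misidentified the projection $\FP_1$. You write that $\FP_1$ is ``the same object as $\FP$ in the earlier notation,'' i.e.\ the orthogonal projection onto $\ker L$. This cannot be the intended meaning: if $\FP_1=\FP$, then $L\FP_1 f=0$, so the middle term of the asserted chain $\lag Lf,f\rag=\lag L\FP_1f,\FP_1f\rag\geq\de_0\|\FP_1 f\|_\nu^2$ would vanish and the inequality would force $\FP f=0$ for all $f$, which is absurd. The only consistent reading is $\FP_1=\{\FI-\FP\}$, the microscopic projection, so that the identity $\lag Lf,f\rag=\lag L\{\FI-\FP\}f,\{\FI-\FP\}f\rag$ holds by self-adjointness of $L$ together with $L\FP=0$, and the inequality $\geq\de_0\|\{\FI-\FP\}f\|_\nu^2$ is the spectral gap. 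Note also that your sentence claiming $\lag L\FP_1 f,\FP_1 f\rag=\lag L\{\FI-\FP_1\}f,\{\FI-\FP_1\}f\rag$ as an ``identity'' cannot be right for the same reason, whichever projection $\FP_1$ denotes. What your argument actually establishes is $\lag Lf,f\rag\geq\de_0\|\{\FI-\FP\}f\|_\nu^2$, which is exactly the lemma once $\FP_1$ is read correctly, so this is a labeling error rather than a mathematical one.

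The technical core you identify --- $L=\nu-K$, compactness of $A=\nu^{-1/2}K\nu^{-1/2}$ via a diagonal/large-velocity truncation of the kernel using the bounds in Lemma \ref{Kop}, and then the contradiction with a normalized minimizing sequence $g_n$ converging strongly (by compactness of $A$) to a nonzero $g_0$ with $(I-A)g_0=0$ and $\nu^{-1/2}g_0\perp\ker L$ --- is the correct and standard route (Grad/Hilbert). Your caveat about checking that the constraint $g_n\perp\nu^{1/2}\ker L$ passes to the limit and that $\nu^{1/2}\ker L$ is a finite-dimensional (hence closed) subspace is the right thing to verify; since $\ker L$ consists of rapidly decaying functions, $\nu^{1/2}$ acts harmlessly on it and the constraint is preserved under strong $L^2$ convergence. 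The proof would be sound once the $\FP_1$ identification is corrected.
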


In the case of $0\leq\ga\leq1$, the following lemma with $\vth=0$ which can be found in
\cite[Proposition 3.1, pp.397]{AEP-87} enables us to gain the smallness property of $\CK$ defined as \eqref{CK-def} at large velocity.
\begin{lemma}\label{g-ck}
Let $0\leq\ga\leq1$, $\ell>4$, then there exists a function $\varsigma(\ell)$ which satisfies $\varsigma(\ell)\rightarrow0$ as $\ell\rightarrow+\infty$
such that
\begin{align}%\label{Q-lf2}
w^\ell\{| &Q_{\rm{loss}}(f,g)|+|Q_{\rm{gain}}(f,g)|+|Q_{\rm{gain}}(g,f)|\}\notag\\
\leq& C
\|w^\ell f\|_{\infty}\{C(\ell)\|w^{\ell+\ga/2}g\|_{\infty}
+\varsigma(\ell)\|w^{3}g\|_{\infty}(1+|\Fv|)^\ga\}.\notag
\end{align}
%where $Q_{\textrm{loss}}$ denotes the negative part of $Q$ in \eqref{Q-op}.
\end{lemma}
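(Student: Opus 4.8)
The plan is to obtain Lemma~\ref{g-ck} as a restatement of \cite[Proposition~3.1]{AEP-87} with $\vth=0$, adapted to the angular-cutoff hard-potential kernel $B_0(\Fv-\Fv_\ast,\om)=|\Fv-\Fv_\ast|^\ga b_0(\cos\ta)$, $0\le b_0(\cos\ta)\le C\cos\ta$, $\ga\in[0,1]$; below I sketch the steps rather than grind through the collision-integral estimates. The loss term is immediate: since $|\Fv-\Fv_\ast|^\ga\lesssim(1+|\Fv|)^\ga(1+|\Fv_\ast|)^\ga$ and $\int_{\R^3}(1+|\Fv_\ast|)^\ga w^{-3}(\Fv_\ast)\,d\Fv_\ast<\infty$ for $\ga\le 1$, one has $w^\ell(\Fv)|Q_{\rm{loss}}(f,g)|=w^\ell(\Fv)|f(\Fv)|\,\nu_g(\Fv)\lesssim\|w^\ell f\|_{\infty}(1+|\Fv|)^\ga\|w^3 g\|_{\infty}$, which is already of the second type on the right-hand side (so one may take the coefficient there to be $\varsigma(\ell)$, since in fact it is bounded by a constant). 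Thus everything reduces to the two gain contributions $Q_{\rm{gain}}(f,g)$ and $Q_{\rm{gain}}(g,f)$, which are symmetric after relabelling, so it suffices to treat $Q_{\rm{gain}}(f,g)$.

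For the gain term I would first exploit the energy identity $|\Fv'|^2+|\Fv'_\ast|^2=|\Fv|^2+|\Fv_\ast|^2$ coming from \eqref{pp-ve}, which gives the weight-gain inequality $w^\ell(\Fv)\le w^\ell(\Fv')\,w^\ell(\Fv'_\ast)$; placing $w^\ell(\Fv'_\ast)$ on $f$ reduces matters to bounding $\int_{\R^3\times\S^2_+}|\Fv-\Fv_\ast|^\ga b_0(\cos\ta)\,w^\ell(\Fv')|g(\Fv')|\,d\Fv_\ast d\om$ uniformly in $\Fv$. Then I would split the collision integral into a region where full weight-gain on $g$ is available and its complement. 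On the first region the outgoing velocities are comparable to $\Fv$, so after the standard Carleman-type change of variables $\Fv_\ast\mapsto\Fv'$ (which converts $|\Fv-\Fv_\ast|^\ga b_0(\cos\ta)\,d\Fv_\ast d\om$ into a kernel $\lesssim|\Fv-\Fu|^{\ga-2}$ supported on a bounded ball about $\Fv$, integrable since $\ga<2$) one absorbs one power of the collision weight into $w^{\ga/2}(\Fv')$, producing the bound $C(\ell)\|w^{\ell+\ga/2}g\|_{\infty}$ with $C(\ell)$ finite but possibly growing in $\ell$. On the complementary region, where the outgoing velocity carrying the small weight can stay bounded while $\Fv_\ast$ (hence $\Fv'_\ast$) is large, one keeps the weight on $f$, uses $|f(\Fv'_\ast)|\le\|w^\ell f\|_{\infty}w^{-\ell}(\Fv'_\ast)$ and $|g(\Fv')|\le\|w^3 g\|_{\infty}$, and bounds the remaining integral by means of the pointwise bound $b_0(\cos\ta)\le C\cos\ta=C|\Fv-\Fv_\ast|^{-1}((\Fv_\ast-\Fv)\cdot\om)$ together with a dyadic decomposition in $|\Fv_\ast|$; this produces a constant carrying a negative power of $\ell$ and at most one factor $(1+|\Fv|)^\ga$, i.e.\ the term $\varsigma(\ell)\|w^3 g\|_{\infty}(1+|\Fv|)^\ga$ with $\varsigma(\ell)\to 0$ as $\ell\to\infty$.

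The main obstacle is exactly the second region: one must argue that in $Q_{\rm{gain}}$ a large value of $|\Fv_\ast|$ always comes with enough decay --- jointly from $w^{-\ell}(\Fv'_\ast)$ and from the cutoff $b_0(\cos\ta)\le C\cos\ta$, which suppresses the grazing directions responsible for keeping one outgoing velocity small while the other is large --- so that the constant multiplying $\|w^3 g\|_{\infty}(1+|\Fv|)^\ga$ can be made arbitrarily small by enlarging $\ell$, and simultaneously that no more than $w^{\ell}$ is ever needed on $f$. Once this bookkeeping is settled, combining the two regions for both orderings of the gain term with the loss estimate yields the claimed inequality, with $\varsigma(\ell)$ defined as the explicit $\ell$-decaying constant produced in the far region. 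I would finally remark, as in \cite{AEP-87}, that the endpoint weight $w^3$ enters only through the finiteness of $\int_{\R^3}(1+|\Fv_\ast|)^\ga w^{-3}(\Fv_\ast)\,d\Fv_\ast$, so it is not sharp and could be replaced by any fixed $w^{3+\de}$.
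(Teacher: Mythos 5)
The paper supplies no proof of this lemma; it is imported verbatim from \cite[Proposition~3.1]{AEP-87} with $\vth=0$, which is also your point of departure, and the dichotomy you set up for the gain terms (weight-gain from energy conservation, a near region where all the weight can be loaded onto $g$ at the cost of a constant $C(\ell)$, and a far region where $w^{-\ell}(\Fv'_\ast)$ supplies the $\varsigma(\ell)\to0$) is the right mechanism. However, your handling of the loss term is wrong. Under the paper's convention \eqref{Q-op}, $Q_{\rm{loss}}(f,g)(\Fv)=g(\Fv)\int B_0(\Fv-\Fv_\ast,\om)f(\Fv_\ast)\,d\Fv_\ast\,d\om$, so the weight $w^\ell(\Fv)$ lands on $g$, not on $f$: using $\int|\Fv-\Fv_\ast|^\ga b_0\,w^{-\ell}(\Fv_\ast)\,d\Fv_\ast\,d\om\lesssim(1+|\Fv|)^\ga$ for $\ell>(3+\ga)/2$ together with $w^\ell(\Fv)(1+|\Fv|)^\ga\lesssim w^{\ell+\ga/2}(\Fv)$, one obtains
\begin{equation*}
w^\ell(\Fv)|Q_{\rm{loss}}(f,g)|\lesssim \|w^\ell f\|_\infty\,w^{\ell+\ga/2}(\Fv)|g(\Fv)|\le \|w^\ell f\|_\infty\,\|w^{\ell+\ga/2}g\|_\infty,
\end{equation*}
which belongs to the \emph{first} term $C(\ell)\|w^{\ell+\ga/2}g\|_\infty$ on the right-hand side, not to the $\varsigma(\ell)$-term. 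You instead wrote $Q_{\rm{loss}}(f,g)=f(\Fv)\nu_g(\Fv)$, which in the paper's notation is $Q_{\rm{loss}}(g,f)$; the bound you then derive, $C\|w^\ell f\|_\infty\|w^3 g\|_\infty(1+|\Fv|)^\ga$, has a \emph{fixed} constant $C$, and one cannot ``take the coefficient there to be $\varsigma(\ell)$'' when that coefficient does not tend to zero -- $\varsigma(\ell)$ must be a single function of $\ell$ tending to $0$ that makes the stated inequality hold for all $f,g,\Fv$. As written, this would in fact destroy the smallness needed in the downstream Lemma~\ref{g-ck-lem}. The correct placement of the loss contribution is the one displayed above.

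A secondary imprecision: the claim that a Carleman change of variables turns the gain kernel into ``a kernel $\lesssim|\Fv-\Fu|^{\ga-2}$ supported on a bounded ball about $\Fv$'' is not right as stated; $\Fv'$ is not confined to a ball around $\Fv$ (a large companion velocity $\Fv_\ast$ sends $\Fv'$ arbitrarily far), and what actually makes the Carleman kernel integrable at infinity is Gaussian decay, as in the paper's bound for $\Fk_2$. This does not change the structure of the dichotomy, but the near-region estimate cannot rest on a compact-support assertion. Beyond these points, the overall scheme is consistent with the argument in \cite{AEP-87}.
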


The following result, which has been proved in \cite{DL-2022}  is a direct consequence of Lemma \ref{g-ck}.

\begin{lemma}\label{g-ck-lem}
Let $0\leq\ga\leq1$, then there is a constant $C>0$ such that for any arbitrarily large $\ell>4$, there are sufficiently large $M=M(\ell)>0$ and suitably small $\varsigma=\varsigma(\ell)>0$ such that it holds that
\begin{align}%\label{CK2}
\chi_M\nu^{-1} w^{\ell}|\CK f|\leq C \{(1+M)^{-\ga}+\varsigma\}\|w^{\ell}  f\|_{\infty}.\notag
\end{align}
\end{lemma}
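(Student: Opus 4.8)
\textbf{Proof proposal for Lemma \ref{g-ck-lem}.} The statement to be established is that, given $0\leq\ga\leq1$, there is a universal constant $C>0$ so that for every sufficiently large $\ell>4$ one can pick a large $M=M(\ell)$ and a small $\varsigma=\varsigma(\ell)$ with $\chi_M\nu^{-1}w^{\ell}|\CK f|\leq C\{(1+M)^{-\ga}+\varsigma\}\|w^{\ell}f\|_{\infty}$. The plan is to deduce this directly from the pointwise bound in Lemma \ref{g-ck}, which already isolates the two mechanisms that make $\CK$ small in the large-velocity regime: the factor $\varsigma(\ell)\to 0$ coming from the angular integration, and the extra decay of order $(1+|\Fv|)^{\ga/2}$ that is gained relative to the collision frequency $\nu\sim(1+|\Fv|)^{\ga}$.

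First I would recall from \eqref{CK-def} that $\CK f=Q(f,\mu)+Q_{\rm{gain}}(\mu,f)$, so that $\CK f$ is controlled by a finite combination of the loss and gain terms appearing in Lemma \ref{g-ck} with one of the two arguments replaced by $\mu$. Applying Lemma \ref{g-ck} with $g=\mu$ and with $f$ in the other slot (and, for the term $Q_{\rm{gain}}(\mu,f)$, using the version of the estimate with the two arguments swapped, which is also listed in Lemma \ref{g-ck}), I obtain the pointwise bound
\begin{align}
w^{\ell}|\CK f|(\Fv)\leq C\|w^{\ell}f\|_{\infty}\left\{C(\ell)\|w^{\ell+\ga/2}\mu\|_{\infty}+\varsigma(\ell)\|w^{3}\mu\|_{\infty}(1+|\Fv|)^{\ga}\right\}.\notag
\end{align}
Since $\mu=(2\pi)^{-3/2}e^{-|\Fv|^2/2}$ decays faster than any polynomial, both $\|w^{\ell+\ga/2}\mu\|_{\infty}$ and $\|w^{3}\mu\|_{\infty}$ are finite constants depending only on $\ell$; call their combination $C(\ell)$ again. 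This yields $w^{\ell}|\CK f|(\Fv)\leq C\|w^{\ell}f\|_{\infty}\{C(\ell)+\varsigma(\ell)(1+|\Fv|)^{\ga}\}$.

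Next I would multiply by $\chi_M\nu^{-1}$ and use that $\nu(\Fv)\sim(1+|\Fv|)^{\ga}$ together with the support property of the cutoff, namely $\chi_M(\Fv)=0$ for $|\Fv|\leq M$. On the support of $\chi_M$ we have $(1+|\Fv|)\geq 1+M$, hence $\nu(\Fv)^{-1}\lesssim(1+|\Fv|)^{-\ga}\leq(1+M)^{-\ga}$, which absorbs the first, $\ell$-dependent but velocity-independent, term into a contribution of size $C(\ell)(1+M)^{-\ga}$. For the second term the factor $(1+|\Fv|)^{\ga}$ cancels exactly against $\nu^{-1}$ up to a constant, leaving a contribution of size $C\varsigma(\ell)$. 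Combining,
\begin{align}
\chi_M\nu^{-1}w^{\ell}|\CK f|(\Fv)\leq C\left\{C(\ell)(1+M)^{-\ga}+\varsigma(\ell)\right\}\|w^{\ell}f\|_{\infty}.\notag
\end{align}
Finally, for fixed large $\ell$ I would first choose $\varsigma(\ell)$ small (which is possible since $\varsigma(\ell)\to 0$ as $\ell\to\infty$, and in any case one may simply invoke the version of Lemma \ref{g-ck} for that $\ell$), and then choose $M=M(\ell)$ large enough that $C(\ell)(1+M)^{-\ga}\leq(1+M)^{-\ga}$ after redefining constants, i.e. so that the $\ell$-dependence is reabsorbed into the decay gained from $M$; taking $M$ large compared to any power of $C(\ell)$ does this because $\ga>0$ would be needed for a genuine gain — in the borderline case $\ga=0$ one instead notes that $\nu$ is bounded below and the whole bound is controlled by $\varsigma(\ell)$ plus a term that can be made small by the same cutoff argument applied to the exponentially small tail of $\mu$ relative to $f$, so one simply enlarges $M$ to push $C(\ell)$ times the $\mu$-tail below $\varsigma(\ell)$. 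This gives the claimed inequality with a universal $C$.

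The only subtlety — and the step I would treat most carefully — is the interplay of the two parameters in the case $\ga$ close to $0$: there $(1+M)^{-\ga}$ decays very slowly in $M$, so one cannot simply dominate $C(\ell)(1+M)^{-\ga}$ by a fixed small number by taking $M$ large for a \emph{fixed} $\ell$ unless one is willing to let the final constant depend on $\ell$ (which the statement allows only through $M$ and $\varsigma$, not through $C$). The resolution is that the $C(\ell)$ multiplying $(1+M)^{-\ga}$ actually comes with the rapidly decaying weight $\|w^{\ell+\ga/2}\mu\|_\infty$, which is bounded by a constant \emph{independent of $\ell$} once $\ell$ exceeds a fixed threshold (the Gaussian beats the polynomial uniformly), so $C(\ell)$ can in fact be taken $\ell$-independent here, and the clean choice $M=M(\ell)$, $\varsigma=\varsigma(\ell)$ with a universal $C$ goes through. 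I would make this observation explicit rather than hide it, since it is exactly the point where the formulation of the lemma (universal $C$, parameters depending on $\ell$) is being used.
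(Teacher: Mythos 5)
Your overall strategy is the right one and matches the paper's intent: Lemma \ref{g-ck-lem} is stated there as a consequence of Lemma \ref{g-ck} (with the details referred to \cite{DL-2022}), so applying Lemma \ref{g-ck} with $g=\mu$, summing the $Q_{\rm gain}$/$Q_{\rm loss}$ pieces to cover $\CK f=Q(f,\mu)+Q_{\rm gain}(\mu,f)$, and then multiplying by $\chi_M\nu^{-1}$ and using $\nu\sim(1+|\Fv|)^\ga$ is exactly the right skeleton. Up through the bound $\chi_M\nu^{-1}w^{\ell}|\CK f|\lesssim\{C(\ell)\|w^{\ell+\ga/2}\mu\|_\infty(1+M)^{-\ga}+\varsigma(\ell)\|w^3\mu\|_\infty\}\|w^\ell f\|_\infty$, your computation is sound.

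The gap is in the step you yourself flag as delicate, and your proposed resolution is simply false: $\|w^{\ell+\ga/2}\mu\|_\infty=\sup_{\Fv}(1+|\Fv|^2)^{\ell+\ga/2}\mu(\Fv)$ is \emph{not} bounded uniformly in $\ell$. The supremum is attained near $|\Fv|^2\approx 2(\ell+\ga/2)$ and grows roughly like $(2\ell/e)^\ell$; the Gaussian beats the polynomial only \emph{pointwise} at large $|\Fv|$ for fixed $\ell$, not at the level of the supremum as $\ell$ increases. Consequently the coefficient $C(\ell)\|w^{\ell+\ga/2}\mu\|_\infty$ in front of $(1+M)^{-\ga}$ grows rapidly in $\ell$ and cannot be absorbed into a universal constant $C$, which is what the lemma demands (only $M$ and $\varsigma$ may depend on $\ell$). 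The underlying problem is structural: once you invoke the global $\ell$-dependent supremum $\|w^{\ell+\ga/2}\mu\|_\infty$ from the abstract Lemma \ref{g-ck}, that number is a $\Fv$-independent constant, so multiplying by $\chi_M$ afterwards does nothing to shrink it. To make the cutoff bite, the Gaussian decay of $\mu$ has to be exploited \emph{inside} the estimate of $\CK$ on the set $|\Fv|\geq M$ — e.g.\ by using that $Q_{\rm loss}(f,\mu)$ carries an explicit factor $\mu(\Fv)$, so $\chi_M w^\ell\mu\leq\sup_{|\Fv|\geq M}w^\ell\mu\to 0$ as $M\to\infty$ for each fixed $\ell$, and by a corresponding localization in the gain terms where large $|\Fv|$ forces one of the post-collisional velocities to be large. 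That is genuinely different from the argument you wrote, and it is also where your fallback remark for $\ga$ near $0$ breaks down, since that fallback implicitly presupposes the $\mu$-supremum is $M$-restricted when, in your chain of inequalities, it is not.
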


The following lemma is concerned with the $L^2$ estimate on the operator $\CK$.
\begin{lemma}\label{CK-l2-lem}
Let $0\leq\ga\leq1$, then there is a constant $C>0$ such that for suitably large $l_2>0$, there are sufficiently large $M=M(\ell_2)>0$ and suitably small $\varsigma=\varsigma(l_2)>0$ such that it holds that
\begin{align}\label{CK-l2}
\|\nu^{-1/2} w^{l_2}\chi_M\CK f\|_2\leq C\{(1+M)^{-\ga}+\varsigma\}^{1/2}(1+M)^{-\ga/2}\|\nu^{1/2}w^{l_2}f\|_2.
\end{align}

\end{lemma}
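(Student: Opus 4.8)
The plan is to exploit that, under the angular cutoff assumption, $\CK$ is a genuine integral operator, and to combine a bound on its weighted kernel (read off from the proof of Lemma~\ref{g-ck}) with a Schur-type argument in which the cutoff $\chi_M$ supplies the gain in $M$ while the small-coefficient far-field part of the gain operator supplies the gain in $\varsigma$. In this sense \eqref{CK-l2} is the $L^2$ counterpart of the pointwise bound in Lemma~\ref{g-ck-lem} and would be proved along parallel lines.

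\emph{Kernel representation.} Using $\CK g=\mu^{\frac12}K(\mu^{-\frac12}g)$ together with $Kf(\Fv)=\int_{\R^3}\Fk(\Fv,\Fv_\ast)f(\Fv_\ast)\,d\Fv_\ast$ from Lemma~\ref{Kop}, one has $\CK f(\Fv)=\mu^{\frac12}(\Fv)\int_{\R^3}\Fk(\Fv,\Fv_\ast)\mu^{-\frac12}(\Fv_\ast)f(\Fv_\ast)\,d\Fv_\ast=:\int_{\R^3}\mathbf{l}(\Fv,\Fv_\ast)f(\Fv_\ast)\,d\Fv_\ast$. Put $\mathbf{l}_w(\Fv,\Fv_\ast):=w^{l_2}(\Fv)|\mathbf{l}(\Fv,\Fv_\ast)|w^{-l_2}(\Fv_\ast)$. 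Inspecting the proof of Lemma~\ref{g-ck} (that is, Proposition~3.1 of \cite{AEP-87}) with the second slot taken to be $\mu$ gives, for $l_2$ large,
\[
\int_{\R^3}\mathbf{l}_w(\Fv,\Fv_\ast)\,d\Fv_\ast\ \le\ C(l_2)(1+|\Fv|)^{-1}+\varsigma(l_2)(1+|\Fv|)^{\ga},
\]
and, by the symmetry of the collision kernel, the same bound with $\Fv$ and $\Fv_\ast$ interchanged; moreover the $C(l_2)$-term in fact decays super-exponentially in $|\Fv|$ because of the prefactor $w^{l_2}(\Fv)\mu^{\frac12}(\Fv)$ hidden in $\mathbf{l}$.

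\emph{The Schur estimate.} Writing $|f|=\nu^{-\frac12}w^{-l_2}\cdot|\nu^{\frac12}w^{l_2}f|$ and using $\nu(\Fv_\ast)\gtrsim1$,
\[
\nu^{-\frac12}(\Fv)w^{l_2}(\Fv)\chi_M(\Fv)|\CK f(\Fv)|\ \le\ \chi_M(\Fv)\nu^{-\frac12}(\Fv)\int_{\R^3}\mathbf{l}_w(\Fv,\Fv_\ast)\big|\nu^{\frac12}(\Fv_\ast)w^{l_2}(\Fv_\ast)f(\Fv_\ast)\big|\,d\Fv_\ast .
\]
Applying Cauchy--Schwarz in $\Fv_\ast$, squaring, integrating in $\Fv$ over $\{|\Fv|\ge M\}$ and using Fubini gives $\|\nu^{-\frac12}w^{l_2}\chi_M\CK f\|_2^2\le I\cdot II\cdot\|\nu^{\frac12}w^{l_2}f\|_2^2$, where $I:=\sup_{|\Fv|\ge M}\nu^{-1}(\Fv)\int_{\R^3}\mathbf{l}_w(\Fv,\Fv_\ast)\,d\Fv_\ast$ and $II:=\sup_{\Fv_\ast}\int_{|\Fv|\ge M}\mathbf{l}_w(\Fv,\Fv_\ast)\,d\Fv$. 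On $\{|\Fv|\ge M\}$ one has $\nu^{-1}(\Fv)\le C(1+M)^{-\ga}$, so by the previous step $I\le C(l_2)(1+M)^{-\ga-1}+C\varsigma(l_2)\le C\{(1+M)^{-\ga}+\varsigma\}$ once $M=M(l_2)$ is large; and $II\le C(1+M)^{-\ga}$ by the Gaussian decay of $\Fk$ in $|\Fv-\Fv_\ast|$, the local integrability of $|\Fv-\Fv_\ast|^{-2+\ga}$, and the super-exponential smallness of $w^{l_2}(\Fv)\mu^{\frac12}(\Fv)$ on $\{|\Fv|\ge M\}$. Taking square roots and multiplying the two factors produces $C\{(1+M)^{-\ga}+\varsigma\}^{1/2}(1+M)^{-\ga/2}$, which is \eqref{CK-l2}.

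\emph{The main obstacle.} The delicate region is $|\Fv_\ast|\sim|\Fv|$, where the polynomial weight ratio $w^{l_2}(\Fv)w^{-l_2}(\Fv_\ast)$ is merely bounded and the Grad kernel is not small, so the only available smallness there is the coefficient $\varsigma(l_2)$ of the cut-off part of the gain operator. Consequently the parameters must be chosen in the order: first fix $l_2$ large so that $\varsigma(l_2)$ is as small as needed; then fix $M=M(l_2)$ large so that the $C(l_2)$-terms are dominated by $(1+M)^{-\ga}$; the constant $C$ in \eqref{CK-l2} is then universal. A secondary, purely bookkeeping, point is to distribute the collision-frequency weights in the Cauchy--Schwarz step so that $\|\nu^{\frac12}w^{l_2}f\|_2$, and not merely $\|w^{l_2}f\|_2$, appears on the right; this is exactly the form needed when \eqref{CK-l2} is used to absorb $\tfrac1\eps\chi_M\CK\hat f_{R,1}$ into the dissipation term $\tfrac1\eps\|w^{l_2}\hat f_{R,1}\|_\nu^2$ in the $L^1_{\bar k}L^2_{z,\Fv}$ estimate for $f_{R,1}$.
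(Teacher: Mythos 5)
The paper does not prove this lemma itself --- it cites \cite[Proposition~2.1]{DL-2022} --- so your Schur-test argument is an independent route, and its skeleton ($\CK f=\mu^{1/2}K(\mu^{-1/2}f)$, pass to the weighted kernel $\mathbf{l}_w$, column-sum estimate, Schur) is reasonable. But the second Schur factor is wrong, and not by a constant.

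You claim $II:=\sup_{\Fv_\ast}\int_{|\Fv|\ge M}\mathbf{l}_w(\Fv,\Fv_\ast)\,d\Fv\le C(1+M)^{-\ga}$, and justify it by the ``super-exponential smallness of $w^{l_2}(\Fv)\mu^{1/2}(\Fv)$ on $\{|\Fv|\ge M\}$.'' This ignores the reciprocal factor $w^{-l_2}(\Fv_\ast)\mu^{-1/2}(\Fv_\ast)$ in $\mathbf{l}_w$ and the fact that $\sup_{\Fv_\ast}$ is unrestricted. In fact $II=\infty$ when $\ga>0$: set $\Fu=\Fv-\Fv_\ast$ and $L=|\Fv_\ast|$; the exponential pieces of $\mathbf{l}_w$ combine to $\exp\{-\tfrac12(|\Fu|+L\cos\theta)^2\}$ ($\theta$ the angle between $\Fu$ and $\Fv_\ast$), which the angular integration over $S^2$ reduces to a factor $\sim 1/L$, while the remaining radial integral against $|\Fu|^{\ga}\,w^{l_2}(\Fv)/w^{l_2}(\Fv_\ast)$ is of order $L^{1+\ga}$ (up to an $l_2$-dependent constant). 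Hence $\int_{\R^3}\mathbf{l}_w(\Fv,\Fv_\ast)\,d\Fv$ grows like $(1+|\Fv_\ast|)^{\ga}$, and the supremum over $\Fv_\ast$ diverges. Your Schur test, as set up, does not close.

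The missing ingredient is exactly the collision-frequency weight you discarded \emph{before} Cauchy--Schwarz by invoking $\nu\gtrsim1$. Keep it: split the integrand as $\mathbf{l}_w^{1/2}\cdot\big[\mathbf{l}_w^{1/2}\,\nu^{-1/2}(\Fv_\ast)\cdot\nu^{1/2}(\Fv_\ast)w^{l_2}(\Fv_\ast)|f(\Fv_\ast)|\big]$ --- equivalently, run the Schur test with the weight $\rho=\nu^{1/2}$. The second factor then becomes $II':=\sup_{\Fv_\ast}\nu^{-1}(\Fv_\ast)\int_{|\Fv|\ge M}\mathbf{l}_w(\Fv,\Fv_\ast)\,d\Fv$; the prefactor $\nu^{-1}(\Fv_\ast)\sim(1+|\Fv_\ast|)^{-\ga}$ exactly cancels the divergent growth, and the range $|\Fv_\ast|\lesssim M$ (where $|\Fv-\Fv_\ast|\gtrsim M$ on the support of $\chi_M$) contributes a super-exponentially small amount, so $II'$ is finite and, for $l_2$ and then $M$ suitably large, yields the $(1+M)^{-\ga}$ factor in \eqref{CK-l2}. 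So the point you relegated to ``secondary, purely bookkeeping'' --- where the $\nu$'s go in Cauchy--Schwarz --- is in fact the crux of the estimate, and the way you actually distributed them makes the argument fail rather than merely lose a constant.
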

\begin{proof}
The validation of \eqref{CK-l2} for the scenario $0 <\gamma \leq 1$ is presented in \cite[pp.7, Proposition 2.1]{DL-2022}. The proof for the case where $\gamma = 0$ can be established in a similar manner.
\end{proof}

The following Lemma concerning the polynomial weighted estimates on the collision operator $Q$ can be verified by using a parallel argument as for obtaining \cite[Proposition 3.1, pp.397]{AEP-87}.

\begin{lemma}\label{op-es-lem}Let $\ell>4$ and $1\geq\ga\geq0$, then it holds that
\begin{equation}\notag
|w^{\ell} \nu^{-1} Q_{gain}(F_1,F_2)|,\ |w^{\ell} \nu^{-1} Q_{loss}(F_1,F_2)|\leq C\|w^\ell  F_1\|_{\infty}\|w^\ell F_2\|_{\infty}.
\end{equation}
\end{lemma}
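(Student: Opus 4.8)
The plan is to split $Q=Q_{gain}-Q_{loss}$ and estimate the two parts separately, the loss part being elementary and the gain part following the classical weighted bounds of \cite{AEP-87} --- the same mechanism underlying Lemmas~\ref{Ga} and \ref{g-ck}.

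For the loss term I would use the factorization $Q_{loss}(F_1,F_2)(\Fv)=F_2(\Fv)\int_{\R^3\times\S^2_+}B_0(\Fv-\Fv_\ast,\om)F_1(\Fv_\ast)\,d\Fv_\ast d\om$. Inserting $|F_1(\Fv_\ast)|\le\|w^{\ell}F_1\|_\infty w^{-\ell}(\Fv_\ast)$ and $w^{\ell}(\Fv)|F_2(\Fv)|\le\|w^{\ell}F_2\|_\infty$, it remains to check $\nu^{-1}(\Fv)\int_{\R^3\times\S^2_+}|\Fv-\Fv_\ast|^{\ga}b_0(\cos\ta)w^{-\ell}(\Fv_\ast)\,d\Fv_\ast d\om\lesssim 1$. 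Since $\int_{\S^2_+}b_0(\cos\ta)\,d\om\le C$, $|\Fv-\Fv_\ast|^{\ga}\le C(1+|\Fv|^{\ga})(1+|\Fv_\ast|^{\ga})$ for $\ga\in[0,1]$, the exponent $\ell>4$ makes $\int_{\R^3}(1+|\Fv_\ast|^{\ga})w^{-\ell}(\Fv_\ast)\,d\Fv_\ast<\infty$, and $\nu(\Fv)\sim(1+|\Fv|)^{\ga}$, this bound is immediate.

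For the gain term --- the substantive case --- I would first extract the data: the collisional conservation law $|\Fv|^2+|\Fv_\ast|^2=|\Fv'|^2+|\Fv_\ast'|^2$ gives $1+|\Fv|^2\le 1+|\Fv'|^2+|\Fv_\ast'|^2\le 2\max\{1+|\Fv'|^2,\,1+|\Fv_\ast'|^2\}$, so, splitting according to which post-collisional factor dominates,
$$\frac{w^{\ell}(\Fv)}{w^{\ell}(\Fv')\,w^{\ell}(\Fv_\ast')}\le 2^{\ell}\big(w^{-\ell}(\Fv')+w^{-\ell}(\Fv_\ast')\big).$$
Combined with $|F_1(\Fv_\ast')|\,|F_2(\Fv')|\le\|w^{\ell}F_1\|_\infty\|w^{\ell}F_2\|_\infty\,w^{-\ell}(\Fv_\ast')w^{-\ell}(\Fv')$, this reduces the claim to showing that
\begin{align*}
&\nu^{-1}(\Fv)\int_{\R^3\times\S^2_+}|\Fv-\Fv_\ast|^{\ga}b_0(\cos\ta)\,w^{-\ell}(\Fv')\,d\om\,d\Fv_\ast\\
&\qquad\text{and}\qquad \nu^{-1}(\Fv)\int_{\R^3\times\S^2_+}|\Fv-\Fv_\ast|^{\ga}b_0(\cos\ta)\,w^{-\ell}(\Fv_\ast')\,d\om\,d\Fv_\ast
\end{align*}
are bounded by a constant. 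For this I would pass to the Carleman-type representation: with $\Fv$ fixed, change variables $(\Fv_\ast,\om)\mapsto\Fv'$, using that $\Fv'-\Fv\parallel\om$ while $\Fv_\ast'-\Fv\perp\om$, so that $|\Fv-\Fv_\ast|^2=|\Fv-\Fv'|^2+|\Fv-\Fv_\ast'|^2$ and $\Fv_\ast'$ runs over the plane through $\Fv$ orthogonal to $\Fv'-\Fv$; the cutoff $b_0(\cos\ta)\le C\cos\ta$ supplies the Jacobian factor $\cos\ta\sim|\Fv'-\Fv|/|\Fv-\Fv_\ast|$ and removes the grazing singularity. The first integral then collapses to $\int_{\R^3}|\Fv-\Fv'|^{\ga-1}w^{-\ell}(\Fv')\,d\Fv'\lesssim(1+|\Fv|)^{\ga}$ (integrable near $\Fv'=\Fv$ since $\ga-1>-3$, and at infinity since $\ell>4$), and dividing by $\nu(\Fv)\sim(1+|\Fv|)^{\ga}$ yields the constant bound; the second integral is symmetric after interchanging the roles of $\Fv'$ and $\Fv_\ast'$.

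I expect the weight bookkeeping in the gain term to be the only real obstacle. Unlike the loss term, $Q_{gain}$ grows like $\nu(\Fv)$, and $\Fv',\Fv_\ast'$ can be large while $\Fv$ is moderate, so the naive bound $w^{\ell}(\Fv)\le w^{\ell}(\Fv')w^{\ell}(\Fv_\ast')$ alone is useless (it leaves a divergent kinematic integral). The resolution is exactly the combination above --- trading the weights on $\Fv'$ and $\Fv_\ast'$ for integrable decay via the Carleman change of variables together with the angular cutoff --- which is the content of \cite[Proposition~3.1]{AEP-87} and of the companion estimate in Lemma~\ref{Ga}.
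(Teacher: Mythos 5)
Your loss-term estimate is correct and standard. The gain-term argument, however, has a genuine gap: after the weight splitting, both of the "reduced" kinematic integrals you arrive at are actually divergent, so the reduction cannot close. From $\tfrac{w^{\ell}(\Fv)}{w^{\ell}(\Fv')\,w^{\ell}(\Fv_\ast')}\le 2^{\ell}\big(w^{-\ell}(\Fv')+w^{-\ell}(\Fv_\ast')\big)$ and the pointwise bound on $F_1,F_2$ you reduce to controlling
\[
\nu^{-1}(\Fv)\int_{\R^3\times\S^2_+}|\Fv-\Fv_\ast|^{\ga}\,b_0(\cos\ta)\,w^{-\ell}(\Fv')\,d\om\,d\Fv_\ast
\]
and its mirror with $w^{-\ell}(\Fv_\ast')$. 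Fix $\om$ and write $\Fv_\ast=\Fv+u\om+\mathbf{w}$ with $u\ge 0$ and $\mathbf{w}\in\om^{\perp}$, so that $\Fv'=\Fv+u\om$, $\Fv_\ast'=\Fv+\mathbf{w}$, $|\Fv-\Fv_\ast|^2=u^2+|\mathbf{w}|^2$, $\cos\ta=u(u^2+|\mathbf{w}|^2)^{-1/2}$, and $d\Fv_\ast=du\,d\mathbf{w}$. Even after $b_0(\cos\ta)\le C\cos\ta$, the integrand of the displayed quantity is $\sim u(u^2+|\mathbf{w}|^2)^{(\ga-1)/2}w^{-\ell}(\Fv+u\om)$, whose $\mathbf{w}$-integral behaves like $\int^{\infty}r^{\ga}\,dr=\infty$ for every fixed $u>0$ and every $\ga\ge 0$; the mirror integral, carrying $w^{-\ell}(\Fv+\mathbf{w})$, has $\int_0^{\infty}u(u^2+|\mathbf{w}|^2)^{(\ga-1)/2}du=\infty$ for all $\ga>-1$. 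The angular cutoff and the Carleman Jacobian $|\Fv-\Fv'|^{-2}$ do tame the singularity at $\Fv'=\Fv$, as you say, but they give no decay in the transversal variable, and that decay is exactly what the pointwise weight inequality discards. In particular, the claimed "collapse" of the inner integral to $\int|\Fv-\Fv'|^{\ga-1}w^{-\ell}(\Fv')\,d\Fv'$ is false.

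The correct route is to carry the full product $w^{-\ell}(\Fv')w^{-\ell}(\Fv_\ast')$, together with the prefactor $w^{\ell}(\Fv)$, into the Carleman representation without first bounding $w^{\ell}(\Fv)/[w^{\ell}(\Fv')w^{\ell}(\Fv_\ast')]$ pointwise. One then estimates
\[
w^{\ell}(\Fv)\,\nu^{-1}(\Fv)\int_{\R^3}\frac{w^{-\ell}(\Fv')}{|\Fv-\Fv'|}\left(\int |\Fv-\Fv_\ast'|^{\ga-1}\,w^{-\ell}(\Fv_\ast')\,d\Fv_\ast'\right)d\Fv'
\]
(inner integral over the plane through $\Fv$ orthogonal to $\Fv'-\Fv$): the retained $w^{-\ell}(\Fv_\ast')$ makes the inner integral finite once $\ell>2$, and a case analysis in the direction $(\Fv'-\Fv)/|\Fv'-\Fv|$ relative to $\Fv$ produces the compensating factor $w^{-\ell}(\Fv)\nu(\Fv)$ so that the whole expression is bounded. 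This weight-trading, not a one-line triangle inequality on the weights, is the substance of the AEP-87 estimate the paper cites for this lemma.
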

Base on Lemmas \ref{Ga} and \ref{op-es-lem}, one further has
\begin{lemma}\label{lk1-ne}Let $1\geq\ga\geq0$, for $\ell>4$, then it holds that
\begin{equation}\label{Q-lk1}
|w^{\ell} \nu^{-1} \hat{Q}(\hat{F}_1,\hat{F}_2)|\leq C\sum\limits_{\bar{l}\in\Z^2}\|w^\ell \hat{F}_1(\bar{k}-\bar{l})\|_{\infty}\sum\limits_{\bar{l}\in\Z^2}\|w^\ell \hat{F}_2(\bar{l})\|_{\infty},
\end{equation}
and for any $\ell\geq0$, it holds that
\begin{equation}\label{Ga-lk1}
|w^{\ell} \nu^{-1} \hat{\Ga}(\hat{F}_1,\hat{F}_2)|\leq C\sum\limits_{\bar{l}\in\Z^2}\|w^\ell \hat{F}_1(\bar{k}-\bar{l})\|_{\infty}\sum\limits_{\bar{l}\in\Z^2}\|w^\ell \hat{F}_2(\bar{l})\|_{\infty},
\end{equation}
where $\hat{Q}(\hat{F}_1,\hat{F}_2)$ and $\hat{\Ga}(\hat{F}_1,\hat{F}_2)$ are defined by \eqref{Ga-ft} and \eqref{Q-ft-def}, respectively.

\end{lemma}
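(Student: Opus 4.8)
\textbf{Proof proposal for Lemma \ref{lk1-ne}.}
The plan is to reduce the Fourier-side estimates \eqref{Q-lk1} and \eqref{Ga-lk1} to their physical-space analogues, namely Lemma \ref{Ga} (with $p=\infty$, $\theta=1$) and Lemma \ref{op-es-lem}, by exploiting the fact that the bilinear collision operators act only in the velocity variable $\Fv$ and that the Fourier transform $\CF_{\bar x}$ in the horizontal space variable converts the pointwise product of functions of $\bar x$ into a discrete convolution over $\bar k\in\Z^2$. First I would recall from \eqref{Ga-ft} and \eqref{Q-ft-def} the definitions $\hat\Ga(\hat f,\hat g)=\CF_{\bar x}\{\Ga(f,g)\}$ and $\hat Q(\hat f,\hat g)=\CF_{\bar x}\{Q(f,g)\}$, and then use the bilinearity of $Q$ and $\Ga$ together with the convolution theorem to write, for fixed $\bar k$,
\begin{align}
\hat Q(\hat F_1,\hat F_2)(\bar k,z,\Fv)=\sum_{\bar l\in\Z^2}Q\big(\hat F_1(\bar k-\bar l,z,\cdot),\hat F_2(\bar l,z,\cdot)\big)(\Fv),\notag
\end{align}
and similarly for $\hat\Ga$. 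This identity is just the statement that convolution in the discrete frequency variable corresponds to a product on the physical torus, combined with the fact that $Q$ and $\Ga$ commute with $\CF_{\bar x}$ since they do not involve $\bar x$ or $z$ derivatives.

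Next, for the $\Ga$-estimate \eqref{Ga-lk1}, I would apply the triangle inequality in $\bar l$ and then invoke Lemma \ref{Ga} with $p=\infty$, $\theta=1$ (so that $\nu^{1-\theta}=\nu^0=1$), and weight $w^\ell$: this gives, pointwise in $(z,\Fv)$,
\begin{align}
|w^\ell\nu^{-1}\hat\Ga(\hat F_1,\hat F_2)(\bar k,z,\Fv)|
\le\sum_{\bar l\in\Z^2}\Big\{\|w^\ell\hat F_1(\bar k-\bar l,z,\cdot)\|_{L^\infty_\Fv}\|\hat F_2(\bar l,z,\cdot)\|_{L^\infty_\Fv}
+\|\hat F_1(\bar k-\bar l,z,\cdot)\|_{L^\infty_\Fv}\|w^\ell\hat F_2(\bar l,z,\cdot)\|_{L^\infty_\Fv}\Big\}.\notag
\end{align}
Using the trivial bound $\|\hat F_i\|_{L^\infty_\Fv}\le\|w^\ell\hat F_i\|_{L^\infty_\Fv}$ for $\ell\ge0$ and then taking the supremum over $z$ (or simply noting the bound holds for every $z$), both terms are dominated by $\big(\sum_{\bar l}\|w^\ell\hat F_1(\bar k-\bar l)\|_\infty\big)\big(\sum_{\bar l}\|w^\ell\hat F_2(\bar l)\|_\infty\big)$ after a relabeling of indices, which is \eqref{Ga-lk1}. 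For the $Q$-estimate \eqref{Q-lk1}, the same scheme applies but now I would use Lemma \ref{op-es-lem}, which requires $\ell>4$ and controls $w^\ell\nu^{-1}Q_{\rm gain}$ and $w^\ell\nu^{-1}Q_{\rm loss}$ by $\|w^\ell F_1\|_\infty\|w^\ell F_2\|_\infty$; splitting $Q=Q_{\rm gain}-Q_{\rm loss}$, summing over $\bar l$, and relabeling gives \eqref{Q-lk1}.

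The argument is essentially bookkeeping, so there is no serious obstacle; the only point requiring a little care is making sure the discrete convolution identity for $\hat Q$ and $\hat\Ga$ is justified — one must check that the interchange of $\CF_{\bar x}$ with the velocity integrals defining $Q$ is legitimate, which follows from the decay of $\mu$ and the polynomial weights (so that the relevant integrals converge absolutely and Fubini applies). A second minor point is that Lemma \ref{op-es-lem} and Lemma \ref{Ga} are stated with fixed weight $w^\ell$ on both inputs, whereas \eqref{Q-lk1}--\eqref{Ga-lk1} also carry $w^\ell$ on both factors, so no weight transfer is needed; one only uses $w^{\ell}\ge 1$ to drop weights where convenient. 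Thus the proof reduces to citing Lemmas \ref{Ga} and \ref{op-es-lem} term by term after passing to the frequency-convolution form.
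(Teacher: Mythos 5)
Your proof is correct and follows essentially the same route as the paper: write $\hat Q$ (and $\hat\Gamma$) as a discrete convolution over $\bar l$, apply the pointwise bound from Lemma~\ref{op-es-lem} (resp. Lemma~\ref{Ga} with $p=\infty$, $\theta=1$) to each summand, and sum. The paper additionally invokes the "generalized Minkowski inequality," but since the bound is pointwise in $(z,\Fv)$, the triangle inequality plus the termwise estimate already suffices, so your argument is if anything a slight tightening of the same idea.
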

\begin{proof}
We only prove \eqref{Q-lk1}, because \eqref{Ga-lk1} can be verified in the same way.
Recalling \eqref{Q-ft-def} and \eqref{Q-op}, we get
\begin{align}\label{q-pft}
\hat{Q}(\hat{F}_1,\hat{F}_2)=\sum\limits_{\hat{l}\in\Z^2}Q(\hat{F}_1(\bar{k}-\bar{l}),\hat{F}_2(\bar{l})),
\end{align}
therefore one has by using Lemma \ref{op-es-lem} and generalized Minkowski' inequality that
\begin{align}
|w^{\ell} \nu^{-1} \hat{Q}(\hat{F}_1,\hat{F}_2)|\leq & C\sum\limits_{\bar{l}\in\Z^2}\|w^\ell \hat{F}_1(\bar{k}-\bar{l})\|_{\infty}\|w^\ell \hat{F}_2(\bar{l})\|_{\infty}.\notag
\end{align}
This ends the proof of Lemma \ref{lk1-ne}.
\end{proof}

The next lemma is devoted to the weighted mixture estimate on the inner product $\lag \hat{Q}(\hat{f},\hat{g}),\hat{h}\rag$.
\begin{lemma}\label{es-tri}
Let $l_{\infty}> 2l_2\gg 4$, then, it holds that
\begin{equation}\label{es-tri-p1}
|\langle \hat{Q}(\hat{f},\hat{g}),w^{2l_2}  \hat{h}\rangle|\leq
C\|\nu^{\frac{1}{2}}w^{l_2}\hat{h}(\bar{k})\|_{2}\sum\limits_{\bar{l}}\|\nu^{\frac{1}{2}}w^{l_2} \hat{f}(\bar{k}-\bar{l})\|_2\|\nu^{\frac{1}{2}}w^{l_2}\hat{g}(\bar{l})\|_2,
\end{equation}
in particular, it follows that
\begin{equation}\label{es-tri-p2}
|\langle(1-\chi_M) \hat{Q}(\hat{f},\hat{g}), \hat{h}\rangle|\leq C\|\hat{h}\|_2\|\nu^{\frac{1}{2}}w^{l_2}\hat{f}\|_2\|\nu^{\frac{1}{2}}w^{l_2}\hat{g}\|_2.
\end{equation}
Moreover, it holds that for $l_2>2$,
\begin{align}\label{q-l2tk}
\left((\hat{Q}(\hat{f},\hat{g}),w^{\ell_2}\hat{h}\right)
\lesssim &
C_\eta\sum\limits_{\bar{l}}\int_{-1}^1\left(\|w^{l_2}\nu^{\frac{1}{2}} \hat{f}(\bar{k}-\bar{l})\|_2\|w^{l_2}\nu^{\frac{1}{2}}\hat{g}(\bar{l})\|_2\right)^2dz
\notag \\
& + \eta\|w^{l_2}\nu^{\frac{1}{2}}\hat{h}(\bar{k})\|^2_{2}.
\end{align}
and
\begin{align}\label{ga-l2tk}
|(\hat{\Ga}(\hat{f},\hat{g}),\hat{h})\lesssim
\eta\|\nu^{\frac{1}{2}}\hat{h}(\bar{k})\|^2_{2}+C_\eta\sum\limits_{\bar{l}}\int_{-1}^1\left(\|\nu^{\frac{1}{2}} \hat{f}(\bar{k}-\bar{l})\|_2\|\nu^{\frac{1}{2}}\hat{g}(\bar{l})\|_2\right)^2dz,
\end{align}

\end{lemma}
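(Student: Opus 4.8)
The statement to be proven is Lemma \ref{es-tri}, which collects weighted mixture estimates for the Fourier-transformed collision operator $\hat{Q}$ and its symmetrized version $\hat{\Ga}$ against a test function. The plan is to reduce everything to the classical bilinear trilinear estimate for the collision operator on $\R^3_\Fv$ together with the convolution structure in the discrete frequency variable $\bar k$, which was already isolated in \eqref{q-pft}. Concretely, recall from \eqref{q-pft} that $\hat{Q}(\hat{f},\hat{g})(\bar{k})=\sum_{\bar{l}\in\Z^2}Q(\hat{f}(\bar{k}-\bar{l}),\hat{g}(\bar{l}))$, so it suffices to estimate $\langle Q(\hat{f}(\bar{k}-\bar{l}),\hat{g}(\bar{l})),w^{2l_2}\hat{h}(\bar{k})\rangle$ for each fixed pair $(\bar{k}-\bar{l},\bar{l})$ and then sum in $\bar{l}$.

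For \eqref{es-tri-p1}, I would first move one weight onto each factor: using the pointwise bound $w^{2l_2}(\Fv)\lesssim w^{l_2}(\Fv_\ast')w^{l_2}(\Fv')$ valid on the collision manifold together with $l_\infty>2l_2$, and the standard cutoff collision-operator estimate (carried over from e.g.\ the AEP-type bounds already cited in the paper, Lemma \ref{Ga} / Lemma \ref{op-es-lem} and the kernel bounds $\Fk_1,\Fk_2$ recorded after \eqref{sp.L}), one gets
\begin{align}
|\langle Q(\hat{f}(\bar{k}-\bar{l}),\hat{g}(\bar{l})),w^{2l_2}\hat{h}(\bar{k})\rangle|
\lesssim \|\nu^{1/2}w^{l_2}\hat{f}(\bar{k}-\bar{l})\|_2\,\|\nu^{1/2}w^{l_2}\hat{g}(\bar{l})\|_2\,\|\nu^{1/2}w^{l_2}\hat{h}(\bar{k})\|_2. \notag
\end{align}
Summing this in $\bar{l}$ leaves $\|\nu^{1/2}w^{l_2}\hat{h}(\bar{k})\|_2$ outside the sum and produces the stated convolution. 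The bound \eqref{es-tri-p2} is the special case $l_2$ arbitrary with the cutoff $(1-\chi_M)$ forcing the weight $w^{2l_2}$ to be comparable to a constant on the support, so one simply drops $w^{2l_2}$ and keeps $\|\hat{h}\|_2$; no frequency sum is needed there since it is stated for a single $\bar k$.

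For \eqref{q-l2tk} and \eqref{ga-l2tk}, the only new ingredient is that the inner product there is the full $L^2$ inner product in $(\bar k, z,\Fv)$ (so it includes an integral over $z\in(-1,1)$ and a sum over $\bar k$), and one wants to split off $\|w^{l_2}\nu^{1/2}\hat{h}(\bar{k})\|^2_2$ with a small coefficient $\eta$. Here I would apply the pointwise-in-$z$ version of the estimate above, then Cauchy--Schwarz in $\bar l$ absorbing $\|w^{l_2}\nu^{1/2}\hat{h}(\bar{k},z)\|_2$ with Young's inequality $ab\le \eta a^2+C_\eta b^2$, and finally integrate in $z$ and sum in $\bar k$; the convolution sum $\sum_{\bar l}\big(\int_{-1}^1(\|w^{l_2}\nu^{1/2}\hat f(\bar k-\bar l)\|_2\|w^{l_2}\nu^{1/2}\hat g(\bar l)\|_2)^2dz\big)$ appears exactly as written. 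The $\hat\Ga$ version \eqref{ga-l2tk} is identical using $\Ga(f,g)=\mu^{-1/2}Q(\sqrt\mu f,\sqrt\mu g)$, where the extra $\sqrt\mu$ factors make all polynomial weights harmless, so no weight $w^{l_2}$ is even needed on the right. The main technical obstacle — and the place to be careful — is the weight-splitting inequality $w^{2l_2}(\Fv)\lesssim w^{l_2}(\Fv')w^{l_2}(\Fv'_\ast)$ combined with integrability of the kernel against $w^{l_2}\mu^{1/2}$-type factors, i.e.\ checking that the hypothesis $l_\infty>2l_2\gg4$ is exactly what is needed for the $\nu$-weighted norms on the right-hand side to control everything; this is a routine but weight-bookkeeping-heavy computation that mirrors the AEP-87 argument, so I would state it and refer to that source rather than reproduce it.
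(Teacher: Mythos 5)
Your proposal follows essentially the same route as the paper's proof: use the convolution formula \eqref{q-pft} to reduce $\hat Q$ to a sum of single-frequency operators $Q(\hat f(\bar k-\bar l),\hat g(\bar l))$, apply a weighted Grad/cutoff trilinear estimate to each term and sum in $\bar l$, then obtain \eqref{q-l2tk} and \eqref{ga-l2tk} by Cauchy--Schwarz in $\bar l$ and Young's inequality after integrating in $z$. The only difference is in sourcing: the paper cites \cite[Lemmas 3.3--3.4]{Cao-jfa} for the trilinear estimate \eqref{es-tri-p1}--\eqref{es-tri-p2}, whereas you propose reconstructing the weight-splitting by hand (in the spirit of \cite{AEP-87}); both are fine, and the paper is simply being terse. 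Two small inaccuracies in your sketch worth noting. First, the hypothesis $l_\infty>2l_2$ does not actually enter anywhere in this lemma --- none of the displayed bounds mention $l_\infty$; it is carried along only for consistency with the surrounding analysis, so invoking it in the weight-splitting step is a red herring. Second, for \eqref{es-tri-p2} you cannot literally ``drop'' $w^{2l_2}$: the correct manipulation is to move $(1-\chi_M)$ onto $\hat h$ by self-adjointness, write $\langle \hat Q(\hat f,\hat g), w^{2l_2}\big(w^{-2l_2}(1-\chi_M)\hat h\big)\rangle$, and apply \eqref{es-tri-p1} with $\hat h$ replaced by $w^{-2l_2}(1-\chi_M)\hat h$; the factor $\nu^{1/2}w^{-l_2}(1-\chi_M)$ is bounded since $1-\chi_M$ has compact velocity support, which yields $\|\nu^{1/2}w^{l_2}\big(w^{-2l_2}(1-\chi_M)\hat h\big)\|_2\lesssim\|\hat h\|_2$ and hence \eqref{es-tri-p2}. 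Neither of these issues affects the overall validity of the argument.
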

\begin{proof}
\eqref{es-tri-p1} and \eqref{es-tri-p2} is direct consequence of that of \cite[pp.37-38, Lemmas 3.3-3.4]{Cao-jfa}.
\eqref{q-l2tk} and \eqref{ga-l2tk} follows from \eqref{q-pft}, \eqref{es-tri-p1} and \eqref{es1.Ga} and Cauchy-Schwarz's inequality as well as generalized Minkowski's equality.
This ends the proof of Lemma \ref{es-tri}.
\end{proof}

We now conclude by presenting the proof for the elliptic estimate, as expressed in \eqref{epes-ap}, and the trace estimate given in \eqref{trace}.

\begin{proof}[The proof of \eqref{epes-ap} and \eqref{trace}]
We first prove \eqref{epes-ap}. The computation is divided into two cases.

\medskip
\noindent{\it Case 1. $\bar{k}\neq0$.} Taking the inner product of $\eqref{ep-a}_1$ and $\hat{\phi}_{a_s}$ and using $\eqref{ep-a}_2$, one directly has
\begin{align}\notag
\|\bar{k}\hat{\phi}_{a_s}\|_2^2+\|\pa_z\hat{\phi}_{a_s}\|_2^2\lesssim\|\hat{a}_s\|_2^2,%\label{as-ip1}
\end{align}
which also implies
\begin{align}
(1+|\bar{k}|)\|\hat{\phi}_{a_s}\|_2^2+\|\pa_z\hat{\phi}_{a_s}\|_2^2\lesssim\|\hat{a}_s\|_2^2,\label{as-ip2}
\end{align}
due to $|\bar{k}|\geq1$.
Similarly, the inner product of $\eqref{ep-a}_1$ and $|\bar{k}|^2\hat{\phi}_{a_s}$ gives
\begin{align}
\||\bar{k}|^2\hat{\phi}_{a_s}\|_2^2+\||\bar{k}|\pa_z\hat{\phi}_{a_s}\|_2^2\lesssim\|\hat{a}\|_2^2.\label{as-ip3}
\end{align}
\noindent{\it Case 2. $\bar{k}=0$.} In this case, $\eqref{ep-a}$ reads
\begin{align}\label{ep-a-k0}
\left\{\begin{array}{rll}
&-\pa_z^2 \hat{\phi}_{a_s}(0,z)=\hat{a}_s,\\[2mm]
&\pa_{z}\hat{\phi}_{a_s}(0,\pm 1)=0,\ \int_{-1}^1\hat{\phi}_{a_s}(0,z)dz=0.
\end{array}\right.
\end{align}
Then \eqref{ep-a-k0}$_1$ directly implies that for suitably small $\eta>0$,
\begin{align}\label{k0-p1}
\|\pa_z \hat{\phi}_{a_s}\|_{H^1_z}\leq C_\eta\|\hat{a}_s\|_2+\eta\|\hat{\phi}_{a_s}\|_{2}.
\end{align}
On the other hand, from $\eqref{ep-a-k0}_2$, we may assume there exits $z_0\in(-1,1)$ such that
\begin{align}%\label{m-v}
\hat{\phi}_{a_s}(0,z_0)=0.\notag
\end{align}
Therefore,
\begin{align}\label{k0-p2}
\|\hat{\phi}_{a_s}\|_{2}\lesssim \|\pa_z \hat{\phi}_{a_s}\|_2.
\end{align}
Then \eqref{k0-p1} and \eqref{k0-p2} yield
\begin{align}\label{k0-p3}
\|\hat{\phi}_{a_s}(0,z)\|_{H^2_z}\leq C\|\hat{a}_s\|_2.
\end{align}
Consequently, \eqref{as-ip2}, \eqref{as-ip3} and \eqref{k0-p3} gives \eqref{epes-ap}.

Finally, we turn to prove the trace inequality \eqref{trace}. Note that
\begin{align}
\hat{\phi}_{a_s}(\bar{k},\pm1)=\int_z^{\pm1}\pa_\tau\hat{\phi}_{a_s}(\bar{k},\tau)d\tau+\hat{\phi}_{a_s}(\bar{k},z),\ \forall z\in(-1,1).\notag
\end{align}
Integrating the above identity with respect to $z\in(-1,1)$ and applying H\"older's inequality, we further obtain
\begin{align}
|\hat{\phi}_{a_s}(\bar{k},\pm1)|\leq\|\pa_\tau\hat{\phi}_{a_s}(\bar{k},\tau)\|_2+\|\hat{\phi}_{a_s}(\bar{k},z)\|_2.\label{tr-as}
\end{align}
Consequently, \eqref{epes-a2} is a direct consequence of \eqref{epes-ap} and \eqref{tr-as}. This completes the proof of \eqref{epes-ap} and \eqref{epes-a2}.
\end{proof}

\medskip
\noindent {\bf Acknowledgments:}
Renjun Duan's research was partially supported by the General Research Fund (Project No.~14303321) from RGC of Hong Kong and the Direct Grant (4053652) from CUHK. Shuangqian Liu's research was supported by grant from the National Natural Science Foundation of China (contract 12325107).  Robert Strain's (\orcidlink{0000-0002-1107-8570} \href{https://orcid.org/0000-0002-1107-8570}{https://orcid.org/0000-0002-1107-8570}) research was partially supported by the NSF grant's DMS-2055271 and DMS-2408264 (USA).

\providecommand{\bysame}{\leavevmode\hbox to3em{\hrulefill}\thinspace}
\providecommand{\href}[2]{#2}

%%%%%%%%%%%%%%%%%%%%%%%%%%%%%%%%%%%%%%%%%%%%%%%%%%%%%%%%%%%%%%%%%%%%%%%%%%%%%%%%%%

\end{document}